\numberwithin{equation}{section}
\begin{document}

\newcommand{\E}{\mathbb{E}}
\newcommand{\PP}{\mathbb{P}}
\newcommand{\RR}{\mathbb{R}}

\newtheorem{theorem}{Theorem}[section]
\newtheorem{remark}[theorem]{Remark}
\newtheorem{lemma}[theorem]{Lemma}
\newtheorem{coro}[theorem]{Corollary}
\newtheorem{defn}[theorem]{Definition}
\newtheorem{assp}[theorem]{Assumption}
\newtheorem{expl}[theorem]{Example}
\newtheorem{prop}[theorem]{Proposition}
\newtheorem{rmk}[theorem]{Remark}
\newcommand\tq{{\scriptstyle{3\over 4 }\scriptstyle}}
\newcommand\qua{{\scriptstyle{1\over 4 }\scriptstyle}}
\newcommand\hf{{\textstyle{1\over 2 }\displaystyle}}
\newcommand\hhf{{\scriptstyle{1\over 2 }\scriptstyle}}
\newcommand{\LA}[1]{\ref{lemma:#1}}

\newcommand{\eproof}{\hfill $\Box$} 
\def\tl{\tilde}
\def\trace{\hbox{\rm trace}}
\def\diag{\hbox{\rm diag}}
\def\for{\quad\hbox{for }}
\def\refer{\hangindent=0.3in\hangafter=1}
\newcommand{\vvec}[1]{{\mathbf{#1}}}
\newcommand\wD{\widehat{\D}}
\def\FGhat{{\widehat{F}_{\Gamma}}}
\def\FGtilde{{\widetilde{F}_{\Gamma}}}
\def\FG{{F_{\Gamma}}}
\def\FGhatc{{\widehat{F}_{c,\widehat{\Gamma}}}}
\def\FGtildec{{\widetilde{F}_{c,\widehat{\Gamma}}}}
\def\FGc{{F_{c,\widehat{\Gamma}}}}
\renewcommand{\div}{\mathop{\sf div  }}
\def\MQ{{\mathcal Q}}
\def\MU{{\mathcal U}}
\def\A{{\cal A}}
\def\B{{\cal B}}
\def\Bhat{{\hat{\cal B}}}
\def\C{{\cal C}}
\def\D{{\cal D}}
\def\E{{\cal E}}
\def\Real{{\bf R}}
\def\T{{\cal T}}
\def\G{{\cal G}}
\def\F{{\cal F}}
\def\H{{\cal H}}
\def\L{{\cal L}}
\def\U{{\cal U}}
\def\uhat{{\widehat{u}}}
\def\qhat{{\widehat{\bf q}}}
\def\Hhat{{\hat{\cal H}}}
\def\Htilde{{\tilde{\cal H}}}
\def\I{{\cal I}}
\def\Ktilde{{\widetilde{K}}}
\def\Khat{{\widehat{K}}}
\def\Kbar{{\overline{K}}}
\def\barlambda{{\overline{\lambda}}}
\def\Kbreve{{\breve{K}}}
\def\M{{\cal M}}
\def\Mhat{{\widehat{M}}}
\def\N{{\cal N}}
\def\Khat{{\widehat{K}}}
\def\PDr{{P_{\Delta}}}
\def\PBD{{P_{_D}}}
\def\Q{{\cal Q}}
\def\bfQ{{\bf {\cal Q}}}
\def\RBM{{\rm RBM}}
\def\V{{\cal V}}
\def\W{{\cal W}}
\def\What{{\widehat{W}}}
\def\PDr{{P_{\Delta}}}
\def\Stilde{{\widetilde{S}}}
\def\Stildeeps{{\widetilde{S}}_{\large\varepsilon}}
\def\Seps{S_{\large\varepsilon}}
\def\Sepsdelta{S_{{\varepsilon},\Delta}}
\def\Sdelta{S_{\Delta}}
\def\Stildeepsdelta{{\widetilde{S}}_{{\large\varepsilon},\Delta}}
\def\Wtilde{{\widetilde{W}}}
\def\Rtilde{{\widetilde{R}}}
\def\Rbar{{\widebar{R}}}
\def\ker{{\rm \bf ker \, }}
\def\range{{\rm \bf range \, }}
\def\diver{{\rm div \, }}
\def\rot{{\rm rot \, }}
\def\>{\raisebox{-1ex}{$\; \stackrel{\textstyle >}{\sim } \; $}}
\def\<{\raisebox{-1ex}{$ \; \stackrel{\textstyle <} {\sim } \; $}}
\def\endproof{\begin{flushright} $\Box $ \end{flushright}}
\def\uG{{\mu_\Gamma}}
\def\uE{{\mu_{{\mathcal A},\Gamma}}}
 \def\pE{{p_{{\mathcal A},\Gamma}}}
\def\vE{{v_{{\mathcal A},\Gamma}}}
\def\wE{{w_{{\mathcal A},\Gamma}}}
\def\uH{{u_{{\mathcal H},\Gamma}}}
\def\wG{{w_\Gamma}}
\def\Imap{{I_H^{\Omega^i}}}
\def\Imapj{{I_H^{\Omega^j}}}
\def\Ibmap{{I_H^{\partial\Omega^i}}}
\def\Ibmapj{{I_H^{\partial\Omega^j}}}
\def\Ibmapk{{I_H^{\partial\Omega^k}}}
\def\Ibmapl{{I_H^{\partial\Omega^l}}}
\def\Ktilde{{\widetilde{K}}}
\def\Btilde{{\widetilde{B}}}
\def\phitilde{{\widetilde{\varphi}}}
\def\Ntilde{{\widetilde{N}}}
\def\Ztilde{{\widetilde{Z}}}
\def\Atilde{{\widetilde{A}}}
\def\Khat{{\widehat{K}}}
\def\Kbar{{\overline{K}}}
\def\Kbreve{{\breve{K}}}
\def\M{{\cal M}}
\def\Mhat{{\widehat{M}}}
\def\N{{\mathcal N}}
\def\Khat{{\widehat{K}}}
\def\PDr{{P_{\Delta}}}
\def\PBD{{P_{_D}}}
\def\Q{{\cal Q}}
\def\RBM{{\rm RBM}}
\def\V{{\cal V}}
\def\W{{\cal W}}
\def\What{{\widehat{W}}}
\def\wbar{{\overline{w}}}
\def\PDr{{P_{\Delta}}}
\def\Stilde{{\widetilde{S}}}
\def\STG{{\Stilde_\Gamma}}
\def\KTG{{\widetilde{K}_\Gamma}}
\def\BTG{{\widetilde{B}_\Gamma}}
\def\NTG{{\widetilde{N}_\Gamma}}
\def\ZTG{{\widetilde{Z}_\Gamma}}
\def\Utilde{{\widetilde{U}}}
\def\Stildeeps{{\widetilde{S}}_{\large\varepsilon}}
\def\Seps{S_{\large\varepsilon}}
\def\Sepsdelta{S_{{\varepsilon},\Delta}}
\def\Sdelta{S_{\Delta}}
\def\Stildeepsdelta{{\widetilde{S}}_{{\large\varepsilon},\Delta}}
\def\Wtilde{{\widetilde{W}}}
\def\Wbar{{\overline{W}}}
\def\ker{{\rm \bf ker \, }}
\def\range{{\rm \bf range \, }}
\def\diver{{\rm div \, }}
\def\rot{{\rm rot \, }}
\def\>{\raisebox{-1ex}{$\; \stackrel{\textstyle >}{\sim } \; $}}
\def\<{\raisebox{-1ex}{$ \; \stackrel{\textstyle <} {\sim } \; $}}
\def\u{{{\bf u}}}
\def\v{{{\bf v}}}
\def\q{{{\bf q}}}
\def\SGS{{B_\Gamma}}
\def\SGSS{{Z_\Gamma}}
\def\gtilde{{\widetilde{g}}}
\def\ctilde{{\tilde{c}}}
\def\ytilde{{\widetilde{y}}}
\def\Ttilde{{\widetilde{T}}}
\def\Mtilde{{\widetilde{M}}}
\def\Vtilde{{\widetilde{V}}}
\def\hatI{{\widehat{I}}}
\def\hatGamma{{\widehat{\Gamma}}}
\def\barR{{\overline{R}}}
\def\hatbarR{\widehat{{\overline{R}}}}
\def\Rtilde{{\widetilde{R}}}
\def\RtildeG{{\widetilde{R}_\Gamma}}
\def\hatRtilde{{\widehat{\widetilde{R}}}}
\def\hatDelta{{\widehat{\Delta}}}
\def\hatD{{\widehat{D}}}
\def\hatE{{\widehat{E}}}
\def\hatB{{\widehat{B}}}
\def\hatP{{\widehat{P}}}
\def\hatR{{\widehat{R}}}
\def\hatM{{\widehat{M}}}
\def\hatN{{\widehat{N}}}
\def\hatA{{\widehat{A}}}
\def\hatPi{{\widehat{\Pi}}}
\def\hatH{{\hat{H}}}
\def\hatu{{\hat{u}}}
\def\hata{{\hat{a}}}
\def\hatdelta{{\widehat{\delta}}}
\def\hatS{{\widehat{S}}}
\def\K{\left(\hatRtilde_{\hatGamma}^{T}\Ttilde\hatRtilde_{\hatGamma}\right)}
\def\logHh{\left(1+\log\frac{H}{h}\right)}
\def\q{{{\bf q}}}
\def\vvecV{{{\bf V}}}
\def\vvecn{{{\bf n}}}
\def\P{{{\bf P}}}
\def\r{{{\bf r}}}
\def\vbeta{{{\bm \beta}}}
\def\Lhat{\widehat{\Lambda}}
\def\Ltilde{\widetilde{\Lambda}}
\newcommand{\EQ}[1]{(\ref{equation:#1})}
\def\beginproof{\indent {\it Proof:~}}
\def\endproof{\qquad $\Box $}
\def\mymax{{C_L(\epsilon,H,h)}}
\def\SG{{S_\Gamma}}
\def\SGS{{B_\Gamma}}

\def\Cone{\nu_{1,\epsilon,\tau_K,h}}

\def\Ctwo{\nu_{\epsilon,\tau_K,h}}

\def\Cthree{\nu{3,\epsilon,\tau_K,h}}

\def\C4{{C_{4,\epsilon,{\bm\beta},\tau_K,h}}}

\title{BDDC Algorithms for Advection-diffusion problems with HDG Discretizations}

\author {Xuemin Tu\thanks{Department of Mathematics, University of Kansas, 1460 Jayhawk Blvd, Lawrence, KS 66045-7594, U.S.A, E-mail:
{xuemin@ku.edu}} 
\and  Jinjin Zhang\thanks{Department of Mathematics, University of Kansas, 1460 Jayhawk Blvd, Lawrence, KS 66045-7594, U.S.A, E-mail:
{jinjinzhang@ku.edu}}
}

\maketitle

\begin{abstract}

The balancing domain decomposition methods (BDDC) are originally introduced for
symmetric positive definite systems and have been extended to the
nonsymmetric positive definite system from the linear finite element
discretization of advection-diffusion equations. In this paper, the
convergence of the GMRES method is analyzed  for the BDDC preconditioned linear system from
advection-diffusion equations with the hybridizable discontinuous
Galerkin  (HDG) discretization. Compared to the finite element discretizations, 
several additional norms for the numerical trace have to be used and the
equivalence between the bilinear forms and norms needs to be
established.  For large viscosity, if the subdomain size is small
enough, the number of iterations is independent of the number of
subdomains and depends only slightly on the sudomain problem
size. The convergence deteriorates when the
viscosity decreases. These results are similar to those with the finite element
discretizations.  Moreover, the effects of the additional  primal
constraints used in the BDDC algorithms are more significant with the
higher degree HDG discretizations. The results of two two-dimensional
examples are provided to confirm our theory.

\end{abstract}
\medskip \noindent
{\small\bf Key words:} 
Discontinuous Galerkin,  HDG, domain decomposition, BDDC, advection-diffusion
\section{Introduction}

General hybridizable discontinuous
Galerkin (HDG)  methods were
introduced for second order elliptic problems in \cite{CGL2009}, where
the only global coupled degrees of freedom are
 called ``numerical traces'', a scalar variable.  This significantly
 reduces the number of the degrees of freedom in the resulting global
 system  comparing to other traditional
  DG
methods.  The HDG has been used for many different applications  such
as  Stokes equation
\cite{NPC2010,HDGS2011}, Maxwell's equation \cite{HDGM},  Helmholtz
equation \cite{HDGH}, Oseen equation \cite{HDGOseen}, and
advection-diffusion equation \cite{nguyen_implicit_2009,fu_analysis_2015-1,QS2016}.  The
resulting linear system from the HDG discretization of
advection-diffusion equation is nonsymmetric but usually positive definite and 
the generalized minimal residual methods
(GMRES) can be  used to solve the linear system.   To reach
a given accuracy,  many GMRES iterations might be needed, especially for
HDG discretizations with high order basis functions.  Preconditioners
are necessary to accelerate the convergence of GMRES. To our
best knowledge, there are very few
fast solvers for the advection-diffusion problems with HDG
discretizations.

Domain decomposition methods are widely used preconditioner techniques for
solving large sparse linear systems arising from finite element
discretization of partial differential equations (PDEs). A number of domain decomposition methods have been
proposed for solving advection-diffusion problems. Overlapping Schwarz
methods were studied 
in \cite{3cai,1caiwid,2caiwid}, where the rate of convergence was proved to be independent of the
number of  subdomains if the
coarse mesh is fine enough. For nonoverlapping domain decomposition
methods,  the standard 
 Dirichlet and
Neumann boundary conditions for the subdomain local problems cannot
ensure  the positive definiteness of the local bilinear forms and
therefore are not appropriate, see \cite[Chapter 6]{Quarteroni:1999:DDM}
and references therein for more details. A Robin boundary condition
for the subdomain local problems 
was used in \cite{Achdou:1997:RPA} to overcome this difficulty,
see also~\cite{Achdou:1999:DDN, Achdou:2000:DDAD}. In \cite{006tos}, 
the one-level and two-level FETI algorithms were proposed, with the
same subdomain Robin boundary conditions, for solving
advection-diffusion problems.

The balancing
domain decomposition by constraints (BDDC) methods, one of the most
popular nonoverlapping domain decomposition methods,  were introduced
in \cite{Dohrmann:2003:PSC} and  analyzed in \cite{Jan1,
Jan2} for symmetric positive definite problems. The BDDC methods have
also been extended to solving those  linear systems resulting from the discretization of
Stokes equations ~\cite{LiBDDCS,TW:2017:WGS}, the flow in
porous media ~\cite{Tu:2005:BPP, Tu:2005:BPD,
Tu_thesis}, and  the Helmholtz equations ~\cite{BDDCH}. The BDDC
algorithms have also been applied to second order elliptic problems
with HDG
discretization \cite{TW:2016:HDG} and the Stokes equation 
\cite{TW:2017:WGS,TWZstokes2020}. 
 In \cite{Conceicao_thesis},
extensive numerical experiments for 
some additive and multiplicative BDDC algorithms with vertex constraints 
or edge
average constraints have
been studied for advection-diffusion problems with stabilized finite
element discretizations.  A convergence rate  estimate of the GMRES was established in
\cite{TuLi:2008:BDDCAD} for solving the
the BDDC preconditioned advection-diffusion problems with linear
finite element discretization, stabilized by the Galerkin/least squares
methods  \cite{Hughes:1989:NFE}. The Robin boundary conditions were
used for subdomain local problems and two flux-based primal
constraints are added in additional to standard vertex and edge/face
average constraints. Similar to \cite{1caiwid}, a perturbation
approach  is used  to handle the asymmetry  of
the problem in the analysis and the partially sub-assembled finite
element problem in the preconditioner is treated as a  non-conforming finite element approximation.

In this paper, following the approach used in \cite{TuLi:2008:BDDCAD
}, we develop and analyze the BDDC algorithms for
advection-diffusion problems with the HDG discretizations. Compared to
the standard linear finite element discretizations, the coarse problems
in the BDDC algorithms are simpler since only edge/face related
constraints are needed for two/three dimensions. However, there are several
difficulties in the analysis for the HDG discretizations and different
technical tools are needed. First, since the BDDC preconditioned
system is a reduced system for the ``numerical
trace''  $\lambda$ in the HDG discretization, we need to handle and estimate
$\Q\lambda$ and $\U\lambda$ these two operators in the HDG
discretizations appropriately. Second,  we need to establish
the equivalent relation between the reduced norm from the bilinear form of the HDG
discretization of  the
advection-diffusion PDE and a norm defined for $\lambda$. This norm
plays a similar rule as the
$H^1$ semi-norm in \cite{TuLi:2008:BDDCAD}, where the relation is
easily obtained by the definitions. Third, in the HDG
discretization, since $\lambda$ is defined only on the mesh element
boundaries, appropriate extensions to the element interiors are needed for
the error estimate of the partially sub-assembled problem in the preconditioner. 
A new
norm, originally defined in \cite{multigrid}, is used to 
replace the $L^2$ norm used in \cite{TuLi:2008:BDDCAD
},  for the non-conforming finite element approximation approach.  
Moreover, an error analysis for 
the numerical trace $\lambda$ in an appropriate norm needs to be established. 
 
The rest of the paper is organized as follows. We first describe the advection-diffusion
problem and the adjoint problem in Section 2. The
HDG discretizations are introduced and a reduced system for $\lambda$
is formed. 
In Section 3, the local bilinear forms and a reduced subdomain
interface problems are introduced.  The BDDC preconditioner and
several useful norms are provided in Sections 4 and 5, respectively. We provide an
analysis of the convergence rate for our BDDC algorithms in Section 6
and all detailed proofs of the lemmas used in Section 6 are provided
in Section 7. 
Finally,
some computational results are presented in Section 8.

\section{Problem setting and HDG discretizations}\label{sec:problem}
Following \cite{ChenC2012,ChenC2014,fu_analysis_2015-1,QS2016},  we consider a second order scalar
advection-diffusion problem  defined in a bounded polyhedral domain $\Omega
\in {\bf R}^n$,   ($n=2$ or $3$ for two and three dimensions, respectively):
\begin{equation}
\label{equation:pde1} \left\{
\begin{array}{rcl}
-\epsilon \Delta u+ \vbeta \cdot \nabla u & =& f ,
\quad \mbox{in } \Omega, \\ [0.5ex] u & = &0 , \quad\mbox{on }
\partial \Omega.
\end{array} \right.
\end{equation}
Here the constant viscosity $\epsilon>0$,
the velocity field $\vbeta(x)\in \left(L^{\infty}(\Omega)\right)^n$,  and  $f(x)\in L^2(\Omega)$.
We also assume that 
\begin{equation}
\label{equation:cond1} \nabla\cdot\vbeta(x)\in L^{\infty}(\Omega),
\quad -\nabla\cdot \vbeta(x)\ge 0
 \quad  \forall ~ x\in \Omega.
\end{equation}
$\vbeta$ has no closed curves and $\vbeta (x) \neq
0$ for any $x\in \Omega$ to ensure the well-posedness of the
continuous problem in $\epsilon=0$ limit, see \cite{DEF1974,RST1996,AyusoM2009,fu_analysis_2015-1}.

The regularity result 
\begin{equation}\label{equation:regularity} \|u\|_{H^2(\Omega)}\le
\frac{C}{\epsilon}\| f \|_{L^2(\Omega)},
\end{equation}
is assumed for the weak solutions of both the original 
problem \EQ{pde1} and adjoint
problem 
\begin{equation}
\label{equation:pdea} \left\{
\begin{array}{rcl}
L^*u=-\epsilon \Delta u- \nabla\cdot(\vbeta u) & =& f ,
\quad \mbox{in } \Omega, \\ [0.5ex] u & = &0 , \quad\mbox{on }
\partial \Omega.
\end{array} \right.
\end{equation}
Here $C$ is a positive constant independent of $\epsilon$. 

\subsection{HDG discretization} Following \cite{nguyen_implicit_2009,fu_analysis_2015-1}, we
 introduce a new variable  
$\q=-\epsilon \nabla u$
and let $\rho=\epsilon^{-1}$. 
We obtain the following system for $\q$ and $u$ as
\begin{equation}\label{equation:upeqn}
\left\{
\begin{array}{ll}
\epsilon^{-1}\q=-\nabla u  & \mbox{in}\quad\Omega, \\
\nabla\cdot \q +\vbeta\cdot \nabla u = f & \mbox{in}\quad\Omega, \\
u = 0 &  \mbox{in}\quad \partial\Omega.
\end{array}
\right.
\end{equation}

Here $\q$ and $u$ will be approximated by some discontinuous finite
element spaces. We first introduce a  shape-regular and quasi-uniform
triangulation $\T_h$ of
$\Omega$.  The  characteristic element size of $\T_h$ is denoted by
$h$,  the element is 
denoted by $K$, and  the union of faces of elements is denoted by $\E$.  The sets of the domain interior
and boundary faces are denoted by $\E_i$ and $\E_\partial$, respectively. 

 Let $P_k(D)$ be the
space of polynomials of order at most $k$ on $D$. We set
$\P_k(D)=[P_k(D)]^n$ and  
define the following finite element spaces:
\begin{eqnarray*}
&&{{\bf V}}_k=\{\v_h\in [L^2(\Omega)]^n: \v_h|_K\in \P_k(K)\quad
\forall K \in \Omega\},\\
&&{W}_k=\{w_h\in L^2(\Omega): w_h|_K\in P_k(K)\quad
\forall K \in \Omega\},\\
&&{M}_k=\{\mu_h\in L^2(\E): \mu_h|_e\in P_k(e)\quad
\forall e \in \E\}.
\end{eqnarray*} 
Let $\Lambda_k=\{\mu\in M_k: \mu|_e=0 ~\forall e\in \partial\Omega\}$. To
make our notations simple, we drop the subscript $k$ from now on for
${{\bf V}}_k$, ${W}_k$, ${M}_k$, and $\Lambda_k$.

For each $K$, let  $(\q_h, u_h)\in (\P_k(K), P_k(K))$ such that for all $K
\in \T_h$
\begin{equation}
\label{equation:updiselement}
\left\{
\begin{array}{lcll}
(\epsilon^{-1}\q_h,\r_h)_K- (u_h, \nabla \cdot \r_h)_K +
\left<\uhat_h,\r_h\cdot {\bf n}\right>_{\partial K}&=&0,&\forall
\r_h\in \P_k(K),\\
(\q_h+\vbeta u_h ,\nabla w_h)_K -\left<(\qhat_h+\widehat{\vbeta
  u_h})\cdot {\bf n},w_h\right>_{\partial K}   +(\nabla\cdot\vbeta u_h , w_h)_K            &=&-(f,w_h)_K,&\forall w_h\in P_k(K),
\end{array}\right.
\end{equation}
where $(\cdot,\cdot)_K$ and
$\left<\cdot,\cdot\right>_{\partial K}$ denote $L^2$-inner product
for functions defined in $K$ and $\partial
K$, respectively. $\uhat_h$ and
$\qhat_h+\widehat{\vbeta
  u_h}$ are the numerical traces which approximate $u_h$ and
$-\epsilon \nabla u_h+\vbeta u_h$
on $\partial K$, respectively. 

The numerical trace $\uhat_h=\lambda_h$ for $\lambda_h\in \Lambda$. 
$(\qhat_h+\widehat{\vbeta u_h}) \cdot {\bf n}$ takes the form:
\begin{equation}\label{equation:nqtrace}
(\qhat_h+\widehat{\vbeta u_h}) \cdot {\bf n} =\q_h\cdot {\bf n}+\vbeta \cdot {\bf n}\lambda_h+\tau_K (u_h-\lambda_h), \quad
\mbox{on }\partial K,
\end{equation}
where the local stabilization parameter $\tau_K$ is piecewise, nonnegative constant defined on $\partial\T_h,$ see \cite{nguyen_implicit_2009} for details.  Following \cite[Lemma 3.1]{nguyen_implicit_2009}, we assume
\begin{assp}\label{assumption:beta0}
  $$\inf_{x\in \E} \left(\tau_K-\frac12\vbeta(x)\cdot{\bf n} \right)\ge 0,\quad
    \forall \E\in \partial K, ~\forall K\in \T_h,$$
where the strict inequality holds at least on one edge $\E$ in each
element $K$.   
\end{assp}

Using  the definitions of numerical traces $\lambda_h$ and 
$(\qhat_h+\widehat{\vbeta u_h}) \cdot {\bf n}$, we can write  the discrete problem resulting
from HDG discretization  as: to find
$(\q_h, u_h, \lambda_h)\in {\bf V}\times W\times \Lambda$ such that
for all $(\r_h,w_h,\mu_h)\in \vvecV\times W\times \Lambda$ 
\begin{equation}\label{equation:updisdual}
\left\{
\begin{array}{lcll}
(\epsilon^{-1}\q_h,\r_h)_{\T_h}  - (u_h,\nabla \cdot
\r_h)_{\T_h}+\left<\lambda_h,\r_h\cdot\vvecn\right>_{\partial \T_h} 
&=&0,\\
(\q_h+\vbeta u_h,\nabla w_h)_{\T_h}+(\nabla\cdot \vbeta u_h,w)_{\T_h}-\left<\q_h\cdot {\bf n}+\vbeta \cdot {\bf n}\lambda_h+\tau_K (u_h-\lambda_h),w_h\right>_{\partial \T_h}
  &=&- (f,w_h)_\Omega,\\
\left<\q_h\cdot {\bf n}+\vbeta \cdot {\bf n}\lambda_h+\tau_K (u_h-\lambda_h), \mu_h\right>_{\partial \T_h}&=&0,
\end{array}\right.
\end{equation}
where $(\cdot,\cdot)_{\T_h}=\sum_{K\in \T_h}(\cdot,\cdot)_K$
and $\left<\cdot,\cdot\right>_{\partial\T_h}=\sum_{K\in
  \T_h}\left<\cdot,\cdot\right>_{\partial K}$. 

As  \cite[Equations (4.1) and (4.2)]{fu_analysis_2015-1}, for any
$({\bf q}, u, {\bf\lambda}), ~({\bf r},w,\mu)\in \vvecV\times
W\times \Lambda$, we can define the following bilinear form 
\begin{align}\label{equation:Bform}
&D(({\bf q},u,\lambda),({\bf r},w,\mu))\\
&=(\epsilon^{-1}{\bf q}, {\bf r})_{\T_h}-(u, \nabla\cdot {\bf r} ) _{\T_h}+\langle\lambda, {\bm r}\cdot {\bf n}\rangle_{\partial \T_h}\nonumber\\
& \quad -({\bf q}+{\bm\beta}u,\nabla w)_{\T_h}+\langle ({\bf q}+{\bm\beta\lambda})\cdot{\bf n}+\tau_K(u-\lambda),w \rangle_{\partial\T_h}-(\nabla\cdot{\bm\beta}u,w)_{\T_h}\nonumber\\
&\quad-\langle({\bf q}+{\bm \beta}\lambda)\cdot n+\tau_K(u-\lambda),\mu\rangle_{\partial\T_h}.\nonumber
\end{align}
The HDG method \EQ{updisdual} can be written as the following compact
form: to find $({\bf q}_h, u_h,{\bf\lambda}_h)\in \vvecV\times W\times \Lambda$ such that 
\begin{equation}\label{equation:Deqn}
D(({\bf q}_h,u_h,\lambda_h),({\bf r}, w,\mu))=(f,w)_{\mathcal{T}_h},
\end{equation}
for all $({\bf r},w,\mu)\in \vvecV\times W\times \Lambda$.


\subsection{The system for $\lambda$}
Next, we reduce \EQ{Deqn} to a system for $\lambda_h$ only by
eliminating ${\bf q}_h$ and $u_h$.   Defining the following operators 
$\mathcal{A}:{\bf V}\rightarrow {\bf V}$, $\mathcal{B}:{\bf
    V}\rightarrow W$, $\mathcal{C}:{\bf V}\rightarrow M$, $\mathcal{R}:
  W\rightarrow W$, $\mathcal{S}_1: M\rightarrow W$, $\mathcal{S}_2:
  W\rightarrow M$, and $\mathcal{T}:  M\rightarrow M$ as follows:
\begin{align}\label{equation:alloperators}
(\mathcal{A}{\bf q},{\bf r})_{\mathcal{T}_h}&=(\epsilon^{-1}{\bf
                                              q},{\bf
                                              r})_{\mathcal{T}_h},
                                              (\mathcal{B}{\bf r},u)_{\mathcal{T}_h}=-(u,\nabla\cdot{\bf r})_{\mathcal{T}_h},
\langle\mathcal{C} {\bf r},\lambda\rangle_{\partial{\mathcal{T}_h}}=\langle \lambda,{\bf r}\cdot {\bf n}\rangle_{\partial{\mathcal{T}_h}}\\
(\mathcal{R}u,w)_{\mathcal{T}_h}&=- \langle \tau_K
                                              u,w \rangle_{\partial\mathcal{T}_h}+(\vbeta
                                              u,\nabla
                                              w)_{\mathcal{T}_h}
                                              +(\nabla \cdot \vbeta u,w) _{\mathcal{T}_h},\nonumber\\
&=- \langle \tau_K
                                              u,w \rangle_{\partial\mathcal{T}_h}+\frac{1}{2}(\vbeta
                                              u,\nabla
                                              w)_{\mathcal{T}_h}+\frac{1}{2}(\vbeta
                                              u,\nabla
                                              w)_{\mathcal{T}_h}
                                              +(\nabla \cdot \vbeta u,w) _{\mathcal{T}_h}\nonumber\\
&=-\langle (\tau_K-\frac12{\bm \beta}\cdot{\bf n})u,w\rangle_{\partial\mathcal{T}_h}+(\frac12 \nabla\cdot{\bm \beta}u,w)_{\mathcal{T}_h}
-\frac12({\bm \beta}\cdot\nabla u,w)_{\mathcal{T}_h}+\frac12(u,{\bm\beta}\cdot\nabla w)_{\mathcal{T}_h},\nonumber\\
(\mathcal{S}_1\lambda,w)_{{\mathcal{T}_h}}&=
                                                                  \langle
                                                                \tau_K
                                                                  \lambda,w\rangle_{\partial\mathcal{T}_h}-\langle {\vbeta}\cdot{\bf n}\lambda,w\rangle_{\partial{\mathcal{T}_h}}\nonumber\\
&=\langle (\tau_K-\frac12{\bm \beta}\cdot{\bf n})\lambda,w\rangle_{\partial\mathcal{T}_h}-\frac12\langle {\bm\beta}\cdot{\bf n}\lambda,w\rangle_{\partial{\mathcal{T}_h}}, \nonumber\\
\langle\mathcal{S}_2
  u,\mu\rangle_{\partial\mathcal{T}_h}&=\langle \tau_K u,\mu\rangle_{\partial\mathcal{T}_h}\nonumber\\
&=\langle(\tau_K-\frac12{\bm \beta}\cdot{\bf n})u,\mu\rangle_{\partial\mathcal{T}_h}+\frac12\langle{\bm \beta}\cdot{\bf n}u,\mu\rangle_{\partial\mathcal{T}_h}, \nonumber\\
\langle\mathcal{T}\lambda,\mu \rangle_{\partial
  \mathcal{T}_h}&=-\langle(\tau_K-{\bm \beta}\cdot{\bf
                  n})\lambda,\mu\rangle_{\partial\mathcal{T}_h}, F_h(w)=-(f,w)_{\mathcal{T}_h}. \nonumber
\end{align}

The HDG method generates operator equations of the following form 
\begin{equation}\label{equation:eq3.1}
\begin{pmatrix}
\mathcal{A}&\mathcal{B}^t&\mathcal{C}^t\\
\mathcal{B}&\mathcal{R}&\mathcal{S}_1\\
\mathcal{C}&\mathcal{S}_2&\mathcal{T}
\end{pmatrix}
\begin{pmatrix}
{\bf q}_h\\
u_h\\
\lambda_h
\end{pmatrix}=
\begin{pmatrix}
0\\
F_h\\
0
\end{pmatrix}.
\end{equation}

In each $K$, given the value of $\lambda_h$ on $\partial K$, under
Assumption \ref{assumption:beta0},
${\bf q}_h$ and $u_h$ can be uniquely determined, see
\cite[Lemma 3.1]{nguyen_implicit_2009}. Namely, given $\lambda_h$, the solution $(\q_h,
u_h)$ of \EQ{eq3.1}  is uniquely determined by the first two
equations. This elimination can be described by introducing the
following operators: ${\mathcal{Q}}_v:{\bf V}\rightarrow
{\bf V}$, ${\mathcal{Q}}_w: W\rightarrow {\bf V}$, $\mathcal{U}_v:{\bf
  V}\rightarrow W$, ${\mathcal{U}_w}:W\rightarrow W$:
\begin{equation}\label{equation:QvQwdef}
\begin{pmatrix}
\mathcal{A} &\mathcal{B}^t\\
\mathcal{B} &\mathcal{R}
\end{pmatrix}
\begin{pmatrix}
{\mathcal{Q}}_v{\bf g}_h\\
{\mathcal{U}_v{\bf g}_h}
\end{pmatrix}=
\begin{pmatrix}
{\bf g}_h\\
0
\end{pmatrix},\quad 
\begin{pmatrix}
\mathcal{A}&\mathcal{B}^t\\
\mathcal{B}&\mathcal{R}
\end{pmatrix}
\begin{pmatrix}
{\mathcal{Q}_w}{f}_h\\
{\mathcal{U}_w}f_h
\end{pmatrix}=\begin{pmatrix}
0\\
f_h
\end{pmatrix},
\end{equation}
for all ${\bf g}_h\in {\bf V},$ and $f_h\in W$. 

Let 
\begin{eqnarray}\label{equation:QmuUmudef}
\mathcal{Q}{\mu}&=&-\mathcal{Q}_v(\mathcal{C}^{t}\mu)-\mathcal{Q}_{w}(\mathcal{S}_1\mu),\\
\mathcal{U}{\mu}&=&-\mathcal{U}_v(\mathcal{C}^t\mu)-\mathcal{U}_{w}(\mathcal{S}_1\mu).\nonumber
\end{eqnarray}
By the definition, we have
\begin{equation}\label{equation:eq3.3}
\begin{aligned}
&\mathcal{A}\mathcal{Q}\mu+\mathcal{B}^t\mathcal{U}\mu=-\mathcal{C}^t\mu,\\
&\mathcal{B}\mathcal{Q}\mu+\mathcal{R}\mathcal{U}\mu=-\mathcal{S}_1\mu.
\end{aligned}
\end{equation}

Given $\mu \in M$, $\mathcal{Q}{\mu}$ and $\mathcal{U}{\mu}$ can be computed on
each element $K$ independently by using the value of $\mu$ on
$\partial K$.  By \EQ{eq3.3}, we have, for all $\mu\in M$ and
${\bf r}\in {\bf V}$, 
\begin{equation}\label{equation:eq3.4}
\begin{aligned}
&(\epsilon^{-1}\mathcal{Q}\mu,{\bf r})_K-(\mathcal{U}\mu,\nabla\cdot{\bf r})_K=-\langle\mu,{\bf r}\cdot{\bf n}\rangle_{\partial K},\\
&(\nabla\cdot{\mathcal{Q}\mu},w)_K+\langle(\tau_K-\frac12{\bm\beta}\cdot{\bf
  n})(\mathcal{U}\mu-\mu),w\rangle_{\partial K}+(-\frac12 \nabla\cdot{\bm\beta}\mathcal{U}\mu,w)_{K}\\
&\qquad-\frac12 ({\bm\beta}\cdot\nabla w,\mathcal{U}\mu)_K+\frac12({\bm\beta}\cdot\nabla\mathcal{U}\mu,w)_{K}
=-\frac12\langle{\bm\beta}\cdot{\bf n}\mu, w \rangle_{\partial K}.
\end{aligned}
\end{equation}

After eliminating ${\bf q}_h$ and $u_h$ in each element, we could
obtain a system for $\lambda_h$. Once the solution $\lambda_h$ of
\EQ{eq3.1} is found, the solution can be completed by computing ${\bf
  q}_h$ and $u_h$  in each element with the given $\lambda_h$.  The
system of  $\lambda_h$ can be characterized by the following theorem,
which is \cite[Theorem 3.1]{fu_analysis_2015-1}.  The proof can be
found in \cite[Lemma  3.2]{nguyen_implicit_2009}.
 \begin{theorem} \label{lemma:lambdaeqn} Under Assumption \ref{assumption:beta0}, 
 $({\bf q}_h,u_h,\lambda_h)\in {\bf V}\times W\times \Lambda$ is the
 solution of
 $\EQ{eq3.1}$ if and only if $\lambda_h\in \Lambda$ is the unique
 solution of  
 \begin{equation}\label{equation:eq3.5}
 a_h(\lambda_h,\mu)=b_h(\mu),
\end{equation}
 for all $\mu\in \Lambda$ and
\begin{equation}\label{eq3.6}
\begin{aligned}
 {\bf q}_h&=\mathcal{Q}\lambda_h+\mathcal{Q}_{w}F_h,\\
 u_h&=\mathcal{U}\lambda_h+\mathcal{U}_wF_h,
\end{aligned}
\end{equation}
 where 
 \begin{align}\label{equation:adef}
 a_h(\lambda,\mu)&=(\epsilon^{-1}\mathcal{Q}\lambda,\mathcal{Q}\mu)_{\mathcal{T}_h}+\langle(\tau_K(\mathcal{U}\lambda-\lambda),\mathcal{U}{\mu}-\mu\rangle_{\partial\mathcal{T}_h}\\
 &\quad+(-\nabla\cdot{\bm
   \beta}\mathcal{U}\lambda,\mathcal{U}\mu)_{\mathcal{T}_h}-({\bm
   \beta}\mathcal{U}\lambda,\nabla\mathcal{U}\mu)_{\mathcal{T}_h}+\langle{\bm\beta}\cdot{\bf n}\lambda,\mathcal{U}\mu\rangle_{\partial\mathcal{T}_h},\nonumber\\
 b_{h}(\mu)&=-\left((
             F_h,\mathcal{U}{\mu})_{\mathcal{T}_h}+\langle{\bm\beta}\cdot{\bf
             n}\mathcal{U}_{w}F_h,\mu\rangle_{\partial\mathcal{T}_h}
 +(\mathcal{U}_wF_h,{\bm\beta}\cdot\nabla\mathcal{U}\mu)_{\mathcal{T}_h}-({\bm\beta}\cdot\nabla\mathcal{U}_wF_h,\mathcal{U}\mu)_{\mathcal{T}_h}\right).\nonumber
 \end{align}
 \end{theorem}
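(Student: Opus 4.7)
Proof plan. The statement is the standard static condensation of the HDG system: the first two block equations of \EQ{eq3.1} are local in each element $K$, so we can solve out $({\bf q}_h,u_h)$ in terms of $(\lambda_h,F_h)$, and then the third equation---the conservation of numerical flux across element interfaces---becomes a global scalar system for $\lambda_h$ alone. The plan has three stages: derive \EQ{eq3.6} by local solvability and linearity, substitute into the third equation and recognise the result as \EQ{eq3.5}, and finally verify uniqueness.

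First, writing out the first two rows of \EQ{eq3.1},
\begin{equation*}
\mathcal{A}{\bf q}_h+\mathcal{B}^{t}u_h=-\mathcal{C}^{t}\lambda_h,\qquad \mathcal{B}{\bf q}_h+\mathcal{R}u_h=F_h-\mathcal{S}_1\lambda_h.
\end{equation*}
Under Assumption~\ref{assumption:beta0} the elementwise $2\times 2$ operator $\bigl(\begin{smallmatrix}\mathcal{A}&\mathcal{B}^t\\ \mathcal{B}&\mathcal{R}\end{smallmatrix}\bigr)$ is invertible on each $K$, which is \cite[Lemma~3.1]{nguyen_implicit_2009}. By linearity, I would split the solution into the response to $(-\mathcal{C}^t\lambda_h,-\mathcal{S}_1\lambda_h)$ and the response to $(0,F_h)$; comparing with \EQ{QvQwdef}--\EQ{QmuUmudef} identifies these with $(\mathcal{Q}\lambda_h,\mathcal{U}\lambda_h)$ and $(\mathcal{Q}_wF_h,\mathcal{U}_wF_h)$ respectively, which adds up to \EQ{eq3.6}.

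Second, substituting \EQ{eq3.6} into the third row of \EQ{eq3.1} and pairing with $\mu\in\Lambda$ produces
\begin{equation*}
\langle\mathcal{C}\mathcal{Q}\lambda_h+\mathcal{S}_2\mathcal{U}\lambda_h+\mathcal{T}\lambda_h,\mu\rangle_{\partial\mathcal{T}_h}=-\langle\mathcal{C}\mathcal{Q}_wF_h+\mathcal{S}_2\mathcal{U}_wF_h,\mu\rangle_{\partial\mathcal{T}_h}.
\end{equation*}
The task is then to rewrite both sides through the ``dual'' identities \EQ{eq3.4} applied to $\mathcal{Q}\mu,\mathcal{U}\mu$. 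I would test the first line of \EQ{eq3.4} with ${\bf r}=\mathcal{Q}\lambda_h$ and the second line with $w=\mathcal{U}\lambda_h$, and symmetrise using the same identities with the roles of $\lambda_h$ and $\mu$ exchanged. Combined with the antisymmetric splittings of $\mathcal{S}_2$ and $\mathcal{T}$ into a $(\tau_K-\tfrac12{\bm\beta}\cdot{\bf n})$-piece and a pure convective piece recorded in \EQ{alloperators}, the boundary terms containing $\mathcal{Q}\lambda_h\cdot{\bf n}$ and $(\tau_K-\tfrac12{\bm\beta}\cdot{\bf n})(\mathcal{U}\lambda_h-\lambda_h)$ reassemble exactly into the five contributions appearing in $a_h$ in \EQ{adef}. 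The same manipulation with $(\mathcal{Q}_wF_h,\mathcal{U}_wF_h)$ in place of $(\mathcal{Q}\lambda_h,\mathcal{U}\lambda_h)$, together with $(\mathcal{A}\mathcal{Q}_wF_h+\mathcal{B}^t\mathcal{U}_wF_h,\mathcal{R}\mathcal{Q}_wF_h+\mathcal{R}\mathcal{U}_wF_h)=(0,F_h)$, collapses the right-hand side to $b_h(\mu)$.

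Third, uniqueness. To show \EQ{eq3.5} has at most one solution, set $F_h\equiv 0$, take $\mu=\lambda_h$, and use Assumption~\ref{assumption:beta0} together with \EQ{cond1}. Integration by parts on each $K$ in the convective terms of $a_h$ absorbs the boundary flux $\langle{\bm\beta}\cdot{\bf n}\lambda_h,\mathcal{U}\lambda_h\rangle$ against the $\tfrac12{\bm\beta}\cdot{\bf n}$ pieces and shows
\begin{equation*}
a_h(\lambda_h,\lambda_h)\ge \epsilon^{-1}\|\mathcal{Q}\lambda_h\|_{\mathcal{T}_h}^2+\|(\tau_K-\tfrac12{\bm\beta}\cdot{\bf n})^{1/2}(\mathcal{U}\lambda_h-\lambda_h)\|_{\partial\mathcal{T}_h}^2+\tfrac12\|(-\nabla\cdot{\bm\beta})^{1/2}\mathcal{U}\lambda_h\|_{\mathcal{T}_h}^2,
\end{equation*}
forcing $\mathcal{Q}\lambda_h=0$ and $\mathcal{U}\lambda_h=\lambda_h$ on the edge where the strict inequality in Assumption~\ref{assumption:beta0} holds; tracing this through the local problem \EQ{eq3.4} then gives $\lambda_h=0$, as in \cite[Lemma~3.2]{nguyen_implicit_2009}. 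Combined with the equivalence proved above, this yields both unique solvability of \EQ{eq3.5} and the complete ``iff'' claim with the reconstruction formulas \EQ{eq3.6}. The main obstacle is the bookkeeping in the second stage: the antisymmetric splittings in \EQ{alloperators} have been designed precisely so that the substitution closes into $a_h$ without stray terms, but verifying cancellation of each pair of boundary contributions is the only place where the argument is not automatic.
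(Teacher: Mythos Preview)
Your proposal is correct and follows the standard static-condensation argument; the paper itself does not give a proof but simply defers to \cite[Lemma~3.2]{nguyen_implicit_2009} and \cite[Theorem~3.1]{fu_analysis_2015-1}, which carry out precisely the three stages you outline. The only minor remark is that in your uniqueness step, once $\mathcal{Q}\lambda_h=0$ the first line of \EQ{eq3.4} already forces $\mathcal{U}\lambda_h$ to be constant on each $K$ with $\mathcal{U}\lambda_h=\lambda_h$ on all of $\partial K$ (not just on the distinguished edge), after which $\lambda_h\in\Lambda$ and single-valuedness across faces give $\lambda_h\equiv 0$.
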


By the definitions of the bilinear forms $D$, defined in \EQ{Bform}, and $a_h$, we have, 
\begin{equation}\label{equation:Daeqn}
a_h(\lambda,\mu)=D (\left(\mathcal{Q}\lambda,
  \mathcal{U}\lambda, \lambda\right),
\left({\bf r}, w, \mu\right)), 
\end{equation}
for all $({\bf r},w,\mu)\in \vvecV\times W\times \Lambda$.

Since $a_h$ is nonsymmetric,  it is useful  to write $a_h$ as the sum
of the symmetric and skew-symmetric parts.  In order to do this, by integration by part, we can rewrite
\begin{align*}
({\bm
   \beta}\mathcal{U}\lambda,\nabla\mathcal{U}\mu)_{\mathcal{T}_h}&=
\frac12({\bm
   \beta}\mathcal{U}\lambda,\nabla\mathcal{U}\mu)_{\mathcal{T}_h}+\frac12 ({\bm
   \beta}\mathcal{U}\lambda,\nabla\mathcal{U}\mu)_{\mathcal{T}_h}\\
&=\frac12({\bm
   \beta}\mathcal{U}\lambda,\nabla\mathcal{U}\mu)_{\mathcal{T}_h}+\frac12\langle{\bm\beta}\cdot{\bf n}\U\lambda,\mathcal{U}\mu\rangle_{\partial\mathcal{T}_h}
-\frac12(\nabla\cdot \vbeta \U\lambda,\U\mu)_{\mathcal{T}_h}-\frac12(
  \vbeta\cdot \nabla \U\lambda,\U\mu)_{\mathcal{T}_h}.
\end{align*}

Plug the above equation to \EQ{adef} and we obtain
\begin{align*}
 a_h(\lambda,\mu)&=(\epsilon^{-1}\mathcal{Q}\lambda,\mathcal{Q}\mu)_{\mathcal{T}_h}+\langle\tau_K(\mathcal{U}\lambda-\lambda),\mathcal{U}{\mu}-\mu\rangle_{\partial\mathcal{T}_h}\\
 &\quad+\left(-\nabla\cdot{\bm
   \beta}\mathcal{U}\lambda,\mathcal{U}\mu\right)_{\mathcal{T}_h}+\langle{\bm\beta}\cdot{\bf
   n}\lambda,\mathcal{U}\mu\rangle_{\partial\mathcal{T}_h}\\
&\quad -\left(\frac12({\bm
   \beta}\mathcal{U}\lambda,\nabla\mathcal{U}\mu)_{\mathcal{T}_h}+\frac12\langle{\bm\beta}\cdot{\bf n}\U\lambda,\mathcal{U}\mu\rangle_{\partial\mathcal{T}_h}
-\frac12(\nabla\cdot \vbeta \U\lambda,\U\mu)_{\mathcal{T}_h}-\frac12(
  \vbeta\cdot \nabla \U\lambda,\U\mu)_{\mathcal{T}_h}\right)\\
&=(\epsilon^{-1}\mathcal{Q}\lambda,\mathcal{Q}\mu)_{\mathcal{T}_h}+\langle(\tau_K-\frac12\vbeta\cdot
  \vvecn)(\mathcal{U}\lambda-\lambda),\mathcal{U}{\mu}-\mu\rangle_{\partial\mathcal{T}_h}\\
 &\quad+(-\frac12\nabla\cdot{\bm \beta}\mathcal{U}\lambda,\mathcal{U}\mu)_{\mathcal{T}_h}+\frac12({\bm \beta}\cdot\nabla\mathcal{U}\lambda,\mathcal{U}\mu)_{\mathcal{T}_h}-\frac12(
 \mathcal{U}\lambda,{\bm\beta}\cdot\nabla\mathcal{U}\mu)_{\mathcal{T}_h}\\
 &\quad-\frac12\langle{\bm\beta}\cdot{\bf n}\mathcal{U}\lambda,\mu\rangle_{\partial\mathcal{T}_h}+\frac12\langle{\bm\beta}\cdot{\bf n}\lambda,\mathcal{U}\mu\rangle_{\partial \mathcal{T}_h}+\frac12\langle{\bm\beta}\cdot{\bf n}\lambda,\mu\rangle_{\partial \mathcal{T}_h}.
\end{align*}


For $\lambda$ which is zero on $\partial\Omega$,
$\frac12\langle{\bm\beta}\cdot{\bf n}\lambda,\mu\rangle_{\partial
  \mathcal{T}_h}=0$ and the symmetric and skew-symmetric parts
of $a_h(\lambda, \mu)$ are denoted  by
\begin{eqnarray}
\label{equation:sbformK} b_h(\lambda,\mu) &=&(\epsilon^{-1}\mathcal{Q}\lambda,\mathcal{Q}\mu)_{\mathcal{T}_h}+\langle(\tau_K-\frac12\vbeta\cdot
  \vvecn)(\mathcal{U}\lambda-\lambda),\mathcal{U}{\mu}-\mu\rangle_{\partial\mathcal{T}_h}\\
&&+(-\frac12\nabla\cdot{\bm \beta}\mathcal{U}\lambda,\mathcal{U}\mu)_{\mathcal{T}_h}, \nonumber\\[0.5ex]
\label{equation:sbformN} z_h(\lambda,\mu) &=&\frac12({\bm \beta}\cdot\nabla\mathcal{U}\lambda,\mathcal{U}\mu)_{\mathcal{T}_h}-\frac12(
 \mathcal{U}\lambda,{\bm\beta}\cdot\nabla\mathcal{U}\mu)_{\mathcal{T}_h}\\
 &&-\frac12\langle{\bm\beta}\cdot{\bf n}\mathcal{U}\lambda,\mu\rangle_{\partial\mathcal{T}_h}+\frac12\langle{\bm\beta}\cdot{\bf n}\lambda,\mathcal{U}\mu\rangle_{\partial \mathcal{T}_h}.\nonumber
\end{eqnarray}

Let $\q$, $u$, and $\lambda$  denote the unknowns
associated with $\q_h$, $u_h$, and $\lambda_h$,
respectively.  The matrix form of \EQ{eq3.1} can be written as 
 \begin{equation}\label{equation:uplmatrix}
 \left[ \begin{array}{ccc}
 A_{\q\q} &  A^T_{u\q }  &  A^T_{ \lambda\q}    \\
 A_{u\q } &  A_{u u}  &  A_{ u\lambda}    \\
 A_{\lambda\q} &  A_{\lambda u} &  A_{\lambda \lambda}  
 \end{array} 
 \right] \left[
 \begin{array}{c}
 \q        \\
 u     \\
 \lambda     
 \end{array}
 \right] = \left[ 
 \begin{array}{l}
 \vvec{0}        \\
 F_h\\
 0
 \end{array}
 \right].
\end{equation} 
Eliminating $\q$ and $u$ in each element independently
from \EQ{uplmatrix}, we obtain
the system for $\lambda$ only
 \begin{equation}\label{equation:lmatrix}
 A\lambda=b,
\end{equation} 
where
$$A=A_{\lambda\lambda}-[A_{\lambda\q}~A_{\lambda u}]\left[ \begin{array}{cc}
 A_{\q\q} &  A^T_{u \q }    \\
 A_{u \q } &  A_{u u}\end{array} 
 \right]^{-1} \left[\begin{array}{c} A_{\lambda\q}^T\\A_{u \lambda} \end{array}\right]$$
and
$$b=-[A_{\lambda\q}~A_{\lambda u}]\left[ \begin{array}{cc}
 A_{\q\q} &  A^T_{ u\q}    \\
 A_{u\q } &  A_{u u}\end{array} 
 \right]^{-1} \left[\begin{array}{c} {\bf
       0}\\F_h\end{array}\right].$$
We also denote $A=B+Z$, where $B$ and $Z$ are symmetric and
skew-symmetric parts of $A$, corresponding to the bilinear forms $b_h$
and $z_h$ defined in \EQ{sbformK} and \EQ{sbformN}, respectively.

In next two sections, we will develop a BDDC algorithm to solve the
system in  \EQ{lmatrix}  for the numerical trace $\lambda$.

\section{Domain decomposition and a reduced subdomain interface problem }

The domain $\Omega$ is decomposed  into $N$
 nonoverlapping subdomains 
$\Omega_i$ with diameters $H_i$, $i=1,\cdots,N$, and set $H=\max_iH_i$. 
Each subdomain is  assumed to be a union of  shape-regular coarse 
triangles and that 
the number of such triangles forming an individual subdomain 
is uniformly bounded. 
The subdomain interface is denoted by  $\Gamma$. $\Gamma_h$ denotes  the 
set of
the interface nodes and can be defined as 
$\Gamma_h =\left(\cup_{i\ne j}\left(\partial \Omega_{i,h}\cap\partial
  \Omega_{j,h}\right)\right) \setminus \partial \Omega_h$,  where $\partial
\Omega_{i,h} $ is the set of nodes on $\partial \Omega_i$ and
$\partial \Omega_{h} $ is the set of nodes on $\partial \Omega$. 
The spaces of finite element functions on $\Omega_i$ are denoted by ${\bf
  V}^{(i)}$, $W^{(i)}$ and $\Lambda^{(i)}$. The local
bilinear forms are defined on ${\bf
  V}^{(i)}\times W^{(i)}\times \Lambda^{(i)}$ and $\Lambda^{(i)}$ as
\begin{align}\label{equation:locBform}
&D^{(i)}(({\bf q}^{(i)},u^{(i)},\lambda^{(i)}),({\bf r}^{(i)},w^{(i)},\mu^{(i)}))\\
&=(\epsilon^{-1}{\bf q}^{(i)}, {\bf r}^{(i)})_{\T_h(\Omega_i)}-(u^{(i)}, \nabla\cdot {\bf r}^{(i)} ) _{\T_h(\Omega_i)}+\langle\lambda^{(i)}, {\bf r}^{(i)}\cdot {\bf n}\rangle_{\partial\T_h(\Omega_i)}\nonumber\\
&\quad -({\bf q}^{(i)}+{\bm\beta}u^{(i)},\nabla w^{(i)}) _{\T_h(\Omega_i)}+\langle ({\bf q}^{(i)}+{\bm\beta\lambda}^{(i)})\cdot{\bf n}+\tau_K(u^{(i)}-\lambda^{(i)}),w^{(i)} \rangle_{\partial\T_h(\Omega_i)}-(\nabla\cdot{\bm\beta}u^{(i)},w^{(i)}) _{\T_h(\Omega_i)}\nonumber\\
&\quad-\langle({\bf q}^{(i)}+{\bm \beta}\lambda^{(i)})\cdot{\bf n}+\tau_K(u^{(i)}-\lambda^{(i)}),\mu^{(i)}\rangle_{\partial\T_h(\Omega_i)}\nonumber
\end{align}
and
\begin{align}\label{equation:locah}
 a^{(i)}_h(\lambda^{(i)},\mu^{(i)})
&=(\epsilon^{-1}\mathcal{Q}\lambda^{(i)},\mathcal{Q}\mu^{(i)})_{\T_h(\Omega_i)}+\langle(\tau_K-\frac12\vbeta\cdot
  \vvecn)(\mathcal{U}\lambda^{(i)}-\lambda^{(i)}),\mathcal{U}{\mu^{(i)}}-\mu^{(i)}\rangle_{\partial\T_h(\Omega_i)}\nonumber\\
 &\quad+(-\frac12\nabla\cdot{\bm \beta}\mathcal{U}\lambda^{(i)},\mathcal{U}\mu^{(i)})_{\T_h(\Omega_i)}+\frac12({\bm \beta}\cdot\nabla\mathcal{U}\lambda^{(i)},\mathcal{U}\mu^{(i)})_{\partial\T_h(\Omega_i)}-\frac12(
 \mathcal{U}\lambda^{(i)},{\bm\beta}\cdot\nabla\mathcal{U}\mu^{(i)})_{\T_h(\Omega_i)}\nonumber\\
 &\quad-\frac12\langle{\bm\beta}\cdot{\bf
   n}\mathcal{U}\lambda^{(i)},\mu^{(i)}\rangle_{\partial
   \T_h(\Omega_i)}+\frac12\langle{\bm\beta}\cdot{\bf
   n}\lambda^{(i)},\mathcal{U}\mu^{(i)}\rangle_{\partial
   \T_h(\Omega_i)}-\frac12\langle{\bm\beta}\cdot{\bf
   n}\lambda^{(i)},\mu^{(i)}\rangle_{\partial\T_h(\Omega_i)},\nonumber
\end{align}
where $(\cdot,\cdot)_{\T_h(\Omega_i)}=\sum_{K\in \T_h, K\subset \Omega_i}(\cdot,\cdot)_K$
and $\left<\cdot,\cdot\right>_{\partial\T_h(\Omega_i)}=\sum_{K\in
  \T_h, K\subset \Omega_i}\left<\cdot,\cdot\right>_{\partial K}$. 

Similarly to \EQ{Daeqn}, by the definitions, we have, 
\begin{equation}\label{equation:locDaeqn}
a^{(i)}_h(\lambda^{(i)},\mu^{(i)})=D^{(i)} \left(\left(\mathcal{Q}\lambda^{(i)},
  \mathcal{U}\lambda^{(i)}, \lambda^{(i)}\right),
\left({\bf r}^{(i)}, w^{(i)}, \mu^{(i)}\right)\right), 
\end{equation}
for all $({\bf r}^{(i)},w^{(i)},\mu^{(i)})\in \vvecV^{(i)}\times W^{(i)}\times \Lambda^{(i)}$.

Due to the sign of the boundary integral
$\frac12\langle{\bm\beta}\cdot{\bf
  n}\lambda^{(i)},\mu^{(i)}\rangle_{\partial\T_h(\Omega_i)} $ depending on $\vbeta$
and $\vvec{n}$, the 
bilinear form $a_h^{(i)}(\cdot, \cdot)$ cannot be ensured to be positive
definite on $\Lambda^{(i)}$.  As in
\cite{Achdou:2000:DDAD,TuLi:2008:BDDCAD}, we use the Robin boundary condition on $\partial\Omega_i$
and introduce
\begin{equation}
\label{equation:lbformA} a^{(i)}(\lambda^{(i)},\mu^{(i)}) =
a_h^{(i)}(\lambda^{(i)},\mu^{(i)})+\frac12\langle{\bm\beta}\cdot{\bf
  n}\lambda^{(i)},\mu^{(i)}\rangle_{\partial\T_h(\Omega_i)}.
\end{equation}
 The modified local bilinear
forms $a^{(i)}(\cdot, \cdot)$ are ensured to be positive definite on $\Lambda^{(i)}$, $i
= 1, 2. \dots, N$ under the assumption \EQ{cond1}. This modification
will not change the global solution since $\lambda^{(i)}$ and
$\mu^{(i)}$ are continuous at element boundaries and  the boundary
integrals on the subdomains will
cancel out between neighboring subdomains. 
We can write the symmetric and skew-symmetric parts of $a^{(i)}(\lambda^{(i)},\mu^{(i)})$
as
\begin{eqnarray}
\label{equation:lbformK} \qquad b^{(i)}(\lambda^{(i)},\mu^{(i)}) & = &(\epsilon^{-1}\mathcal{Q}\lambda^{(i)},\mathcal{Q}\mu^{(i)})_{\T_h(\Omega_i)}+\langle(\tau_K-\frac12\vbeta\cdot
  \vvecn)(\mathcal{U}\lambda^{(i)}-\lambda^{(i)}),\mathcal{U}{\mu^{(i)}}-\mu^{(i)}\rangle_{\partial\T_h(\Omega_i)}\nonumber\\
 &&\quad+(-\frac12\nabla\cdot{\bm \beta}\mathcal{U}\lambda^{(i)},\mathcal{U}\mu^{(i)})_{\T_h(\Omega_i)}
, \\
\label{equation:lbformN} \qquad z^{(i)}(\lambda^{(i)},\mu^{(i)}) &=&
\frac12({\bm \beta}\cdot\nabla\mathcal{U}\lambda^{(i)},\mathcal{U}\mu^{(i)})_{\T_h(\Omega_i)}-\frac12(
 \mathcal{U}\lambda^{(i)},{\bm\beta}\cdot\nabla\mathcal{U}\mu^{(i)})_{\T_h(\Omega_i)}\nonumber\\
 &&\quad-\frac12\langle{\bm\beta}\cdot{\bf n}\mathcal{U}\lambda^{(i)},\mu^{(i)}\rangle_{\partial\T_h(\Omega_i)}+\frac12\langle{\bm\beta}\cdot{\bf n}\lambda^{(i)},\mathcal{U}\mu^{(i)}\rangle_{\partial\T_h(\Omega_i)}.
\end{eqnarray}

In order to  reduce the global problem \EQ{lmatrix} to a subdomain interface
problem, we  decompose $\Lambda$ into the subdomain interior and interface
parts.  Let $\Lhat_{\Gamma}$ denote the degrees of freedom  associated with $\Gamma$
and $\Lambda_I$ be a direct sum of subdomain interior degrees of
freedom $\Lambda_I=\bigoplus_{i=1}^{N}\Lambda_I^{(i)}$. We can write
\[
\Lambda=\Lambda_I\bigoplus\Lhat_{\Gamma}
\]
and
the global problem \EQ{lmatrix} can be written as $\lambda_I\in
\Lambda_I$ and $\Lambda_\Gamma\in \Lhat_\Gamma$,
\begin{equation}\label{equation:ll}
\left[
\begin{array}{cc} 
A_{II} & A_{I\Gamma}\\
A_{I\Gamma}^T &A_{\Gamma\Gamma}
\end{array} \right] \left[
\begin{array}{c} 
\lambda_I\\
\lambda_{\Gamma}
 \end{array}
\right] = 
\left[ \begin{array}{l} 
 b_I \\
 b_\Gamma
\end{array} \right]. \end{equation}

We also denote the subdomain local interface numerical trace  space by
$\Lambda_{\Gamma}^{(i)}$. 
We use the restriction operator $R_\Gamma^{(i)}$ to map functions in $\Lhat_\Gamma $ to their
subdomain components in the space $\Lambda_\Gamma^{(i)}$. The direct
sum of $R^{(i)}_\Gamma$ is denoted by $R_\Gamma$.

The
subdomain problems can be written as 
\begin{equation}
\label{equation:dissubupl} 
A^{(i)}\lambda^{(i)}=b^{(i)},
\end{equation} 
where
$$A^{(i)}=\left[
\begin{array}{cc} 
A^{(i)}_{II} & A^{(i)}_{I\Gamma}\\
A^{{(i)}^T}_{I\Gamma} &A^{(i)}_{\Gamma\Gamma}
\end{array} \right],\quad \lambda^{(i)}=\left[
\begin{array}{c} 
\lambda^{(i)}_I\\
\lambda^{(i)}_{\Gamma}
 \end{array}
\right]\in 
\Lambda^{(i)}_I\times 
\Lambda^{(i)}_{\Gamma}, \quad \mbox{and} \quad b^{(i)} = 
\left[ \begin{array}{l} 
 b^{(i)}_I \\
 b^{(i)}_\Gamma
\end{array} \right].$$ 
 $A^{(i)}$ is corresponding to  the subdomain bilinear form
$a^{(i)}$, defined in \EQ{lbformA}.

We define the subdomain Schur complement $S^{(i)}_\Gamma$ by:
given $\lambda^{(i)}_\Gamma \in \Lambda^{(i)}_\Gamma$, find
$S^{(i)}_\Gamma \lambda^{(i)}_\Gamma$ 
 such that
\begin{equation}
\label{equation:dissubschur} \left[
\begin{array}{cc} 
A^{(i)}_{II} & A^{(i)}_{I\Gamma}\\
A^{{(i)}^T}_{I\Gamma} &A^{(i)}_{\Gamma\Gamma}
\end{array} \right] \left[
\begin{array}{c} 
\lambda^{(i)}_I\\
\lambda^{(i)}_{\Gamma}
 \end{array}
\right] = 
\left[ \begin{array}{l} 
 0 \\
 S_\Gamma^{(i)}\lambda_\Gamma
\end{array} \right]. \end{equation}
We can assemble the subdomain local Schur complement $S^{(i)}_\Gamma$
to obtain the global interface problem: find $\lambda_{\Gamma}\in 
\Lhat_\Gamma$, such that
\begin{equation}
\label{equation:continuousinterface} 
\widehat{S}_{\Gamma}\lambda_{\Gamma} =b_{\Gamma},
\end{equation}
where $b_\Gamma = \sum_{i=1}^N R_\Gamma^{(i)^T}
b_\Gamma^{(i)}$, and
 \begin{equation}
\label{equation:SGammahat} \widehat{S}_\Gamma 
= \sum_{i=1}^N R_{\Gamma}^{(i)^T}
S_\Gamma^{(i)} R_{\Gamma}^{(i)}. \end{equation}
We note that $\widehat{S}_{\Gamma}$ is defined on the interface space
$\Lhat_\Gamma$. It is non-symmetric but 
 positive definite. A
 BDDC preconditioned generalized minimal residual method (GMRES)
 \cite{Saad:1986:GGM} will be proposed in next section to solve \EQ{continuousinterface}.

\section{The BDDC
  preconditioner}

In order to introduce the BDDC preconditioner, we further decompose
the subdomain interface variables to the primal and 
remaining variables, denoted  by $\Lhat_{\Pi}$ and $\Lambda_{\Delta}$,
 respectively. The primal variable $\Lhat_{\Pi}$ is spanned by subdomain  
interface edge/face basis
functions with constant values at the nodes of the edge/face for 
two/three  dimensions. These variables can be explicitly presented  by a change of variables, see  \cite{Li:2004:FBC} and
\cite{Klawonn:2004:FDP}.  The space $\Lambda_{\Delta}$ is the direct sum of the
$\Lambda_{\Delta}^{(i)}$, which have
a zero average over each edge/face.

We define  $\Ltilde_{\Gamma}$, the partially assembled interface
space, as  
$$
\Ltilde_{\Gamma} = \Lhat_{\Pi} \bigoplus
\Lambda_{\Delta} = \Lhat_{\Pi} \bigoplus \left(
\prod_{i=1}^N \Lambda^{(i)}_\Delta \right).
$$

In order to present the BDDC preconditioner, we also need  some restriction, extension, and scaling operators
between  different spaces. 
The restriction operators $\widehat{R}_{\Gamma}^{(i)}$ and $\overline{R}_{\Gamma}^{(i)}$ restricts functions 
in $\Lhat_\Gamma$ and $\Ltilde_\Gamma$ to $\Lambda_{\Gamma}^{(i)}$,
respectively,  while   $R^{(i)}_\Delta$  
 maps functions from $\Lhat_\Gamma$
to $\Lambda_\Delta^{(i)}$ and
  $R_{\Gamma \Pi}$ restricts  $\Lhat_\Gamma$ to its
subspace $\Lhat_\Pi$.
$\overline{R}_{\Gamma}$ is
 the direct sum of the $\overline{R}_{\Gamma}^{(i)}$ and
 $\Rtilde_\Gamma$ is the direct sum of 
$R_{\Gamma \Pi}$ and  
$R_\Delta^{(i)}$.
Recall $\rho=\epsilon^{-1}$ and we assume that the $\rho_i(x)$ is 
 constant in each
subdomain.  Let ${\cal N}_x$ be the set of indices $j$ of the subdomains
 such that $x\in \partial \Omega_j$. A positive scaling factor
 $\delta_i^{\dagger}(x)$ can be defined as follows:
 for $\gamma\in [1/2, \infty)$,
$$\delta^{\dagger}_i(x)=\frac{\rho_i^{\gamma}(x)}{\sum_{j\in {\cal N}_{x}}\rho^{\gamma}_j(x)},
 ~~~~x\in \partial \Omega_{i,h} \cap \Gamma_h.$$
We note that $\delta^{\dagger}_i(x)$ 
is  constant on each edge/face.
 $R^{(i)}_{D,\Delta}$  is given by multiplying each row of  $R^{(i)}_{\Delta}$, with the
scaling factor $\delta_i^{\dagger}(x)$ and the direct sume of
$R_{\Gamma\Pi}$ and $R^{(i)}_{D,\Delta}$ gives $\Rtilde_{D,\Gamma}$. 

We also denote by  $\FGtilde$, the right hand side space
corresponding to  $\Ltilde_{\Gamma}$. The same
 restriction, extension,  and scaled 
restriction operators will be used for 
the  space $\FGtilde$ as  for $\Ltilde_{\Gamma}$.

With the help of $\overline{R}_{\Gamma}$, we can define 
the interface numerical trace  Schur complement $\Stilde_\Gamma$, on the
partially assembled interface numerical trace space
$\Ltilde_\Gamma$, as 
\begin{equation}
\label{equation:SGammatilde1} \Stilde_\Gamma =
\overline{R}_{\Gamma}^T S_\Gamma \overline{R}_{\Gamma},
\end{equation}
where $S_\Gamma$ is a direct sum of $S^{(i)}_\Gamma$, the subdomain
local Schur complement defined in \EQ{dissubschur}.

The BDDC preconditioner is 
\begin{equation}
\label{equation:precond2} M^{-1} =
\Rtilde_{D,\Gamma}^T\Stilde_{\Gamma}^{-1}   \Rtilde_{D,\Gamma}.
 \end{equation}

We have obtained the preconditioned BDDC algorithm for solving the global interface 
 problem \EQ{continuousinterface}: find $\lambda_\Gamma\in \Lhat_\Gamma$, such that
\begin{equation}
\label{equation:BDDC} \Rtilde_{D,\Gamma}^T \Stilde_{\Gamma}^{-1} 
\Rtilde_{D,\Gamma}
\widehat{S}_{\Gamma}\lambda_{\Gamma} = \Rtilde_{D,\Gamma}^T \Stilde_{\Gamma}^{-1} 
\Rtilde_{D,\Gamma}b_{\Gamma}.
\end{equation}
A GMRES method is used to solve~\EQ{BDDC} since the preconditioned problem is
positive definite but nonsymmetric.  In each
iteration, one needs to multiply $\widehat{S}_\Gamma$  and
$\Stilde_\Gamma^{-1}$ by a vector, which require 
solving subdomain Dirichlet
boundary problems, Robin boundary problems, and  a coarse level
problem, see ~\cite{Li:2004:FBC} for details. After obtaining the interface
solution $\lambda_\Gamma$, the solution can be completed by solving subdomain Dirichlet
problems to find $\lambda_I$.

\section{A partial assembled finite element space, bilinear forms, and
norms}
In order to analyze the convergence of the GMRES algorithm for solving
\EQ{BDDC}, in this section, we introduce several bilinear forms and
corresponding norms. 

Similarly to \cite{TuLi:2008:BDDCAD}, we first introduce a partially sub-assembled finite element space
$\Ltilde$, which  is defined as 
$$
\Ltilde = \Lhat_{\Pi} \bigoplus
\Lambda_{r} = \Lhat_{\Pi} \bigoplus \left(
\prod_{i=1}^N \left(\Lambda^{(i)}_I\bigoplus\Lambda^{(i)}_\Delta \right)\right).
$$
 By the definition, we can see that functions in the space $\Ltilde$ have a
continuous primal part but in general a discontinuous
dual part.
$\Lambda\subset\Ltilde$ and the injection operator
from $\Lambda$ to $\Ltilde$ is denoted by $\Rtilde$.

The bilinear forms $\widetilde{a}_h(\lambda, \mu)$,
$\widetilde{a}(\lambda, \mu)$, $\widetilde{b}(\lambda, \mu)$, and $\widetilde{z}(\lambda, \mu)$  are defined on
$\Ltilde$ as: $\forall ~\lambda, \mu\in\Ltilde$, 
\[
\widetilde{a}_h(\lambda, \mu) = \sum_{i = 1}^N a_h^{(i)}(\lambda^{(i)},\mu^{(i)}),
\quad \widetilde{a}(\lambda, \mu)  = \sum_{i = 1}^N a^{(i)}(\lambda^{(i)},\mu^{(i)}),
\]
and
\[
\widetilde{b}(\lambda, \mu)  = \sum_{i = 1}^N b^{(i)}(\lambda^{(i)},\mu^{(i)}), \quad
\widetilde{z}(\lambda, \mu)  = \sum_{i = 1}^N z^{(i)}(\lambda^{(i)},\mu^{(i)}).
\]
where $\lambda^{(i)}$ and $\mu^{(i)}$ represent restrictions of $\lambda$ and $\mu$
to subdomain $\Omega_i$.  $a_h^{(i)}$,  $a^{(i)}$,  $b^{(i)}$, 
$z^{(i)}$ are defined in Section 3. 

$\Atilde$, $\Btilde$,
and $\Ztilde$  denote the partially sub-assembled matrices corresponding to the
bilinear forms $\widetilde{a}(\cdot, \cdot)$, $\widetilde{b}(\cdot,
\cdot)$, and $\widetilde{z}(\cdot, \cdot)$, respectively. We have
\[
A=\Rtilde^T\Atilde\Rtilde,\quad
B=\Rtilde^T\Btilde\Rtilde,\quad\mbox{and }
Z=\Rtilde^T\Ztilde\Rtilde.
\]
Recall $A$, $B$, and $Z$ are defined at the end of Section 2.

We know that  the subdomain bilinear forms
$b^{(i)}(\cdot,\cdot)$, $i = 1, 2, ..., N$, are symmetric positive
semi-definite on $\Lambda^{(i)}$ and define a semi-norm $\|\lambda^{(i)}\|_{B^{(i)}}^2 =
b^{(i)}(\lambda^{(i)},\lambda^{(i)})$.  For any $\lambda \in \Lambda$, we define $\|\lambda\|^2_{B}= \sum_{i=1}^N
\|\lambda^{(i)}\|_{B^{(i)}}^2$ and 
$\|w\|^2_\Btilde= \sum_{i=1}^N \|w^{(i)}\|_{B^{(i)}}^2$,
for
any $w \in \Ltilde$, where $\lambda^{(i)}$ and $w^{(i)}$ are the
restrictions of $\lambda$ and $w$ to Subdomain $\Omega_i$, respectively.
We note that the $\|\cdot\|_B$  and  $\|\cdot\|_{\Btilde}$ are norms
defined for $\Lambda$ and $\Ltilde$, respectively.


Given  $\lambda_\Gamma \in
\Ltilde_\Gamma$,
we define a discrete extension of $\lambda_\Gamma$ to the interior of subdomains by
\begin{equation}
\label{equation:extension} \lambda_{{\A}, \Gamma} = \left[
\begin{array}{c} -
A_{II}^{-1} \Atilde_{I \Gamma} \lambda_\Gamma  \\
\lambda_{\Gamma} \end{array} \right] \in \Ltilde.
\end{equation}
By the definition, we know that $\lambda_{{\A}, \Gamma}$ is obtained
from $\lambda_\Gamma $ by
solving subdomain advection-diffusion problems with Dirichlet
boundary conditions. 
We note that  $\lambda_{{\A}, \Gamma}$ are also
well defined for $\lambda_\Gamma \in \Lhat_\Gamma$, and as a result
$\lambda_{{\A}, \Gamma} \in \Lambda$. However, $\lambda_{{\A}, \Gamma}$ does not have any
energy minimization property as usual harmonic extension from symmetric
elliptic problems. 

Using the extension, we can define two bilinear forms for vectors in $\Lhat_\Gamma$ and
$\Ltilde_\Gamma$ respectively by
\begin{eqnarray}
\label{equation:productS} & & \quad
\left<\lambda_\Gamma,\mu_\Gamma\right>_{B_\Gamma}=\mu_{{\A}, \Gamma}^T B
\lambda_{{\A}, \Gamma}, ~ \mbox{ and }~  \left<\lambda_\Gamma,\mu_\Gamma
\right>_{Z_\Gamma}= \mu_{{\A}, \Gamma}^T Z \lambda_{{\A}, \Gamma}, \quad
\forall ~ \lambda_\Gamma, \mu_\Gamma \in \Lhat_\Gamma, \\
\label{equation:tildeproductS} & & \quad
\left<\lambda_\Gamma,\mu_\Gamma\right>_{\Btilde_\Gamma}=\mu_{{\A}, \Gamma}^T
\Btilde \lambda_{{\A}, \Gamma}, ~ \mbox{ and }~ \left<\lambda_\Gamma,\mu_\Gamma
\right>_{\Ztilde_\Gamma}= \mu_{{\A}, \Gamma}^T \Ztilde \lambda_{{\A},
\Gamma}, \quad \forall ~ \lambda_\Gamma, \mu_\Gamma \in \Ltilde_\Gamma.
\end{eqnarray}
Here $\left< p, q \right>_M$  represents
the product $q^T M p$, for any given matrix $M$ and vectors $p$ and
$q$.

By the definitions ~\EQ{extension}, \EQ{productS}, and
\EQ{tildeproductS}, we have the following lemma, which is similar to
\cite[Lemma 7.2]{TuLi:2008:BDDCAD}.
\begin{lemma}
\label{lemma:null} For any $\mu \in \Ltilde$, denotes its restriction
to $\Gamma$ by $\mu_\Gamma \in \Ltilde_\Gamma$. Then for any
$\lambda_\Gamma\in \Ltilde_\Gamma$ and $\mu \in \Ltilde$, $\left< \lambda_\Gamma,
\mu_\Gamma \right>_{\Stilde_\Gamma} = \left< \lambda_{{\A}, \Gamma}, \mu
\right>_\Atilde$ and $\left< \lambda_\Gamma, \mu_\Gamma
\right>_{\Stilde_\Gamma} = \left< \lambda_\Gamma, \mu_\Gamma
\right>_{\Btilde_\Gamma}+\left< \lambda_\Gamma, \mu_\Gamma
\right>_{\Ztilde_\Gamma}$. For any $\lambda_\Gamma\in \Ltilde_\Gamma$,
$\left< \lambda_\Gamma, \lambda_\Gamma \right>_{\Stilde_\Gamma} = \left<
\lambda_{{\A}, \Gamma}, \lambda_{{\A}, \Gamma} \right>_\Atilde = \left< \lambda_{{\A},
\Gamma}, \lambda_{{\A}, \Gamma} \right>_\Btilde  =
\left<\lambda_\Gamma,\lambda_\Gamma\right>_{\Btilde_\Gamma} \geq 0$, and
$\left<\lambda_\Gamma,\lambda_\Gamma\right>_{\Ztilde_\Gamma}=0.$ The same
results also hold for functions and the corresponding bilinear forms
in the space $\Lhat_\Gamma$.
\end{lemma}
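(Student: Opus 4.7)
The plan is to prove the three assertions in order, observing that each reduces to a direct calculation using the definition of the Dirichlet-style discrete extension in \EQ{extension} together with the decomposition $\Atilde=\Btilde+\Ztilde$, which follows from $a^{(i)}=b^{(i)}+z^{(i)}$ in Section 3. The identity for functions on $\Lhat_\Gamma$ is then obtained by restriction, since $\Lambda\subset\Ltilde$ under $\Rtilde$.

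For the first identity, I would write out $\left<\lambda_{{\A},\Gamma},\mu\right>_\Atilde=\mu^T\Atilde\lambda_{{\A},\Gamma}$ in blocks according to the interior/interface splitting used in \EQ{ll}. Because the interior component of $\lambda_{{\A},\Gamma}$ is exactly $-\Atilde_{II}^{-1}\Atilde_{I\Gamma}\lambda_\Gamma$, the interior block of $\Atilde\lambda_{{\A},\Gamma}$ vanishes, so only the interface contribution survives and equals $\mu_\Gamma^T(\Atilde_{\Gamma\Gamma}-\Atilde_{\Gamma I}\Atilde_{II}^{-1}\Atilde_{I\Gamma})\lambda_\Gamma$. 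Since the interior degrees of freedom are local to each subdomain, this Schur complement of $\Atilde$ coincides with $\Rbar_\Gamma^T S_\Gamma\Rbar_\Gamma=\Stilde_\Gamma$ from \EQ{SGammatilde1}, which gives the first equality.

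The second identity is an immediate corollary: take $\mu=\mu_{{\A},\Gamma}\in\Ltilde$ in the first identity and use $\Atilde=\Btilde+\Ztilde$, together with the definitions \EQ{tildeproductS}. For the third identity, specialize further to $\mu=\lambda_{{\A},\Gamma}$; the chain $\left<\lambda_\Gamma,\lambda_\Gamma\right>_{\Stilde_\Gamma}=\left<\lambda_{{\A},\Gamma},\lambda_{{\A},\Gamma}\right>_\Atilde$ then splits as a sum of a $\Btilde$ and a $\Ztilde$ contribution. The $\Ztilde$ term vanishes because $z^{(i)}$ in \EQ{lbformN} is skew-symmetric in its two arguments (swap $\lambda^{(i)},\mu^{(i)}$ and use symmetry of the $L^2$-pairings to get $z^{(i)}(\mu,\lambda)=-z^{(i)}(\lambda,\mu)$), so $\Ztilde$ is a skew-symmetric matrix and $x^T\Ztilde x=0$ for every $x$. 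Consequently $\left<\lambda_{{\A},\Gamma},\lambda_{{\A},\Gamma}\right>_\Atilde=\left<\lambda_{{\A},\Gamma},\lambda_{{\A},\Gamma}\right>_\Btilde=\left<\lambda_\Gamma,\lambda_\Gamma\right>_{\Btilde_\Gamma}$, and the nonnegativity is the positive semi-definiteness of each $b^{(i)}$ noted just before the lemma.

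The only real point requiring care, and therefore the place I would be most careful, is verifying that the Schur complement of the partially sub-assembled matrix $\Atilde$ with respect to its interior block really coincides with $\Rbar_\Gamma^T S_\Gamma \Rbar_\Gamma$; this uses the fact that the interior unknowns are strictly local so the block $\Atilde_{II}$ is block-diagonal across subdomains and inverts subdomain-by-subdomain, matching the definition of $S_\Gamma^{(i)}$ in \EQ{dissubschur}. The statements on $\Lhat_\Gamma$ then follow by the same argument applied with $\Rtilde$ replaced by the identity and $\Atilde$ replaced by $A=\Rtilde^T\Atilde\Rtilde$ together with $B$ and $Z$, which inherit their symmetry and skew-symmetry from $\Btilde$ and $\Ztilde$.
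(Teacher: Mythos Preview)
Your proposal is correct and follows the same approach the paper indicates: the lemma is asserted there as a direct consequence of the definitions \EQ{extension}, \EQ{productS}, \EQ{tildeproductS} (with a pointer to \cite[Lemma~7.2]{TuLi:2008:BDDCAD}), and your block Schur-complement computation together with the skew-symmetry of $\Ztilde$ is exactly how that unpacking goes. The one place you rightly flag for care---that $\Atilde_{II}=A_{II}$ is block-diagonal over subdomains so that the Schur complement of $\Atilde$ equals $\Rbar_\Gamma^T S_\Gamma \Rbar_\Gamma=\Stilde_\Gamma$---is the only nontrivial step, and your justification of it is sound.
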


Using Lemma~\LA{null},  for elements in the spaces $\Lhat_\Gamma$ and
$\Ltilde_\Gamma$, we can define $B_\Gamma$- and $\Btilde_\Gamma$-
norms
respectively as: $\|\uG\|^2_{B_\Gamma} = \left< \mu_\Gamma, \mu_\Gamma
\right>_{B_\Gamma}$, for any $\mu_\Gamma \in \Lhat_\Gamma$, and
$\|\wG\|^2_{\Btilde_\Gamma} = \left< w_\Gamma, w_\Gamma
\right>_{\Btilde_\Gamma}$, for any $w_\Gamma \in \Ltilde_\Gamma$.


We also  introduce several useful norms, which are defined in
\cite{schwarz,multigrid,error}. For any domain $D$ and $\lambda\in
\Lambda(D)$,  we denote the $L^2$ norm by $\|\cdot\|_D$ and define 
\begin{equation}\label{equation:L2norm}
\|\lambda\|^2_{h,D}=\sum_{K\in
    \T_h,K\subseteq D}\left\|\lambda\right\|^2_{L^2(\partial
    K)}\frac{|K|}{|\partial K|},
\end{equation} 
where $|K|$ and $|\partial K|$ denote the $n$- and $(n-1)$-dimensional
measures of $K$ and $\partial K$, respectively. 
Define 
\begin{equation}\label{equation:IIInorm}
\interleave\lambda\interleave^2_{D}=\sum_{K\in
  \T_h,K\subseteq D}\left\|\lambda-m_{K}(\lambda)\right\|^2_{L^2(\partial
  K)}\frac{1}{|\partial K|},
\end{equation} 
where 
\begin{equation}\label{equation:IIIm}
m_\kappa=\frac{1}{|\partial K|}\int_{\partial K}\lambda ds.
\end{equation}
When the domain $D$ is $\Omega$, we drop $\Omega$ as a subscript in
notation. Namely we will use $\|\cdot\|_h$ and $\interleave\cdot\interleave$ denote
$\|\cdot\|_{h,\Omega}$ and $\interleave\cdot\interleave_{\Omega}$, respectively. 

\section{Convergence rate of the GMRES iterations}

In our analysis, to make the  notations and proof simpler, we will
focus on two dimensional case. The approach can be extended to three
dimensions as well. 

We need the following assumptions:

\begin{assp}\label{assump:onbeta} Following \cite{AyusoM2009, fu_analysis_2015-1}, we assume    
$\vbeta\in W^{1,\infty}(\Omega)$ satisfies:
\begin{enumerate}
\item $\vbeta$ has no closed curves and $\vbeta\neq 0$, $\forall
  x\in\Omega$;
\item $-\nabla\cdot{\vbeta}\geq 0$, see \EQ{cond1}.
 \item { $$\max_{\E\in \partial K}\inf_{x\in \E}
   \left(\tau_K-\frac12\vbeta(x)\cdot{\bf
       n}\right)=\widetilde{C}_K>0,\quad \forall K\in \T_h,$$  see Assumption \ref{assumption:beta0};}

\item $\inf_{x\in \E} \left(\tau_K-\frac12\vbeta(x)\cdot \vvecn\right) \ge C_1^* \max_{x\in \E} |\vbeta(x)\cdot\vvecn|$, $\forall
\E\in \partial K$ and $\forall K\in \T_h$;
\end{enumerate}  
\end{assp}

\begin{assp}\label{assump:onWtilde} For each face $\E^k$, 
the coarse level primal subspace $\Lhat_{\Pi}$ contains one
face average, one face flux weighted average, and
one face flux weighted first moment  degrees of
freedom such that for any $w \in \Wtilde$,
$$
\int_{\E^k} w^{(i)} ~ds , \quad \int_{\E^k}
\vbeta\cdot\vvec{n}w^{(i)} ~ds ,  \quad \mbox{and} \quad
\int_{\E^k} \vbeta\cdot\vvec{n}w^{(i)}s ~ ds,
$$
respectively, are the same 
(with a difference of factor $-1$ corresponding to opposite
normal directions) for the two subdomains $\Omega_i$ that share
$\E^k$.
\end{assp}

\begin{assp}\label{assump:onsubdomainshape}
Each subdomain $\Omega_i$ is triangular or
quadrilateral in two dimensions.
The
subdomains form a shape regular coarse mesh of $\Omega$.
\end{assp}

Under Assumption \ref{assump:onsubdomainshape}, let $\What_H$ be the
continuous linear 
finite
element space on the coarse subdomain mesh,  and $I_H$ be the
finite element interpolation operator into $\What_H$. 
Bramble-Hilbert lemma,  cf.~\cite[Theorem 2.3]{1xu},  gives the
following result
\begin{lemma}
\label{lemma:WHapproximation} There exists a constant $C$, such that
for all $u \in H^2(\Omega_i)$, $i = 1, 2, ..., N$, $\| u - I_H u
\|_{H^t(\Omega_i)} \leq C H^{2-t} | u |_{H^2(\Omega_i)}$, where $t =
0, 1, 2$.
\end{lemma}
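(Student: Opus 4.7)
The plan is to reduce the estimate to a local bound on a single coarse triangle via the shape regularity assumption on the subdomain mesh, apply the Bramble-Hilbert lemma on a reference element, and transfer back by an affine scaling argument; finally sum the contributions over the bounded number of coarse triangles that make up each $\Omega_i$.

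More concretely, first I would invoke Assumption \ref{assump:onsubdomainshape} to write $\Omega_i$ as a union of a uniformly bounded number of shape-regular coarse triangles $\{\tau_\ell\}$ of diameter $\sim H$, on each of which $I_H u$ coincides with the standard linear nodal interpolant. Fix a reference triangle $\widehat{\tau}$ and an affine map $F_\ell : \widehat{\tau} \to \tau_\ell$ with $\|DF_\ell\| \lesssim H$ and $\|DF_\ell^{-1}\| \lesssim H^{-1}$; let $\widehat{u} = u \circ F_\ell$ and let $\widehat{I}\widehat{u}$ be the linear interpolant on $\widehat{\tau}$. On $\widehat{\tau}$, because $\widehat{I}$ is a bounded projection from $H^2(\widehat{\tau})$ onto $P_1(\widehat{\tau})$ that reproduces linear polynomials, the standard Bramble-Hilbert lemma gives $\|\widehat{u} - \widehat{I}\widehat{u}\|_{H^s(\widehat{\tau})} \le C\, |\widehat{u}|_{H^2(\widehat{\tau})}$ for $s=0,1,2$.

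Pulling back via $F_\ell$ and using the standard Sobolev scaling identities $|v|_{H^s(\tau_\ell)} \sim H^{s - 1}\,|\widehat{v}|_{H^s(\widehat{\tau})}$ (here in two dimensions, after accounting for the Jacobian factor $H^2$) yields, on each $\tau_\ell$,
\begin{equation*}
|u - I_H u|_{H^t(\tau_\ell)} \;\le\; C\, H^{2-t}\, |u|_{H^2(\tau_\ell)}, \qquad t = 0, 1, 2.
\end{equation*}
For $t = 2$ the left-hand side equals $|u|_{H^2(\tau_\ell)}$ up to the constant, since $I_H u$ is affine on $\tau_\ell$ so its second derivatives vanish; for $t = 0, 1$ it is the classical estimate. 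Combining the three seminorm estimates gives the full $H^t$-norm bound
\begin{equation*}
\|u - I_H u\|_{H^t(\tau_\ell)} \;\le\; C(H^{2-t} + H^{1-t}\cdot \mathbf{1}_{t\ge 1} + \mathbf{1}_{t=2})\,|u|_{H^2(\tau_\ell)} \;\le\; C\, H^{2-t}\,|u|_{H^2(\tau_\ell)},
\end{equation*}
where the last step uses that $H$ is bounded above (so lower-order powers of $H$ are absorbed into the constant).

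Summing the squares of these local estimates over the uniformly bounded number of coarse triangles composing $\Omega_i$ and taking square roots then produces the claim. The only genuine subtlety is the $t = 2$ case, where one must argue that the missing power of $H$ is harmlessly absorbed into the constant because $|u - I_H u|_{H^2(\tau_\ell)} = |u|_{H^2(\tau_\ell)}$ on each coarse element; everything else is the textbook Bramble-Hilbert / scaling procedure, and for this reason the result is simply quoted from \cite{1xu}.
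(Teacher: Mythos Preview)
The paper does not prove this lemma; it simply quotes it as a direct consequence of the Bramble--Hilbert lemma, citing \cite[Theorem 2.3]{1xu}. Your argument is exactly the standard affine-scaling proof that underlies that citation, and it is correct in structure and conclusion. One small slip: the scaling identity should read $|v|_{H^s(\tau_\ell)} \sim H^{1-s}\,|\widehat{v}|_{H^s(\widehat{\tau})}$ (in two dimensions), not $H^{s-1}$; with the exponent as you wrote it the intermediate computation would yield $H^{t-2}$ rather than $H^{2-t}$, though your stated final inequality is the right one.
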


In this paper, we focus on the dependence of the GMRES  convergence
rate on the viscosity $\epsilon$, the subdomain size $H$, and the
mesh size $h$ and always use $c$ and $C$ to
represent  generic positive constants independent of $\epsilon$, $H$,
and $h$.  We consider
$0<\epsilon\le 1$. When $\epsilon$ is close to $1$, the system \EQ{pde1}
is   diffusion dominant,  and when $\epsilon$ is close to $0$, the system \EQ{pde1}
is   advection dominant.

We define two constants which will be used in our analysis:
{
\begin{equation}\label{equation:Cdef}
\begin{aligned}
\nu_{\epsilon,\tau_K,h}&=\max_{K\in \T_h}\left(h\sqrt{\max\limits_{x\in\partial K}(\tau_K-\frac12\vbeta\cdot{\bf n})}+Ch\epsilon^{-1}\|\vbeta\|_{\infty},\right.\\
&\qquad\qquad\left. \frac{C}{\widetilde{C}_K}\left(\widetilde{C}^{\frac12}_K\epsilon\sqrt{\max\limits_{x\in\partial K}(\tau-\frac12\vbeta\cdot{\bf n})}+\|\vbeta\|_{\infty}+h\|\nabla\cdot\vbeta\|_{\infty}\right)\right)
\end{aligned}
\end{equation}
}
and
\begin{equation}\label{equation:gamma}
\gamma_{h,\tau_K}=\max _{K\in \T_h}\left\{1+\tau_Kh_K\right\}
\end{equation} as \cite{multigrid}. Recall $\widetilde{C}_K$ is
defined in Assumption \ref{assump:onbeta}.

We denote the preconditioned operator $\Rtilde_{D,\Gamma}^T
\Stilde_{\Gamma}^{-1} \Rtilde_{D,\Gamma} S_{\Gamma}$
in~\EQ{BDDC} by $T$. The following theorem, proved  in \cite{eielsc}, gives the estimate of the convergence rate of the GMRES
iteration.
\begin{theorem} \label{theorem:EES} Let $c$ and $C^2$
 be two positive parameters such that
\begin{eqnarray}
\label{equation:lowb}
c \left< u , u \right>_\SGS& \leq &  \left< u , T u \right>_\SGS, \\
\label{equation:upb} \left< T  u , T u \right>_\SGS & \leq & C
\left< u , u \right>_\SGS.
\end{eqnarray}
Then
$$
\frac{\| r_m \|_\SGS}{\| r_0 \|_\SGS} \leq \left( 1 -
\frac{c^2}{C^2} \right)^{m/2},
$$
where $r_m$ is the residual  of the GMRES iteration at iteration $m$.
\end{theorem}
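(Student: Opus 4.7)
The approach is the classical minimal-residual argument of Eisenstat, Elman, and Schultz~\cite{eielsc}, adapted to the non-standard inner product $\langle\cdot,\cdot\rangle_{B_\Gamma}$ in which the analysis takes place. A preliminary observation is that by Lemma~\LA{null} and the positive definiteness of the symmetric part $B$ of $A$, the bilinear form $\langle\cdot,\cdot\rangle_{B_\Gamma}$ is a genuine inner product on $\Lhat_\Gamma$, and $\|\cdot\|_{B_\Gamma}$ is an honest norm. Consequently, GMRES run in this inner product enjoys its usual minimal-residual property: the residual at step $m$ has the form $r_m = p_m(T)r_0$ for some polynomial $p_m$ of degree $\le m$ with $p_m(0)=1$, and $p_m$ is chosen so as to minimize $\|p_m(T)r_0\|_{B_\Gamma}$ over all such polynomials.

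The first step is therefore to exploit this optimality against an arbitrary polynomial. Picking any fixed $\alpha>0$ and the test polynomial $p(x)=(1-\alpha x)^m$, we obtain
$$\|r_m\|_{B_\Gamma}\;\le\;\|(I-\alpha T)^m r_0\|_{B_\Gamma}\;\le\;\|I-\alpha T\|_{B_\Gamma}^{\,m}\,\|r_0\|_{B_\Gamma},$$
where $\|\cdot\|_{B_\Gamma}$ denotes both the norm and the induced operator norm on $\Lhat_\Gamma$. It therefore suffices to bound $\|I-\alpha T\|_{B_\Gamma}$ by some $q<1$ for a judicious choice of $\alpha$.

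The second step is a direct expansion. For any $u\in\Lhat_\Gamma$,
$$\|(I-\alpha T)u\|_{B_\Gamma}^2 \;=\; \|u\|_{B_\Gamma}^2 \;-\; 2\alpha\,\langle u,Tu\rangle_{B_\Gamma} \;+\; \alpha^2\,\langle Tu,Tu\rangle_{B_\Gamma}.$$
Inserting the coercivity assumption~\EQ{lowb} into the cross term and the continuity assumption~\EQ{upb} into the quadratic term yields
$$\|(I-\alpha T)u\|_{B_\Gamma}^2 \;\le\; \bigl(1-2\alpha c+\alpha^2 C^2\bigr)\,\|u\|_{B_\Gamma}^2.$$
Minimizing the quadratic $1-2\alpha c+\alpha^2 C^2$ in $\alpha>0$ gives the optimum $\alpha^\ast=c/C^2$ with minimum value $1-c^2/C^2$. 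Substituting back, we obtain $\|I-\alpha^\ast T\|_{B_\Gamma}\le (1-c^2/C^2)^{1/2}$, and consequently
$$\frac{\|r_m\|_{B_\Gamma}}{\|r_0\|_{B_\Gamma}}\;\le\;\Bigl(1-\frac{c^2}{C^2}\Bigr)^{m/2},$$
which is the claimed bound.

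There is no serious obstacle in the proof itself, which is essentially a textbook application of the Eisenstat--Elman--Schultz framework; the only subtle ingredient is verifying that $\langle\cdot,\cdot\rangle_{B_\Gamma}$ is a legitimate inner product so that the minimal-residual polynomial argument applies, and this is supplied by Lemma~\LA{null}. The substantive work of the paper lies elsewhere, namely in establishing the two hypotheses~\EQ{lowb} and~\EQ{upb} with explicit and favorable dependence of $c$ and $C^2$ on $\epsilon$, $H$, $h$, and the polynomial degree $k$, which is the content of the subsequent sections.
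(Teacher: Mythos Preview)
Your proof is correct and is precisely the classical Eisenstat--Elman--Schultz argument. The paper does not give its own proof of this theorem but simply cites~\cite{eielsc}, so your argument is exactly the one the paper defers to.
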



The following theorem is our main result for the convergence of the
GMRES iterations for solving \EQ{BDDC}.

\begin{theorem}\label{theorem:maintheorem}
For all
$\mu_{\Gamma}\in\widehat{\Lambda}_{\Gamma}$, we have
\begin{equation}\label{eq3.11}
\begin{aligned}
\langle T\mu_{\Gamma},T\mu_{\Gamma}\rangle_{B_{\Gamma}}\leq CC^2_u\|\mu_{\Gamma}\|^2_{B_{\Gamma}},
\end{aligned}
\end{equation}
\begin{equation}\label{eq3.12}
c_l\langle\mu_{\Gamma},\mu_{\Gamma}\rangle_{B_{\Gamma}}\leq C\frac{\Ctwo^2}{\epsilon}\langle\mu_{\Gamma},T\mu_{\Gamma}\rangle_{B_{\Gamma}},\\
\end{equation}
where $C_u=\frac{\Ctwo^2}{\epsilon}C_{ED}^2$ and
$c_l=1-\frac{\Ctwo^6C_h}{\epsilon^{5/2}}$. $C_{ED}$ and $C_h$ are defined in
Lemmas \LA{averagenorm} and \LA{L2error}, respectively.
\end{theorem}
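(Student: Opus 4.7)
The plan is to establish both inequalities by the nonsymmetric BDDC perturbation framework developed for the linear finite element case in \cite{TuLi:2008:BDDCAD}, adapted to the HDG setting where the numerical trace $\lambda$ lives only on element boundaries and where the nonsymmetric bilinear form $a_h$ must first be related to the $B_\Gamma$-norm via the factor $\nu_{\epsilon,\tau_K,h}^2/\epsilon$ coming from \EQ{Cdef}.

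I would first prove the upper bound (\ref{eq3.11}). Introduce the intermediate quantity $w_\Gamma := \Stilde_\Gamma^{-1}\Rtilde_{D,\Gamma}\widehat{S}_\Gamma \mu_\Gamma \in \Ltilde_\Gamma$, so that $T\mu_\Gamma = \Rtilde_{D,\Gamma}^T w_\Gamma$. Lemma \LA{averagenorm}, which controls the averaging operator $\Rtilde_{D,\Gamma}^T$ in the $B_\Gamma$-norm with constant $C_{ED}$, yields
\[
\|T\mu_\Gamma\|_{B_\Gamma}^2 \le C\, C_{ED}^2\, \|w_\Gamma\|_{\Btilde_\Gamma}^2.
\]
By Lemma \LA{null} the right-hand side equals $C\,C_{ED}^2 \langle w_\Gamma, w_\Gamma\rangle_{\Stilde_\Gamma}$, which by the definition of $w_\Gamma$ and duality against $\widehat{S}_\Gamma\mu_\Gamma$ through $\overline{R}_\Gamma$ is bounded in terms of $\|\mu_\Gamma\|_{B_\Gamma}^2$. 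The equivalence between $\Stilde_\Gamma$ and $\Btilde_\Gamma$ supplied by Lemma \LA{null}, together with the bilinear-form/norm equivalence between $a_h$ and $\|\cdot\|_{B_\Gamma}$ (the HDG analog of the $H^1$-seminorm estimate in \cite{TuLi:2008:BDDCAD}, produced here by the constants in \EQ{Cdef}), then delivers the remaining $\nu_{\epsilon,\tau_K,h}^2/\epsilon$ factors that, combined with $C_{ED}^2$, assemble into $C_u^2$.

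For the lower bound (\ref{eq3.12}), the strategy is to rewrite $\langle\mu_\Gamma, T\mu_\Gamma\rangle_{B_\Gamma}$ as $\|\mu_\Gamma\|_{B_\Gamma}^2$ plus a perturbation and then show the perturbation is small relative to a multiple of $\|\mu_\Gamma\|_{B_\Gamma}^2$. Using Lemma \LA{null} and the definition of $T$,
\[
\langle \mu_\Gamma, T\mu_\Gamma\rangle_{B_\Gamma} = \|\mu_\Gamma\|_{B_\Gamma}^2 + \mathcal{E}(\mu_\Gamma),
\]
where $\mathcal{E}(\mu_\Gamma)$ collects (i) the mismatch between the extension $\mu_{\mathcal{A},\Gamma}\in\Lambda$ and its partially sub-assembled counterpart $w_{\mathcal{A},\Gamma}\in\Ltilde$ produced by the averaging in $\Rtilde_{D,\Gamma}$, and (ii) the skew-symmetric contribution coming from the $Z$-part of $a$. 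Interpreting (i) as a non-conforming HDG approximation with Dirichlet data $\mu_\Gamma$ and interior extension through $\mathcal{Q},\mathcal{U}$, Lemma \LA{L2error} bounds it in the $\interleave\cdot\interleave$-norm of \EQ{IIInorm} with constant $C_h$; the skew part (ii) is handled by integration by parts and the same estimate. Combining these bounds with the $a_h\leftrightarrow B_\Gamma$ equivalence and tracking the factor $\nu_{\epsilon,\tau_K,h}^2/\epsilon$ through each application yields $|\mathcal{E}(\mu_\Gamma)| \le C\,\nu_{\epsilon,\tau_K,h}^6 C_h\epsilon^{-5/2}\|\mu_\Gamma\|_{B_\Gamma}^2$, which rearranges to (\ref{eq3.12}) with $c_l = 1 - \nu_{\epsilon,\tau_K,h}^6 C_h/\epsilon^{5/2}$ and the expected $\nu_{\epsilon,\tau_K,h}^2/\epsilon$ on the right.

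The main obstacle is the lower bound. Unlike the symmetric positive definite case, coercivity of $T$ in $\|\cdot\|_{B_\Gamma}$ is not automatic and must be extracted via an Aubin--Nitsche-type duality argument using the elliptic regularity \EQ{regularity} on both the primal problem \EQ{pde1} and its adjoint \EQ{pdea}. The novelty compared to \cite{TuLi:2008:BDDCAD} is that because the HDG trace lives only on $\partial K$, the usual $L^2$ norm must be replaced by the mesh-dependent norms $\|\cdot\|_{h,D}$ and $\interleave\cdot\interleave$ from \EQ{L2norm}--\EQ{IIInorm}, and the error estimate of Lemma \LA{L2error} in those norms must be propagated through the BDDC error decomposition while carefully bookkeeping how $\nu_{\epsilon,\tau_K,h}$ and $\gamma_{h,\tau_K}$ enter through each bilinear-form/norm equivalence. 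This bookkeeping is what ultimately produces the precise exponents $\nu^6$ and $\epsilon^{5/2}$ in $c_l$ and is the step I expect to be the most delicate.
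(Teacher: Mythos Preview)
Your upper-bound sketch is essentially the paper's argument (carried out inside Lemma~\LA{wnorm}): one writes $\|T\mu_\Gamma\|^2_{B_\Gamma}=\|E_Dw_\Gamma\|^2_{\Btilde_\Gamma}$ for $w_\Gamma=\Stilde_\Gamma^{-1}\Rtilde_{D,\Gamma}S_\Gamma\mu_\Gamma$, applies Lemma~\LA{averagenorm}, and closes via $\|w_\Gamma\|^2_{\Btilde_\Gamma}=\langle\mu_\Gamma,T\mu_\Gamma\rangle_{S_\Gamma}$ and Lemma~\LA{Sgnorm}.

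The lower-bound plan has a genuine gap. Your decomposition $\langle\mu_\Gamma,T\mu_\Gamma\rangle_{B_\Gamma}=\|\mu_\Gamma\|^2_{B_\Gamma}+\mathcal E$ forces $\mathcal E=\langle\mu_\Gamma,v_\Gamma\rangle_{B_\Gamma}$ with $v_\Gamma=T\mu_\Gamma-\mu_\Gamma$, a pairing in the \emph{symmetric} form $B_\Gamma$. For a coercive symmetric form there is no estimate of the shape $|\langle\mu_\Gamma,v_\Gamma\rangle_{B_\Gamma}|\le C\|\mu_\Gamma\|_{B_\Gamma}\|v_{\A,\Gamma}\|_h$; Cauchy--Schwarz only yields $\|\mu_\Gamma\|_{B_\Gamma}\|v_\Gamma\|_{B_\Gamma}$, and $\|v_\Gamma\|_{B_\Gamma}$ is of order $C_u\|\mu_\Gamma\|_{B_\Gamma}$ by the upper bound, giving no smallness. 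Lemma~\LA{L2error} controls $\|v_{\A,\Gamma}\|_h$ (the norm of \EQ{L2norm}, not $\interleave\cdot\interleave$ as you wrote), and that $L^2$-type control is only exploitable through the \emph{skew} form $Z_\Gamma$, whose first-order structure admits the mixed estimate in the third line of Lemma~\LA{Sgnorm}. Your description of $\mathcal E$ as collecting a skew contribution and a $\mu_{\A,\Gamma}$-versus-$w_{\A,\Gamma}$ mismatch does not follow from the formula you wrote and conflates ingredients from separate lemmas.

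What the paper does instead is to pass first through $S_\Gamma$: using $\Rtilde_\Gamma^T\Rtilde_{D,\Gamma}=I$ one gets $\|\mu_\Gamma\|^2_{B_\Gamma}=\langle w_\Gamma,\Rtilde_\Gamma\mu_\Gamma\rangle_{\Stilde_\Gamma}$, and Lemmas~\LA{Sgnorm} and~\LA{wnorm} give $\|\mu_\Gamma\|^2_{B_\Gamma}\le C(\Ctwo^4/\epsilon^2)\langle\mu_\Gamma,T\mu_\Gamma\rangle_{S_\Gamma}$. Only then is the split $\langle\mu_\Gamma,T\mu_\Gamma\rangle_{S_\Gamma}=\langle\mu_\Gamma,T\mu_\Gamma\rangle_{B_\Gamma}+\langle\mu_\Gamma,v_\Gamma\rangle_{Z_\Gamma}$ invoked (using $\langle\mu_\Gamma,\mu_\Gamma\rangle_{Z_\Gamma}=0$ from Lemma~\LA{null}), and the $Z_\Gamma$ term is handled by the third estimate of Lemma~\LA{Sgnorm} together with Lemma~\LA{L2error}. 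The factor $C\Ctwo^2/\epsilon$ on the right of (\ref{eq3.12}) arises precisely from this detour through $S_\Gamma$; its presence in the statement already signals that a direct $B_\Gamma$-level perturbation is not the mechanism.
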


\begin{remark}\label{remark:onc0}
Compare to the linear finite element discretization, studied in
\cite[Theorem 7.15]{TuLi:2008:BDDCAD}, our upper bound has one more 
$\frac{1}{\epsilon}$ in $C_{ED}$  and two
additional factors $\Ctwo$ and
$\gamma_{h,\tau_K}$, defined in \EQ{Cdef} and \EQ{gamma},
respectively. For the detailed explanation, see Remark
\ref{remark:Ed}.   For the common choices of $\tau_K$,
$\gamma_{h,\tau_K}$ is usually a bounded constant. However, $\Ctwo$
can be quite large if $\epsilon$ is very small unless $h$ is
sufficiently small to balance.  $\Ctwo$ appears in the lower bound as
well. 
For the case $\epsilon = O(1)$, in Theorem~\ref{theorem:maintheorem}, $C_u=\left(1+\log\frac Hh\right)^2\gamma_{h,\tau}$ and
$c_l = 1 - C H \left(1+\log\frac{H}{h}\right)^2$, which will be positive for appropriate choice of $H$ and
$h$. In this case Theorem~\ref{theorem:EES} applies and the convergence rate of our BDDC algorithm is
independent of the number of subdomains and depends on $H/h$
slightly.  Due to a different proof of Lemma \LA{L2error}, our lower bound is better than that in \cite[Theorem
7.15]{TuLi:2008:BDDCAD}, see Remark \ref{remark:error} for details.  The convergence rate deteriorates with a
decrease of $\epsilon$. For the advection-dominated cases,  the
performance of our BDDC algorithms are satisfactory  for lower order
discretization (degree $0$) in our  numerical experiments.  The extra
flux-based constraints used in our BDDC
algorithms improve the performance significantly for degree $1$ and
$2$ discretizations.  However, when $\epsilon$
is very small, in order to make $c_l$ in
Theorem~\ref{theorem:maintheorem} positive,   $H$ and $h$ have to
be tiny which might not be practical. 

\end{remark}

Similarly to the  upper bound estimate for the BDDC algorithm applied
to symmetric positive definite problems, for any function $\mu_\Gamma
\in \Ltilde_\Gamma$, we define the average operator as $E_{D,\Gamma} \mu_\Gamma = \RtildeG
\Rtilde^T_{D,\Gamma} \mu_\Gamma$, which computes an average of $\mu_\Gamma$ across
$\Gamma$. For symmetric positive definite problems, the stability analysis of $E_{D, \Gamma}$ can be found 
in \cite{klaw_wid3, Klawonn:2004:FDP, TW:2016:HDG}. See  \cite{TuLi:2008:BDDCAD} for
nonsymmetric positive definite problems and 
\cite{BDDCH, LiBDDCS,TW:2017:WGS} for symmetric indefinite problems.

We will provide proofs for the following Lemmas \LA{Sgnorm}, \LA{averagenorm}, and
\LA{L2error} in next section.  

\begin{lemma}\label{lemma:Sgnorm}
\begin{eqnarray*}
\langle\lambda_\Gamma,\mu_\Gamma\rangle_{S_\Gamma}&\leq&
                                                         C\frac{\Ctwo^2}{{\epsilon}}
                                                         \|\lambda_\Gamma\|_{B_\Gamma}\|\mu_\Gamma\|_{B_\Gamma},
  \quad \forall \lambda_\Gamma,\mu_\Gamma\in \Lhat_\Gamma,\\
\langle\lambda_\Gamma,\mu_\Gamma\rangle_{\Stilde_\Gamma}&\leq&
                                                         C\frac{\Ctwo^2}{{\epsilon}}
                                                         \|\lambda_\Gamma\|_{\Btilde_\Gamma}\|\mu_\Gamma\|_{\Btilde_\Gamma},
  \quad \forall \lambda_\Gamma,\mu_\Gamma\in \Ltilde_\Gamma,\\
\langle\lambda_\Gamma,\mu_\Gamma\rangle_{Z_\Gamma}&\leq&
                                                         C\frac{\Ctwo^2}{\sqrt{\epsilon}}
                                                         \|\lambda_\Gamma\|_{B_\Gamma}\|\mu_{\A,\Gamma}\|_{h},
  \quad \forall \lambda_\Gamma,\mu_\Gamma\in \Lhat_\Gamma.
\end{eqnarray*}
Recall $\mu_{\A,\Gamma} \in \Lambda$ is the extension of $\mu_\Gamma$,
defined in \EQ{extension}. 
\end{lemma}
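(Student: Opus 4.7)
By Lemma~\LA{null}, on both $\widehat\Lambda_\Gamma$ and $\widetilde\Lambda_\Gamma$ the $S_\Gamma$ (respectively $\widetilde S_\Gamma$) inner product splits as the sum of its symmetric $B_\Gamma$-part and its skew-symmetric $Z_\Gamma$-part. Since $B_\Gamma$ is symmetric positive semi-definite, Cauchy--Schwarz gives $|\langle\lambda_\Gamma,\mu_\Gamma\rangle_{B_\Gamma}|\le\|\lambda_\Gamma\|_{B_\Gamma}\|\mu_\Gamma\|_{B_\Gamma}$ immediately, so the first two inequalities reduce to showing $|\langle\lambda_\Gamma,\mu_\Gamma\rangle_{Z_\Gamma}|\le C(\Ctwo^2/\epsilon)\|\lambda_\Gamma\|_{B_\Gamma}\|\mu_\Gamma\|_{B_\Gamma}$ and its $\widetilde Z_\Gamma$ analog; the latter follows from the same subdomain-local estimate since \EQ{lbformN} assembles element-by-element with no cross-interface coupling. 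Inequality~(3) is an independent direct estimate of $\langle\cdot,\cdot\rangle_{Z_\Gamma}$ using the same machinery but a different norm on the $\mu$ side, so the whole lemma comes down to a single family of subdomain estimates of $z^{(i)}$.

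To bound $\langle\lambda_\Gamma,\mu_\Gamma\rangle_{Z_\Gamma}=\sum_i z^{(i)}(\lambda^{(i)}_{\A,\Gamma},\mu^{(i)}_{\A,\Gamma})$, I expand \EQ{lbformN} into its two volume terms $(\vbeta\cdot\nabla\mathcal U\lambda,\mathcal U\mu)_K$, $(\mathcal U\lambda,\vbeta\cdot\nabla\mathcal U\mu)_K$ and two boundary terms $\langle\vbeta\cdot\mathbf n\,\mathcal U\lambda,\mu\rangle_{\partial K}$, $\langle\vbeta\cdot\mathbf n\,\lambda,\mathcal U\mu\rangle_{\partial K}$, apply Cauchy--Schwarz, and reduce the resulting $L^2(K)$ and $L^2(\partial K)$ norms of $\mathcal U\lambda$, $\mathcal U\mu$ and their gradients to the $B$-norm via a local energy estimate. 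That estimate is obtained by testing the HDG local problem \EQ{eq3.4} with $\mathbf r=\mathcal Q\lambda$ and with $w=\mathcal U\lambda$, invoking a Poincar\'e-type inequality for polynomials (treating element mean and mean-free part separately), combined with standard inverse and trace inequalities; Assumption~\ref{assump:onbeta}(3) keeps $\widetilde C_K$ bounded away from zero so the stabilization piece of $b^{(i)}$ is usable. Each of the three constituents of $b^{(i)}$---the diffusion piece $\epsilon^{-1}\|\mathcal Q\lambda\|_K^2$, the weighted jump $\|(\tau_K-\hf\vbeta\cdot\mathbf n)^{1/2}(\mathcal U\lambda-\lambda)\|_{L^2(\partial K)}^2$, and the reaction piece $\|(-\hf\nabla\cdot\vbeta)^{1/2}\mathcal U\lambda\|_K^2$---contributes a different combination of $h$, $\epsilon$, $\|\vbeta\|_\infty$, $\tau_K$, $\widetilde C_K$ to the bound on $\|\mathcal U\lambda\|_K$; the two alternatives inside the maximum defining $\Ctwo$ in \EQ{Cdef} arise as the sharp uniform choice across the diffusion-dominated and advection-dominated regimes. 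Substituting $\|\mathcal U\lambda\|_K\le C(\Ctwo/\sqrt\epsilon)\|\lambda\|_{B^{(i)}}$ (and its boundary-trace analog) on both arguments and summing over subdomains produces the $\Ctwo^2/\epsilon$ factor of the first two inequalities.

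For inequality~(3) the same four terms are estimated, but only the $\lambda$-side is converted through the $B^{(i)}$-bound. On the $\mu$-side I retain the boundary $L^2$-norms of $\mathcal U\mu$ and of $\mu$; absorbing the trace inequality for $\mathcal U\mu$ into the $\sqrt{|K|/|\partial K|}$ weights shows that these reassemble to $\|\mu_{\A,\Gamma}\|_h$ as defined in \EQ{L2norm}. Since only one argument now pays the $1/\sqrt\epsilon$ cost of the $B$-norm conversion, the prefactor improves from $\Ctwo^2/\epsilon$ to $\Ctwo^2/\sqrt\epsilon$. The main obstacle in the whole lemma is precisely the local energy estimate controlling $\mathcal U\lambda$ and its traces by $\|\lambda\|_{B^{(i)}}$: no single test function or Poincar\'e estimate in \EQ{eq3.4} is sharp uniformly in $\epsilon,h,\vbeta,\tau_K$, so one has to piece together two competing bounds, and the two expressions inside the maximum defining $\Ctwo$ in \EQ{Cdef} are exactly these two bounds. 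Once that estimate is in hand, all three inequalities of the lemma are routine term-by-term bookkeeping.
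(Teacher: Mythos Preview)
Your proposal has a genuine gap at its central step. You claim a subdomain (or element) energy estimate of the form $\|\mathcal U\lambda\|_K\le C(\Ctwo/\sqrt\epsilon)\|\lambda\|_{B^{(i)}}$, obtained by ``testing \EQ{eq3.4} with $\mathbf r=\mathcal Q\lambda$ and $w=\mathcal U\lambda$'' together with a Poincar\'e-type argument. But this inequality is false in general: if $\nabla\cdot\vbeta=0$ (which Assumption~\ref{assump:onbeta} permits) and $\lambda$ is a nonzero constant on $\partial K$, then $\mathcal Q\lambda=0$, $\mathcal U\lambda=\lambda$ on $\partial K$ (see the proof of Lemma~\LA{IJQzero}), so all three pieces of $b^{(i)}$ vanish and $\|\lambda\|_{B^{(i)}}=0$, yet $\|\mathcal U\lambda\|_K\neq 0$. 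The paper itself flags this obstruction in Remark~\ref{remark:moree}. Testing \EQ{eq3.4} as you suggest simply reproduces $a_h(\lambda,\lambda)=\|\lambda\|_B^2$ and gives no control on $\|\mathcal U\lambda\|_K$.

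The paper's route is different and not purely local. It first establishes the element-local bounds $\|\mathcal U\mu\|_K\le C\Ctwo\|\mu\|_{h,K}$ (Lemma~\LA{UmuBound}), $\|\nabla\mathcal U\mu\|_K\le C\Ctwo\interleave\mu\interleave_K$ (Lemma~\LA{IJQzero}), and $\|\mathcal U\mu-\mu\|_{\partial K}\le Ch^{1/2}\Ctwo\interleave\mu\interleave_K$ (Lemma~\LA{Ubound}); the two cases in the proof of Lemma~\LA{UmuBound} are where the two branches of the maximum in \EQ{Cdef} actually arise, and they bound $\|\mathcal U\mu\|_K$ by $\|\mu\|_{h,K}$, not by the $B$-norm. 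These are then combined at the subdomain level (Lemma~\LA{lbnorm}) to give $|z^{(i)}(\eta,\mu)|\le C(\Ctwo^2/\sqrt\epsilon)\big(\|\eta\|_{B^{(i)}}\|\mu\|_{h,\Omega_i}+\|\eta\|_{h,\Omega_i}\|\mu\|_{B^{(i)}}\big)$, still carrying the $\|\cdot\|_{h,\Omega_i}$ terms. Only after summing over subdomains does one invoke the \emph{global} Poincar\'e-type inequalities $\|\mu\|_h\le C\interleave\mu\interleave$ (Lemma~\LA{muk|||} for $\mu\in\Lambda$, using $\mu|_{\partial\Omega}=0$) and its $\Ltilde$ analogue (Lemma~\LA{mu|||t}, which genuinely uses the primal edge-average constraints in Assumption~\ref{assump:onWtilde}), followed by $\interleave\mu\interleave\le\epsilon^{-1/2}\|\mu\|_B$ from Lemma~\LA{equivalent}. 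This chain is packaged as Lemma~\LA{bnorm}, from which Lemma~\LA{Sgnorm} follows immediately via Lemma~\LA{null}. Your dismissal of the $\Ltilde_\Gamma$ case as ``the same subdomain-local estimate'' therefore misses a nontrivial ingredient: Lemma~\LA{mu|||t} is not local and does not hold without the coarse constraints.
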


\begin{lemma}\label{lemma:averagenorm}
There exists a positive constant $C$ such that for all $\mu_{\Gamma}\in\widetilde{\Lambda}_{\Gamma},$
\begin{align*}
\|E_D\mu_{\Gamma}\|^2_{\widetilde{S}_{\Gamma}}\leq CC_{ED}^2\|\mu_{\Gamma}\|^2_{\widetilde{S}_{\Gamma}},
\end{align*}
where $C^2_{ED}=\frac{\Ctwo^4}{\epsilon^2}\left(1+\log\frac Hh\right)^2\gamma_{h,\tau_K}$.
\end{lemma}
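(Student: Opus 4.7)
The plan is to follow the standard BDDC template for bounding an averaging operator, adapted to the HDG setting where the norms involve the operators $\mathcal{Q}$ and $\mathcal{U}$ acting on the numerical trace. The first step is to use Lemma \LA{null} to pass from the nonsymmetric $\widetilde{S}_\Gamma$-norm to the symmetric $\widetilde{B}_\Gamma$-norm on the diagonal: since $\langle w_\Gamma, w_\Gamma\rangle_{\widetilde{S}_\Gamma} = \langle w_\Gamma, w_\Gamma\rangle_{\widetilde{B}_\Gamma}$ for any $w_\Gamma\in\widetilde{\Lambda}_\Gamma$, the claim is equivalent to
\[
\|E_D\mu_\Gamma\|^2_{\widetilde{B}_\Gamma} \leq CC_{ED}^2 \|\mu_\Gamma\|^2_{\widetilde{B}_\Gamma}.
\]
Writing $E_D\mu_\Gamma = \mu_\Gamma - (I - E_D)\mu_\Gamma$ and applying the triangle inequality, the task reduces to estimating the jump term $\|(I-E_D)\mu_\Gamma\|^2_{\widetilde{B}_\Gamma}$, because the identity contribution already gives $\|\mu_\Gamma\|^2_{\widetilde{B}_\Gamma}$.

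Next I would decompose $(I-E_D)\mu_\Gamma$ into edge-local contributions. Since $\Ltilde_\Gamma = \hat\Lambda_\Pi \oplus \prod_i \Lambda_\Delta^{(i)}$, only the dual component contributes to the jump, and the dual component restricted to an edge $\mathcal{E}^k$ can, thanks to Assumption \ref{assump:onWtilde}, be written with zero edge average, zero flux-weighted edge average, and zero flux-weighted first moment. The contribution from each edge takes the form
\[
(I-E_D)\mu_\Gamma|_{\mathcal{E}^k} = \delta^{\dagger}_j(\mu_\Gamma^{(i)} - \mu_\Gamma^{(j)}),
\]
and $\delta^{\dagger}_i$ is constant on $\mathcal{E}^k$, so the analysis localizes to the two subdomains sharing each edge. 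For each such local jump, use the Robin-type local extension operator implicit in $\mathcal{Q},\mathcal{U}$ to bound the $\widetilde{B}^{(i)}$-norm of the edge contribution by an appropriate trace norm ($\|\cdot\|_{h,\partial\Omega_i}$ or $\interleave\cdot\interleave_{\partial\Omega_i}$) of the jump. This translation step introduces the factors $\Ctwo^2/\epsilon$ twice — once for each application of the norm equivalence between the bilinear form $\widetilde{b}$ and the trace-level norm — yielding a $\Ctwo^4/\epsilon^2$ overall, and the stabilization constant $\gamma_{h,\tau_K}$ appears from the boundary term $\langle(\tau_K - \tfrac12\vbeta\cdot\mathbf{n})(\mathcal{U}\lambda-\lambda), \mathcal{U}\mu-\mu\rangle$ in \EQ{lbformK}.

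Once the edge-jump is bounded by a trace norm, I apply a classical discrete Sobolev-type estimate to get the logarithmic factor. Specifically, since the dual component has zero appropriate moments on each edge, a Poincaré-type inequality together with the standard edge cut-off technique (as in \cite{klaw_wid3,TW:2016:HDG}) gives, for each edge $\mathcal{E}^k$ shared by $\Omega_i$ and $\Omega_j$,
\[
\interleave \mu_\Gamma^{(i)} - \mu_\Gamma^{(j)}\interleave^2_{\mathcal{E}^k} \leq C\left(1+\log\frac{H}{h}\right)^2 \bigl(\interleave \mu_\Gamma^{(i)}\interleave^2_{\partial\Omega_i} + \interleave \mu_\Gamma^{(j)}\interleave^2_{\partial\Omega_j}\bigr).
\]
Summing over edges, converting each right-hand-side term back into the local $\widetilde{B}^{(i)}$-norm (paying another $\Ctwo^2/\epsilon$ and $\gamma_{h,\tau_K}$ factor), and using that the overlap of subdomain boundaries is uniformly bounded yields the claimed $C^2_{ED}$.

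The main obstacle, and the reason this proof is more delicate than the finite element case in \cite{TuLi:2008:BDDCAD}, is the norm translation in the middle step: the $\widetilde{B}$-norm is expressed through the internally-defined operators $\mathcal{Q}$ and $\mathcal{U}$ on element interiors, whereas the averaging operator $E_D$ acts only on the numerical trace $\lambda$ living on element faces. Bridging these requires exploiting the element-by-element local solvability of \EQ{eq3.4}, the $L^2$-like face norm \EQ{L2norm}, and the moment-free norm \EQ{IIInorm} taken from \cite{multigrid}, rather than the more direct $H^1$-seminorm identity available in the conforming FEM setting. This is where the extra $\Ctwo^2/\epsilon$ powers and the $\gamma_{h,\tau_K}$ factor enter, and carefully tracking them through the edge-by-edge reduction is the crux of the argument.
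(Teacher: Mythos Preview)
The paper takes a different route from yours. Rather than an edge-by-edge decomposition with discrete Sobolev estimates directly in the advection--diffusion HDG norms, the paper \emph{reduces to the already-established elliptic HDG bound} (Lemma \LA{EDO}, from \cite{TW:2016:HDG}). The key device is to compare the advection--diffusion extension $v_{\mathcal{A},\Gamma}^{(i)}$ of the jump $v_\Gamma = E_D\mu_\Gamma - \mu_\Gamma$ with the elliptic \emph{harmonic} extension $v_{\mathcal{H},\Gamma}^{(i)}$: since their difference vanishes on $\partial\Omega_i$, Lemma \LA{lbnormzero} controls $z^{(i)}$ on it, and since $v_\Gamma$ has zero edge averages (from the primal constraints), Lemma \LA{averagezeroscaling} handles the $\|\cdot\|_{h,\Omega_i}$ contribution. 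This gives $\|v_{\mathcal{A},\Gamma}^{(i)}\|_{B^{(i)}} \le C\frac{\Ctwo^2}{\sqrt{\epsilon}}\interleave v_{\mathcal{H},\Gamma}^{(i)}\interleave_{\Omega_i}$, after which Lemma \LA{equivo} converts to the elliptic $A^e$-norm, Lemma \LA{EDO} supplies the $(1+\log H/h)^2\gamma_{h,\tau_K}$ factor, and the energy-minimization property of the $\mathcal{H}$-extension (together with Lemmas \LA{equivo} and \LA{subequivalent}) closes the loop back to $\|\mu_\Gamma\|^2_{\widetilde{S}_\Gamma}$.

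Your direct approach has a genuine gap at the ``translation'' steps. The $\widetilde{B}_\Gamma$-norm is defined through the $\mathcal{A}$-extension \EQ{extension}, which does \emph{not} minimize any symmetric energy; hence you cannot bound the $\widetilde{B}^{(i)}$-norm of an edge contribution by a trace norm via ``the Robin-type local extension'' --- any such extension lemma controls the minimal-energy extension, not the specific $\mathcal{A}$-extension. The same difficulty blocks the reverse step from trace norms of $\mu_\Gamma^{(i)}$ back to $\|\mu_\Gamma\|_{\widetilde{B}_\Gamma}$. The paper resolves exactly this obstruction by passing to the elliptic harmonic extension, which \emph{does} minimize $A^e$-energy. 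Your factor bookkeeping also does not match the target: as written you accumulate three powers of $\Ctwo^2/\epsilon$ rather than two, and $\gamma_{h,\tau_K}$ in the paper enters once via Lemma \LA{equivo} (the elliptic norm equivalence), not through the stabilization term in \EQ{lbformK}.
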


The following lemma is similar to \cite[Lemmas 7.9 and 7.10]{TuLi:2008:BDDCAD}. 

\begin{lemma}\label{lemma:wnorm}
Given $\mu_{\Gamma}\in\Lhat_{\Gamma}$, let
$w_{\Gamma}=\widetilde{S}^{-1}_{\Gamma}\widetilde{R}_{D,\Gamma}S_{\Gamma}\mu_{\Gamma}$, then we have
\begin{equation}\label{equation:weqn}
\| \wG
\|^2_{\BTG} =\left<\uG,T\uG\right>_\SG
\end{equation}
and
\[
\|w_{\Gamma}\|^2_{\widetilde{B}_{\Gamma}}\leq C\frac{\Ctwo^4}{\epsilon^2}C_{ED}^2 \|\mu_{\Gamma}\|^2_{B_{\Gamma}}.
\]
\end{lemma}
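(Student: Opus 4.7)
The plan is to establish the two claims in sequence. The identity \EQ{weqn} is essentially an algebraic rearrangement built on Lemma~\LA{null}, while the upper bound combines the continuity of $\Stilde_\Gamma$ from Lemma~\LA{Sgnorm} with the stability of the averaging operator from Lemma~\LA{averagenorm}. Throughout I will use the bracket convention $\langle p, q\rangle_M = q^T M p$ together with the assembly identity $S_\Gamma = \RtildeG^T \Stilde_\Gamma \RtildeG$, which follows from $\overline{R}_\Gamma \RtildeG = R_\Gamma$ (embedding into the partially assembled space and then splitting into subdomain pieces agrees with the direct subdomain restriction from $\Lhat_\Gamma$).

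For \EQ{weqn}: since $w_\Gamma \in \Ltilde_\Gamma$, Lemma~\LA{null} gives $\|w_\Gamma\|^2_{\BTG} = \langle w_\Gamma, w_\Gamma\rangle_{\Stilde_\Gamma} = w_\Gamma^T \Stilde_\Gamma w_\Gamma$, because $\Ztilde_\Gamma$ vanishes on self-pairings. Substituting the defining relation $\Stilde_\Gamma w_\Gamma = \Rtilde_{D,\Gamma} S_\Gamma \mu_\Gamma$ and moving $\Rtilde_{D,\Gamma}$ to the other factor yields $w_\Gamma^T \Rtilde_{D,\Gamma} S_\Gamma \mu_\Gamma = (\Rtilde_{D,\Gamma}^T w_\Gamma)^T S_\Gamma \mu_\Gamma = (T\mu_\Gamma)^T S_\Gamma \mu_\Gamma = \langle \mu_\Gamma, T\mu_\Gamma\rangle_\SG$, which is \EQ{weqn}.

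For the bound, set $\overline{\mu}_\Gamma = \RtildeG \mu_\Gamma \in \Ltilde_\Gamma$ and use $E_D = \RtildeG \Rtilde_{D,\Gamma}^T$, so $E_D^T = \Rtilde_{D,\Gamma} \RtildeG^T$. The assembly identity lets me rewrite
\[
\|w_\Gamma\|^2_{\BTG} = w_\Gamma^T \Rtilde_{D,\Gamma} S_\Gamma \mu_\Gamma = w_\Gamma^T \Rtilde_{D,\Gamma} \RtildeG^T \Stilde_\Gamma \overline{\mu}_\Gamma = \langle \overline{\mu}_\Gamma, E_D w_\Gamma\rangle_{\Stilde_\Gamma}.
\]
Applying the $\Stilde_\Gamma$ continuity from the second inequality of Lemma~\LA{Sgnorm} gives $\|w_\Gamma\|^2_{\BTG} \leq C(\Ctwo^2/\epsilon)\,\|\overline{\mu}_\Gamma\|_{\BTG}\,\|E_D w_\Gamma\|_{\BTG}$. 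Two estimates finish the argument: Lemma~\LA{null} together with the assembly identity yields $\|\overline{\mu}_\Gamma\|^2_{\BTG} = \overline{\mu}_\Gamma^T \Stilde_\Gamma \overline{\mu}_\Gamma = \mu_\Gamma^T \RtildeG^T \Stilde_\Gamma \RtildeG \mu_\Gamma = \mu_\Gamma^T S_\Gamma \mu_\Gamma = \|\mu_\Gamma\|^2_\SGS$; and Lemma~\LA{averagenorm} together with Lemma~\LA{null} yields $\|E_D w_\Gamma\|^2_{\BTG} = \|E_D w_\Gamma\|^2_{\Stilde_\Gamma} \leq C\,C_{ED}^2 \|w_\Gamma\|^2_{\Stilde_\Gamma} = C\,C_{ED}^2 \|w_\Gamma\|^2_{\BTG}$. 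Substituting, dividing by $\|w_\Gamma\|_{\BTG}$ (the trivial case aside), and squaring produces the stated bound $\|w_\Gamma\|^2_{\BTG} \leq C(\Ctwo^4/\epsilon^2)\,C_{ED}^2 \|\mu_\Gamma\|^2_\SGS$.

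The main technical point is the scaffolding around the assembly identity $S_\Gamma = \RtildeG^T \Stilde_\Gamma \RtildeG$ and the interchange $\Rtilde_{D,\Gamma}\RtildeG^T = E_D^T$ in the paper's overloaded notation, and making sure it is the $\Stilde_\Gamma$-version of Lemma~\LA{Sgnorm} (not the $S_\Gamma$-version) that is invoked on the partially assembled pair $(\overline{\mu}_\Gamma, E_D w_\Gamma)$. Once those identifications are pinned down, the rest is a standard Cauchy--Schwarz chain, and the skew-symmetric part of $\Stilde_\Gamma$ never directly enters because each norm estimate is absorbed into $\|\cdot\|_{\BTG}$ via Lemma~\LA{null}.
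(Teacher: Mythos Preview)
Your proof is correct and uses the same ingredients as the paper (Lemma~\LA{null}, Lemma~\LA{Sgnorm}, Lemma~\LA{averagenorm}, and the assembly identity $\widehat{S}_\Gamma = \RtildeG^T \Stilde_\Gamma \RtildeG$), but the organization of the second part differs. The paper first proves the auxiliary identity $\langle T\mu_\Gamma, T\mu_\Gamma\rangle_{B_\Gamma} = \|E_D w_\Gamma\|^2_{\BTG}$, bounds this by $C C_{ED}^2 \|w_\Gamma\|^2_{\BTG} = C C_{ED}^2 \langle \mu_\Gamma, T\mu_\Gamma\rangle_{S_\Gamma}$, and then applies the $S_\Gamma$-continuity of Lemma~\LA{Sgnorm} on $\Lhat_\Gamma$ to extract the intermediate bound $\|T\mu_\Gamma\|_{B_\Gamma} \le C(\Ctwo^2/\epsilon)C_{ED}^2 \|\mu_\Gamma\|_{B_\Gamma}$; only then does it feed this back to bound $\|w_\Gamma\|^2_{\BTG}$. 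Your route instead rewrites $\|w_\Gamma\|^2_{\BTG} = \langle \RtildeG \mu_\Gamma, E_D w_\Gamma\rangle_{\Stilde_\Gamma}$ via $E_D^T = \Rtilde_{D,\Gamma}\RtildeG^T$ and applies the $\Stilde_\Gamma$-continuity directly on this mixed pair, which is shorter. The paper's detour is not wasted, however: the intermediate estimate on $\|T\mu_\Gamma\|_{B_\Gamma}$ is exactly the upper bound \eqref{eq3.11} of Theorem~\ref{theorem:maintheorem}, so their longer argument proves the lemma and the theorem's upper bound in one stroke, whereas your version proves only the lemma as stated.
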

\begin{proof}

Since $\Rtilde_{D,\Gamma}^T \wG = \Rtilde_{D,\Gamma}^T
\Stilde^{-1}_\Gamma\Rtilde_{D,\Gamma} S_\Gamma u = T \uG$, we have,
using Lemma~\LA{null},
\begin{eqnarray*}
\| \wG \|^2_{\BTG} &= & \left< \wG, \wG \right>_{\STG} = \wG^T
\Stilde_\Gamma \wG =
\wG^T\Stilde_\Gamma \Stilde^{-1}_\Gamma\Rtilde_{D,\Gamma} \SG \uG \\
& = & \wG^T
\Rtilde_{D,\Gamma}\SG\uG=\big<\uG,\Rtilde_{D,\Gamma}^T\wG \big>_\SG
= \left<\uG, T\uG\right>_\SG. 
\end{eqnarray*}

Moreover,
\begin{align*}
\langle T\mu_{\Gamma},T\mu_{\Gamma}\rangle_{B_{\Gamma}}&=\langle T\mu_{\Gamma},T\mu_{\Gamma}\rangle_{S_{\Gamma}}\\
&=\langle\widetilde{R}^T_{D,\Gamma}\widetilde{S}^{-1}_{\Gamma}\widetilde{R}_{D,\Gamma}S_{\Gamma}\mu_{\Gamma},\widetilde{R}^T_{D,\Gamma}\widetilde{S}^{-1}_{\Gamma}\widetilde{R}_{D,\Gamma}S_{\Gamma}\mu_{\Gamma}\rangle_{S_{\Gamma}}\\
&=\langle\widetilde{R}_{\Gamma}\widetilde{R}^T_{D,\Gamma}w_{\Gamma},\widetilde{R}_{\Gamma}\widetilde{R}^T_{D,\Gamma}w_{\Gamma}\rangle_{\widetilde{S}_{\Gamma}}=\|E_Dw_{\Gamma}\|^2_{\widetilde{B}_{\Gamma}}.
\end{align*}
From Lemmas \LA{averagenorm}, \LA{Sgnorm}, and \EQ{weqn},  we have
\begin{align*}
\langle T\mu_{\Gamma},T\mu_{\Gamma}\rangle_{B_{\Gamma}}&=\|E_Dw_{\Gamma}\|^2_{\widetilde{B}_{\Gamma}}\leq CC^2_{ED}\|w_{\Gamma}\|^2_{\widetilde{B}_{\Gamma}}\\
&=CC_{ED}^2\langle\mu_{\Gamma},T\mu_{\Gamma}\rangle_{S_{\Gamma}}\\
&\leq C\frac{\Ctwo^2}{{\epsilon}} C^2_{ED}\|T\mu_{\Gamma}\|_{B_{\Gamma}}\|\mu_{\Gamma}\|_{B_{\Gamma}}.
\end{align*}
Therefore
\begin{equation}\label{eq3.10}
\begin{aligned}
\|T\mu_{\Gamma}\|_{B_{\Gamma}}\leq C\frac{\Ctwo^2}{{\epsilon}} C_{ED}^2\|\mu_{\Gamma}\|_{B_{\Gamma}},
\end{aligned}
\end{equation}
 and 
\begin{align*}
\|w_{\Gamma}\|^2_{\widetilde{B}_{\Gamma}}&=\langle\mu_{\Gamma},T\mu_{\Gamma}\rangle_{S_{\Gamma}}\leq
                                         C\frac{\Ctwo^2}{{\epsilon}}\|\mu_{\Gamma}\|_{B_{\Gamma}}\|T\mu_{\Gamma}\|_{B_{\Gamma}}\\
&\leq C\frac{\Ctwo^4}{\epsilon^2} C_{ED}^2\|\mu_{\Gamma}\|^2_{B_{\Gamma}}.
\end{align*}
\end{proof}

For the lower bound, we need to estimate  the $L^2$ type error. 
\begin{lemma}\label{lemma:L2error} Given $\mu_{\Gamma}\in\Lhat_{\Gamma}$,
let  $v_{\Gamma}=T\mu_{\Gamma}-\mu_{\Gamma}$, then 
\[\|v_{\mathcal{A},\Gamma}\|_{h}\le
  CC_{h}\|\mu_{\Gamma}\|_{B_{\Gamma}},\]
where $C_h=
\frac{\Ctwo^6\left(1+\log\frac{H}{h}\right)^2\gamma_{h,\tau_K}\mymax}{\epsilon^{4}},$
$C_L(\epsilon,H,h)=\frac{1}{\sqrt\epsilon}\max(H\epsilon, H^2)$ is defined in
Lemma \LA{exact}.
\end{lemma}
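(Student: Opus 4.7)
\medskip

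\noindent\textbf{Proof plan.} The plan is to adapt the Aubin--Nitsche type duality argument used in \cite[Lemma~7.13]{TuLi:2008:BDDCAD} to the HDG setting, where the appropriate weaker norm is $\|\cdot\|_h$ of \EQ{L2norm} rather than the plain $L^2$ norm, as suggested by \cite{multigrid} for non-conforming HDG approximations. The basic idea is that $v_\Gamma = T\mu_\Gamma - \mu_\Gamma$ measures the deviation of the preconditioned iterate from the true interface function; since $T\mu_\Gamma = \Rtilde_\Gamma E_{D,\Gamma} w_\Gamma$ while $\mu_\Gamma = \Rtilde_\Gamma\mu_\Gamma$ after injection, the discrepancy is controlled by the jump of $w_\Gamma$ across subdomain interfaces, which the primal constraints of Assumption \ref{assump:onWtilde} are designed to suppress.

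\medskip

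\noindent\textbf{Step 1 (set up the adjoint problem).} Given $v_{\mathcal{A},\Gamma}\in\Lambda$, introduce $z\in H^1_0(\Omega)\cap H^2(\Omega)$ solving the adjoint PDE \EQ{pdea} with a right-hand side tied to $v_{\mathcal{A},\Gamma}$ (roughly, the piecewise average of $v_{\mathcal{A},\Gamma}$ on $\partial K$ lifted in a $\|\cdot\|_h$-dual manner). Apply the regularity estimate \EQ{regularity} to get $\|z\|_{H^2(\Omega)}\le (C/\epsilon)\|v_{\mathcal{A},\Gamma}\|_h$, which introduces one factor of $1/\epsilon$.

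\medskip

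\noindent\textbf{Step 2 (HDG approximation of the adjoint).} Let $\lambda_z\in\Lambda$ be the HDG numerical trace of $z$ on the global (conforming) mesh. Using the HDG error analysis for the adjoint (this is where $\Ctwo$ and $\gamma_{h,\tau_K}$ enter, and where the approximation constant $\mymax=(1/\sqrt{\epsilon})\max(H\epsilon,H^2)$ from Lemma~\LA{exact} shows up through the subdomain-scale $H$ that governs coarse-level terms), express $v_{\mathcal{A},\Gamma}$ in terms of $a_h$-type pairings against $\mathcal{U}\lambda_z$ and $\lambda_z$, picking up a factor $\Ctwo^k \mymax$.

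\medskip

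\noindent\textbf{Step 3 (use BDDC structure).} Write $T\mu_\Gamma - \mu_\Gamma = \Rtilde_\Gamma(E_{D,\Gamma}w_\Gamma - \Rtilde_\Gamma\mu_\Gamma)$ and use the Galerkin consistency of the HDG discretization on the conforming space $\Lambda$ to kill the ``smooth'' part of $\lambda_z$; what remains is a residual expressed against $\lambda_z - I_H\lambda_z$ (or the analogous HDG coarse interpolant). Here the partially sub-assembled problem is treated as a non-conforming approximation, so the bilinear form $\widetilde{a}$ has to be tested against the jump $(I-E_{D,\Gamma})w_\Gamma$. Continuity of $\widetilde{a}$ bounds this by Lemma~\LA{Sgnorm}, giving an extra factor $\Ctwo^2/\epsilon$.

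\medskip

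\noindent\textbf{Step 4 (control the jump).} Invoke the stability of the average operator from Lemma \LA{averagenorm} on $(I-E_{D,\Gamma})w_\Gamma$, which contributes $C_{ED}^2=(\Ctwo^4/\epsilon^2)(1+\log(H/h))^2\gamma_{h,\tau_K}$; then use Lemma \LA{wnorm} to bound $\|w_\Gamma\|_{\Btilde_\Gamma}$ in terms of $\|\mu_\Gamma\|_{B_\Gamma}$, which is where another $\Ctwo^2/\epsilon$ appears. Combining all the factors produces exactly the advertised constant
\[
C_h \;=\; \frac{\Ctwo^6\left(1+\log\tfrac{H}{h}\right)^2 \gamma_{h,\tau_K}\,\mymax}{\epsilon^{4}}.
\]

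\medskip

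\noindent\textbf{Main obstacle.} The delicate part is Step~2: justifying that the HDG discretization of the adjoint problem yields an error estimate in the $\|\cdot\|_h$ norm with the precise dependence on $\epsilon$, $\Ctwo$, and $\mymax$ stated here, because the adjoint is itself a convection-diffusion problem with reversed flow direction and the HDG stabilization parameter $\tau_K$ was chosen with respect to the primal flow. This is where the use of the new norm from \cite{multigrid} is essential, as the standard $L^2$ estimate from \cite{TuLi:2008:BDDCAD} does not interact well with the HDG numerical trace. A careful splitting of $a_h$ into its symmetric part $b_h$ and skew part $z_h$ (see \EQ{sbformK}--\EQ{sbformN}) will be needed to absorb the skew terms into $\|\cdot\|_{B_\Gamma}$ via Cauchy--Schwarz with factor $\Ctwo^2/\sqrt{\epsilon}$ as in the third inequality of Lemma~\LA{Sgnorm}, which is precisely why an $\epsilon^{-4}$ rather than an $\epsilon^{-3}$ appears in $C_h$.
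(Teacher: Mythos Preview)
Your high-level plan---an Aubin--Nitsche duality argument in the HDG-adapted norm $\|\cdot\|_h$, followed by stability of the averaging operator and of $w_\Gamma$---is in the right spirit, and the lemmas you cite (\LA{Sgnorm}, \LA{averagenorm}, \LA{wnorm}, \LA{exact}) are indeed the ones that appear. However, the paper does \emph{not} apply duality directly to $v_{\A,\Gamma}$ as you propose in Steps~1--2. Instead it first performs, subdomain by subdomain, the splitting
\[
v_{\A,\Gamma}^{(i)} \;=\; \H_\A\bigl(\hatR_\Gamma^{(i)}\Rtilde_{D,\Gamma}^T w_\Gamma-\barR_\Gamma^{(i)}w_\Gamma\bigr)\;+\;\H_\A\bigl(\barR_\Gamma^{(i)}w_\Gamma-\hatR_\Gamma^{(i)}\mu_\Gamma\bigr),
\]
and handles the two pieces with completely different tools. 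The first (averaging-jump) piece has zero edge averages on $\partial\Omega_i$, so its $\|\cdot\|_{h,\Omega_i}$ is controlled directly by Lemma~\LA{averagezeroscaling} followed by Lemmas~\LA{averagenorm} and~\LA{wnorm}; no duality is used at all there. The second piece is $w_{\A,\Gamma}-\mu_{\A,\Gamma}\in\Ltilde$, and this is where the duality argument is applied---but through the auxiliary $S$-norm of~\EQ{SKdef}, built from the local extension operators $S_i^K$ of~\EQ{SKdef1}--\EQ{SKdef2}, and culminating in Lemma~\LA{approximation}. These operators, together with Lemma~\LA{propw} (the interface-level Galerkin orthogonality $\widetilde{a}(w_{\A,\Gamma}-\mu_{\A,\Gamma},\varphi_\lambda)=0$ for $\varphi_\lambda\in\Lambda$), are the key technical devices your plan does not mention.

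This is not just a packaging difference. The adjoint source $g$ in the paper is \emph{not} built from $v_{\A,\Gamma}$ but from a generic $\lambda\in\Ltilde$ via~\EQ{gdef}, precisely so that the duality bound (Lemmas~\LA{exact}, \LA{HDGerror}, \LA{ErrorBound}) feeds into Lemma~\LA{approximation}, which is a sup over test functions in the $S$-norm. Your Step~3 expression $T\mu_\Gamma-\mu_\Gamma=\Rtilde_\Gamma(E_{D,\Gamma}w_\Gamma-\Rtilde_\Gamma\mu_\Gamma)$ is also not well-posed as written (the operators do not compose that way between $\Lhat_\Gamma$ and $\Ltilde_\Gamma$), and without the explicit splitting above you would have difficulty invoking Lemma~\LA{averagezeroscaling} on the jump part; the paper notes in Remark~\ref{remark:error} that it is exactly this separate treatment of the two terms that avoids an extra $H/h$ factor compared to \cite[Lemma~7.14]{TuLi:2008:BDDCAD}. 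Finally, the ``main obstacle'' you identify---the adjoint HDG error with reversed flow---is handled in the paper by Lemma~\LA{HDGerror}, but that lemma is a relatively routine projection-error estimate; the genuinely new HDG-specific machinery is the $S_i^K$ extensions, which let the duality argument interact correctly with a trace variable defined only on element boundaries.
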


Now we are ready to prove our main theorem Theorem
\ref{theorem:maintheorem}. 
The proof is similar to that for
\cite[Theorem 7.15]{TuLi:2008:BDDCAD}.

{\bf Proof of Theorem \ref{theorem:maintheorem}:}
The upper bound Equation $\eqref{eq3.11}$ is given by $\eqref{eq3.10}.$
To prove the lower bound Equation $\eqref{eq3.12}$, for any
$\mu_\Gamma\in \Lhat_\Gamma$, using the identity
$\widetilde{R}^T_{\Gamma}\widetilde{R}_{D,\Gamma}=I$, we have \begin{align*}
\langle \mu_{\Gamma}, \mu_{\Gamma}\rangle_{B_{\Gamma}}
&=\langle \mu_{\Gamma},\mu_{\Gamma} \rangle_{S_{\Gamma}}=\mu^{T}_{\Gamma}\widetilde{R}^T_{\Gamma}\widetilde{R}_{D,\Gamma}S_{\Gamma}\mu_{\Gamma}\\
&=\mu^{T}_{\Gamma}\widetilde{R}^T_{\Gamma}\widetilde{S}_{\Gamma}\widetilde{S}^{-1}_{\Gamma}\widetilde{R}_{D,\Gamma}S_{\Gamma}\mu_{\Gamma}=\langle w_{\Gamma},\widetilde{R}_{\Gamma}\mu_{\Gamma}\rangle_{\widetilde{S}_{\Gamma}}\\
&\leq C\frac{\Ctwo^2}{\epsilon} \|w_{\Gamma}\|_{\widetilde{B}_{\Gamma}}\|\mu_{\Gamma}\|_{B_{\Gamma}}\\
&=C\frac{\Ctwo^2}{\epsilon}  \langle\mu,T\mu_{\Gamma}\rangle^{\frac12}_{S_{\Gamma}}\|\mu_{\Gamma}\|_{B_{\Gamma}},
\end{align*}
where we use Lemmas \LA{Sgnorm} and \LA{wnorm} for the last two steps. 

Canceling the common factor and using Lemma \LA{null},  we have 
\begin{align*}
\|\mu_{\Gamma}\|^2_{B_{\Gamma}}&\leq
                               C\frac{\Ctwo^4}{\epsilon^2} \langle\mu_{\Gamma},T\mu_{\Gamma}\rangle_{S_{\Gamma}}\\
 &=C\frac{\Ctwo^4}{\epsilon^2}  (\langle\mu_{\Gamma},T\mu_{\Gamma}\rangle_{B_{\Gamma}}+
\langle\mu_{\Gamma},T\mu_{\Gamma}-\mu_{\Gamma}\rangle_{Z_{\Gamma}}).
\end{align*}

Let $v_\Gamma=T\mu_{\Gamma}-\mu_{\Gamma}$. Using Lemma \LA{Sgnorm}, we have
\begin{align*}
\langle\mu_{\Gamma},\mu_{\Gamma}\rangle_{B_{\Gamma}}&\leq C\frac{\Ctwo^4}{\epsilon^2}  (\langle\mu_{\Gamma},T\mu_{\Gamma}\rangle_{B_{\Gamma}}+
\langle\mu_{\Gamma},T\mu_{\Gamma}-\mu_{\Gamma}\rangle_{Z_{\Gamma}})\\
&\leq C\frac{\Ctwo^4}{\epsilon^2}  \langle\mu_{\Gamma},T\mu_{\Gamma}\rangle_{B_{\Gamma}}+C\frac{\Ctwo^6}{\epsilon^{5/2}}\|\mu_{\Gamma}\|_{B_{\Gamma}}\|v_{\A,\Gamma}\|_{h}\\
&\leq C\frac{\Ctwo^2}{\epsilon} \langle\mu_{\Gamma},T\mu_{\Gamma}\rangle_{B_{\Gamma}}+C\frac{\Ctwo^6C_h}{\epsilon^{5/2}}\|\mu_{\Gamma}\|^2_{B_{\Gamma}},
\end{align*}
where we use Lemma \LA{L2error} for the last step.

The second term in the right hand side can be combined with the left
hand side and Equation $\eqref{eq3.12}$ is proved. 
\eproof

\section{Some estimates and results}
In this section, we will provide the proofs for Lemmas
\LA{Sgnorm}, \LA{averagenorm}, and \LA{L2error}.

 First, we list two useful results.  For shape regular partition $\mathcal{T}_{h}$
as detailed in \cite{WYWGS2016}, the following trace and inverse inequalities
hold; see\cite{WY2014mixed}.

\begin{lemma}\label{lemma:trace}
(Trace Inequality)  There exists a constant $C$ such that
\begin{equation}
\|g\| _{e}^{2}\leq C\left(h_{K}^{-1}\|g\| _{K}^{2}+h_{K}\|\nabla g\| _{K}^{2}\right),\label{eq:trace}
\end{equation}
where $g\in H^1(K)$, and $K$ is an element of $\mathcal{T}_{h}$ with $e$ as an edge/face.
\end{lemma}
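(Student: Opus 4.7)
This is a standard trace inequality, so I would follow the classical scaling argument to a reference element, which is the usual route for such mesh-dependent trace estimates in finite element analysis.

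First I would fix a reference element $\hat K$ of unit diameter with a reference edge/face $\hat e$, and introduce an affine bijection $F_K:\hat K\to K$ with $F_K(\hat e)=e$. The shape-regularity of $\mathcal{T}_h$ assumed at the start of Section~6 yields the uniform bounds $\|DF_K\|\le C h_K$, $\|DF_K^{-1}\|\le C h_K^{-1}$, and $|\det DF_K|\sim h_K^n$, with constants depending only on the shape-regularity parameter. On the fixed reference domain the standard continuity of the trace operator $H^1(\hat K)\to L^2(\hat e)$ gives, for $\hat g := g\circ F_K$,
\[
\|\hat g\|_{L^2(\hat e)}^{2} \;\le\; \hat C\bigl(\|\hat g\|_{L^2(\hat K)}^{2}+\|\nabla\hat g\|_{L^2(\hat K)}^{2}\bigr),
\]
where $\hat C$ depends only on the shape of $\hat K$.

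Next I would pull this inequality back to $K$ via the change-of-variables formula, using $\nabla\hat g=(DF_K)^{T}(\nabla g)\circ F_K$. In $n$ space dimensions the resulting scalings are
\[
\|\hat g\|_{L^2(\hat e)}^{2}\sim h_K^{-(n-1)}\|g\|_{e}^{2},\qquad \|\hat g\|_{L^2(\hat K)}^{2}\sim h_K^{-n}\|g\|_{K}^{2},\qquad \|\nabla\hat g\|_{L^2(\hat K)}^{2}\sim h_K^{-(n-2)}\|\nabla g\|_{K}^{2}.
\]
Substituting these into the reference inequality and multiplying through by $h_K^{n-1}$ reproduces exactly
\[
\|g\|_{e}^{2}\;\le\; C\bigl(h_K^{-1}\|g\|_{K}^{2}+h_K\|\nabla g\|_{K}^{2}\bigr),
\]
with $C$ depending only on $\hat C$ and the shape-regularity constant, and in particular independent of $h_K$ and of the subdomain in which $K$ lies.

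There is no substantive obstacle: the only point to verify is that the shape-regularity and quasi-uniformity hypotheses on $\mathcal{T}_h$ stated in Section~6 furnish the two-sided estimates on $DF_K$ uniformly in $K$, so a single constant $C$ works for every element of the triangulation. As an alternative on a reference simplex one can give a purely elementary proof by writing $g^2$ on $\hat e$ via the fundamental theorem of calculus in the normal direction and applying Cauchy--Schwarz, but the change-of-variables route is cleaner and immediately accommodates both simplicial and quadrilateral/hexahedral shape-regular elements.
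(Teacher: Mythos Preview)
Your scaling argument is correct and is the standard proof of this mesh-dependent trace inequality. The paper does not actually give a proof here: it simply states the lemma as a known result and cites \cite{WY2014mixed}, so there is no substantive comparison to make beyond noting that you have supplied the classical argument the paper omits.
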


\begin{lemma}
(Inverse Inequality) There exists a constant $C=C(k)$ such that
\begin{equation}
\label{lemma:inverse}
\|{\nabla g}\|_K \leq C(k) h^{-1}_K \|g\|_K, \qquad \forall K \in \mathcal{T}_{h},
\end{equation}
for any piecewise polynomial $g$ of degree $k$ on $\mathcal{T}_{h}$.
\end{lemma}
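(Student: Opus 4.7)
The plan is to establish the inverse inequality by the standard scaling argument: reduce to a reference element, use norm equivalence on the finite-dimensional polynomial space there, and then transport the bound back to $K$ using shape regularity.

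First, I would fix a reference element $\hat{K}$ (say the unit $n$-simplex) and, for each $K \in \mathcal{T}_h$, the affine map $F_K(\hat{x}) = B_K \hat{x} + b_K$ with $F_K(\hat{K}) = K$. For any polynomial $g$ of degree at most $k$ on $K$, the pullback $\hat{g} := g \circ F_K$ lies in the finite-dimensional space $P_k(\hat{K})$. Since all norms on $P_k(\hat{K})$ are equivalent, there is a constant $C(k)$, depending only on $k$ and $\hat{K}$, with
$$
\|\hat{\nabla}\hat{g}\|_{L^2(\hat{K})} \le C(k)\,\|\hat{g}\|_{L^2(\hat{K})} \qquad \forall\,\hat{g}\in P_k(\hat{K}).
$$

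Next, I would translate this inequality back to $K$ via the standard change of variables. The Jacobian formula gives $\|g\|_{L^2(K)}^2 = |\det B_K|\,\|\hat{g}\|_{L^2(\hat{K})}^2$. Since $\nabla g(x) = B_K^{-T}\,\hat{\nabla}\hat{g}(\hat{x})$, one has
$$
\|\nabla g\|_{L^2(K)}^2 \le \|B_K^{-1}\|^2\,|\det B_K|\,\|\hat{\nabla}\hat{g}\|_{L^2(\hat{K})}^2.
$$
Combining with the reference-element bound above yields
$$
\|\nabla g\|_{L^2(K)}^2 \le C(k)^2\,\|B_K^{-1}\|^2\,\|g\|_{L^2(K)}^2.
$$
Shape regularity of $\mathcal{T}_h$ gives $\|B_K^{-1}\| \le C h_K^{-1}$ with a constant depending only on the shape-regularity constant, which finishes the argument.

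The main obstacle — slight, in this case — is simply ensuring that the constant in the final bound is independent of the particular element $K$; this is precisely what shape regularity buys us, since without it the constants in the affine-map scaling could depend on the element's aspect ratio. Beyond that, the proof is standard finite-element bookkeeping, and the essential content is the norm-equivalence step on $P_k(\hat{K})$.
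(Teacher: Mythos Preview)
Your proof is correct and is the standard scaling argument for the inverse inequality. The paper does not actually prove this lemma but simply cites a reference (\cite{WY2014mixed}), so your self-contained argument via the reference element and shape regularity is exactly what one would expect to find behind that citation.
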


In order to prove Lemmas \LA{Sgnorm} and
\LA{averagenorm}, we first need to establish the
relation between the bilinear form $a_h(\lambda, \lambda)$ defined in
\EQ{adef} and the $\interleave\lambda \interleave$ defined in
\EQ{IIInorm} for all $\lambda\in \Lambda$.  The difficulty is to estimate $\Q\lambda$ and $\U\lambda$ these two operators.
Similar estimate for the standard finite element discretization
studied in \cite{TuLi:2008:BDDCAD} is much more straightforward.  We
will use some techniques developed in \cite{error,multigrid}.

\subsection{Estimate for the norms}
We introduce two operators  ${\bm J}\mathcal{Q}\mu$ and ${\bm J}\mathcal{U}\mu$
 such that for all ${\bf r}\in {\bf V}$ and
$w\in W$, 
\begin{equation}\label{equation:JQJUdef}
\begin{aligned}
&( \epsilon^{-1}{\bm J}\mathcal{Q}\mu,{\bf r})_K-({\bm J\mathcal{U}\mu},\nabla\cdot{\bf r})_K=-\langle\mu,{\bf r}\cdot{\bf n}\rangle_{\partial K},\\
&(w,\nabla\cdot({\bm J}\mathcal{Q}\mu))_K+\langle(\tau_K-\frac12{\bm\beta}\cdot{\bf n})({\bm J}\mathcal{U}\mu-\mu),w \rangle_{\partial K}=0.
\end{aligned}
\end{equation}
We note that ${\bm J}\mathcal{Q}\mu$ and ${\bm J}\mathcal{U}\mu$
satisfy  the same equations as $\mathcal{Q}\mu$ and $\mathcal{U}\mu$
defined in \cite[Equation (2.9)]{multigrid} with $\tau_K$ in
\cite{multigrid}  replaced by $\tau_K-\frac12{\bm\beta\cdot\bf n}$.

By $\EQ{eq3.4}$, we have  
\begin{equation}\label{equation:IJQIJUdef}
\begin{aligned}
&(\epsilon^{-1}({\bm I}-{\bm J})\mathcal{Q}\mu,{\bf r})_K-( ({\bm I}-{\bm J})\mathcal{U}\mu,\nabla\cdot{\bf r})_K=0,\\
&(\nabla\cdot{({\bm I}-{\bm J})\mathcal{Q}\mu},w)_K+\langle(\tau_K-\frac12{\bm\beta}\cdot{\bf n})({\bm I}-{\bm J})\mathcal{U}\mu,w\rangle_{\partial K}+((-\frac12\nabla\cdot{\bm\beta})\mathcal{U}\mu,w)_{K}\\
&-\frac12({\bm\beta}\mathcal{U}\mu,\nabla w)_K+\frac12( w,{\bm\beta}\cdot\nabla\mathcal{U}\mu)_{K}
=-\frac12\langle{\bm\beta}\cdot{\bf n}\mu, w \rangle_{\partial K}.
\end{aligned}
\end{equation}
We have the   following results:

\begin{lemma}\label{lemma:JQbound}
\begin{eqnarray*}
&&\|{\bm J}\mathcal{Q}\mu\|_{K}\leq C\Cone\epsilon\interleave\mu\interleave_K,\\
&&\|(\tau_K-\frac12{\bm\beta\cdot\bf n})^{\frac12}({\bm J}\mathcal{U}\mu-\mu)\|_{\partial K}\leq  C\epsilon \sqrt{\max_{x\in \partial
                                             K}(\tau_K-\frac12{\bm\beta\cdot\bf
                                             n}) h}\interleave\mu\interleave_K, \\
&&\|{\bm J}\mathcal{U}\mu\|_{K}\leq C\|\mu\|_{h,K},\\
&&\|\nabla {\bm J}{\mathcal U}\mu\|_{K}\leq C\interleave\mu\interleave_K,
\end{eqnarray*}
where $\Cone=1+\epsilon^{-\frac12}\sqrt{\max\limits_{x\in\partial K,K\in\T_h}(\tau_K-\frac12{\bm\beta}\cdot{\bf n})h}$.
\begin{proof}
The first and  second inequalities basically follow  from \cite[Theorem
3.9]{multigrid}.  However, in \cite{multigrid}  the dependence of
$\epsilon$ is not estimated explicitly. We  follow \cite{multigrid}
and give the detailed proof of these inequalities.

By \cite[Lemma 3.2]{multigrid}, we have
\[
\|\U^{RT}\mu-\mu\|_{\partial K}\le
C\epsilon^{-1}h^{\frac12}\|\Q^{RT}\mu\|_K,\]
 where $\U^{RT}$ and $\Q^{RT}$ are defined in \cite[(3.5a)
  and (3.5b)]{multigrid}, which are the corresponding $\U$ and $\Q$
  for the elliptic problem with the RT discretization.

Following the proof of \cite[Equation (3.11)]{multigrid}, we have 
\begin{equation}\label{equation:lllQ}
\|{\bm J}\mathcal{Q}\mu-\Q^{RT}\mu\|_K\le C
\epsilon^{-\frac12}\sqrt{\max_{x\in \partial
    K}(\tau_K-\frac12{\bm\beta\cdot\bf n}) h} \|\Q^{RT}\mu\|_K
\end{equation}
and therefore 
$$\|{\bm J}\mathcal{Q}\mu\|_K\le C\left(1+
\epsilon^{-\frac12}\sqrt{\max_{x\in \partial K}(\tau_K-\frac12{\bm\beta\cdot\bf n}) h} \right)\|\Q^{RT}\mu\|_K.$$
Following the proof of  \cite[Theorem 2.2]{schwarz}, we
have
\begin{equation}\label{equation:lllQ1}\|\Q^{RT}\mu\|_K\le C\epsilon
  \interleave\mu\interleave_K
\end{equation} 
and the first inequality is proved. 

The second inequality is obtained by using
\cite[Equation (3.10)]{multigrid}. We have
\begin{eqnarray*}
\|(\tau_K-\frac12{\bm\beta\cdot\bf n})^{\frac12}({\bm
  J}\mathcal{U}\mu-\mu)\|_{\partial K}&\leq&  C
                                             \sqrt{\max_{x\in \partial
                                             K}(\tau_K-\frac12{\bm\beta\cdot\bf
                                             n}) h} \|\Q^{RT}\mu\|_K\\
&\le& C\epsilon \sqrt{\max_{x\in \partial
                                             K}(\tau_K-\frac12{\bm\beta\cdot\bf
                                             n}) h}
      \interleave\mu\interleave_K.
\end{eqnarray*}

The third inequality can be obtained by following the proof of \cite[Equation
(3.12)]{multigrid}. We have
\begin{eqnarray*}\|{\bm
  J}\mathcal{U}\mu-\U^{RT}\mu\|_K&\le& Ch\left(\|{\bm
  J}\mathcal{Q}\mu-\Q^{RT}\mu\|_K+\|\Q^{RT}\mu\|_K\right)\\
&\le& Ch\left(1+\epsilon^{-\frac12}\sqrt{\max_{x\in \partial
                                             K}(\tau_K-\frac12{\bm\beta\cdot\bf
                                             n}) h}
      \right)\|\Q^{RT}\mu\|_K\\
&\le& C \left(\epsilon+\epsilon^{\frac12}\sqrt{\max_{x\in \partial
                                             K}(\tau_K-\frac12{\bm\beta\cdot\bf
                                             n}) h}
      \right)\|\mu\|_{h,K},
\end{eqnarray*}
where we use \EQ{lllQ} for the second
inequality and \EQ{lllQ1} for the third
inequality. 

By\cite[Equation (3.6)]{multigrid}, we have $\|\U^{RT}\mu\|_K\le C
\|\mu\|_{h,K}$ and 
$$\|{\bm
  J}\mathcal{U}\mu\|_K\le C\left(1+\epsilon +\epsilon^{\frac12} \sqrt{\max_{x\in \partial
                                             K}(\tau_K-\frac12{\bm\beta\cdot\bf
                                             n}) h}
      \right)\|\mu\|_{h,K}\le C\|\mu\|_{h,K}.$$

The fourth inequality can be proved as follows. Recall that $m_K(\mu)$ is the average of $\mu$ on $\partial K$,
defined in \EQ{IIIm}.
We first prove that ${\bm J}\mathcal{U}m_K(\mu)=m_K(\mu)$. If $\mu$ is
constant on $\partial K$, by the
proof of \cite[Lemma 2.1]{schwarz}, we know $\U^{RT}\mu=\mu$ on $K$ and $\Q^{RT}\mu=0$. 
By \cite[Equation (3.12)]{multigrid}, we have
$$\|{\bm J}\mathcal{U}\mu -\U^{RT}\mu\|_K\le 0$$ and 
$${\bm J}\mathcal{U}\mu =\U^{RT}\mu=\mu.$$
Therefore ${\bm J}\mathcal{U}m_K(\mu)=m_K(\mu)$
on $K$  and  $\nabla{\bm
  J}\mathcal{U}(m_K(\mu))=0$. By the inverse inequality, the
third inequality in this lemma, \EQ{L2norm}, and \EQ{IIInorm}, we have 
\begin{align*}
\|\nabla{\bm J}\mathcal{U}\mu\|_{K}&=\|\nabla{\bm J}\mathcal{U}(\mu-m_K(\mu))\|_{K}\\
&\leq Ch^{-1}\|{\bm J}\mathcal{U}(\mu-m_K(\mu)\|_{K}\\
&\leq Ch^{-1}\|\mu-m_K(\mu)\|_{h, K}\\
&\leq C\interleave\mu\interleave_K.
\end{align*}
\end{proof}
\end{lemma}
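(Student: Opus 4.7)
The plan is to prove the four inequalities one at a time by comparing the pair $({\bm J}\Q\mu, {\bm J}\U\mu)$ to the elliptic RT analogues $(\Q^{RT}\mu, \U^{RT}\mu)$ defined in \cite{multigrid}, and then importing known estimates on the RT pair. The equations \EQ{JQJUdef} differ from the ones defining $\Q^{RT}\mu, \U^{RT}\mu$ only through the replacement $\tau_K \mapsto \tau_K - \tfrac12 \vbeta\cdot\vvecn$, so the same framework applies; what is new is that we must track the $\epsilon$-dependence carefully since \cite{multigrid} does not do so explicitly.

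For the first inequality, I would form the difference $({\bm J}\Q\mu - \Q^{RT}\mu, {\bm J}\U\mu - \U^{RT}\mu)$ and test it against itself in the two equations \EQ{JQJUdef} (and their RT counterparts), choosing ${\bf r} = {\bm J}\Q\mu - \Q^{RT}\mu$ and $w = {\bm J}\U\mu - \U^{RT}\mu$. This reproduces the identity leading to \cite[Eq.~(3.11)]{multigrid}, giving
\[
\|{\bm J}\Q\mu - \Q^{RT}\mu\|_K \le C\epsilon^{-1/2}\sqrt{\max_{x\in\partial K}(\tau_K-\tfrac12\vbeta\cdot\vvecn)\,h}\;\|\Q^{RT}\mu\|_K.
\]
Combining with the triangle inequality and the bound $\|\Q^{RT}\mu\|_K \le C\epsilon\,\interleave\mu\interleave_K$ from \cite[Thm.~2.2]{schwarz} yields the first estimate with the factor $\Cone$. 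The second inequality follows by the trace/inverse argument of \cite[Eq.~(3.10)]{multigrid} applied to ${\bm J}\U\mu - \mu$ on $\partial K$, which bounds the boundary norm by $C\sqrt{\max(\tau_K - \tfrac12\vbeta\cdot\vvecn)h}\;\|\Q^{RT}\mu\|_K$, and then again invoking $\|\Q^{RT}\mu\|_K \le C\epsilon\,\interleave\mu\interleave_K$.

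The third inequality proceeds by adapting \cite[Eq.~(3.12)]{multigrid}: bound $\|{\bm J}\U\mu - \U^{RT}\mu\|_K$ by $Ch(\|{\bm J}\Q\mu - \Q^{RT}\mu\|_K + \|\Q^{RT}\mu\|_K)$, insert the previous estimate, and absorb using $\|\Q^{RT}\mu\|_K \le C\epsilon\,\interleave\mu\interleave_K \le C\|\mu\|_{h,K}$ (after a further application of the definitions of the norms). Together with the RT bound $\|\U^{RT}\mu\|_K \le C\|\mu\|_{h,K}$ from \cite[Eq.~(3.6)]{multigrid}, a triangle inequality delivers $\|{\bm J}\U\mu\|_K \le C\|\mu\|_{h,K}$, with the $\epsilon$-factors remaining bounded because $0<\epsilon\le 1$.

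For the fourth inequality, the key observation is that ${\bm J}\U$ preserves constants on $\partial K$: if $\mu \equiv c$ on $\partial K$, then $(\Q^{RT}\mu, \U^{RT}\mu) = (0,c)$ solves the defining system, and the argument of \cite[Lemma~2.1]{schwarz} combined with $\|{\bm J}\U\mu - \U^{RT}\mu\|_K = 0$ forces ${\bm J}\U\mu = c$. Applying this with $c = m_K(\mu)$ yields $\nabla {\bm J}\U\mu = \nabla {\bm J}\U(\mu - m_K(\mu))$. I would then invoke the inverse inequality of Lemma \LA{inverse} on the piecewise polynomial ${\bm J}\U(\mu - m_K(\mu))$, apply the third inequality (now for $\mu - m_K(\mu)$), and finally recognize $h^{-1}\|\mu - m_K(\mu)\|_{h,K} \le C\interleave\mu\interleave_K$ directly from the definitions \EQ{L2norm} and \EQ{IIInorm}. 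The main obstacle throughout is bookkeeping of the explicit $\epsilon$ and $\tau_K$ dependence inside the constants $\Cone$, which is precisely what \cite{multigrid} suppresses; the structural inequalities themselves are direct transcriptions.
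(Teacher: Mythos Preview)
Your proposal is correct and follows essentially the same route as the paper: compare $({\bm J}\Q\mu,{\bm J}\U\mu)$ to the RT pair $(\Q^{RT}\mu,\U^{RT}\mu)$, import \cite[Eqs.~(3.10)--(3.12)]{multigrid} with $\tau_K$ replaced by $\tau_K-\tfrac12\vbeta\cdot\vvecn$, track the $\epsilon$-dependence via $\|\Q^{RT}\mu\|_K\le C\epsilon\interleave\mu\interleave_K$ from \cite[Thm.~2.2]{schwarz}, and for the gradient bound use constant preservation of ${\bm J}\U$ plus the inverse inequality. The paper's proof matches yours step for step, including the specific citations.
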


\begin{lemma}{\label{lemma:estimateu}}
Let $\E$ be any edge of a simplex $K\in \T_h(\Omega)$, then  for all $w\in W_h$,
\begin{align*}
C\|w\|_K\le h\|\B^t w\|_K+h^{\frac12}\|w\|_{\E}. 
\end{align*} 
\end{lemma}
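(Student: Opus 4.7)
My plan is a scaling argument reducing the estimate to a reference-element inequality, followed by a finite-dimensional norm equivalence on $P_k(\hat K)$; the non-degeneracy of the right-hand side is the only substantive point.

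First, integration by parts applied to $w\in P_k(K)$ gives, for every ${\bf r}\in[P_k(K)]^n$,
\[
(\B^t w,{\bf r})_K=-(w,\nabla\cdot{\bf r})_K=(\nabla w,{\bf r})_K-\langle w,{\bf r}\cdot{\bf n}\rangle_{\partial K},
\]
so $\B^t w$ is the unique object in $[P_k(K)]^n$ recording both the interior gradient and the boundary trace of $w$. Introduce the standard affine map $F_K:\hat K\to K$ onto a fixed reference simplex with $|\det DF_K|\simeq h^n$, and set $\hat w=w\circ F_K\in P_k(\hat K)$. The usual chain-rule and scaling computations give
\[
\|w\|_K\simeq h^{n/2}\|\hat w\|_{\hat K},\qquad h^{1/2}\|w\|_\E\simeq h^{n/2}\|\hat w\|_{\hat\E},\qquad h\|\B^t w\|_K\simeq h^{n/2}\|\hat\B^t\hat w\|_{\hat K},
\]
where $\hat\B^t$ is the analogue of $\B^t$ on $\hat K$. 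Dividing by the common factor $h^{n/2}$, the lemma is equivalent to the $h$-independent statement
\[
C\|\hat w\|_{\hat K}\le \|\hat\B^t\hat w\|_{\hat K}+\|\hat w\|_{\hat\E}\qquad\forall\,\hat w\in P_k(\hat K).
\]

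On the finite-dimensional space $P_k(\hat K)$, both sides are seminorms; by equivalence of norms, the inequality will follow once the right-hand side is shown to be a norm, i.e.\ to vanish only at $\hat w\equiv 0$. Suppose then that $\|\hat\B^t\hat w\|_{\hat K}=0$ and $\hat w|_{\hat\E}=0$. The first condition gives $(\hat w,\nabla\cdot\hat{\bf r})_{\hat K}=0$ for every $\hat{\bf r}\in[P_k(\hat K)]^n$; since $\nabla\cdot:[P_k(\hat K)]^n\to P_{k-1}(\hat K)$ is surjective (lift any $p\in P_{k-1}$ by $\hat{\bf r}=(F,0,\dots,0)$ with $F(x_1,\dots,x_n)=\int_0^{x_1}p\,dt\in P_k(\hat K)$), this forces $\hat w\perp P_{k-1}(\hat K)$ in $L^2(\hat K)$. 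The second condition says $\hat w$ vanishes on the entire edge $\hat\E$, so $\hat w=L\,q$ where $L$ is the sign-normalized affine polynomial vanishing on $\hat\E$ and positive on $\hat K\setminus\hat\E$, and $q\in P_{k-1}(\hat K)$. Testing the orthogonality with $\tilde q=q$ gives $\int_{\hat K}L\,q^2\,d\hat x=0$, and since $L>0$ a.e.\ on $\hat K$ we conclude $q\equiv 0$, hence $\hat w\equiv 0$.

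Once the reference-element inequality is in hand, pulling back through $F_K^{-1}$ restores the $h$-factors computed above and yields the lemma. The hard part is the non-degeneracy step: it is the only place where the specific structure of $\B^t$ interacts with the geometry of $\hat\E$, and it hinges on surjectivity of $\nabla\cdot$ on polynomial spaces combined with the sign-definiteness of the edge-vanishing factor $L$; the rest is pure scaling. The case $k=0$ is immediate, since then $\B^t w\equiv 0$ and the claim reduces to $C\|w\|_K\le h^{1/2}\|w\|_\E$ for constants, which holds by direct computation.
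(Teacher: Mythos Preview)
Your argument is correct. The paper itself does not give a proof here but simply cites \cite[Lemma~3.3]{multigrid}; your scaling-to-reference-element plus finite-dimensional norm-equivalence argument is exactly the standard route to such estimates and is almost certainly what the cited reference contains. The only substantive step, as you identify, is the non-degeneracy of the functional $\hat w\mapsto \|\hat\B^t\hat w\|_{\hat K}+\|\hat w\|_{\hat\E}$ on $P_k(\hat K)$, and your verification---using that $\hat\B^t\hat w=0$ forces $\hat w\perp P_{k-1}(\hat K)$ via surjectivity of the divergence, and that vanishing on $\hat\E$ forces $\hat w=Lq$ with $q\in P_{k-1}$ and $L$ the opposite barycentric coordinate, whence $\int_{\hat K}Lq^2=0$ gives $q=0$---is clean and complete. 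The scaling identities you state for $\|w\|_K$, $h^{1/2}\|w\|_\E$, and $h\|\B^t w\|_K$ all hold up to shape-regularity constants; the third one can be checked by writing $\|\B^t w\|_K=\sup_{{\bf r}}\,(-(w,\nabla\cdot{\bf r})_K)/\|{\bf r}\|_K$ and tracking the Jacobian factors.
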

\begin{proof}
See  \cite[Lemmas 3.3]{multigrid}. 
\end{proof}
\begin{lemma}\label{lemma:UmuBound}
\begin{align*}
\|\U\mu\|_K\le C\nu_{\epsilon,\tau_K,h}\|\mu\|_{h,K},
\end{align*}
where $\nu_{\epsilon,\tau_K,h}$ is defined in $\eqref{equation:Cdef}$.
\end{lemma}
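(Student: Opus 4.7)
The plan is to exploit the splitting
$\mathcal{U}\mu = {\bm J}\mathcal{U}\mu + ({\bm I}-{\bm J})\mathcal{U}\mu$,
so that the ``pure elliptic'' part ${\bm J}\mathcal{U}\mu$ is already controlled by Lemma~\LA{JQbound} (third estimate) via $\|{\bm J}\mathcal{U}\mu\|_K \le C\|\mu\|_{h,K}$, and only the advection-driven correction $({\bm I}-{\bm J})\mathcal{U}\mu$ needs to be estimated. Since $({\bm I}-{\bm J})\mathcal{U}\mu \in W^{(i)}$ is a piecewise polynomial, Lemma~\LA{estimateu} applies to it on any edge $\mathcal{E} \subset \partial K$; I would pick the particular edge singled out by Assumption~\ref{assumption:beta0} (so that $(\tau_K - \tfrac12 \vbeta\cdot\vvecn) \ge \widetilde{C}_K > 0$ on $\mathcal{E}$), which will later let me convert boundary $L^2$ norms into controlled quantities with only a factor $\widetilde{C}_K^{-1/2}$ lost.

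\textbf{Energy identity from \EQ{IJQIJUdef}.} The next step is to derive an energy identity for $({\bm I}-{\bm J})\mathcal{Q}\mu,\,({\bm I}-{\bm J})\mathcal{U}\mu$ by testing with ${\bf r}=({\bm I}-{\bm J})\mathcal{Q}\mu$ in the first equation of \EQ{IJQIJUdef} and with $w=({\bm I}-{\bm J})\mathcal{U}\mu$ in the second, then subtracting. The $(\nabla\cdot\mathcal{Q},\mathcal{U})$--terms cancel (as in the HDG inf-sup argument), leaving
\[
\epsilon^{-1}\|({\bm I}-{\bm J})\mathcal{Q}\mu\|_K^2 \;+\; \bigl\langle (\tau_K - \tfrac12\vbeta\cdot\vvecn)({\bm I}-{\bm J})\mathcal{U}\mu,\,({\bm I}-{\bm J})\mathcal{U}\mu\bigr\rangle_{\partial K}
\]
equal to advection source terms (involving $\nabla\cdot\vbeta\,\mathcal{U}\mu$, $\vbeta\mathcal{U}\mu\cdot\nabla({\bm I}-{\bm J})\mathcal{U}\mu$, and $\vbeta\cdot\vvecn\,\mu$ on $\partial K$). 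The right-hand side is then bounded by Cauchy--Schwarz, the trace inequality (Lemma~\ref{lemma:trace}), the inverse inequality (Lemma~\ref{lemma:inverse}), and $\|\vbeta\|_\infty$, $\|\nabla\cdot\vbeta\|_\infty$. Splitting $\mathcal{U}\mu = {\bm J}\mathcal{U}\mu + ({\bm I}-{\bm J})\mathcal{U}\mu$ on the right-hand side, the ${\bm J}\mathcal{U}\mu$ contributions are bounded by Lemma~\LA{JQbound} in terms of $\|\mu\|_{h,K}$, while the $({\bm I}-{\bm J})\mathcal{U}\mu$ contributions are absorbed into the left-hand side using Young's inequality. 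This will yield
\[
\epsilon^{-1}\|({\bm I}-{\bm J})\mathcal{Q}\mu\|_K^2 + \widetilde{C}_K\|({\bm I}-{\bm J})\mathcal{U}\mu\|_{\mathcal{E}}^2 \;\lesssim\; F(\epsilon,\tau_K,h,\vbeta)\,\|\mu\|_{h,K}^2,
\]
where $F$ absorbs the various $\|\vbeta\|_\infty$, $h\|\nabla\cdot\vbeta\|_\infty$, and $\epsilon\,(\tau_K-\tfrac12\vbeta\cdot\vvecn)$ scalings that appear.

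\textbf{Putting the pieces together.} With this in hand, Lemma~\LA{estimateu} applied to $w=({\bm I}-{\bm J})\mathcal{U}\mu$ on the chosen edge gives
\[
\|({\bm I}-{\bm J})\mathcal{U}\mu\|_K \;\le\; C\bigl(h\|{\bm B}^{t}({\bm I}-{\bm J})\mathcal{U}\mu\|_K + h^{1/2}\|({\bm I}-{\bm J})\mathcal{U}\mu\|_{\mathcal{E}}\bigr).
\]
The boundary term $h^{1/2}\|({\bm I}-{\bm J})\mathcal{U}\mu\|_{\mathcal{E}}$ is controlled by $\widetilde{C}_K^{-1/2}$ times the energy quantity above, which is the origin of the $\widetilde{C}_K^{-1}$ prefactor in the second entry of $\nu_{\epsilon,\tau_K,h}$. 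The volumetric term is treated using the first identity in \EQ{IJQIJUdef}: testing with an arbitrary ${\bf r}\in\vvecV^{(i)}$ shows that ${\bm B}^{t}({\bm I}-{\bm J})\mathcal{U}\mu = -\epsilon^{-1}({\bm I}-{\bm J})\mathcal{Q}\mu$ (up to boundary consistency), so the energy estimate bounds it by $\epsilon^{-1}\|({\bm I}-{\bm J})\mathcal{Q}\mu\|_K$, which in turn yields the $h\epsilon^{-1}\|\vbeta\|_\infty$ term in the first entry of $\nu_{\epsilon,\tau_K,h}$. Adding the contribution $\|{\bm J}\mathcal{U}\mu\|_K \le C\|\mu\|_{h,K}$ and taking the maximum over the two alternative ways the bounds can dominate gives the claimed $C\,\nu_{\epsilon,\tau_K,h}\,\|\mu\|_{h,K}$.

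\textbf{Main obstacle.} The principal difficulty is that the source in \EQ{IJQIJUdef} contains $\mathcal{U}\mu$ itself (not merely $\mu$ or ${\bm J}\mathcal{U}\mu$), so the energy identity is self-referential. Resolving this requires the decomposition $\mathcal{U}\mu = {\bm J}\mathcal{U}\mu + ({\bm I}-{\bm J})\mathcal{U}\mu$ together with a careful Young's-inequality absorption that depends on the sign of $\tau_K - \tfrac12\vbeta\cdot\vvecn$ on the distinguished edge from Assumption~\ref{assumption:beta0}; the two different ways of doing the absorption (one that keeps $\epsilon^{-1}$ small and one that trades against $\widetilde{C}_K^{-1}$) are precisely what produces the two branches inside the maximum defining $\nu_{\epsilon,\tau_K,h}$.
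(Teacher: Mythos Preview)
Your overall architecture matches the paper's proof: the same splitting $\mathcal{U}\mu={\bm J}\mathcal{U}\mu+({\bm I}-{\bm J})\mathcal{U}\mu$, the same energy identity obtained by testing \EQ{IJQIJUdef} with ${\bf r}=({\bm I}-{\bm J})\mathcal{Q}\mu$ and $w=({\bm I}-{\bm J})\mathcal{U}\mu$, and the same use of Lemma~\LA{estimateu} together with the identity $\mathcal{B}^t({\bm I}-{\bm J})\mathcal{U}\mu=\epsilon^{-1}({\bm I}-{\bm J})\mathcal{Q}\mu$.

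Where your write-up is loose is the absorption step. After the cancellation, the right-hand side of the energy identity (the paper's \EQ{normofQU1}) still carries the \emph{volume} factor $\|({\bm I}-{\bm J})\mathcal{U}\mu\|_K$ through the term $(({\bm I}-{\bm J})\mathcal{U}\mu,\,\vbeta\cdot\nabla{\bm J}\mathcal{U}\mu)_K$, whereas the left-hand side only controls $\epsilon^{-1}\|({\bm I}-{\bm J})\mathcal{Q}\mu\|_K^2$ and the \emph{boundary} quantity $\|(\tau_K-\tfrac12\vbeta\cdot\vvecn)^{1/2}({\bm I}-{\bm J})\mathcal{U}\mu\|_{\partial K}^2$. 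A bare Young's inequality therefore cannot absorb that volume term. The paper resolves this not by ``two ways of doing Young's'' but by an explicit \emph{case dichotomy} based on which of the two summands in Lemma~\LA{estimateu} dominates: in Case~I the assumption forces $\|({\bm I}-{\bm J})\mathcal{U}\mu\|_K\le h\epsilon^{-1}\|({\bm I}-{\bm J})\mathcal{Q}\mu\|_K$, so the offending term converts to the $\mathcal{Q}$ part of the left-hand side and one cancels a common factor; in Case~II the term $(({\bm I}-{\bm J})\mathcal{U}\mu,\vbeta\cdot\nabla{\bm J}\mathcal{U}\mu)_K$ is first rewritten via $\mathcal{B}^t$ as in \EQ{normofQU2}, and then the case assumption bounds $\|\mathcal{B}^t({\bm I}-{\bm J})\mathcal{U}\mu\|_K$ and $\|({\bm I}-{\bm J})\mathcal{U}\mu\|_K$ by the boundary norm on the distinguished edge $\E^s_K$, after which one cancels $\|(\tau_K-\tfrac12\vbeta\cdot\vvecn)^{1/2}({\bm I}-{\bm J})\mathcal{U}\mu\|_{\partial K}$. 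These two cases are precisely what produce the two entries in the maximum defining $\nu_{\epsilon,\tau_K,h}$ in \EQ{Cdef}.

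In short: your plan is on the right track and uses the same ingredients, but the sentence ``the $({\bm I}-{\bm J})\mathcal{U}\mu$ contributions are absorbed into the left-hand side using Young's inequality'' hides the real work. To make the argument go through you must either (i) perform the paper's Case~I/Case~II split, or (ii) feed Lemma~\LA{estimateu} \emph{into} the energy estimate (not just after it) so that $\|({\bm I}-{\bm J})\mathcal{U}\mu\|_K$ is first expressed in terms of the two left-hand-side quantities before any absorption is attempted.
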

\begin{proof}
By triangle inequality we have 
\begin{align*}
\|\mathcal{U}\mu\|_K&\leq \|\mathcal{U}\mu-{\bm J}{\mathcal{U}\mu}\|_K+\|{\bm J}\mathcal{U}\mu\|_K.
\end{align*}
Lemma $\ref{lemma:JQbound}$ gives
$\|{\bm J}\U\mu\|_K\le C\|\mu\|_{h,K}$  and we only need to 
estimate $\|\mathcal{U}\mu-{\bm J}{\mathcal{U}\mu}\|_K$.

Let ${\bf r}=({\bm I}-{\bm J})\Q\mu,$ $w=({\bm I}-{\bm J})\U\mu$ in $\eqref{equation:IJQIJUdef}$, then we have
\begin{align*}
&(\epsilon^{-1}({\bm I}-{\bm J})\mathcal{Q}\mu,\epsilon^{-1}({\bm I}-{\bm J})\mathcal{Q}\mu)_K+\langle(\tau_K-\frac12{\bm\beta}\cdot{\bf n})({\bm I}-{\bm J})\mathcal{U}\mu,({\bm I}-{\bm J})\mathcal{U}\mu\rangle_{\partial K}\\
&+((-\frac12\nabla\cdot{\bm\beta})\mathcal{U}\mu,({\bm I}-{\bm J})\mathcal{U}\mu)_{K}-\frac12({\bm\beta}\mathcal{U}\mu,\nabla({\bm I}-{\bm J})\mathcal{U}\mu)_K+\frac12( ({\bm I}-{\bm J})\mathcal{U}\mu,{\bm\beta}\cdot\nabla\mathcal{U}\mu)_{K}\\
&=-\frac12\langle{\bm\beta}\cdot{\bf n}(\mu-{\bm J}\mathcal{U}\mu), ({\bm I}-{\bm J})\mathcal{U}\mu \rangle_{\partial K}-\frac12\langle{\bm\beta}\cdot{\bf n}{\bm J}\mathcal{U}\mu,  ({\bm I}-{\bm J})\mathcal{U}\mu\rangle_{\partial K}.
\end{align*}
Applying the divergence theorem for the second term in the right hand
side of the last equality, we have
\begin{align*}
&(\epsilon^{-1}({\bm I}-{\bm J})\mathcal{Q}\mu,\epsilon^{-1}({\bm I}-{\bm J})\mathcal{Q}\mu)_K+\langle(\tau_K-\frac12{\bm\beta}\cdot{\bf n})({\bm I}-{\bm J})\mathcal{U}\mu,({\bm I}-{\bm J})\mathcal{U}\mu\rangle_{\partial K}\\
&+((-\frac12\nabla\cdot{\bm\beta})({\bm I}-{\bm J}))\mathcal{U}\mu,({\bm I}-{\bm J})\mathcal{U}\mu)_{K}-\frac12({\bm\beta}({\bm I}-{\bm J})\mathcal{U}\mu,\nabla({\bm I}-{\bm J})\mathcal{U}\mu)_K\\
&+\frac12( ({\bm I}-{\bm J})\mathcal{U}\mu,{\bm\beta}\cdot\nabla({\bm I}-{\bm J})\mathcal{U}\mu)_{K}+( ({\bm I}-{\bm J})\mathcal{U}\mu,{\bm\beta}\cdot\nabla{\bm J}\mathcal{U}\mu)_{K}\\
&=-\frac12\langle{\bm\beta}\cdot{\bf n}(\mu-{\bm J}\mathcal{U}\mu), ({\bm I}-{\bm J})\mathcal{U}\mu \rangle_{\partial K},
\end{align*}
which implies that 
\begin{equation}{\label{equation:normofQU1}}
\begin{aligned}
&(\epsilon^{-1}({\bm I}-{\bm J})\mathcal{Q}\mu,({\bm I}-{\bm J})\mathcal{Q}\mu)_K+\langle(\tau_K-\frac12{\bm\beta}\cdot{\bf n})({\bm I}-{\bm J})\mathcal{U}\mu,({\bm I}-{\bm J})\mathcal{U}\mu\rangle_{\partial K}\\
&+((-\frac12\nabla\cdot{\bm\beta})({\bm I}-{\bm J}))\mathcal{U}\mu,({\bm I}-{\bm J})\mathcal{U}\mu)_{K}\\
&=-\frac12\langle{\bm\beta}\cdot{\bf n}(\mu-{\bm J}\mathcal{U}\mu),
({\bm I}-{\bm J})\mathcal{U}\mu \rangle_{\partial K}-( ({\bm I}-{\bm
  J})\mathcal{U}\mu,{\bm\beta}\cdot\nabla{\bm J}\mathcal{U}\mu)_{K}.
\end{aligned}
\end{equation}

By Lemma \LA{estimateu},  we can obtain that 
\begin{equation}{\label{equation:estimateIJU}}
\|\mathcal{U}\mu-{\bm J}{\mathcal{U}\mu}\|_K\leq h\|\B^t( \mathcal{U}\mu-{\bm J}{\mathcal{U}\mu})\|_K+\min_{\E\subset\partial K}h^{\frac12}\|\mathcal{U}\mu-{\bm J}{\mathcal{U}\mu}\|_{\E}.
\end{equation}
Therefore, we can estimate $\|\U\mu-{\bm J}\U\mu\|_K$ for  two cases
which depend on the values of $h\|\B^t( \mathcal{U}\mu-{\bm
  J}{\mathcal{U}\mu})\|_K$ and $\min_{\E\subset\partial
  K}h^{\frac12}\|\mathcal{U}\mu-{\bm J}{\mathcal{U}\mu}\|_{\E}$.

\begin{enumerate}
\item [Case I]:
$\|\B^t (\mathcal{U}\mu-{\bm J}{\mathcal{U}\mu})\|_K\geq \min\limits_{\E\subset\partial K}h^{-\frac12}\|\mathcal{U}\mu-{\bm J}\mathcal{U}\mu\|_{\E}:$
\begin{equation}{\label{equation:condition1}}
\|{\U}\mu-{\bm J}{\U}\mu\|_K\leq h\|\B^t( {\U}\mu-{\bm J}{{\U}\mu})\|_K
=h\|\epsilon^{-1}({\bm I}-{\bm J}){\Q}\mu\|_K,
\end{equation}
where the second equality follows from the first equation of $\eqref{equation:IJQIJUdef}.$ By $\EQ{normofQU1}$ we have 
\begin{align*}
&\epsilon^{-1}\|({\bm I}-{\bm J})\Q\mu\|^2_K\\
&\le C\||\vbeta\cdot{\bf n}|(\mu-{\bm J}\U\mu)\|_{\partial K}\|({\bm
                                                 I}-{\bm
                                                 J})\U\mu\|_{\partial
                                                 K}+\|({\bm I}-{\bm
                                                 J})\U\mu\|_K\|\vbeta\|_{\infty}\|\nabla
                                                 {\bm J}\U\mu\|_K\\
&\leq C\||\vbeta\cdot{\bf n}|(\mu-{\bm J}\mathcal{U}\mu)\|_{\partial K}h^{-\frac12}\|({\bm I}-{\bm J})\mathcal{U}\mu\|_K+\|({\bm I}-{\bm J})\U\mu\|_K\|\vbeta\|_{\infty}h^{-1}\|{\bm J}\mathcal{U}\mu\|_K\\
&\leq C\epsilon\sqrt{\max\limits_{x\in\partial K}(\tau_K-\frac12\vbeta\cdot{\bf n})h}\interleave\mu\interleave_K
h^{\frac12}\epsilon^{-1}\|({\bm I}-{\bm J})\Q\mu\|_K\\
&\quad +\epsilon^{-1}\|\vbeta\|_{\infty}\|({\bm I}-{\bm J})\Q\mu\|_K\|{\bm J}\U\mu\|_K,
\end{align*}
where the first term of second inequality follows from trace
inequality and the second term of the second inequality follows from
inverse inequality, the third inequality follows from Lemma \LA{JQbound}
and $\EQ{condition1}$.
Cancelling the common factor, we have
\begin{align*}
\|({\bm I}-{\bm J})\Q\mu\|_K\le \epsilon h\sqrt{\max\limits_{x\in\partial K}(\tau_K-\frac12\vbeta\cdot{\bf n})}\interleave\mu\interleave_K+C\|\vbeta\|_{\infty}\|\mu\|_{h,K}.
\end{align*}
Plugging  in the above estimate to \EQ{condition1}, we have
\begin{align*}
\|\U\mu-{\bm J}\U\mu\|_K&\le (h^2\sqrt{\max\limits_{x\in\partial K}(\tau_K-\frac12\vbeta\cdot{\bf n})}\interleave\mu\interleave_K+Ch\epsilon^{-1}\|\vbeta\|_{\infty}\|\mu\|_{h,K})\\
&=(h\sqrt{\max\limits_{x\in\partial K}(\tau_K-\frac12\vbeta\cdot{\bf n})}+Ch\epsilon^{-1}\|\vbeta\|_{\infty})\|\mu\|_{h,K}.
\end{align*}

\item[Case II:]
\begin{equation}\label{equation:condition2}
 \|\B^t (\mathcal{U}\mu-{\bm J}{\mathcal{U}\mu})\|_K\le
 \min\limits_{\E\subset\partial K}h^{-\frac12}\|\mathcal{U}\mu-{\bm
   J}\mathcal{U}\mu\|_{\E}.
 \end{equation}

By Assumption $\ref{assump:onbeta}$ and the second inequality in Lemma $\ref{lemma:JQbound}$, we have
\begin{eqnarray}\label{equation:eee1}
&&\||\vbeta\cdot{\bf n}|^{\frac12}({\bm
                 J}\mathcal{U}\mu-\mu)\|_{\partial K}
                 \le C\|(\tau_K-\frac12\vbeta\cdot{\bf
                 n})^{\frac12}({\bm J}\mathcal{U}\mu-\mu)\|_{\partial
                 K}\nonumber\\
  &&\le           C  \epsilon\sqrt{\max\limits_{\partial K}(\tau_K-\frac12\vbeta\cdot{\bf n})h}\interleave\mu\interleave_K.
\end{eqnarray}

Using the definition of $\B^t$ defined in \EQ{alloperators} and
\EQ{normofQU1},  we can write
\begin{equation}{\label{equation:normofQU2}}
  \begin{aligned}
    &(\epsilon^{-1}({\bm I}-{\bm J})\mathcal{Q}\mu,({\bm I}-{\bm J})\mathcal{Q}\mu)_K+\langle(\tau_K-\frac12{\bm\beta}\cdot{\bf n})({\bm I}-{\bm J})\mathcal{U}\mu,({\bm I}-{\bm J})\mathcal{U}\mu\rangle_{\partial K}\\
&+((-\frac12\nabla\cdot{\bm\beta})({\bm I}-{\bm J}))\mathcal{U}\mu,({\bm I}-{\bm J})\mathcal{U}\mu)_{K}\\
&=-\frac12\langle{\bm\beta}\cdot{\bf n}(\mu-{\bm J}\mathcal{U}\mu),
({\bm I}-{\bm J})\mathcal{U}\mu \rangle_{\partial K}-( ({\bm I}-{\bm
  J})\mathcal{U}\mu,{\bm\beta}\cdot\nabla{\bm J}\mathcal{U}\mu)_{K}\\
&=-\frac12\langle{\bm\beta}\cdot{\bf n}(\mu-{\bm J}\mathcal{U}\mu), ({\bm I}-{\bm J})\mathcal{U}\mu \rangle_{\partial K}-( ({\bm I}-{\bm J})\mathcal{U}\mu,\nabla\cdot(\vbeta{\bm J}\mathcal{U}\mu))_{K}+( ({\bm I}-{\bm J})\mathcal{U}\mu,\nabla\cdot\vbeta({\bm J}\mathcal{U}\mu))_{K}\\
&=-\frac12\langle{\bm\beta}\cdot{\bf n}(\mu-{\bm J}\mathcal{U}\mu), ({\bm I}-{\bm J})\mathcal{U}\mu \rangle_{\partial K}-({\mathcal{B}}^t(({\bm I}-{\bm J})\mathcal{U}\mu,\vbeta\cdot{\bm J}\mathcal{U}\mu)_{K}
+( ({\bm I}-{\bm J})\mathcal{U}\mu,\nabla\cdot\vbeta({\bm J}\mathcal{U}\mu))_{K}.
\end{aligned}
\end{equation}

By $\EQ{normofQU2}$ and applying Cauchy-Schwartz inequality, we have
\begin{eqnarray}\label{equation:eee2}
\|(\tau_K-\frac12\vbeta\cdot{\bf n})^{\frac12}(\mathcal{U}\mu-{\bm J}{\mathcal{U}\mu})\|^2_{\partial K}
&\leq& \frac12\||\vbeta\cdot{\bf n}|^{\frac12}(\mu-{\bm J}\mathcal{U}\mu)\|_{\partial K}\||\vbeta\cdot{\bf n}|^{\frac12}({\bm I}-{\bm J})\mathcal{U}\mu\|_{\partial K}\nonumber\\
&&\qquad+\|\B^t(({\bm I}-{\bm J})\mathcal{U}\mu )\|_{K}\|{\bm J}\mathcal{U}\mu\|_K\|\vbeta\|_{\infty}\nonumber\\
&&\qquad+\|({\bm I}-{\bm J})\mathcal{U}\mu\|_K\|{\bm J}\mathcal{U}\mu\|_K\|\nabla\cdot{\vbeta}\|_{\infty}.
\end{eqnarray}

By Assumption \ref{assump:onbeta}, we have
$\max\limits_{\E\subset\partial K}\inf\limits_{x\in
  \E}(\tau_K-\frac12\vbeta\cdot{\bf n})=\widetilde{C}_K>0$ and let
$\E^s_K$ be the edge of $K$ where $\inf\limits_{x\in \E}(\tau_K-\frac12\vbeta\cdot{\bf n})$ attains its maximum.
By \EQ{eee2}, \EQ{estimateIJU}, \EQ{eee1}, \EQ{condition2}, and  the estimation of $\|{\bm
  J}\U\mu\|_K$ in Lemma $\ref{lemma:JQbound}$, we have 
\begin{align*}
 &\|(\tau_K-\frac12\vbeta\cdot{\bf n})^{\frac12}(\mathcal{U}\mu-{\bm J}{\mathcal{U}\mu})\|^2_{\partial K}\\
&\leq \frac12\||\vbeta\cdot{\bf n}|^{\frac12}(\mu-{\bm J}\mathcal{U}\mu)\|^2_{\partial K}\||\vbeta\cdot{\bf n}|^{\frac12}({\bm I}-{\bm J})\mathcal{U}\mu\|_{\partial K}\\
&\qquad+C\|\B^t(({\bm I}-{\bm J})\mathcal{U}\mu )\|_{K}\|{\bm J}\mathcal{U}\mu\|_K\|\vbeta\|_{\infty}\\
&\qquad+C\|({\bm I}-{\bm J})\mathcal{U}\mu\|_K\|{\bm J}\mathcal{U}\mu\|_K\|\nabla\cdot{\vbeta}\|_{\infty}\\
&\leq C\epsilon h^{\frac12}\sqrt{\max\limits_{x\in\partial K}(\tau_K-\frac12\vbeta\cdot{\bf n})}\interleave\mu\interleave_K\|(\tau_K-\frac12\vbeta\cdot{\bf n})^{\frac12}({\bm I}-{\bm J})\U\mu\|_{\partial K}\\
&\qquad+C\|({\bm I}-{\bm J})\mathcal{U}\mu\|_{\E^s_K}\|\mu\|_{h,K}(h^{-\frac12}\|\vbeta\|_{\infty}+h^{\frac12}\|\nabla\cdot\vbeta\|_{\infty})\\
&\le C\epsilon h^{\frac12}\sqrt{\max\limits_{x\in\partial K}(\tau_K-\frac12\vbeta\cdot{\bf n})}\interleave\mu\interleave_K\|(\tau_K-\frac12\vbeta\cdot{\bf n})^{\frac12}({\bm I}-{\bm J})\U\mu\|_{\partial K}\\
&\qquad+\frac{C}{\widetilde{C}^{\frac12}_K}\|(\tau_K-\frac12\vbeta\cdot{\bf n})^{\frac12}({\bm I}-{\bm J})\mathcal{U}\mu\|_{\partial K}\|\mu\|_{h,K}(h^{-\frac12}\|\vbeta\|_{\infty}+h^{\frac12}\|\nabla\cdot\vbeta\|_{\infty}).
\end{align*}

Canceling the common factor $\|(\tau_K-\frac12\vbeta\cdot{\bf n})^{\frac12}({\bm I}-{\bm J})\U\mu\|_{\partial K}$ we have  
\begin{align*}
&\|(\tau_K-\frac12\vbeta\cdot{\bf n})^{\frac12}({\bm I}-{\bm J})\U\mu\|_{\partial K}\\
&\leq {C}\epsilon h^{\frac12}\sqrt{\max\limits_{x\in\partial K}(\tau_K-\frac12\vbeta\cdot{\bf n})}\interleave\mu\interleave_K+\frac{C}{\widetilde{C}^{\frac12}_K}\|\mu\|_{h,K}(h^{-\frac12}\|\vbeta\|_{\infty}+h^{\frac12}\|\nabla\cdot{\vbeta}\|_{\infty})\\
&\leq {C}(\epsilon\sqrt{\max\limits_{x\in\partial K}(\tau_K-\frac12\vbeta\cdot{\bf n})}+\frac{C}{\widetilde{C}^{\frac12}_K}\|\vbeta\|_{\infty}+C\frac{h}{\widetilde{C}^{\frac12}_K}\|\nabla\cdot\vbeta\|_{\infty})\|\mu\|_{\partial K}.
\end{align*}
Therefore, by Lemma \LA{estimateu} and \EQ{condition2}, 
\begin{align*}
\|\mathcal{U}\mu-{\bm J}\mathcal{U}\mu\|_{K}&\leq C\min\limits_{\E\subset\partial K} h^{\frac12}\|\mathcal{U}\mu-{\bm J}\mathcal{U}\mu\|_{\E}\le Ch^{\frac12}\|\mathcal{U}\mu-{\bm J}\mathcal{U}\mu\|_{\E^s_K}\\
&\leq \frac{Ch^{\frac12}}{\widetilde{C}^{\frac12}_K}\|(\tau_K-\frac12\vbeta\cdot{\bf n})^{\frac12}(\mathcal{U}\mu-{\bm J}\mathcal{U}\mu)\|_{\E^s_K}\leq C\frac{h^{\frac12}}{\widetilde{C}^{\frac12}_K}\|(\tau_K-\frac12\vbeta\cdot{\bf n})^{\frac12}(\mathcal{U}\mu-{\bm J}\mathcal{U}\mu)\|_{\partial K}\\
&\leq \frac{C}{\widetilde{C}_K}\left(\widetilde{C}^{\frac12}_K\epsilon\sqrt{\max\limits_{x\in\partial K}(\tau_K-\frac12\vbeta\cdot{\bf n})}+\|\vbeta\|_{\infty}+h\|\nabla\cdot\vbeta\|_{\infty}\right)\|\mu\|_{h,K}.
\end{align*}
\end{enumerate}
Combining the two cases we can obtain the result.
\end{proof}


\begin{lemma}\label{lemma:IJQzero}
For $\mu$, which is  a constant on $\partial K$, we have 
\begin{align*}
&({\bm I}-{\bm J}){\mathcal{U}\mu}|_{\partial K}=0,\\
&({\bm I}-{\bm J}){\mathcal{U}\mu}|_{K}=0.
\end{align*}
Therefore for any $\mu\in\Lambda,$
\begin{align*}
\|\nabla\mathcal{U}\mu\|_K\leq C\Ctwo \interleave\mu\interleave_K,
\end{align*}
where $\Ctwo$ is defined in \EQ{Cdef}.
\end{lemma}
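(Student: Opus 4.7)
\medskip
\noindent\textbf{Proof proposal.} The plan is first to exhibit an explicit solution of the defining equations when $\mu$ is constant on $\partial K$, and then to use linearity together with Lemma \LA{UmuBound} and the inverse inequality for the general bound.

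For the first claim, let $\mu$ be a constant $c$ on $\partial K$. I propose to verify by direct substitution that the pair $(\mathcal{Q}\mu, \mathcal{U}\mu) = (0, c)$ (viewed as the zero vector field and the constant function $c$ on $K$) solves the local system \EQ{eq3.4}. The first equation of \EQ{eq3.4} reduces to $-c(1,\nabla\cdot{\bf r})_K = -\langle c,{\bf r}\cdot{\bf n}\rangle_{\partial K}$, which is just the divergence theorem. For the second equation, the only surviving terms on the left are $(-\frac12 \nabla\cdot\vbeta\, c, w)_K$ and $-\frac12(\vbeta\cdot\nabla w,c)_K$, while the right-hand side is $-\frac12\langle\vbeta\cdot{\bf n}\,c,w\rangle_{\partial K}$; these match because $\int_K \nabla\cdot(\vbeta w)\,dx = \int_{\partial K}\vbeta\cdot{\bf n}\,w\,ds$. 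By the uniqueness in \cite[Lemma 3.1]{nguyen_implicit_2009} (Assumption \ref{assumption:beta0}), this is the unique solution, so $\mathcal{U}\mu\equiv c$ on $K$. The identical argument (omitting the $\vbeta$ terms, which are not present in \EQ{JQJUdef}) shows ${\bm J}\mathcal{U}\mu\equiv c$ as well; alternatively one may invoke the corresponding RT statement \cite[Lemma 2.1]{schwarz} together with \cite[Equation (3.12)]{multigrid} exactly as done in the proof of Lemma \LA{JQbound}. Either way, $({\bm I}-{\bm J})\mathcal{U}\mu = 0$ both in $K$ and on $\partial K$.

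For the gradient bound, the plan is to remove the constant part. Given an arbitrary $\mu\in\Lambda$, set $c = m_K(\mu)$ and write $\mu = (\mu - c) + c$. By linearity of $\mathcal{U}$ and the first part, $\mathcal{U}(c)$ is constant on $K$, so $\nabla\mathcal{U}\mu = \nabla\mathcal{U}(\mu - c)$. Applying the inverse inequality \LA{inverse} on the piecewise polynomial $\mathcal{U}(\mu-c)$ and then Lemma \LA{UmuBound} gives
\begin{equation*}
\|\nabla\mathcal{U}\mu\|_K \le C h^{-1}\|\mathcal{U}(\mu-c)\|_K \le C \Ctwo h^{-1}\|\mu - m_K(\mu)\|_{h,K}.
\end{equation*}
Finally, comparing the definitions \EQ{L2norm} and \EQ{IIInorm} element-by-element yields $\|\mu - m_K(\mu)\|_{h,K}^2 = |K|\interleave\mu\interleave_K^2$, which on a shape-regular element is bounded by $Ch^2\interleave\mu\interleave_K^2$ in two dimensions. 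Substituting this in produces $\|\nabla\mathcal{U}\mu\|_K \le C\Ctwo \interleave\mu\interleave_K$, the desired estimate.

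I do not anticipate a serious obstacle. The only step that is not routine is the recognition that the constant function actually solves the local saddle-point problem \EQ{eq3.4} in the presence of the advection terms; once the integration-by-parts identity $\int_K(\nabla\cdot\vbeta\,w + \vbeta\cdot\nabla w)\,dx = \int_{\partial K}\vbeta\cdot{\bf n}\,w\,ds$ is pointed out, everything reduces to uniqueness of the local HDG system plus an application of the two preceding lemmas. The remaining manipulations (inverse inequality, comparison of $\|\cdot\|_{h,K}$ and $\interleave\cdot\interleave_K$) are standard.
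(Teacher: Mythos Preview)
Your proposal is correct and, in fact, more direct than the paper's own argument.

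For the constant case, the paper does not directly verify that $(0,c)$ solves \EQ{eq3.4}. Instead it first uses Lemma~\LA{JQbound} to conclude ${\bm J}\mathcal{U}\mu=\mu$ on $\partial K$ and $\nabla{\bm J}\mathcal{U}\mu=0$; it then rewrites the right-hand side $-\frac12\langle\vbeta\cdot{\bf n}\mu,w\rangle_{\partial K}$ via integration by parts of ${\bm J}\mathcal{U}\mu$, substitutes into the difference equations \EQ{IJQIJUdef}, and takes ${\bf r}=({\bm I}-{\bm J})\mathcal{Q}\mu$, $w=({\bm I}-{\bm J})\mathcal{U}\mu$ to obtain a coercive energy identity forcing $({\bm I}-{\bm J})\mathcal{Q}\mu|_K=0$ and $({\bm I}-{\bm J})\mathcal{U}\mu|_{\partial K}=0$. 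A further argument with the first equation of \EQ{IJQIJUdef} is needed to upgrade the boundary vanishing to vanishing in $K$. Your route---check $(0,c)$ against both local systems by the divergence theorem and invoke the local uniqueness guaranteed by Assumption~\ref{assumption:beta0}---bypasses the energy identity entirely and yields the stronger fact $\mathcal{U}\mu\equiv c$ on $K$ in one step.

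For the gradient bound, the paper splits $\nabla\mathcal{U}\mu=\nabla{\bm J}\mathcal{U}\mu+\nabla({\bm I}-{\bm J})\mathcal{U}\mu$ and estimates the two pieces separately, using Lemma~\LA{JQbound} for the first and the inverse inequality plus Lemma~\LA{UmuBound} (applied to $({\bm I}-{\bm J})\mathcal{U}$) for the second. You apply the same inverse-inequality-plus-Lemma~\LA{UmuBound} manoeuvre directly to $\mathcal{U}(\mu-m_K(\mu))$, which is cleaner because you have already shown $\mathcal{U}(m_K(\mu))$ is constant, not merely that the difference $({\bm I}-{\bm J})\mathcal{U}(m_K(\mu))$ vanishes. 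Both routes produce the same constant $C\Ctwo$; yours just avoids an unnecessary decomposition.
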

\begin{proof}
If $\mu$ is a  constant on $\partial K$,  $\interleave \mu\interleave_K=0$. By Lemma
\LA{JQbound}, we have
\begin{equation}\label{equation:zzz}
\|(\tau_K-\frac12{\bm\beta\cdot\bf n})^{\frac12}({\bm
  J}\mathcal{U}\mu-\mu)\|_{\partial K}\leq 0 \quad \mbox{and} \quad \|\nabla{\bm J}\mathcal{U}\mu\|_K\le
0
 \end{equation}
and therefore $\nabla{\bm J}\mathcal{U}\mu=0$ in $K$.

By \EQ{zzz} and Assumption \ref{assump:onbeta}, we have
${\bm
  J}\mathcal{U}\mu-\mu=0$ 
on $\partial K$. 
Therefore, 
for any $w\in W(K)$, by integration by part, we have
\begin{align*}
-\frac12\langle{\bm\beta}\cdot{\bf n}\mu, w\rangle_{\partial K}&=-\frac12\langle{\bm\beta}\cdot{\bf n}{\bm J}\mathcal{U}\mu, w\rangle_{\partial K}\\
&=-\frac12(\nabla\cdot{\bm\beta}{\bm J}\mathcal{U}\mu,w)_K-\frac12\langle{\bm J}\mathcal{U}\mu,{\bm\beta}\cdot\nabla w\rangle_{\partial K}+\frac12( w,{\bm\beta}\cdot\nabla{\bm J}\mathcal{U}\mu)_K.
\end{align*}
Plugging this identity  in the second equation in $\EQ{IJQIJUdef}$ we can obtain that 
\begin{align*}
&(\nabla\cdot{({\bm I}-{\bm
  J})\mathcal{Q}\mu},w)_K+\langle(\tau_K-\frac12{\bm\beta}\cdot{\bf
  n})({\bm I}-{\bm J})\mathcal{U}\mu,w\rangle_{\partial K}+\langle (-\frac12\nabla\cdot{\bm\beta})(\mathcal{U}\mu-{\bm J}\mathcal{U}\mu),w\rangle_{\partial K}\\
&-\frac12({\bm\beta}(\mathcal{U}\mu-{\bm J}\mathcal{U}\mu),\nabla w)_K+\frac12(w,{\bm\beta}\cdot\nabla(\mathcal{U}\mu-{\bm J}\mathcal{U}\mu))_{K}
=0.
\end{align*}
Take $w=\mathcal{U}\mu-{\bm J}\mathcal{U}\mu$ and ${\bf r}=({\bm
  I}-{\bm J})\mathcal{Q}\mu$ in the above equation and first equation of $\EQ{IJQIJUdef}$, we have 
\begin{align*}
&(\epsilon^{-1}({\bm I}-{\bm J})\mathcal{Q}\mu,({\bm I}-{\bm J})\mathcal{Q}\mu)_K+\langle (\tau_K-\frac12{\bm\beta}\cdot{\bf n})({\bm I}-{\bm J})\mathcal{U}\mu,({\bm I}-{\bm J})\mathcal{U}\mu\rangle_{\partial K}\\
&\qquad+((-\frac12\nabla\cdot{\bm\beta})({\bm I}-{\bm J}){\mathcal{U}\mu},({\bm I}-{\bm J}){\mathcal{U}\mu})_K=0.
\end{align*}
By Assumption \ref{assump:onbeta}, we have
\begin{align*}
&({\bm I}-{\bm J}){\mathcal{Q}\mu}|_{K}=0, \\
& ({\bm I}-{\bm J}){\mathcal{U}\mu}|_{\partial K}=0.
\end{align*} 
Plugging in the first formula of $\EQ{IJQIJUdef}$, for any ${\bf r}\in{\bf V}$ we have
\begin{align*}
0&=(({\bm I}-{\bm J})\mathcal{U}\mu,\nabla\cdot{\bf r})_{K}\\
&=\langle({\bm I}-{\bm J})\mathcal{U}\mu,{\bf r}\cdot{\bf n}\rangle_{\partial K}-(\nabla({\bm I}-{\bm J})\mathcal{U}\mu,{\bf r})_K\\
&=(\nabla({\bm I}-{\bm J})\mathcal{U}\mu,{\bf r})_K,
\end{align*}
which implies that $\nabla({\bm I}-{\bm J})\mathcal{U}\mu|_{K}=0$. 
Therefore, $({\bm I}-{\bm J})\mathcal{U}\mu$ is a constant
  on $K$. Due to the fact that $({\bm I}-{\bm J})\mathcal{U}\mu=0$ on
  $\partial K$,  we obtain $({\bm I}-{\bm
    J}){\mathcal{U}(\mu)}|_{K}=0$.

  Therefore, for any $\mu\in \Lambda$, 
\begin{equation}\label{equation:JUUequal}
({\bm I}-{\bm J}){\mathcal{U}m_K(\mu)}|_{K}=0,
\end{equation}
where $m_K(\mu) $ is the average of $\mu$ on $\partial K$ defined in \EQ{IIIm}.

For any  $\mu\in \Lambda$,  by Lemmas \LA{UmuBound} and \LA{JQbound}, we have
\begin{align*}
\|({\bm I}-{\bm J})\mathcal{U}\mu\|_K&\leq \|\mathcal{U}\mu\|_K+\|{\bm J}\mathcal{U}\mu\|_{K}\\
&\leq (C\Ctwo+C)\|\mu\|_{h, K}\\
&\leq C\Ctwo \|\mu\|_{h,K},
\end{align*}
where
$\Ctwo$ is defined \EQ{Cdef}.

Then, by \EQ{JUUequal} and the inverse inequality, we have
\begin{align*}
\|\nabla({\bm I}-{\bm J})\mathcal{U}\mu\|_K&=\|\nabla({\bm I}-{\bm J})\mathcal{U}(\mu-m_K(\mu))\|_K\\
&\leq Ch^{-1}\|({\bm I}-{\bm J})\mathcal{U}(\mu-m_K(\mu))\|_K\\
&\leq C\Ctwo h^{-1}\|\mu-m_K(\mu)\|_{h,K}\\
&=
  C\Ctwo\interleave\mu\interleave_K.
\end{align*}
Therefore, {by Lemma \LA{JQbound}}, we have
$$\|\nabla\mathcal{U}\mu\|_K\leq \|\nabla{\bm J}\mathcal{U}\mu\|_K+ \|\nabla({\bm I}-{\bm J})\mathcal{U}\mu\|_K\leq C\Ctwo\interleave\mu\interleave_K.
$$
\end{proof}

\begin{lemma}\label{lemma:Ubound}
\[\|\mathcal{U}\mu-\mu\|_{L^2(\partial K)}\leq Ch^{\frac12}\Ctwo
  \interleave\mu\interleave_{K},
\]
where $\Ctwo$ is defined in \EQ{Cdef}.
\begin{proof}

By Lemma \LA{IJQzero} and \cite[Lemma 3.5 (i)]{multigrid}, we have, on
$\partial K$,
\begin{equation}\label{equation:UJU}
\mathcal{U}m_K(\mu)= {\bm J}\mathcal{U}m_K(\mu)=m_K(\mu).
\end{equation} 
Therefore,
\begin{align*}
&\|\mathcal{U}\mu-\mu\|_{\partial K}\\
&\leq \|\mathcal{U}\mu-\mathcal{U}(m_K(\mu))\|_{\partial K}+\|(\mathcal{U}(m_K(\mu))-\mu)\|_{\partial K}\\
&=\|\mathcal{U}(\mu-m_K(\mu))\|_{\partial K}+
\|(m_K(\mu)-\mu)\|_{\partial K}\\
&\leq h^{-\frac12}\|\mathcal{U}(\mu-m_{K}(\mu))\|_{K}+\|m_K(\mu)-\mu\|_{\partial K}\\
&\leq C\Ctwo h^{-\frac12}\|\mu-m_{K}(\mu)\|_{h, K}+\|m_K(\mu)-\mu\|_{\partial K}\\
&\leq C\Ctwo h^{\frac12}\interleave\mu\interleave_K,
\end{align*}
where we use \EQ{UJU} for the second equality, the trace theorem for
the third inequality, Lemma \LA{UmuBound} for the fourth inequality, and
\EQ{IIInorm} and \EQ{L2norm} for the last inequality. 
\end{proof}
\end{lemma}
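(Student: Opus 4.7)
The plan is to split $\mu$ into its boundary mean $m_K(\mu)$ plus its zero-mean fluctuation $\mu-m_K(\mu)$ and exploit the fact that constants are preserved by the operator $\mathcal{U}$ on each element. More precisely, by Lemma \LA{IJQzero} together with the constant-preservation identity $\mathcal{U}m_K(\mu)={\bm J}\mathcal{U}m_K(\mu)=m_K(\mu)$ from \cite[Lemma 3.5 (i)]{multigrid}, we have $\mathcal{U}m_K(\mu)=m_K(\mu)$ on $\partial K$, so
\[
\mathcal{U}\mu-\mu = \mathcal{U}(\mu-m_K(\mu)) - (\mu-m_K(\mu)) \quad\text{on }\partial K.
\]
A triangle inequality on $\partial K$ then reduces the problem to bounding $\|\mathcal{U}(\mu-m_K(\mu))\|_{\partial K}$ and $\|\mu-m_K(\mu)\|_{\partial K}$ separately.

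For the second term I would use the definition \EQ{IIInorm} directly: since $|\partial K|\sim h$, one has $\|\mu-m_K(\mu)\|_{\partial K}=|\partial K|^{1/2}\interleave\mu\interleave_K\le Ch^{1/2}\interleave\mu\interleave_K$. For the first term, $\mathcal{U}(\mu-m_K(\mu))$ is a polynomial on $K$, so the discrete trace/inverse inequality gives $\|\mathcal{U}(\mu-m_K(\mu))\|_{\partial K}\le Ch^{-1/2}\|\mathcal{U}(\mu-m_K(\mu))\|_K$. Applying Lemma \LA{UmuBound} to the element-interior norm yields $\|\mathcal{U}(\mu-m_K(\mu))\|_K\le C\Ctwo\|\mu-m_K(\mu)\|_{h,K}$, and the definition \EQ{L2norm} combined with $|K|/|\partial K|\sim h$ gives $\|\mu-m_K(\mu)\|_{h,K}\le Ch\interleave\mu\interleave_K$. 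Putting these together produces the bound $Ch^{1/2}\Ctwo\interleave\mu\interleave_K$ for the first term, which dominates the second term since $\Ctwo\ge 1$.

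The main obstacle is really a bookkeeping one: making sure the correct norm is used at each step, in particular correctly relating $\|\cdot\|_{h,K}$, $\interleave\cdot\interleave_K$ and $\|\cdot\|_{L^2(\partial K)}$ through the geometric factors $|K|$ and $|\partial K|$. The genuine analytic content has already been deposited into Lemmas \LA{UmuBound} and \LA{IJQzero} (the latter giving both constant preservation of $\mathcal{U}$ and, via \cite{multigrid}, of $\bm{J}\mathcal{U}$); once we have those, the lemma follows by the two-line split described above plus a standard inverse inequality, with no further delicate estimate required.
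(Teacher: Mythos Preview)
Your proposal is correct and follows essentially the same route as the paper's proof: the same mean-value split $\mathcal{U}\mu-\mu=\mathcal{U}(\mu-m_K(\mu))-(\mu-m_K(\mu))$ based on the constant-preservation identity from Lemma~\LA{IJQzero} and \cite[Lemma~3.5(i)]{multigrid}, then a discrete trace inequality followed by Lemma~\LA{UmuBound}, and finally the conversion between $\|\cdot\|_{h,K}$, $\|\cdot\|_{L^2(\partial K)}$ and $\interleave\cdot\interleave_K$ via \EQ{L2norm} and \EQ{IIInorm}. The only cosmetic difference is that you spell out the geometric factors $|K|$, $|\partial K|$ explicitly, whereas the paper just cites the norm definitions; either way the argument is the same.
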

\begin{lemma}\label{lemma:lastterm}
$$\epsilon^{-1}\|({\bm I}-{\bm
  J})\mathcal{Q}\mu\|^2_K+\|(\tau_K-\frac12{\bm\beta}\cdot{\bf
  n})^{\frac12}({\bm I}-{\bm J})\mathcal{U}\mu\|^2_{\partial K}\leq
C\Ctwo^2(\|\mu\|_{h,K}^2+\interleave\mu\interleave^2_K),$$
where $\Ctwo$ is defined in \EQ{Cdef}.
\end{lemma}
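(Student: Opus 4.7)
The starting point is identity \eqref{equation:normofQU1}, whose left--hand side matches exactly the quantity we need to bound (together with the non--negative term $((-\tfrac12\nabla\cdot\vbeta)({\bm I}-{\bm J})\U\mu,({\bm I}-{\bm J})\U\mu)_K$, which is non--negative by \eqref{equation:cond1} and may simply be dropped). The plan is therefore to bound the two terms on the right--hand side of \eqref{equation:normofQU1} in terms of $\|\mu\|_{h,K}$, $\interleave\mu\interleave_K$, and the absorbable quantity $\|(\tau_K-\tfrac12\vbeta\cdot\vvecn)^{1/2}({\bm I}-{\bm J})\U\mu\|_{\partial K}$, then finish with Young's inequality.

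For the first term $-\tfrac12\langle\vbeta\cdot\vvecn(\mu-{\bm J}\U\mu),({\bm I}-{\bm J})\U\mu\rangle_{\partial K}$, I apply Cauchy--Schwarz on $\partial K$ and invoke Assumption~\ref{assump:onbeta}(4), which gives $|\vbeta\cdot\vvecn|\le C(\tau_K-\tfrac12\vbeta\cdot\vvecn)$ on each face. This converts both boundary factors into the weighted norms, so that the second inequality of Lemma~\ref{lemma:JQbound} yields
\[
\bigl|\tfrac12\langle\vbeta\cdot\vvecn(\mu-{\bm J}\U\mu),({\bm I}-{\bm J})\U\mu\rangle_{\partial K}\bigr|
\le C\epsilon\sqrt{\max_{\partial K}(\tau_K-\tfrac12\vbeta\cdot\vvecn)\,h}\;\interleave\mu\interleave_K\;\bigl\|(\tau_K-\tfrac12\vbeta\cdot\vvecn)^{1/2}({\bm I}-{\bm J})\U\mu\bigr\|_{\partial K}.
\]
A Young's inequality absorbs half of $\|(\tau_K-\tfrac12\vbeta\cdot\vvecn)^{1/2}({\bm I}-{\bm J})\U\mu\|^2_{\partial K}$ into the left, leaving a contribution proportional to $\epsilon^2 h\max_{\partial K}(\tau_K-\tfrac12\vbeta\cdot\vvecn)\interleave\mu\interleave_K^2$, which is controlled by $C\Ctwo^2\interleave\mu\interleave_K^2$ through the second branch of the definition \eqref{equation:Cdef} of $\Ctwo$ (the term $\widetilde{C}_K^{1/2}\epsilon\sqrt{\max(\tau_K-\tfrac12\vbeta\cdot\vvecn)}/\widetilde{C}_K$ combined with Assumption~\ref{assump:onbeta}(3)).

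For the second term $-\bigl(({\bm I}-{\bm J})\U\mu,\vbeta\cdot\nabla{\bm J}\U\mu\bigr)_K$, I use Cauchy--Schwarz in $K$, the fourth inequality of Lemma~\ref{lemma:JQbound} ($\|\nabla{\bm J}\U\mu\|_K\le C\interleave\mu\interleave_K$), and the triangle inequality $\|({\bm I}-{\bm J})\U\mu\|_K\le\|\U\mu\|_K+\|{\bm J}\U\mu\|_K$ bounded via Lemma~\ref{lemma:UmuBound} and the third inequality of Lemma~\ref{lemma:JQbound}, giving $\|({\bm I}-{\bm J})\U\mu\|_K\le C\Ctwo\|\mu\|_{h,K}$. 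A final Young's inequality then produces a contribution bounded by $C\|\vbeta\|_\infty\Ctwo\bigl(\|\mu\|_{h,K}^2+\interleave\mu\interleave_K^2\bigr)$; since $\Ctwo\ge C\|\vbeta\|_\infty/\widetilde{C}_K$ by definition, this is dominated by $C\Ctwo^2\bigl(\|\mu\|_{h,K}^2+\interleave\mu\interleave_K^2\bigr)$.

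The main obstacle is not any single estimate but the careful bookkeeping of the two branches in the definition of $\Ctwo$: the boundary term from Cauchy--Schwarz naturally produces the factor $\epsilon\sqrt{\max(\tau_K-\tfrac12\vbeta\cdot\vvecn)}$ sitting inside the second branch of \eqref{equation:Cdef}, while the volume term from $\nabla{\bm J}\U\mu$ produces a factor $\|\vbeta\|_\infty$ which also sits in that same branch. Recognizing that both are separately majorized by $\Ctwo$ (after multiplying by $\widetilde{C}_K^{1/2}$ or $1/\widetilde{C}_K$, as appropriate) is what justifies collecting everything into a single $C\Ctwo^2\bigl(\|\mu\|_{h,K}^2+\interleave\mu\interleave_K^2\bigr)$ on the right.
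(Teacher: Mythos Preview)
Your proof is correct and somewhat more streamlined than the paper's. You start from identity \eqref{equation:normofQU1}, already established in the proof of Lemma~\ref{lemma:UmuBound}, whose right-hand side has only two terms. The paper instead goes back to \eqref{equation:IJQIJUdef}, performs a different chain of integrations by parts, and decomposes the result into five terms $I$--$V$, each bounded separately. The paper's term $V$ is exactly your first boundary term and is handled identically (Assumption~\ref{assump:onbeta}(4), the second inequality of Lemma~\ref{lemma:JQbound}, Young). The paper's remaining terms $I$--$IV$ together play the role of your second (volume) term, but the paper estimates them one by one using Lemmas~\ref{lemma:UmuBound} and~\ref{lemma:JQbound}; you short-circuit this by invoking the triangle inequality $\|({\bm I}-{\bm J})\U\mu\|_K\le\|\U\mu\|_K+\|{\bm J}\U\mu\|_K\le C\Ctwo\|\mu\|_{h,K}$ directly. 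Both routes end with the same absorption step and the same final factor $C\Ctwo^2$; your version simply reuses work already done rather than repeating it.
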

\begin{proof}
Let ${\bf r}=({\bm I}-{\bm J})\mathcal{Q}\mu$ and $w=({\bm I}-{\bm
  J})\mathcal{U}\mu$ in \EQ{IJQIJUdef}, and we have
\begin{align*}
&\epsilon^{-1}\|({\bm I}-{\bm J})\mathcal{Q}\mu\|^2_K+\|(\tau_K-\frac12{\bm\beta}\cdot{\bf n})^{\frac12}({\bm I}-{\bm J})\mathcal{U}\mu\|^2_{\partial K}\\
&=( \frac12\nabla\cdot{\bm\beta}\mathcal{U}\mu,({\bm I}-{\bm J})\mathcal{U}\mu)_{K}+\frac12(\mathcal{U}\mu, {\bm\beta}\cdot\nabla({\bm I}-{\bm J})\mathcal{U}\mu)_K-\frac12(
({\bm I}-{\bm J})\mathcal{U}\mu,{\bm\beta}\cdot{\nabla\mathcal{U}\mu})_{K}\\
&\qquad-\frac12\langle{{\bm\beta}\cdot{\bf n}\mu, ({\bm I}-{\bm J})\mathcal{U}\mu}\rangle_{\partial  K}\\
&=(\frac12\nabla\cdot{\bm\beta}\mathcal{U}\mu,({\bm I}-{\bm J})\mathcal{U}\mu)_{K}-(\mathcal{U}\mu,{\bm\beta}\cdot\nabla{\bm J}\mathcal{U}\mu)_K-\frac12(\nabla\cdot{\bm\beta}{\bm J}\mathcal{U}\mu,\mathcal{U}\mu)_K+\frac12\langle{\bm J}\mathcal{U}\mu,{\bm\beta}\cdot{\bf n}\mathcal{U}\mu\rangle_{\partial K}\\
&\qquad-\frac12\langle{{\bm\beta}\cdot{\bf n}\mu, ({\bm I}-{\bm J})\mathcal{U}\mu}\rangle_{\partial  K}\\
&=(\frac12\nabla\cdot{\bm\beta}\mathcal{U}\mu, ({\bm I}-{\bm J})\mathcal{U}\mu)_K-(\mathcal{U}\mu,{\bm\beta}\cdot\nabla{\bm J}\mathcal{U}\mu)_K-\frac12(\nabla\cdot{\bm\beta}{\bm J}\mathcal{U}\mu,\mathcal{U}\mu)_K\\
&\qquad+\frac12\langle{\bm J}\mathcal{U}\mu,{\bm\beta}\cdot{\bf n}{\bm J}\mathcal{U}\mu\rangle_{\partial K}+\frac12\langle{\bm J}\mathcal{U}\mu-\mu,{\bm\beta}\cdot{\bf n}({\bm I}-{\bm J})\mathcal{U}\mu\rangle_{\partial K}\\
&:=I+II+III+IV+V.
\end{align*}

We will estimate these five terms separately as follows:

For the first term, we have 
\begin{align*}
I&=(\frac12\nabla\cdot{\bm\beta}\mathcal{U}\mu, ({\bm I}-{\bm
  J})\mathcal{U}\mu)_K\leq\|\frac12\nabla\cdot{\bm\beta}\|_{{\infty}}\|\mathcal{U}\mu\|_{K}\|({\bm
  I}-{\bm J})\mathcal{U}\mu\|_K,\\
&\leq C\|\frac12\nabla\cdot{\bm\beta}\|_{\infty}\left(\Ctwo\right)^2\|\mu\|^2_{h, K},
\end{align*}
where we use Lemmas \LA{UmuBound} and  \LA{JQbound} for the last inequality.

For the second term, by Lemmas \LA{UmuBound} and  \LA{JQbound} we have
\begin{align*}
II&=-(\mathcal{U}\mu,{\bm\beta}\cdot\nabla{\bm J}\mathcal{U}\mu)_K\leq\|{\bm\beta}\|_{{\infty}}\|\mathcal{U}\mu\|_K\|\nabla{\bm J}\mathcal{U}\mu\|_K\\
&\leq C\|{\bm\beta}\|_{\infty}\Ctwo\|\mu\|_{
  h,K}\interleave\mu\interleave_K\leq
  C\|{\bm\beta}\|_{\infty}\Ctwo(\|\mu\|^2_{h,
  K}+\interleave\mu\interleave^2_K).
\end{align*}
Similarly, we have
\begin{align*}
III&=-\frac12(\nabla\cdot{\bm\beta}{\bm J}\mathcal{U}\mu,\mathcal{U}\mu)_K\leq\frac12\|\nabla\cdot{\bm\beta}\|_{\infty}\|{\bm J}\mathcal{U}\mu\|_K\|\mathcal{U}\mu\|_K\\
&\leq C\|\nabla\cdot{\bm\beta}\|_{\infty}\Ctwo\|\mu\|^2_{h,K}.
\end{align*}

For the fourth term, we first use integration by part for the second
equality and Lemma \LA{JQbound} 
\begin{align*}
IV&=\frac12\langle{\bm J}\mathcal{U}\mu,{\bm\beta}\cdot{\bf n}{\bm
    J}\mathcal{U}\mu\rangle_{\partial
    K}=\frac12({\nabla}\cdot{\bm\beta}{\bm J}\mathcal{U}\mu,{\bm
    J}\mathcal{U}\mu)_K+({\bm\beta}\cdot\nabla{{\bm
    J}\mathcal{U}\mu,{\bm J}\mathcal{U}\mu})_K\\
&\leq C\|\nabla\cdot{\bm\beta}\|_{\infty}\|\mu\|^2_{ h,K}+C\|{\bm\beta}\|_{\infty}\interleave\mu\interleave_K\|\mu\|_{h,K}\\
&\leq
  C\|\nabla\cdot{\bm\beta}\|_{\infty}\|\mu\|^2_{h,K}+C\|{\bm\beta}\|_{\infty}(\interleave\mu\interleave_K^2+\|\mu\|^2_{h,K}).
\end{align*}
For the last term, by Assumption \ref{assump:onbeta} and Lemma \LA{JQbound},  we have
\begin{align*}
V&=\frac12\langle{\bm J}\mathcal{U}\mu-\mu,{\bm\beta}\cdot{\bf n}({\bm I}-{\bm J})\mathcal{U}\mu\rangle_{\partial K}\\
&\leq\frac12\|(\tau_K-\frac12{\bm\beta}\cdot{\bf n})^{\frac12}({\bm J}\mathcal{U}\mu-\mu)\|_{\partial K}\|(\tau_K-\frac12{\bm\beta}\cdot{\bf n})^{\frac12}({\bm I}-{\bm J})\mathcal{U}\mu\|_{\partial K}\\
&\leq \frac14  \epsilon^2 \max_{x\in \partial
                                             K}(\tau_K-\frac12{\bm\beta\cdot\bf
                                             n}) h\interleave\mu\interleave^2_K+\frac14\|(\tau_K-\frac12{\bm\beta}\cdot{\bf n})^{\frac12}({\bm I}-{\bm J})\mathcal{U}\mu\|^2_{\partial K}.
\end{align*}

Combing the estimation of these five terms and we obtain 

\begin{align*}
&\epsilon^{-1}\|({\bm I}-{\bm J})\mathcal{Q}\mu\|^2_K+\|(\tau_K-\frac12{\bm\beta}\cdot{\bf n})^{\frac12}({\bm I}-{\bm J})\mathcal{U}\mu\|^2_{\partial K}\\
&\qquad\leq
  C(\Ctwo)^2(\|{\bm\beta}\|_{\infty}+\|\frac12\nabla\cdot{\bm\beta}\|_{\infty}+\|\nabla\cdot{\bm\beta}\|_{\infty}+1)(\|\mu\|^2_{h,
  K}+\interleave\mu\interleave^2_K)\\
&\qquad +\frac14\|(\tau_K-\frac12{\bm\beta}\cdot{\bf n})^{\frac12}({\bm I}-{\bm J})\mathcal{U}\mu\|^2_{\partial K}.
\end{align*}
Cancelling the common terms in both sides, we have
\[
\epsilon^{-1}\|({\bm I}-{\bm J})\mathcal{Q}\mu\|^2_K+\|(\tau_K-\frac12{\bm\beta}\cdot{\bf n})^{\frac12}({\bm I}-{\bm J})\mathcal{U}\mu\|^2_{\partial K}\\
\leq C(\Ctwo)^2 (\|\mu\|^2_{h,
  K}+\interleave\mu\interleave^2_K).\]
\end{proof}
\begin{lemma}\label{lemma:muk|||}
For all $\mu$ with zero boundary on quasiuniform meshes, we have
$$\|\mu\|^2_{h, \Omega}\leq C\interleave\mu\interleave^2_{\Omega}.$$ 
\end{lemma}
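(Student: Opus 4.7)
The strategy is to split $\mu$ on every element boundary $\partial K$ into its mean $m_K(\mu)$ and the deviation $\mu - m_K(\mu)$. The deviation part is controlled directly by $\interleave\mu\interleave^2$, and the mean part is handled by a discrete Poincar\'e--Friedrichs inequality on the piecewise constant function built from the means, using the boundary condition $\mu|_{\partial\Omega} = 0$.

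First I would observe that for every $K \in \T_h$,
\[
\|\mu\|^2_{L^2(\partial K)} \le 2\|\mu - m_K(\mu)\|^2_{L^2(\partial K)} + 2|\partial K|\,m_K(\mu)^2.
\]
Multiplying by $|K|/|\partial K|$, summing over $K$, and invoking quasi-uniformity ($|K|/|\partial K| \le Ch$ and $|\partial K| \le Ch^{n-1}$), the first sum becomes $\sum_K |K|\,\interleave\mu\interleave_K^2 \le Ch^n\,\interleave\mu\interleave^2 \le C\,\interleave\mu\interleave^2$, by definition \EQ{IIInorm}. Thus it remains to bound $\sum_K |K|\,m_K(\mu)^2 = \|v_h\|^2_{L^2(\Omega)}$, where $v_h$ denotes the piecewise constant function defined by $v_h|_K = m_K(\mu)$.

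Next I would estimate the jumps of $v_h$ using the scalar traces of $\mu$. On an interior edge $e = \partial K_1 \cap \partial K_2$, the single-valuedness of $\mu$ on $e$ gives
\[
|e|\,[v_h]_e^2 = \|m_{K_1}(\mu) - m_{K_2}(\mu)\|^2_{L^2(e)} \le 2\|\mu - m_{K_1}\|^2_{L^2(e)} + 2\|\mu - m_{K_2}\|^2_{L^2(e)},
\]
while on a boundary edge $e \subset \partial K \cap \partial\Omega$ the hypothesis $\mu|_e = 0$ yields
\[
|e|\,m_K(\mu)^2 = \|\mu - m_K(\mu)\|^2_{L^2(e)} \le \|\mu - m_K(\mu)\|^2_{L^2(\partial K)}.
\]
Summing over all edges (interior and boundary), the right-hand sides collect into $C\sum_K \|\mu - m_K(\mu)\|^2_{L^2(\partial K)} \le Ch\,\interleave\mu\interleave^2$.

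Finally, the discrete Poincar\'e--Friedrichs inequality for piecewise constants with zero boundary values furnishes a bound of the form
\[
\|v_h\|^2_{L^2(\Omega)} \le C\sum_{e \in \E_i}\frac{|e|}{h_e}[v_h]_e^2\cdot|e| + C\sum_{e \in \E_\partial}\frac{|e|}{h_e} m_{K_e}(\mu)^2\cdot|e|,
\]
(with an appropriate balance of powers of $h$ coming from quasi-uniformity), and combining with the edge estimate above produces $\|v_h\|^2_{L^2(\Omega)} \le C\,\interleave\mu\interleave^2$, completing the proof. The main obstacle is justifying the discrete Poincar\'e step with the correct scaling: a chaining argument (each element is joined to $\partial\Omega$ by a path of $O(1/h)$ adjacent elements, and $m_K(\mu)$ is a telescoping sum of jumps along this path together with a boundary term) gives the cleanest derivation, and related estimates appear in the reference \cite{multigrid} cited in the definition of $\interleave\cdot\interleave$, which can be leveraged rather than reproved from scratch.
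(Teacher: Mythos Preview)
The paper does not prove this lemma itself; it simply cites \cite[Lemma~3.7]{multigrid}. Your proposal supplies an actual argument, and the strategy---split into element-boundary means plus deviations, control deviations trivially, and bound the piecewise-constant mean function $v_h|_K=m_K(\mu)$ via a discrete Poincar\'e--Friedrichs inequality using the jump estimate---is sound and leads to the result.

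It is worth noting that the route taken in \cite{multigrid}, and reused throughout this paper in the neighboring Lemmas~\LA{averagezeroscaling} and~\LA{mu|||t}, is different in flavor: one first projects $\mu$ to the lowest-order trace space via $Q_0$, then invokes the isomorphism $X_h$ from \EQ{Xhdef} to pass to $P_1$ nonconforming finite element functions, and finally applies the Poincar\'e inequality for nonconforming elements together with the norm equivalences \EQ{p10}. Your argument stays entirely within piecewise constants and avoids the $P_1$ nonconforming detour, which is arguably more elementary; the paper's route has the advantage of plugging directly into an existing, well-documented toolbox.

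One small point: the discrete Poincar\'e inequality you wrote down has the $h$-scaling slightly garbled. The clean statement for piecewise constants with zero boundary trace is
\[
\|v_h\|_{L^2(\Omega)}^2 \;\le\; C\sum_{e\in\E} h_e^{-1}\,\|[v_h]\|_{L^2(e)}^2
\;=\; C\sum_{e\in\E} h_e^{-1}|e|\,[v_h]_e^2,
\]
which in two dimensions collapses to $C\sum_e [v_h]_e^2$. Combined with your jump bound $\sum_e|e|\,[v_h]_e^2\le Ch\interleave\mu\interleave^2$ and quasi-uniformity, this closes the estimate. Your chaining sketch reproduces exactly this inequality, so the argument is complete once the scaling is stated correctly.
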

\begin{proof}
See  \cite[Lemma 3.7]{multigrid}.
\end{proof}
\begin{lemma}\label{lemma:equivalent} 
\begin{align*}
c{\epsilon}\interleave\mu\interleave^2 \leq a_h(\mu,\mu)\leq C\Ctwo^2\interleave\mu\interleave^2.
\end{align*}
\end{lemma}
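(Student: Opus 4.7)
Since $\mu\in\Lambda$ vanishes on $\partial\Omega$, we have $\frac12\langle\vbeta\cdot{\bf n}\mu,\mu\rangle_{\partial\mathcal{T}_h}=0$ and the skew-symmetric part satisfies $z_h(\mu,\mu)=0$, so
$$a_h(\mu,\mu)=b_h(\mu,\mu)=\underbrace{\epsilon^{-1}\|\mathcal{Q}\mu\|^2_{\mathcal{T}_h}}_{\mathrm{I}}+\underbrace{\|(\tau_K-\tfrac12\vbeta\cdot{\bf n})^{1/2}(\mathcal{U}\mu-\mu)\|^2_{\partial\mathcal{T}_h}}_{\mathrm{II}}+\underbrace{(-\tfrac12\nabla\cdot\vbeta\,\mathcal{U}\mu,\mathcal{U}\mu)_{\mathcal{T}_h}}_{\mathrm{III}}.$$
Under Assumption \ref{assump:onbeta} each of $\mathrm{I},\mathrm{II},\mathrm{III}$ is non-negative, which is the starting point for both the upper and lower bounds.

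For the upper bound, I would estimate $\mathrm{I},\mathrm{II},\mathrm{III}$ element-wise by $C\Ctwo^2\interleave\mu\interleave^2_K$. For $\mathrm{I}$, decompose $\mathcal{Q}\mu={\bm J}\mathcal{Q}\mu+({\bm I}-{\bm J})\mathcal{Q}\mu$ and combine the first estimate of Lemma \LA{JQbound} (noting $\epsilon\le 1$ so that $\epsilon\,\Cone^2\le C\Ctwo^2$) with Lemma \LA{lastterm}, converting $\|\mu\|^2_{h,K}$ to $\interleave\mu\interleave^2_K$ via Lemma \LA{muk|||} (the zero-boundary Poincaré-type inequality). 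Treat $\mathrm{II}$ in the same way by splitting $\mathcal{U}\mu-\mu=({\bm J}\mathcal{U}\mu-\mu)+({\bm I}-{\bm J})\mathcal{U}\mu$ and applying the second estimate of Lemma \LA{JQbound} together with Lemma \LA{lastterm}. For $\mathrm{III}$, use Cauchy--Schwarz and Lemma \LA{UmuBound} to get $\|\mathcal{U}\mu\|^2_K\le C\Ctwo^2\|\mu\|^2_{h,K}\le C\Ctwo^2\interleave\mu\interleave^2_K$. Summing over $K\in\mathcal{T}_h$ yields the upper bound.

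For the lower bound, discard $\mathrm{II}$ and $\mathrm{III}$ (both non-negative), so it suffices to show $\|\mathcal{Q}\mu\|^2_K\ge c\epsilon^2\interleave\mu\interleave^2_K$ element-wise. Here I would compare $\mathcal{Q}\mu$ to its pure-diffusion Raviart--Thomas analogue $\Q^{RT}\mu$ (the version obtained by replacing $\tau_K-\frac12\vbeta\cdot{\bf n}$ by $\tau_K$ and dropping advection terms in \EQ{JQJUdef}). The perturbation bound \EQ{lllQ} controls $\|{\bm J}\mathcal{Q}\mu-\Q^{RT}\mu\|_K$, Lemma \LA{lastterm} controls $\|({\bm I}-{\bm J})\mathcal{Q}\mu\|_K$, and the lower half of the RT norm-equivalence in \cite[Theorem 2.2]{schwarz} supplies $\|\Q^{RT}\mu\|_K\ge c\epsilon\interleave\mu\interleave_K$. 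Applying the triangle inequality to $\mathcal{Q}\mu=\Q^{RT}\mu+({\bm J}\mathcal{Q}\mu-\Q^{RT}\mu)+({\bm I}-{\bm J})\mathcal{Q}\mu$ and using Lemma \LA{muk|||} to absorb $\|\mu\|^2_{h,K}$-terms into $\interleave\mu\interleave^2_K$ then produces the desired coercivity constant $c\epsilon$.

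The main obstacle is the lower bound: when $\epsilon$ is small the perturbation factor $\epsilon^{-1/2}\sqrt{h\max(\tau_K-\frac12\vbeta\cdot{\bf n})}$ appearing in \EQ{lllQ} is not automatically small, so the triangle inequality alone may fail to isolate $c\epsilon\interleave\mu\interleave_K$ from the $\Q^{RT}\mu$ contribution. Closing the estimate then requires either restoring the discarded term $\mathrm{II}$ (which bounds $\|(\tau_K-\frac12\vbeta\cdot{\bf n})^{1/2}(\mathcal{U}\mu-\mu)\|_{\partial K}$ and so compensates exactly for the boundary-penalty mismatch between $\mathcal{Q}\mu$ and $\Q^{RT}\mu$) or reorganizing the elementwise identity for $\epsilon^{-1}\|\mathcal{Q}\mu\|^2_K+\|(\tau_K-\frac12\vbeta\cdot{\bf n})^{1/2}(\mathcal{U}\mu-\mu)\|^2_{\partial K}$ directly. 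The upper bound is essentially bookkeeping, but the accounting needed to consolidate the various $\epsilon$-dependent factors into a single $\Ctwo^2$ constant still demands care.
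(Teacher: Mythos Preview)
Your upper-bound argument matches the paper's: the same ${\bm J}/({\bm I}-{\bm J})$ splitting, the same lemmas (\LA{JQbound}, \LA{lastterm}, \LA{UmuBound}), and then Lemma~\LA{muk|||} to pass from $\sum_K\|\mu\|_{h,K}^2$ to $\interleave\mu\interleave^2$. One small correction of phrasing: Lemma~\LA{muk|||} is a \emph{global} Poincar\'e-type inequality (it uses $\mu|_{\partial\Omega}=0$), so you cannot convert $\|\mu\|^2_{h,K}$ into $\interleave\mu\interleave^2_K$ element by element; you must sum over $K$ first and then apply it once. The paper does exactly this.

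The lower bound, however, is where your proposal diverges and where the gap you flag is real. Your perturbation/triangle-inequality route through $\Q^{RT}\mu$ cannot close: the error terms coming from \EQ{lllQ} and Lemma~\LA{lastterm} carry factors like $\epsilon^{-1/2}\sqrt{h\max(\tau_K-\tfrac12\vbeta\cdot{\bf n})}$ and $\Ctwo\|\mu\|_{h,K}$ that are not dominated by $c\epsilon\interleave\mu\interleave_K$ when $\epsilon$ is small, and Lemma~\LA{muk|||} goes the wrong way to absorb them (it gives an \emph{upper} bound on $\|\mu\|_h$, which is useless on the subtracted side of a triangle inequality). Restoring term~II does not obviously fix this either, because the penalty in II involves $\mathcal{U}\mu$, not $\U^{RT}\mu$, so the mismatch is not purely a boundary-penalty one.

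The paper avoids all of this by a much simpler observation: the \emph{first} equation in \EQ{eq3.4},
\[
(\epsilon^{-1}\mathcal{Q}\mu,{\bf r})_K-(\mathcal{U}\mu,\nabla\cdot{\bf r})_K=-\langle\mu,{\bf r}\cdot{\bf n}\rangle_{\partial K},
\]
does not contain $\vbeta$ at all; it is identical to the first local equation in the pure-diffusion HDG setting. Consequently the proof of \cite[Lemma~3.8]{multigrid} (which uses only this equation, via a suitable test function ${\bf r}$) applies verbatim and yields $\epsilon^{-1}\|\mathcal{Q}\mu\|_K\ge C\interleave\mu\interleave_K$ directly, element by element. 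Then $a_h(\mu,\mu)\ge\sum_K\epsilon^{-1}\|\mathcal{Q}\mu\|_K^2\ge C\epsilon\interleave\mu\interleave^2$. No comparison to $\Q^{RT}$, no perturbation bounds, no smallness assumption is needed.
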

\begin{proof}
By \EQ{sbformK} and \EQ{sbformN}, we have
\begin{align*}
a_h(\mu,\mu)&=b_h(\mu,\mu)+z_h(\mu,\mu)\\
&=\epsilon^{-1}\|\mathcal{Q}\mu\|^2_{\mathcal{T}_h}
+\|(\tau_K-\frac12{\bm\beta}\cdot{\bf n})^{\frac12}(\mathcal{U}\mu-\mu)\|^2_{\partial\mathcal{T}_h}+\|(-\frac12\nabla\cdot{\bm\beta})^{\frac12}\mathcal{U}\mu\|^2_{\mathcal{T}_h}\\
&=\epsilon^{-1}\|{\bm J}\mathcal{Q}\mu+({\bm I}-{\bm J})\mathcal{Q}\mu\|^2_{\mathcal{T}_h}\\
&+\|(\tau_K-\frac12{\bm\beta}\cdot{\bf n})^{\frac12}\left(({\bm I}-{\bm J}) \mathcal{U}\mu+({\bm J}\mathcal{U}\mu-\mu)\right)\|^2_{\partial\mathcal{T}_h}+\|(-\frac12\nabla\cdot{\bm\beta})^{\frac12}\mathcal{U}\mu\|^2_{\mathcal{T}_h}\\
&\leq C\left(\epsilon^{-1}\|{\bm J}\mathcal{Q}\mu\|^2_{\mathcal{T}_h}+\epsilon^{-1}\|({\bm I}-{\bm J})\mathcal{Q}\mu\|^2_{\mathcal{T}_h}+\|(\tau_K-\frac12{\bm\beta}\cdot{\bf n})^{\frac12}(({\bm I}-{\bm J})\mathcal{U}\mu \|^2_{\partial\mathcal{T}_h}\right.\\
& \left.\qquad+\|(\tau_K-\frac12{\bm\beta}\cdot{\bf n})^{\frac12}({\bm J}\mathcal{U}\mu-\mu)\|^2_{\partial\mathcal{T}_h}+\|(-\frac12\nabla\cdot{\bm\beta})^{\frac12}\mathcal{U}\mu\|^2_{\mathcal{T}_h}\right)\\
&=C\sum\limits_{K\in\mathcal{T}_h}\left(\epsilon^{-1}\|{\bm J}\mathcal{Q}\mu\|^2_{K}+\epsilon^{-1}\|({\bm I}-{\bm J})\mathcal{Q}\mu\|^2_{K}+\|(\tau_K-\frac12{\bm\beta}\cdot{\bf n})^{\frac12}({\bm I}-{\bm J})\mathcal{U}\mu \|^2_{\partial K}\right.\\
& \qquad+\left.\|(\tau_K-\frac12{\bm\beta}\cdot{\bf n})^{\frac12}({\bm J}\mathcal{U}\mu-\mu)\|^2_{\partial K}+\|(-\frac12\nabla\cdot{\bm\beta})^{\frac12}\mathcal{U}\mu\|^2_{K}\right)\\
&\leq C\left(\Cone^2\epsilon +\Ctwo^2+\Cone^2\epsilon^2+\Ctwo^2\right)(\sum\limits_{K\in\mathcal{T}_h}\|\mu\|^2_{h,K}+\sum\limits_{K\in\mathcal{T}_h}\interleave\mu\interleave^2_{K}).
\end{align*}
Here we use Lemma \LA{JQbound} for the first term, Lemma \LA{lastterm} for
the second and third terms,  Lemma \LA{JQbound} for the fourth term, and
Lemma \LA{UmuBound} for the last term.  

Combining the above estimate with Lemma \LA{muk|||}, we obtain the
upper bound estimate.

On the other hand, by the similar proof in \cite[Lemma 3.8]{multigrid}, we can obtain that $\epsilon^{-1}\|\mathcal{Q}\mu\|_K\geq C\interleave\mu\interleave_K$, therefore
\begin{align*}
a_h(\mu,\mu)=\sum\limits_{K\in\T_h}a_K(\mu,\mu)\geq \sum\limits_{K\in\T_h}\epsilon^{-1}\|\mathcal{Q}\mu\|^2_K\geq C\sum\limits_{K\in\T_h}\epsilon\interleave \mu\interleave^2_K=C\epsilon \interleave\mu\interleave^2_{\Omega}.
\end{align*}

\end{proof}

In the following, we establish the relation between $\|\cdot\|_{h,\Omega_i}$
and $\interleave\cdot\interleave_{\Omega_i}$  for the functions in
$\Lambda^{(i)}$, which  have zero edge average
  on 
$\partial \Omega_i$.  We also prove similar result as
Lemma \LA{muk|||} for the functions in $\Ltilde$. 
 
Let $\Lambda^{0,(i)}$ be the zero-order numerical trace space in
$\Omega_i$ and $Q_0$ be the $L^2$ orthogonal projection from
$\Lambda^{(i)}$ into $\Lambda^{0,(i)}$.

\begin{lemma}{\label{lemma:L2projection}}
For any $\mu^{(i)}\in\Lambda^{(i)}$,
\begin{equation}
\begin{aligned}
\interleave Q_0\mu^{(i)}\interleave_{\Omega_i}&\leq C\interleave \mu^{(i)}\interleave_{\Omega_i},\\
\|\mu^{(i)}-Q_0\mu^{(i)}\|^2_{\partial K}&\leq Ch\interleave\mu\interleave^2_{K},\\
\interleave\mu\interleave^2_K &\leq  Ch^{-1}\|\mu\|^2_{\partial K}.
\end{aligned}
\end{equation}
\end{lemma}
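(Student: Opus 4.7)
The plan is to exploit the fact that $Q_0$ is $L^2(\E)$-orthogonal onto functions that are piecewise constant on each edge, together with the best-approximation property of the average $m_K$. All three estimates follow from combining these two minimality facts with the quasi-uniformity bound $|\partial K|\sim h$.

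I would start with the third inequality, which is the cleanest. Because $m_K(\mu)$ is the $L^2(\partial K)$ best constant approximation of $\mu$, one has $\|\mu - m_K(\mu)\|^2_{L^2(\partial K)} \leq \|\mu\|^2_{L^2(\partial K)}$. Dividing by $|\partial K|$ and invoking quasi-uniformity to replace $|\partial K|^{-1}$ by $C h^{-1}$ produces $\interleave \mu\interleave^2_K \le Ch^{-1}\|\mu\|^2_{\partial K}$.

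Next I would tackle the second inequality. On each edge $e\subset \partial K$, $Q_0\mu|_e$ is the edge average $\bar\mu_e$, which is the $L^2(e)$-best constant approximation of $\mu|_e$, so $\|\mu-\bar\mu_e\|^2_e\le \|\mu - m_K(\mu)\|^2_e$. Summing over the edges gives
\[
\|\mu-Q_0\mu\|^2_{\partial K}\le \|\mu-m_K(\mu)\|^2_{\partial K} = |\partial K|\cdot \interleave \mu\interleave^2_K \le Ch\, \interleave\mu\interleave^2_K,
\]
using $|\partial K|\sim h$.

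For the first inequality, the key preparatory observation is that $Q_0$ preserves the integral of $\mu$ on each edge, hence on $\partial K$, so $m_K(Q_0\mu)=m_K(\mu)$; combined with $Q_0\, m_K(\mu)=m_K(\mu)$, this yields the identity $Q_0\mu - m_K(Q_0\mu) = Q_0\bigl(\mu - m_K(\mu)\bigr)$. Then by Cauchy--Schwarz on each edge, $|e|\bigl|\overline{(\mu-m_K(\mu))}_e\bigr|^2 \le \|\mu-m_K(\mu)\|^2_e$, and summing over edges gives $\|Q_0(\mu-m_K(\mu))\|^2_{\partial K}\le \|\mu-m_K(\mu)\|^2_{\partial K}$. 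Dividing by $|\partial K|$ and then summing over $K\subseteq \Omega_i$ produces $\interleave Q_0\mu\interleave_{\Omega_i}\le \interleave\mu\interleave_{\Omega_i}$.

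There is no real obstacle here: each estimate reduces to a one-line application of either the best-approximation property of a constant (for $m_K$, or for the edge average $\bar\mu_e$) or the orthogonal-projection contractivity of $Q_0$, combined with the trivial scaling $|\partial K|\sim h$. The only piece that requires a small calculation rather than pure inequality manipulation is verifying $m_K(Q_0\mu)=m_K(\mu)$, which is immediate from the definition of $Q_0$ as an edgewise $L^2$ projection.
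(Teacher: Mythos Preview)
Your proof is correct. The paper itself does not give an argument for this lemma; it simply cites \cite[(4.9), (4.10), and (2.9)]{schwarz}. Your self-contained treatment, based on (i) the minimality of $m_K(\mu)$ among constants in $L^2(\partial K)$, (ii) the edgewise minimality of $Q_0\mu|_e$ among constants in $L^2(e)$, and (iii) the observation $m_K(Q_0\mu)=m_K(\mu)$ combined with the $L^2$-contractivity of $Q_0$, recovers all three estimates with the optimal constants (in fact $C=1$ in the first inequality before the $|\partial K|\sim h$ scaling is invoked). This is exactly the kind of elementary argument that underlies the cited results, so there is no substantive divergence---you have just unpacked what the reference contains.
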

\begin{proof}
See \cite[(4.9), (4.10), and (2.9)]{schwarz}.
\end{proof}

Let $V^0_h(\Omega_i)$ denote the $P_1$ non-conforming finite element spaces on
the mesh ${\mathcal T}_h(\Omega_i)$. As in \cite[Section 3]{schwarz},
we can establish an isomorphism $X_h: \Lambda^{0,(i)}\rightarrow
V_h^0(\Omega_i)$, by 
\begin{equation}\label{equation:Xhdef}
(X_h\lambda) (x_e)=\lambda|_e,
\end{equation}
where $\lambda\in \Lambda^{0,(i)}$ and $x_e$ denotes the midpoint of edge
$e$.  

 For $w\in V^0_h(\Omega_i)$, we define $|w|^2_{H^1(\Omega_i)}=\sum_{K\in \Omega_i}\|\nabla
w\|^2_{L^2(K)}$.  Similar to \cite[Lemma 3.1]{schwarz}, by the definition and
scaling arguments, we have, for any $\lambda\in \Lambda^{0,(i)}$,
\begin{eqnarray}\label{equation:p10}
c|X_h\lambda|_{H^1(\Omega_i)}&\le& \interleave\lambda\interleave_{\Omega_i}\le
  C|X_h\lambda|_{H^1(\Omega_i)},\\
c\|X_h\lambda\|_{L^2(\Omega_i)}&\le& \|\lambda\|_{h,\Omega_i}\le
  C\|X_h\lambda\|_{L^2(\Omega_i)}.\nonumber
\end{eqnarray}

Using $X_h$, we have the following lemma

\begin{lemma}\label{lemma:averagezeroscaling}
If $ \mu\in \Lambda^{(i)}$ and $\mu$ is edge average zero on $\partial
\Omega_i$, then 
$$\|\mu\|_{h,\Omega_i}\leq CH\interleave\mu\interleave_{\Omega_i}.$$
\end{lemma}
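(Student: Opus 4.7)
The plan is to split $\mu = (\mu - Q_0\mu) + Q_0\mu$ and control each piece separately, using the non-conforming $P_1$ realization $X_h$ of the low-frequency part to reduce the bound on $\|Q_0\mu\|_{h,\Omega_i}$ to a classical Poincar\'e-type estimate on $\Omega_i$. The high-frequency piece is cheap: Lemma \LA{L2projection} gives $\|\mu-Q_0\mu\|^2_{\partial K}\le Ch\interleave\mu\interleave^2_K$ on each element, so combining with the shape-regular scaling $|K|/|\partial K|\le Ch$ in the definition \EQ{L2norm} of $\|\cdot\|_{h,\Omega_i}$ yields $\|\mu-Q_0\mu\|^2_{h,\Omega_i}\le Ch^2\interleave\mu\interleave^2_{\Omega_i}\le CH^2\interleave\mu\interleave^2_{\Omega_i}$.

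For the low-frequency piece $Q_0\mu\in\Lambda^{0,(i)}$, set $w=X_h Q_0\mu\in V_h^0(\Omega_i)$ via \EQ{Xhdef}. Since $Q_0$ averages $\mu$ over each fine edge of $\Omega_i$, the integral of $Q_0\mu$ along any coarse edge of $\partial\Omega_i$ equals the corresponding integral of $\mu$, which vanishes by hypothesis. In particular, the non-conforming boundary mean $\sum_{e\subset\partial\Omega_i}w(x_e)|e|=\int_{\partial\Omega_i}Q_0\mu\,ds=0$. A broken Poincar\'e--Friedrichs inequality for the non-conforming $P_1$ space on $\Omega_i$ (of diameter $H$) then produces $\|w\|_{L^2(\Omega_i)}\le CH\,|w|_{H^1(\Omega_i)}$. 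Applying the norm equivalences \EQ{p10} on both sides and the $\interleave\cdot\interleave$-stability of $Q_0$ from Lemma \LA{L2projection} converts this into $\|Q_0\mu\|_{h,\Omega_i}\le CH\interleave Q_0\mu\interleave_{\Omega_i}\le CH\interleave\mu\interleave_{\Omega_i}$. The triangle inequality then gives the claimed bound.

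The only step that is not a direct consequence of earlier lemmas is the broken Poincar\'e--Friedrichs inequality for $V_h^0(\Omega_i)$ under a vanishing non-conforming boundary mean. Because $V_h^0(\Omega_i)\not\subset H^1(\Omega_i)$, the classical inequality does not apply verbatim, but the required version is a standard fact for Crouzeix--Raviart type spaces: rescale $\Omega_i$ to unit diameter, apply a compactness/contradiction argument on the reference subdomain to obtain the unit-scale estimate, and dilate back to recover the explicit factor $H$.
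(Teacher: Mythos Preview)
Your proof is correct and follows essentially the same approach as the paper: split $\mu=(\mu-Q_0\mu)+Q_0\mu$, bound the high-frequency part by $Ch\interleave\mu\interleave_{\Omega_i}$ via Lemma~\LA{L2projection}, and handle the low-frequency part by passing to $X_hQ_0\mu\in V_h^0(\Omega_i)$, observing that its boundary integral vanishes, and applying a Poincar\'e inequality for the Crouzeix--Raviart space together with \EQ{p10} and the $\interleave\cdot\interleave$-stability of $Q_0$. The paper simply cites Brenner for the broken Poincar\'e inequality where you sketch a compactness argument, but the structure and all key steps coincide.
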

\begin{proof}
Letting $\mu_0$=$Q_0\mu$, by Lemma \LA{L2projection}, we have
\begin{eqnarray}\label{equation:u0}
\|\mu-\mu_0\|^2_{h,\Omega_i}&=&h\sum_{K\in
    \T_h,K\subseteq\Omega_i}\left\|\mu-\mu_0\right\|^2_{L^2(\partial
    K)}\\
&\le&C h\sum_{K\in
    \T_h,K\subseteq\Omega_i}h\interleave\mu\interleave^2_{K}=Ch^2\interleave\mu\interleave^2_{\Omega_i}.\nonumber
\end{eqnarray}

Given any edge $E$ of $\partial\Omega_i$, we have $\int_E\mu=0$.
Since $\mu_0$=$Q_0\mu$, we have  $\int_E\mu_0=0$. By the definition of
$X_h$ and the linearity of $P_1$ non-conforming elements, we have
 $\int_E(X_h\mu_0)=0$, and therefore
 $\int_{\partial\Omega_i}(X_h\mu_0)=0$.

By the Poincare inequality (see \cite{brenner}) and the scaling
argument for $P_1$ non-conforming element,  we have
\[\|X_h\mu\|_{L^2(\Omega_i)}\le CH|X_h\mu|_{H^1(\Omega_i)}.\]
Combing this with \EQ{p10}, we have
\begin{equation}\label{equation:p10x}
\|\mu_0\|_{h,\Omega_i}\le C\|X_h\mu_0\|_{L^2(\Omega_i)}\le
CH|X_h\mu_0|_{H^1(\Omega_i)}\le C H\interleave\mu_0\interleave_{\Omega_i}.
\end{equation}

Therefore, combing \EQ{u0} and \EQ{p10x}, we have
\begin{eqnarray*}
\|\mu\|_{h,\Omega_i}&=&\|\mu-\mu_0+\mu_0\|_{h,\Omega_i}\leq
                        \|\mu-\mu_0\|_{h,\Omega_i}+\|\mu_0\|_{h,\Omega_i}\\
&\le& Ch\interleave\mu\interleave_{\Omega_i}+C H\interleave\mu_0\interleave_{\Omega_i}\\
&\le &CH\interleave\mu\interleave_{\Omega_i}.
\end{eqnarray*}

\end{proof}

Using Lemma \LA{averagezeroscaling} and the proof of Lemma
\LA{equivalent}, we can prove the following
equivalence for subdomain functions.
\begin{lemma}\label{lemma:subequivalent} 
{If $ \mu^{(i)}\in \Lambda^{(i)}$ and $\mu^{(i)}$ is edge average zero on $\partial
\Omega_i$, then 
\begin{align*}
c{\epsilon}\interleave\mu^{(i)}\interleave^2_{\Omega_i} \leq a^{(i)}_h(\mu^{(i)},\mu^{(i)})\leq C\Ctwo^2\interleave\mu^{(i)}\interleave_{\Omega_i}^2.
\end{align*}}
We also note that the lower bound is always hold for any $ \mu^{(i)}\in \Lambda^{(i)}$.
\end{lemma}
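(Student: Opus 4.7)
The proof strategy is to mimic the proof of Lemma \LA{equivalent} verbatim, but carried out on the subdomain $\Omega_i$ instead of on $\Omega$, replacing the Poincaré-type estimate of Lemma \LA{muk|||} with its subdomain analogue Lemma \LA{averagezeroscaling}. The plan is as follows.

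For the upper bound, I would first rewrite $a^{(i)}_h(\mu^{(i)},\mu^{(i)})$ as a sum of a symmetric piece and a boundary term. Using the derivation that led to \EQ{locah}, the skew-symmetric contributions vanish on the diagonal, so
\[
a^{(i)}_h(\mu^{(i)},\mu^{(i)}) = b^{(i)}(\mu^{(i)},\mu^{(i)}) - \tfrac12\langle \vbeta\cdot\vvecn\,\mu^{(i)},\mu^{(i)}\rangle_{\partial\T_h(\Omega_i)},
\]
where the last term reduces to a single integral over $\partial\Omega_i$ since interior contributions cancel. I would then split $\mathcal{Q}\mu^{(i)}=\bm J\mathcal{Q}\mu^{(i)}+(\bm I-\bm J)\mathcal{Q}\mu^{(i)}$ and $\mathcal{U}\mu^{(i)}-\mu^{(i)}=(\bm I-\bm J)\mathcal{U}\mu^{(i)}+(\bm J\mathcal{U}\mu^{(i)}-\mu^{(i)})$ and bound each of the resulting terms element-by-element, exactly as in the last computation of Lemma \LA{equivalent}, using Lemma \LA{JQbound} for the $\bm J$-pieces, Lemma \LA{lastterm} for the $(\bm I-\bm J)$-pieces, and Lemma \LA{UmuBound} for the divergence term. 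This yields
\[
b^{(i)}(\mu^{(i)},\mu^{(i)}) \leq C\Ctwo^2\bigl(\|\mu^{(i)}\|_{h,\Omega_i}^2 + \interleave\mu^{(i)}\interleave_{\Omega_i}^2\bigr).
\]
The remaining boundary integral over $\partial\Omega_i$ I would control by trace inequality and inverse inequality to reduce it to $\|\mu^{(i)}\|_{h,\Omega_i}^2$ as well.

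At this point the edge-average-zero hypothesis on $\partial\Omega_i$ is invoked through Lemma \LA{averagezeroscaling} to get $\|\mu^{(i)}\|_{h,\Omega_i}^2 \leq CH^2\interleave\mu^{(i)}\interleave_{\Omega_i}^2$. Since $H \leq \mathrm{diam}(\Omega)$ is uniformly bounded, this gives the desired upper bound $a^{(i)}_h(\mu^{(i)},\mu^{(i)}) \leq C\Ctwo^2\interleave\mu^{(i)}\interleave_{\Omega_i}^2$.

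For the lower bound, I would argue exactly as at the end of the proof of Lemma \LA{equivalent}: by the element-level estimate $\epsilon^{-1}\|\mathcal{Q}\mu^{(i)}\|_K^2 \geq C\epsilon\interleave\mu^{(i)}\interleave_K^2$ (obtained as in \cite[Lemma 3.8]{multigrid}), summing over the elements of $\Omega_i$ and keeping only this term from the expansion of $a^{(i)}_h(\mu^{(i)},\mu^{(i)})$ gives $c\epsilon\interleave\mu^{(i)}\interleave_{\Omega_i}^2 \leq a^{(i)}_h(\mu^{(i)},\mu^{(i)})$. This part does not involve the Poincaré-type estimate and so does not require the edge-average-zero hypothesis, explaining the final remark of the lemma.

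The main obstacle I anticipate is controlling the boundary term $-\tfrac12\int_{\partial\Omega_i}\vbeta\cdot\vvecn(\mu^{(i)})^2\,ds$, which has no counterpart in the global version because $\mu$ vanishes on $\partial\Omega$. For the upper bound this is only a technical issue handled by a trace inequality combined with Lemma \LA{averagezeroscaling}, but it is the step where the quadratic loss $H^2$ shows up and must be absorbed into the generic constant. Keeping track of how the constants from Lemmas \LA{JQbound}--\LA{UmuBound} combine (in particular that no additional $\epsilon^{-1}$ factor creeps in beyond $\Ctwo^2$) is the most delicate bookkeeping required.
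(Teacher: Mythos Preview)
Your strategy—repeat the proof of Lemma~\LA{equivalent} on $\Omega_i$, substituting Lemma~\LA{averagezeroscaling} for Lemma~\LA{muk|||}—is precisely the paper's (one-sentence) proof.

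The obstacle you flag, the boundary term $-\tfrac12\int_{\partial\Omega_i}\vbeta\cdot\vvecn(\mu^{(i)})^2$, is where your write-up has a genuine gap. An element-level trace/inverse argument gives only $\|\mu^{(i)}\|_{L^2(\partial\Omega_i)}^2 \leq Ch^{-1}\|\mu^{(i)}\|_{h,\Omega_i}^2$, not $C\|\mu^{(i)}\|_{h,\Omega_i}^2$, so after invoking Lemma~\LA{averagezeroscaling} your upper bound would carry an unwanted factor $H^2/h$. More seriously, your lower-bound argument simply discards this boundary term; since it can be negative on the outflow part of $\partial\Omega_i$, the unconditional lower bound asserted in the last sentence of the lemma cannot follow from that reasoning. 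The clean resolution is that the quantity intended is $a^{(i)}(\mu^{(i)},\mu^{(i)})=b^{(i)}(\mu^{(i)},\mu^{(i)})$ (the Robin-modified form of~\EQ{lbformA}), not $a_h^{(i)}$: this is what is actually used downstream (the upper bound is applied to $\|\cdot\|_{B^{(i)}}$ in the proof of Lemma~\LA{averagenorm}), and only for $b^{(i)}$ are all three diagonal pieces manifestly nonnegative under Assumption~\ref{assump:onbeta}, which is exactly why the lower bound holds without the zero-average hypothesis. Read this way there is no boundary term at all, and both your argument and the paper's go through verbatim.
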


 Assumption \ref{assump:onsubdomainshape} assume that the 
subdomains form a shape regular coarse mesh of $\Omega$, denoted by
$\T_H$. We also denote by $V_H^0$, the $P_1$
non-conforming finite element space on this coarse mesh formed by the
subdomains.  Let $\Lambda_H^0$ be the zero-order numerical trace space
in $\Omega$ on this mesh.  
Similarly to $X_h$ defined in \EQ{Xhdef}, we
 can establish an isomorphism $X_H:\Lambda^0_H\rightarrow V_H^0$ by
$$(X_H\lambda)(x_E)=\lambda|_E,$$
where $\lambda\in\Lambda^0_H$ and $x_E$ denotes the midpoint of
element edge
$E$ of the coarse mesh.

Let $Q_H^V$ denotes the $L^2$ orthogonal
projection from $V^0_h$ to $V^0_H$ and $I_h^V$ is an intergrid
transfer operator from $V^0_H$ to $V^0_h$, defined in
\cite{schwarz}. A similar operator can be found in
\cite{BrennerA1989}.  By the definition/construction, we have that
$Q_H^V$ and $I_h^V$ both preserve  the edge average over the edges in $\T_H$.

By \cite[Lemma 4.5 and Lemma 4.6]{schwarz}, we have 
  \begin{equation}\label{equation:htH}
   |Q_H^Vw|_{H^1(\T_H)}\le C|w|_{H^1(\T_h)}, \quad \forall w\in V_h^0,
  \end{equation}
  and
   \begin{equation}\label{equation:Hth}
    |I_h^Vw|_{H^1(\T_h)}\le C|w|_{H^1(\T_H)}, \quad \forall w\in V_H^0.
  \end{equation}
  \EQ{Hth} is also given in \cite[Lemma 1]{BrennerA1989}.


We have the similar result as
Lemma \LA{muk|||} for the functions in $\Ltilde$.
\begin{lemma}\label{lemma:mu|||t}
If $\mu\in\Ltilde,$ then 
\begin{align*}
\|\mu\|_{h}\leq C\interleave\mu\interleave.
\end{align*}
\end{lemma}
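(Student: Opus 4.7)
The plan is to split each subdomain restriction $\mu^{(i)}$ into a piece with vanishing coarse-face averages on $\partial\Omega_i$, handled by Lemma \LA{averagezeroscaling}, and a globally continuous ``coarse'' piece, handled by a discrete Poincar\'e-type inequality on the non-conforming coarse space $V_H^0$. The essential feature of $\Ltilde$ that we exploit is that the face-average primal constraints force the coarse-face averages of $\mu$ to be single-valued across $\Gamma$.

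First, I would extract the coarse data. Since $\mu\in\Ltilde$, the quantities $\alpha^k := |\E^k|^{-1}\int_{\E^k}\mu^{(i)}\,ds$ depend only on the coarse face $\E^k$ (by continuity of the face-average primal variables in $\Ltilde$), and $\alpha^k=0$ whenever $\E^k\subset\partial\Omega$. Setting $v_H(x_{\E^k}):=\alpha^k$ at every coarse-edge midpoint defines a global $v_H\in V_H^0$ that vanishes at the midpoints of coarse edges on $\partial\Omega$. I then define $\bar\mu^{(i)} := X_h^{-1}\!\left(I_h^V v_H|_{\Omega_i}\right)\in\Lambda^{0,(i)}$. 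Since $I_h^V$ preserves coarse-edge averages, $\mu^{(i)}-\bar\mu^{(i)}$ has zero face average on every coarse face of $\partial\Omega_i$, and Lemma \LA{averagezeroscaling} gives $\|\mu^{(i)}-\bar\mu^{(i)}\|_{h,\Omega_i}\le CH\interleave\mu^{(i)}-\bar\mu^{(i)}\interleave_{\Omega_i}$.

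Next I would estimate $\bar\mu^{(i)}$ through $v_H$. By \EQ{p10}, $\|\bar\mu^{(i)}\|_{h,\Omega_i}\sim\|I_h^V v_H\|_{L^2(\Omega_i)}$ and $\interleave\bar\mu^{(i)}\interleave_{\Omega_i}\sim|I_h^V v_H|_{H^1(\Omega_i)}$; the $L^2$-stability of $I_h^V$ together with \EQ{Hth} reduce these to $\|v_H\|_{L^2(\Omega_i)}$ and $|v_H|_{H^1(\Omega_i)}$, respectively. The key identification is
\[
v_H|_{\Omega_i}=Q_H^V w^{(i)},\qquad w^{(i)}:=X_h(Q_0\mu^{(i)})\in V_h^0(\Omega_i),
\]
which holds because $Q_H^V$ preserves coarse-edge averages and $Q_0$ preserves fine-edge averages (and hence their sums over a coarse face). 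Then by \EQ{htH}, \EQ{p10}, and Lemma \LA{L2projection},
\[
|v_H|_{H^1(\Omega_i)}=|Q_H^V w^{(i)}|_{H^1(\Omega_i)}\le C|w^{(i)}|_{H^1(\Omega_i)}\le C\interleave Q_0\mu^{(i)}\interleave_{\Omega_i}\le C\interleave\mu^{(i)}\interleave_{\Omega_i}.
\]

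Finally, summing over $i$ yields $|v_H|_{H^1(\T_H)}\le C\interleave\mu\interleave$, and the discrete Poincar\'e inequality for $V_H^0$ with zero midpoint data on $\partial\Omega$ gives $\|v_H\|_{L^2(\Omega)}\le C\,|v_H|_{H^1(\T_H)}\le C\interleave\mu\interleave$. Assembling,
\[
\|\mu\|_h^2\le 2\sum_i\|\mu^{(i)}-\bar\mu^{(i)}\|_{h,\Omega_i}^2+2\sum_i\|\bar\mu^{(i)}\|_{h,\Omega_i}^2\le CH^2\interleave\mu\interleave^2+C\|v_H\|_{L^2(\Omega)}^2\le C\interleave\mu\interleave^2.
\]
The main obstacle is the identification $v_H|_{\Omega_i}=Q_H^V w^{(i)}$, which is what allows the global coarse projection to be read off subdomain-wise from purely local data; without the edge-average-preserving property of $Q_H^V$ and $Q_0$, the coarse face averages on $\Gamma$ could not be controlled solely by local subdomain seminorms, and the cross-subdomain coupling would not close.
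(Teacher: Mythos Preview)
Your proof is correct and follows essentially the same strategy as the paper's: both construct the same coarse nonconforming function (your $v_H$ is exactly the paper's $u_H$, since $v_H|_{\Omega_i}=Q_H^V X_h Q_0\mu^{(i)}$ by the edge-average--preserving property of $Q_H^V$), and both split off a piece with vanishing coarse-edge averages and control the remainder via Poincar\'e on $V_H^0$. The only cosmetic differences are that you perform the split at the $\Lambda$ level and invoke Lemma~\LA{averagezeroscaling} directly, whereas the paper first passes to $V_h^0$ via $X_hQ_0$ and re-derives the analogous Poincar\'e estimate there; you also use $L^2$-stability of $I_h^V$ (not stated in the paper but standard) where the paper instead applies Poincar\'e to $I_h^V u_H$ first and then the $H^1$-stability \EQ{Hth}.
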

\begin{proof}
Given $\mu\in \Ltilde$,  let $\mu^{(i)}$ be the restriction of $\mu$
to subdomain $\Omega_i$ and $\mu^{(i)}_0=Q_0\mu^{(i)}$.   by Lemma \LA{L2projection}, we have
\begin{align*}
\|\mu\|^2_h&=\sum_{i=1}^N \|\mu^{(i)}\|^2_{h,\Omega_i} 
\le \sum_{i=1}^N \left(\|\mu^{(i)}-\mu^{(i)}_0\|^2_{h,\Omega_i}+\|\mu^{(i)}_0\|^2_{h,\Omega_i}\right)\\
&\leq Ch^2\interleave\mu\interleave^2+C \sum_{i=1}^N \|X_h\mu^{(i)}_0\|^2_{L^2(\Omega_i)}.
\end{align*}
Next we give an estimation of $\sum_{i=1}^N
\|X_h\mu^{(i)}_0\|^2_{L^2(\Omega_i)}$.

Let $u_H^{(i)}=Q_H^VX_h \mu^{(i)}_0$.
By the definition of $u_H^{(i)}$ and \EQ{htH}, we
can obtain that
\begin{equation}\label{equation:hh}
|u_H^{(i)}|_{H^1(\T_H(\Omega_i))}=|Q_H^VX_h\mu^{(i)}_0|_{H^1(\T_H(\Omega_i))}\le
C|X_h\mu^{(i)}_0|_{H^1(\T_h(\Omega_i))}.
\end{equation}

Since $\mu \in \Ltilde$,  the average
of $\mu^{(i)}$ are the same across the subdomain interfaces.  Therefore,
$\mu^{(i)}_0$ and  $X_h\mu^{(i)}_0$ have the same average across the subdomain interfaces,
see the proof in Lemma \LA{averagezeroscaling}. Given an edge $E_{ij}$
shared
by 
Subdomains $\Omega_i$ and $\Omega_i$, we have
\begin{eqnarray*}
\int_{E_{ij}}u_H^{(i)}&=&\int_{E_{ij}} Q_H^VX_h\mu^{(i)}_0 = \int_{E_{ij}}
  X_h\mu^{(i)}_0=\int_{E_{ij}}\mu^{(i)}_0\\
&=&\int_{E_{ij}}\mu^{(j)}_0=\int_{E_{ij}}X_h\mu^{(j)}_0=\int_{E_{ij}} Q_H^VX_h\mu^{(j)}_0
 =\int_{E_{ij}}u_H^{(j)}.
\end{eqnarray*}

Due to the linearity of $u_H^{(i)}$ and $u_H^{(j)}$, $u_H^{(i)}$ and
$u_H^{(j)}$ have the same values at the midpoint of $E_{ij}$.  Let $u_H$
be  the function with $u_H^{(i)}$ in Subdomain $\Omega_i$, we have $u_H\in V_H^0$.

Since $\mu$ has zero boundary condition, $u_H$ is zero on
$\partial\Omega$.   By the Poincare inequality (see \cite{brenner})
for $P_1$ non-conforming finite element functions, we have
\begin{equation}\label{equation:pH}
\|I_h^Vu_H\|^2_{L^2(\Omega)}\le |I_h^Vu_H|^2_{H^1(\T_h(\Omega))}.
\end{equation}
Moreover,  $X_h\mu^{(i)}_0-I_h^Vu_H^{(i)}$ have zero edge averages
on $\partial \Omega_i$. We also have, by the Poincare inequality in \cite{brenner},
\begin{equation}\label{equation:ph}
\|X_h\mu^{(i)}_0-I_h^Vu_H^{(i)}\|^2_{L^2(\Omega_i)}\le |X_h\mu^{(i)}_0-I_h^Vu_H^{(i)}|^2_{H^1(\T_h(\Omega_i))}.
\end{equation}

By \EQ{Hth}, \EQ{hh}, \EQ{pH}, \EQ{ph}, and \EQ{p10}, 
we have
\begin{eqnarray*}
&&\sum_{i=1}^N \|X_h\mu^{(i)}_0\|^2_{L^2(\Omega_i)}\\
 &\le& \sum_{i=1}^N
                                                        \left(\|X_h\mu^{(i)}_0-I_h^Vu_H^{(i)}\|^2_{L^2(\Omega_i)}+\|I_h^Vu_H^{(i)}\|^2_{L^2(\Omega_i)}\right)
\le  \sum_{i=1}^N
       |X_h\mu^{(i)}_0-I_h^Vu_H^{(i)}|^2_{H^1(\T_h(\Omega_i))}+\|I_h^Vu_H\|^2_{L^2(\Omega)}\\
&\le &C \sum_{i=1}^N
      \left( |X_h\mu^{(i)}_0|^2_{H^1(\T_h(\Omega_i))}+|I_h^Vu_H^{(i)}|^2 _{H^1(\T_h(\Omega_i))}\right)+|I_h^Vu_H|^2_{H^1(\T_h(\Omega))}\\
&\le &C \sum_{i=1}^N
      |X_h\mu^{(i)}_0|^2_{H^1(\T_h(\Omega_i))}+|I_h^Vu_H|^2_{H^1(\T_h(\Omega))}
\le C \sum_{i=1}^N
       |X_h\mu^{(i)}_0|^2_{H^1(\T_h(\Omega_i))}+|u_H|^2_{H^1(\T_H(\Omega))}\\
&=&\sum_{i=1}^N
       \left(|X_h\mu^{(i)}_0|^2_{H^1(\T_h(\Omega_i))}+|u_H^{(i)}|^2_{H^1(\T_H(\Omega_i))}\right)\\
&\le &\sum_{i=1}^N |X_h\mu^{(i)}_0|^2_{H^1(\T_h(\Omega_i))} 
\le  \sum_{i=1}^N\interleave\mu^{(i)}\interleave^2_{\Omega_i}=\interleave\mu\interleave^2.
 \end{eqnarray*}

\end{proof}

\subsection{Estimates for the bilinear forms}

Recall that the symmetric and skew-symmetric parts of the subdomain local
bilinear form $a^{(i)}(\eta^{(i)},\mu^{(i)})$, defined \EQ{lbformA},  are denoted by
$b^{(i)}(\eta^{(i)},\mu^{(i)})$ and $z^{(i)}(\eta^{(i)},\mu^{(i)})$, defined in
\EQ{lbformK} and  \EQ{lbformN}, respectively.  We will give the estimates of these
bilinear forms which are useful for our lower and upper bound estimates.

\begin{lemma}\label{lemma:lbnorm}
There exists a positive $C$ such that for all $\eta^{(i)},\mu^{(i)}\in \Lambda^{(i)}$
\begin{eqnarray*}
b^{(i)}(\eta^{(i)},\mu^{(i)})&\le &C\|\eta^{(i)}\|_{B^{(i)}}\|\mu^{(i)}\|_{B^{(i)}},\\
z^{(i)}(\eta^{(i)},\mu^{(i)})&\leq & C\frac{\nu^2_{\epsilon,\tau_K,h}}{\sqrt{\epsilon}}\|\eta^{(i)}\|_{B^{(i)}}\|\mu\|_{h,\Omega_i}+C\frac{\nu^2_{\epsilon,\tau_K,h}}{\sqrt{\epsilon}}
\|\eta^{(i)}\|_{h,\Omega_i}\|\mu^{(i)}\|_{B^{(i)}}.
\end{eqnarray*}
\end{lemma}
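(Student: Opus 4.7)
The plan is to treat the two inequalities separately, since the first is essentially an application of the Cauchy--Schwarz inequality on a semi-inner product, while the second requires more careful handling of the non-symmetric boundary integrals.

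For the first inequality, I observe that $b^{(i)}(\cdot,\cdot)$, as written in \EQ{lbformK}, is the sum of three integrals, each of which is itself a semi-inner product on $\Lambda^{(i)}$: the term $(\epsilon^{-1}\Q\lambda^{(i)},\Q\mu^{(i)})_{\T_h(\Omega_i)}$ is a genuine $L^2$ inner product, the surface term is non-negative thanks to Assumption \ref{assumption:beta0} (since $\tau_K-\tfrac12\vbeta\cdot\vvecn\ge 0$), and the volume term $(-\tfrac12\nabla\cdot\vbeta\,\U\lambda^{(i)},\U\mu^{(i)})$ is non-negative by assumption \EQ{cond1}. Applying Cauchy--Schwarz on each term and then Cauchy--Schwarz on the resulting sum yields $b^{(i)}(\eta^{(i)},\mu^{(i)})\le \|\eta^{(i)}\|_{B^{(i)}}\|\mu^{(i)}\|_{B^{(i)}}$ with a universal constant.

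For the skew-symmetric bound, I would split $z^{(i)}$ as in \EQ{lbformN} into two volume terms $T_1,T_2$ and two boundary terms $T_3,T_4$. For $T_1=\tfrac12(\vbeta\cdot\nabla\U\eta^{(i)},\U\mu^{(i)})$, I apply Cauchy--Schwarz together with Lemma \LA{IJQzero} to get $\|\nabla\U\eta^{(i)}\|_K\le C\Ctwo\interleave\eta^{(i)}\interleave_K$ and Lemma \LA{UmuBound} to get $\|\U\mu^{(i)}\|_K\le C\Ctwo\|\mu^{(i)}\|_{h,K}$. Since $\|\eta^{(i)}\|^2_{B^{(i)}}\ge \epsilon^{-1}\|\Q\eta^{(i)}\|^2_{\T_h(\Omega_i)}$, combined with the bound $\|\Q^{RT}\eta\|_K\ge C\epsilon\interleave\eta\interleave_K$ established in the preparatory lemmas, one obtains $\interleave\eta^{(i)}\interleave_{\Omega_i}\le C\epsilon^{-1/2}\|\eta^{(i)}\|_{B^{(i)}}$. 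This gives $|T_1|\le C\nu^2_{\epsilon,\tau_K,h}\epsilon^{-1/2}\|\eta^{(i)}\|_{B^{(i)}}\|\mu^{(i)}\|_{h,\Omega_i}$. The term $T_2$ is bounded symmetrically, giving the second half of the claimed estimate.

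The main obstacle is the boundary part $T_3+T_4$, because naively bounding $\langle\vbeta\cdot\vvecn\,\eta^{(i)},\mu^{(i)}\rangle$ on $\partial\T_h(\Omega_i)$ with an $h^{-1/2}$ trace-type inequality produces a factor of $h^{-1}$ which is too singular. The trick I would use is algebraic: write $\U\eta^{(i)}=(\U\eta^{(i)}-\eta^{(i)})+\eta^{(i)}$ in $T_3$ and $\U\mu^{(i)}=(\U\mu^{(i)}-\mu^{(i)})+\mu^{(i)}$ in $T_4$, so that the two contributions $\pm\tfrac12\langle\vbeta\cdot\vvecn\,\eta^{(i)},\mu^{(i)}\rangle_{\partial\T_h(\Omega_i)}$ cancel exactly, leaving
\[
T_3+T_4=-\tfrac12\langle\vbeta\cdot\vvecn(\U\eta^{(i)}-\eta^{(i)}),\mu^{(i)}\rangle_{\partial\T_h(\Omega_i)}+\tfrac12\langle\vbeta\cdot\vvecn\,\eta^{(i)},\U\mu^{(i)}-\mu^{(i)}\rangle_{\partial\T_h(\Omega_i)}.
\]
Each piece now carries an $\U\lambda-\lambda$ factor, and Lemma \LA{Ubound} gives $\|\U\eta^{(i)}-\eta^{(i)}\|_{L^2(\partial K)}\le Ch^{1/2}\Ctwo\interleave\eta^{(i)}\interleave_K$. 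Combined with the scaling $\|\mu^{(i)}\|_{L^2(\partial K)}\le Ch^{-1/2}\|\mu^{(i)}\|_{h,K}$ built into the definition \EQ{L2norm} of $\|\cdot\|_{h,K}$, the two $h^{\pm 1/2}$ factors cancel, and one more application of $\interleave\cdot\interleave\le C\epsilon^{-1/2}\|\cdot\|_{B^{(i)}}$ yields $|T_3+T_4|\le C\nu^2_{\epsilon,\tau_K,h}\epsilon^{-1/2}(\|\eta^{(i)}\|_{B^{(i)}}\|\mu^{(i)}\|_{h,\Omega_i}+\|\eta^{(i)}\|_{h,\Omega_i}\|\mu^{(i)}\|_{B^{(i)}})$, after summing over $K\subset\Omega_i$ with Cauchy--Schwarz. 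Collecting $T_1,T_2,T_3,T_4$ gives the second claim of the lemma.
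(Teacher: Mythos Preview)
Your proof is correct and follows essentially the same approach as the paper: Cauchy--Schwarz on each summand for $b^{(i)}$, and for $z^{(i)}$ the volume terms via Lemmas~\LA{IJQzero} and~\LA{UmuBound} together with the lower bound $\interleave\cdot\interleave_{\Omega_i}\le C\epsilon^{-1/2}\|\cdot\|_{B^{(i)}}$, while the boundary terms are rearranged so that each carries a factor $\U\lambda-\lambda$ controlled by Lemma~\LA{Ubound}. The only (inconsequential) difference is the algebraic rearrangement of the boundary terms: you add and subtract $\tfrac12\langle\vbeta\cdot\vvecn\,\eta^{(i)},\mu^{(i)}\rangle_{\partial\T_h(\Omega_i)}$, obtaining the pairing $(\U\eta^{(i)}-\eta^{(i)})\cdot\mu^{(i)}$ and $\eta^{(i)}\cdot(\U\mu^{(i)}-\mu^{(i)})$, whereas the paper adds and subtracts $\tfrac12\langle\vbeta\cdot\vvecn\,\U\eta^{(i)},\U\mu^{(i)}\rangle_{\partial\T_h(\Omega_i)}$, obtaining $\U\eta^{(i)}\cdot(\U\mu^{(i)}-\mu^{(i)})$ and $(\eta^{(i)}-\U\eta^{(i)})\cdot\U\mu^{(i)}$; both lead to the same $h^{\pm 1/2}$ cancellation and the same final bound.
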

\begin{proof} 
\begin{align*}
&b^{(i)}(\eta^{(i)},\mu^{(i)})\\
&=(\epsilon^{-1}\mathcal{Q}\eta^{(i)},\mathcal{Q}\mu^{(i)})_{\T_h(\Omega_i)}+((\tau_K-\frac12{\bm\beta}\cdot{\bf n})(\mathcal{U}\eta^{(i)}-\eta^{(i)}),\mathcal{U}\mu^{(i)}-\mu^{(i)})_{\partial\T_h(\Omega_i)}+((-\frac12\nabla\cdot{\bm\beta})\mathcal{U}\eta^{(i)},\mathcal{U}\mu^{(i)}\rangle_{\T_h(\Omega_i)}\\
&\leq \epsilon^{-1}\|\mathcal{Q}\eta^{(i)}\|_{\T_h(\Omega_i)}\|\mathcal{Q}\mu^{(i)}\|_{\T_h(\Omega_i)}+
\|(\tau_K-\frac12{\bm\beta}\cdot{\bf n})^{\frac12}(\mathcal{U}\eta-\eta)\|_{\partial\T_h(\Omega_i)}\|(\tau_K-\frac12{\bm\beta}\cdot{\bf n})^{\frac12}(\mathcal{U}\mu-\mu)\|_{\partial\T_h(\Omega_i)}\\
&\qquad+\frac12\|(-\nabla\cdot{\bm\beta})^{\frac12}\mathcal{U}\eta^{(i)}\|_{\T_h(\Omega_i)}
\|(-\nabla\cdot{\bm\beta})^{\frac12}\mathcal{U}\mu^{(i)}\|_{\T_h(\Omega_i)}\\
&\leq C\|\eta^{(i)}\|_{B^{(i)}}\|\mu^{(i)}\|_{B^{(i)}},
\end{align*}
where we use the definition of $\|\cdot\|_{B^{(i)}}$ and the Cauchy-Schwarz
inequality.

We can prove the bound for $z^{(i)}(\eta^{(i)},\mu^{(i)})$
  as follows:  
\begin{align}\label{equation:localzzz}
&|z^{(i)}(\eta^{(i)},\mu^{(i)})|\nonumber\\
&=\left|\frac12({\bm\beta}\cdot\nabla\mathcal{U}\eta^{(i)},\mathcal{U}\mu^{(i)})_{\T_h(\Omega_i)}-\frac12(\mathcal{U}\eta^{(i)},{\bm\beta}\cdot\nabla\mathcal{U}\mu^{(i)})_{\T_h(\Omega_i)}\right.\\
&\qquad\left.-\frac12\langle{\bm\beta}\cdot{\bf n}\mathcal{U}\eta^{(i)},\mu^{(i)}\rangle_{\partial\T_h(\Omega_i)}+\frac12\langle{\bm\beta}\cdot{\bf{n}}\eta^{(i)},\mathcal{U}\mu^{(i)}\rangle_{\partial\T_h(\Omega_i)}\right|\nonumber\\
&=\left|\frac12({\bm\beta}\cdot\nabla\mathcal{U}\eta^{(i)},\mathcal{U}\mu^{(i)})_{\T_h(\Omega_i)}-\frac12(\mathcal{U}\eta^{(i)},{\bm\beta}\cdot\nabla\mathcal{U}\mu^{(i)})_{\T_h(\Omega_i)}\right.\nonumber\\
&\qquad\left.+\frac12\langle{\bm\beta}\cdot{\bf n}\mathcal{U}\eta^{(i)}, \mathcal{U}\mu^{(i)}-\mu^{(i)}\rangle_{\partial\T_h(\Omega_i)}+\frac12\langle{\bm\beta}\cdot{\bf{n}}\left(\eta^{(i)}-\mathcal{U}\eta^{(i)}\right),\mathcal{U}\mu^{(i)}\rangle_{\partial\T_h(\Omega_i)}\right|\nonumber\\
&\le
  \left|\frac12({\bm\beta}\cdot\nabla\mathcal{U}\eta^{(i)},\mathcal{U}\mu^{(i)})_{\T_h(\Omega_i)}\right|+\left|\frac12(\mathcal{U}\eta^{(i)},{\bm\beta}\cdot\nabla\mathcal{U}\mu^{(i)})_{\T_h(\Omega_i)}\right|\nonumber\\
&\qquad
+\frac12\left\|\left|{\bm\beta}\cdot{\bf
  n}\right|^{\frac12}\right\|_{\infty}\|\mathcal{U}\eta^{(i)}\|_{\partial\T_h(\Omega_i)}\left\|\left|{\bm\beta}\cdot{\bf{n}}\right|^{\frac12}\left(\mathcal{U}\mu^{(i)}-\mu^{(i)}\right)\right\|_{\partial\T_h(\Omega_i)}\nonumber\\
&\qquad +\frac12 \|\left|{\bm\beta}\cdot{\bf
  n}\right|^{\frac12}\|_{\infty}\|\mathcal{U}\mu^{(i)}\|_{\partial\T_h(\Omega_i)}\left\|\left|{\bm\beta}\cdot{\bf{n}}\right|^{\frac12}\left(\mathcal{U}\eta^{(i)}-\eta^{(i)}\right)\right\|_{\partial\T_h(\Omega_i)}\nonumber\\
&\leq
  \frac12\|\bm\beta\|_{\infty}\|\nabla\mathcal{U}\eta^{(i)}\|_{\T_h(\Omega_i)}\|\mathcal{U}\mu^{(i)}\|_{\T_h(\Omega_i)}
+\frac12\|\bm\beta\|_{\infty}\|\mathcal{U}\eta^{(i)}\|_{\T_h(\Omega_i)}\|\nabla\mathcal{U}\mu^{(i)}\|_{\T_h(\Omega_i)}\nonumber\\
&\qquad +\frac12\left\|\left|{\bm\beta}\cdot{\bf
  n}\right|\right\|_{\infty}\|\mathcal{U}\eta^{(i)}\|_{\partial\T_h(\Omega_i)}\left\|\left(\mathcal{U}\mu^{(i)}-\mu^{(i)}\right)\right\|_{\partial\T_h(\Omega_i)}\nonumber\\
&\qquad +\frac12\|\left|{\bm\beta}\cdot{\bf
  n}\right|\|_{\infty}\|\mathcal{U}\mu^{(i)}\|_{\partial\T_h(\Omega_i)}\left\|\left(\mathcal{U}\eta^{(i)}-\eta^{(i)}\right)\right\|_{\partial\T_h(\Omega_i)},
\end{align}
where Assumption \ref{assump:onbeta} is used in the last step.

By Lemmas \LA{IJQzero} and \LA{UmuBound}, we have
$$\|\nabla\mathcal{U}\eta^{(i)}\|_{\T_h(\Omega_i)}\le
C\Ctwo\interleave\eta^{(i)}\interleave_{\Omega_i},\quad
\|\mathcal{U}\mu^{(i)}\|_{\T_h(\Omega_i)}\le C\Ctwo\|\mu^{(i)}\|_{h,\Omega_i}.$$


By Lemma \LA{Ubound}, we have
$$\left\|\mathcal{U}\mu^{(i)}-\mu^{(i)}\right\|_{\partial\T_h(\Omega_i)}\le
C\Ctwo
  h^{\frac12}\interleave\mu^{(i)}\interleave_{\Omega_i}.$$

Plugging in all these estimates in \EQ{localzzz}  and use Lemma
\LA{equivalent}, we have
\begin{align*}
&|z^{(i)}(\eta^{(i)},\mu^{(i)})|\\
&\leq
  C\nu^2_{\epsilon,\tau_K,h}\interleave\eta^{(i)}\interleave_{\Omega_i}\|\mu\|_{h,\Omega_i}+C\nu^2_{\epsilon,\tau_K.h}\|\eta^{(i)}\|_{h,\Omega_i}\interleave\mu^{(i)}\interleave_{\Omega_i}\nonumber\\
&+C\|\mathcal{U}\eta^{(i)}\|_{\partial\Omega_i}\Ctwo h^{\frac12}\interleave\mu^{(i)}\interleave_{\Omega_i}
+C\|\mathcal{U}\mu^{(i)}\|_{\partial\Omega_i}\Ctwo h^{\frac12}\interleave\eta^{(i)}\interleave_{\Omega_i}\\
&\leq C\nu^2_{\epsilon,\tau_K,h}\frac1{\sqrt{\epsilon}}\|\eta^{(i)}\|_{B^{(i)}}\|\mu\|_{h,\Omega_i}+C\nu^2_{\epsilon,\tau_K,h}
\|\eta^{(i)}\|_{h,\Omega_i}\frac1{\sqrt{\epsilon}}\|\mu^{(i)}\|_{B^{(i)}}.
\end{align*}

\end{proof}
\begin{lemma}\label{lemma:lbnormzero}
For all $\eta^{(i)},\mu^{(i)}\in \Lambda^{(i)}$. If  $\mu^{(i)}$
 is zero on $\partial \Omega_i$, we have
\begin{eqnarray*}
z^{(i)}(\eta^{(i)},\mu^{(i)})&\leq&
                            \frac{\Ctwo^2}{\sqrt{\epsilon}}(\|\eta^{(i)}\|_{h,\Omega_i}+\interleave\eta^{(i)}\interleave_{\Omega_i})\|\mu^{(i)}\|_{B^{(i)}}
\end{eqnarray*}
\end{lemma}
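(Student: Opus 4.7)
The plan is to adapt the proof of Lemma \LA{lbnorm} while exploiting the hypothesis $\mu^{(i)}|_{\partial\Omega_i}=0$ in two complementary ways. First, since $z^{(i)}$ is skew-symmetric we have $z^{(i)}(\mu^{(i)},\mu^{(i)})=0$, so $a_h^{(i)}(\mu^{(i)},\mu^{(i)})=b^{(i)}(\mu^{(i)},\mu^{(i)})=\|\mu^{(i)}\|_{B^{(i)}}^2$, and the lower bound in Lemma \LA{subequivalent} yields the Poincar\'e-type control $\interleave\mu^{(i)}\interleave_{\Omega_i}\le C\epsilon^{-1/2}\|\mu^{(i)}\|_{B^{(i)}}$. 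Second, any integral of the form $\langle{\bm\beta}\cdot{\bf n}\,f,\mu^{(i)}\rangle_{\partial\T_h(\Omega_i)}$ with $f$ single-valued on every interior edge of $\Omega_i$ must vanish, since the adjacent-element contributions cancel via ${\bf n}_{K_1}+{\bf n}_{K_2}=0$ on interior edges while $\mu^{(i)}=0$ on $\partial\Omega_i$.

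I would start from the rewriting of $z^{(i)}(\eta^{(i)},\mu^{(i)})$ used in the proof of Lemma \LA{lbnorm}, then integrate by parts elementwise in $\tfrac12({\bm\beta}\cdot\nabla\mathcal{U}\eta^{(i)},\mathcal{U}\mu^{(i)})_{\T_h(\Omega_i)}$ to move the derivative onto $\mathcal{U}\mu^{(i)}$. Combined with the cancellation above, this leaves three ingredients to bound: (a) a shifted volume term $(\mathcal{U}\eta^{(i)},{\bm\beta}\cdot\nabla\mathcal{U}\mu^{(i)})_{\T_h(\Omega_i)}$; (b) a weighted volume term $(\nabla\cdot{\bm\beta}\,\mathcal{U}\eta^{(i)},\mathcal{U}\mu^{(i)})_{\T_h(\Omega_i)}$; and (c) a single boundary pairing of $\mathcal{U}\eta^{(i)}+\eta^{(i)}$ against $\mathcal{U}\mu^{(i)}-\mu^{(i)}$ on $\partial\T_h(\Omega_i)$.

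Term (a) is controlled by Cauchy--Schwarz together with Lemmas \LA{UmuBound} and \LA{IJQzero}, giving $C\Ctwo^2\|\eta^{(i)}\|_{h,\Omega_i}\interleave\mu^{(i)}\interleave_{\Omega_i}$. Term (b) is bounded by a weighted Cauchy--Schwarz with the nonnegative weight $-\nabla\cdot{\bm\beta}$ and the definition of $\|\cdot\|_{B^{(i)}}$, giving $C\|\eta^{(i)}\|_{B^{(i)}}\|\mu^{(i)}\|_{B^{(i)}}\le C\Ctwo\interleave\eta^{(i)}\interleave_{\Omega_i}\|\mu^{(i)}\|_{B^{(i)}}$ by the upper bound in Lemma \LA{subequivalent}. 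For term (c) I would invoke Assumption \ref{assump:onbeta}(4) to dominate $|{\bm\beta}\cdot{\bf n}|$ by $C(\tau_K-\tfrac12{\bm\beta}\cdot{\bf n})$, so the factor with $\mathcal{U}\mu^{(i)}-\mu^{(i)}$ is bounded by $C\|\mu^{(i)}\|_{B^{(i)}}$, and then split $\mathcal{U}\eta^{(i)}+\eta^{(i)}=(\mathcal{U}\eta^{(i)}-\eta^{(i)})+2\eta^{(i)}$, controlling the first piece by $C\Ctwo h^{1/2}\interleave\eta^{(i)}\interleave_{\Omega_i}$ via Lemma \LA{Ubound}. Converting each $\interleave\mu^{(i)}\interleave_{\Omega_i}$ into $\|\mu^{(i)}\|_{B^{(i)}}$ using the first step and collecting constants produces the target estimate.

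The main obstacle I expect is the boundary contribution involving the unsmoothed $\eta^{(i)}$, where a direct trace/inverse argument would produce an unfavourable $h^{-1/2}$ factor. The crucial resolution is the single-valuedness-plus-boundary-cancellation identity above: it either removes this term outright or recasts it as a jump quantity already controlled by $h^{1/2}\interleave\eta^{(i)}\interleave_{\Omega_i}$, so that the final $\eta^{(i)}$-dependence is captured inside the factor $\|\eta^{(i)}\|_{h,\Omega_i}+\interleave\eta^{(i)}\interleave_{\Omega_i}$ and no $H$ from a subdomain Poincar\'e inequality (as in Lemma \LA{averagezeroscaling}) enters the bound.
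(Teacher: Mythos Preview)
Your plan follows essentially the same route as the paper's proof: integrate by parts on the $\nabla\mathcal{U}\eta^{(i)}$ term, use the cancellation $\langle{\bm\beta}\cdot{\bf n}\eta^{(i)},\mu^{(i)}\rangle_{\partial\T_h(\Omega_i)}=0$ (which is exactly the identity the paper exploits), and bound the resulting volume and boundary terms via Lemmas \LA{UmuBound}, \LA{IJQzero}, \LA{Ubound} together with the lower bound in Lemma \LA{subequivalent}. Two small points should be corrected.

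For term (b) you invoke the \emph{upper} bound in Lemma \LA{subequivalent} to convert $\|\eta^{(i)}\|_{B^{(i)}}$ into $C\Ctwo\interleave\eta^{(i)}\interleave_{\Omega_i}$, but that upper bound requires $\eta^{(i)}$ to have zero edge average on $\partial\Omega_i$, which is not assumed. The paper sidesteps this by bounding $\|(-\nabla\cdot{\bm\beta})^{1/2}\mathcal{U}\eta^{(i)}\|_{\T_h(\Omega_i)}$ directly via Lemma \LA{UmuBound}, obtaining $C\Ctwo\|\eta^{(i)}\|_{h,\Omega_i}$ without passing through $\|\eta^{(i)}\|_{B^{(i)}}$; this fits the target factor $\|\eta^{(i)}\|_{h,\Omega_i}+\interleave\eta^{(i)}\interleave_{\Omega_i}$ immediately.

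Your last paragraph overthinks the boundary term carrying the bare $\eta^{(i)}$. No jump or cancellation argument is needed there: Lemma \LA{Ubound} already gives $\|\mathcal{U}\mu^{(i)}-\mu^{(i)}\|_{\partial K}\le C\Ctwo h^{1/2}\interleave\mu^{(i)}\interleave_K$, and this $h^{1/2}$ balances the $h^{-1/2}$ coming from $\|\eta^{(i)}\|_{\partial K}\le Ch^{-1/2}\|\eta^{(i)}\|_{h,K}$, yielding $C\Ctwo\|\eta^{(i)}\|_{h,\Omega_i}\interleave\mu^{(i)}\interleave_{\Omega_i}$; the always-valid lower bound $\interleave\mu^{(i)}\interleave_{\Omega_i}\le C\epsilon^{-1/2}\|\mu^{(i)}\|_{B^{(i)}}$ then finishes the estimate. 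The paper handles this term in exactly this way (keeping the factor $\mathcal{U}\eta^{(i)}$ rather than splitting $\mathcal{U}\eta^{(i)}+\eta^{(i)}$, but the bound is identical).
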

\begin{proof}
By integration by part, we have 
\begin{align*}
&z^{(i)}(\eta^{(i)},\mu^{(i)})\\
&=\frac12
  ({\bm\beta}\cdot\nabla\mathcal{U}\eta^{(i)},\mathcal{U}\mu^{(i)})_{\T_h(\Omega_i)}-\frac12
  (\mathcal{U}\eta^{(i)},{\bm\beta}\cdot\nabla\mathcal{U}\mu^{(i)})_{\T_h(\Omega_i)}\\
&\quad-\frac12\langle{\bm\beta}\cdot{\bf n}\mathcal{U}\eta^{(i)},\mu^{(i)}\rangle_{\partial\T_h(\Omega_i)}+\frac12\langle{\bm\beta}\cdot{\bf{n}}\eta^{(i)},\mathcal{U}\mu^{(i)}\rangle_{\partial\T_h(\Omega_i)}\\
&=\frac12
  \langle{\bm\beta}\cdot{\bf n}\mathcal{U}\eta^{(i)},\mathcal{U}\mu^{(i)}\rangle_{\partial\T_h(\Omega_i)}
-\frac12
  (\mathcal{U}\eta^{(i)},{\bm\beta}\cdot\nabla\mathcal{U}\mu^{(i)})_{\T_h(\Omega_i)}
-\frac12
  (\mathcal{U}\eta^{(i)},\nabla\cdot{\bm\beta}\mathcal{U}\mu^{(i)})_{\T_h(\Omega_i)}\\
&\quad-\frac12
  (\mathcal{U}\eta^{(i)},{\bm \beta}\cdot\nabla\mathcal{U}\mu^{(i)})_{\T_h(\Omega_i)}-\frac12\langle{\bm\beta}\cdot{\bf n}\mathcal{U}\eta^{(i)},\mu^{(i)}\rangle_{\partial\T_h(\Omega_i)}+\frac12\langle{\bm\beta}\cdot{\bf{n}}\eta^{(i)},\mathcal{U}\mu^{(i)}\rangle_{\partial\T_h(\Omega_i)}\\
&= -(\mathcal{U}\eta^{(i)},{\bm\beta}\cdot\nabla\mathcal{U}\mu^{(i)})_{\T_h(\Omega_i)}
-\frac12
  (\mathcal{U}\eta^{(i)},\nabla\cdot{\bm\beta}\mathcal{U}\mu^{(i)})_{\T_h(\Omega_i)}
-\frac12
  \langle\mathcal{U}\eta^{(i)}-\eta^{(i)},{\bm\beta}\cdot {\bf
  n}\left(\mathcal{U}\mu^{(i)}-\mu^{(i)}\right)\rangle_{\partial
 \T_h(\Omega_i)}\\
&\qquad+\langle{\bm\beta}\cdot{\bf n}(\mathcal{U}\mu^{(i)}-\mu^{(i)}), \mathcal{U}\eta^{(i)}\rangle_{\partial\T_h(\Omega_i)}+\frac12\langle{\bm\beta}\cdot{\bf{n}}\eta^{(i)},\mu^{(i)}\rangle_{\partial\T_h(\Omega_i)}.
\end{align*}
Since $\mu^{(i)}=0$ on $\partial \Omega_i$, we have
$\langle{\bm\beta}\cdot{\bf{n}}\eta^{(i)},\mu^{(i)}\rangle_{\partial\T_h(\Omega_i)}=0$
and therefore
\begin{align*}
&z^{(i)}(\eta^{(i)},\mu^{(i)})\\
&= -(\mathcal{U}\eta^{(i)},{\bm\beta}\cdot\nabla\mathcal{U}\mu^{(i)})_{\T_h(\Omega_i)}
-\frac12
  (\mathcal{U}\eta^{(i)},\nabla\cdot{\bm\beta}\mathcal{U}\mu^{(i)})_{\T_h(\Omega_i)}
-\frac12
  \langle\mathcal{U}\eta^{(i)}-\eta^{(i)},{\bm\beta}\cdot {\bf
  n}\left(\mathcal{U}\mu^{(i)}-\mu^{(i)}\right)\rangle_{\partial
  \T_h(\Omega_i)}\\
&\qquad+\langle\mathcal{U}\eta^{(i)},{\bm\beta}\cdot{\bf
  n}(\mathcal{U}\mu^{(i)}-\mu^{(i)})\rangle_{\partial\T_h(\Omega_i)}\\
\le &
     \|\mathcal{U}\eta^{(i)}\|_{\T_h(\Omega_i)}\|{\bm\beta}\cdot\nabla\mathcal{U}\mu^{(i)}\|_{\T_h(\Omega_i)}
+\frac12\|(-\nabla\cdot{\bm\beta})^{\frac12}\mathcal{U}\eta^{(i)}\|_{\T_h(\Omega_i)}\|(-\nabla\cdot{\bm\beta})^{\frac12}\mathcal{U}\mu^{(i)}\|_{\T_h(\Omega_i)}\\
&+\frac12\| |{\bm\beta}\cdot {\bf
  n}|^{\frac12}\left(\mathcal{U}\eta^{(i)}-\eta^{(i)}\right)\|_{\partial\T_h(\Omega_i)}\||{\bm\beta}\cdot {\bf
  n}|^{\frac12}\left(\mathcal{U}\mu^{(i)}-\mu^{(i)}\right)\|_{\partial\T_h(\Omega_i)}\\
&+\|{\bm\beta}\cdot{\bf n}\|^{\frac12}_{\infty}\|\mathcal{U}\eta^{(i)}\|_{\partial\T_h(\Omega_i)
}\||{\bm\beta}\cdot {\bf
  n}|^{\frac12}\left(\mathcal{U}\mu^{(i)}-\mu^{(i)}\right)\|_{\partial\T_h(\Omega_i)}\\
\le&
C\Ctwo^2\|
     \eta^{(i)}\|
     _{h,\Omega_i}\interleave\mu^{(i)}\interleave_{\Omega_i}
+C\Ctwo\|\eta^{(i)}\|_{h,\Omega_i}\|\mu^{(i)}\|_{B^{(i)}}\\
&+C\Ctwo h^{\frac12}\interleave\eta^{(i)}\interleave_{\Omega_i}\|\mu^{(i)}\|_{B^{(i)}}+C\Ctwo
  h^{-\frac12}\|
     \eta^{(i)}\|
     _{h,\Omega_i}\Ctwo h^{\frac12}\interleave\mu^{(i)}\interleave_{\Omega_i},
\end{align*}
where we use Lemmas \LA{UmuBound} and  \LA{IJQzero} for the first
term, Lemma \LA{UmuBound} and \EQ{lbformK}
for the second term, Assumption \ref{assump:onbeta}, Lemma \LA{Ubound} and \EQ{lbformK}
for the third term, and Lemmas \LA{trace}, \LA{inverse},
\LA{UmuBound}, Assumption
\ref{assump:onbeta}, and Lemma \LA{Ubound} for the last term.

We conclude the proof by using the lower bound in Lemma
\LA{equivalent} for Subdomain $\Omega_i$.

\end{proof} 
\begin{remark}\label{remark:moree}
  Compare to \cite{TuLi:2008:BDDCAD}, we cannot control $L_2$-norm of
    $\mathcal{U}\mu$ when the velocity field $\bm\beta$ is divergence
    free. Therefore, we have additional $\frac{1}{\sqrt{\epsilon}}$ in our estimate. 
  \end{remark}

\begin{lemma}\label{lemma:bnorm}
There exists a positive $C$ such that for all $\eta,\mu\in
\Lambda$
\begin{eqnarray*}
z_h(\eta,\mu)&\leq&C \frac{\Ctwo^2}{{\epsilon}}\|\mu\|_{B}\|\eta\|_B,\\
a_h(\eta,\mu)&\leq &C\frac{\Ctwo^2}{{\epsilon}}\|\mu\|_{B}\|\eta\|_B,\\
z_h(\eta,\mu)&\leq& C\Ctwo^2\interleave\eta\interleave \|\mu\|_{h},
\end{eqnarray*}
and 
for all $\eta,\mu\in
\Ltilde$
\begin{eqnarray*}
\widetilde{z}(\eta,\mu)&\leq& C\frac{\Ctwo^2}{{\epsilon}}\|\mu\|_{\widetilde{B}}\|\eta\|_{\widetilde{B}},\\
\widetilde{a} (\eta,\mu)&\leq &C\frac{\Ctwo^2}{{\epsilon}}\|\mu\|_{\widetilde{B}}\|\eta\|_{\widetilde{B}}.
\end{eqnarray*}
\end{lemma}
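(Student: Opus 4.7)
The plan is to deduce all five inequalities from the subdomain estimates already provided in Lemma \LA{lbnorm}, coupled with the global norm equivalences in Lemmas \LA{equivalent}, \LA{muk|||}, and \LA{mu|||t}. The bounds on $a_h$ and $\widetilde{a}$ will be obtained by splitting $a_h=b_h+z_h$ and $\widetilde{a}=\widetilde{b}+\widetilde{z}$, handling the symmetric part by Cauchy--Schwarz and inheriting the bound on the skew-symmetric part.

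First I would treat the symmetric forms $b_h$ and $\widetilde{b}$. Since $b^{(i)}$ is symmetric positive semi-definite and $\|\cdot\|_{B^{(i)}}^2=b^{(i)}(\cdot,\cdot)$, Cauchy--Schwarz on each subdomain and then on the subdomain sum gives $b_h(\eta,\mu)\le\|\eta\|_B\|\mu\|_B$ for $\eta,\mu\in\Lambda$, and an identical argument yields $\widetilde{b}(\eta,\mu)\le\|\eta\|_{\widetilde{B}}\|\mu\|_{\widetilde{B}}$ for $\eta,\mu\in\widetilde{\Lambda}$. This takes care of the ``easy half'' of each bound on $a_h$ and $\widetilde{a}$.

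Next I would bound $z_h$ on $\Lambda$. Applying the second estimate of Lemma \LA{lbnorm} subdomain by subdomain and then Cauchy--Schwarz on the resulting sums produces
\[
z_h(\eta,\mu)=\sum_{i=1}^N z^{(i)}(\eta^{(i)},\mu^{(i)})\le C\frac{\Ctwo^2}{\sqrt\epsilon}\bigl(\|\eta\|_B\|\mu\|_h+\|\eta\|_h\|\mu\|_B\bigr).
\]
Because $\eta,\mu\in\Lambda$ vanish on $\partial\Omega$, Lemma \LA{muk|||} yields $\|\mu\|_h\le C\interleave\mu\interleave$, and the lower bound of Lemma \LA{equivalent} gives $\sqrt\epsilon\,\interleave\mu\interleave\le C\|\mu\|_B$, so $\|\mu\|_h\le C\epsilon^{-1/2}\|\mu\|_B$ and likewise for $\eta$. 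Substituting produces the desired $z_h(\eta,\mu)\le C\Ctwo^2\epsilon^{-1}\|\eta\|_B\|\mu\|_B$, and adding the $b_h$ bound completes the bound on $a_h$. For $\widetilde{z}$ and $\widetilde{a}$ on $\widetilde{\Lambda}$, I would repeat exactly the same calculation, replacing Lemma \LA{muk|||} by Lemma \LA{mu|||t} to obtain $\|\mu\|_h\le C\interleave\mu\interleave$ for $\mu\in\widetilde{\Lambda}$; the rest carries over verbatim.

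For the third inequality $z_h(\eta,\mu)\le C\Ctwo^2\interleave\eta\interleave\|\mu\|_h$, I would go back into the intermediate step of the proof of Lemma \LA{lbnorm} (before the factor $1/\sqrt\epsilon$ is introduced via Lemma \LA{equivalent}), which delivers the subdomain bound
\[
z^{(i)}(\eta^{(i)},\mu^{(i)})\le C\Ctwo^2\bigl(\interleave\eta^{(i)}\interleave_{\Omega_i}\|\mu^{(i)}\|_{h,\Omega_i}+\|\eta^{(i)}\|_{h,\Omega_i}\interleave\mu^{(i)}\interleave_{\Omega_i}\bigr).
\]
Summing, applying Cauchy--Schwarz on the two sums, and then using Lemma \LA{muk|||} to replace $\|\eta\|_h$ by $\interleave\eta\interleave$ (and $\interleave\mu\interleave$ by $C\|\mu\|_h$ where both terms collapse together, using that $\eta,\mu$ have zero trace on $\partial\Omega$) yields the stated one-sided bound. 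The main obstacle in this last step is the asymmetry of the target bound: the two summands from the subdomain estimate have $\interleave\cdot\interleave$ and $\|\cdot\|_h$ on opposite sides, and the absorption of $\|\eta\|_h\interleave\mu\interleave$ into $\interleave\eta\interleave\|\mu\|_h$ must be done carefully using Lemma \LA{muk|||}, exploiting the global zero boundary condition on $\partial\Omega$ rather than only the subdomain-level estimates.
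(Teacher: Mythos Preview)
Your treatment of the first, second, fourth, and fifth inequalities matches the paper's argument: sum the subdomain estimate from Lemma \LA{lbnorm}, apply Cauchy--Schwarz, then convert $\|\cdot\|_h$ to $\interleave\cdot\interleave$ via Lemma \LA{muk|||} (or Lemma \LA{mu|||t} in the partially assembled case) and $\interleave\cdot\interleave$ to $\|\cdot\|_B$ via the lower bound of Lemma \LA{equivalent}. The $b_h$ and $\widetilde{b}$ parts are handled by Cauchy--Schwarz exactly as you describe.

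The third inequality, however, has a genuine gap. Starting from the symmetric intermediate bound
\[
z_h(\eta,\mu)\le C\Ctwo^2\bigl(\interleave\eta\interleave\,\|\mu\|_h+\|\eta\|_h\,\interleave\mu\interleave\bigr),
\]
you propose to absorb the second summand into the first using Lemma \LA{muk|||}. But that lemma only gives $\|\cdot\|_h\le C\interleave\cdot\interleave$, never the reverse; there is no Poincar\'e-type bound $\interleave\mu\interleave\le C\|\mu\|_h$ available (and indeed it is false in general, since $\interleave\cdot\interleave$ controls oscillation while $\|\cdot\|_h$ does not). So the term $\|\eta\|_h\interleave\mu\interleave$ cannot be converted into $\interleave\eta\interleave\,\|\mu\|_h$ by the tools at hand. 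You flag this as ``the main obstacle'' but then claim Lemma \LA{muk|||} resolves it; it does not.

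The paper avoids this obstacle altogether by \emph{not} using the symmetric subdomain bound for this inequality. Instead it integrates by parts in $z_h$ (mirroring the manipulation in the proof of Lemma \LA{lbnormzero}) to rewrite
\[
z_h(\eta,\mu)=({\bm\beta}\cdot\nabla\mathcal{U}\eta,\mathcal{U}\mu)_{\mathcal{T}_h}+\tfrac12(\mathcal{U}\eta,\nabla\cdot{\bm\beta}\,\mathcal{U}\mu)_{\mathcal{T}_h}-\tfrac12\langle{\bm\beta}\cdot{\bf n}(\mathcal{U}\eta-\eta),\mathcal{U}\mu\rangle_{\partial\mathcal{T}_h}-\tfrac12\langle{\bm\beta}\cdot{\bf n}(\mathcal{U}\eta-\eta),\mu\rangle_{\partial\mathcal{T}_h},
\]
using that $\langle{\bm\beta}\cdot{\bf n}\eta,\mu\rangle_{\partial\mathcal{T}_h}=0$ for $\mu\in\Lambda$. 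Now every $\eta$-factor is either $\nabla\mathcal{U}\eta$ or $\mathcal{U}\eta-\eta$, both controlled by $\interleave\eta\interleave$ (Lemmas \LA{IJQzero} and \LA{Ubound}), while every $\mu$-factor is $\mathcal{U}\mu$ or $\mu$, both controlled by $\|\mu\|_h$ (Lemma \LA{UmuBound} and trace/inverse inequalities). The asymmetry of the target bound is thus built into the rewriting, and no reverse Poincar\'e step is needed.
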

\begin{proof}

Using Lemma \LA{lbnorm}, we have 
\begin{align*}
z_h(\eta,\mu)&=\sum\limits_{i=1}^Nz^{(i)}(\eta^{(i)},\mu^{(i)})\\
&\leq C\frac{\Ctwo^2}{\sqrt{\epsilon}}\sum\limits_{i=1}^N\left(\|\eta^{(i)}\|_{B^{(i)}}\|\mu^{(i)}\|_{h,\Omega_i}
+\|\eta^{(i)}\|_{h,\Omega_i}\|\mu^{(i)}\|_{B^{(i)}}\right)\\
&\leq C\frac{\Ctwo^2}{\sqrt{\epsilon}}\left(\|\eta\|_B\sum\limits_{i=1}^N\|\mu^{(i)}\|_{h,\Omega_i}+\sum\limits_{i=1}^N\|\eta^{(i)}\|_{h,\Omega_i}\|\mu\|_B\right)\\
&\leq C\frac{\Ctwo^2}{\sqrt{\epsilon}}\left(\|\eta\|_B\interleave\mu\interleave+\interleave\eta\interleave\|\mu\|_B\right)\\
&\leq C\frac{\Ctwo^2}{\epsilon}\|\eta\|_B\|\mu\|_B,
\end{align*}
where we use Lemmas \LA{muk|||} and \LA{equivalent} for the last two inequalities, respectively.
By Lemma \LA{lbnorm}, we have \begin{align*}
a_h(\eta,\mu)=b_h(\eta,\mu)+z_h(\eta,\mu)\leq C\frac{\Ctwo^2}{{\epsilon}}\|\mu\|_{B}\|\eta\|_{B}.
\end{align*}

Similarly, by using Lemma \LA{mu|||t}, we can prove 
\begin{eqnarray*}
\widetilde{z}(\eta,\mu)&\leq& C\frac{\Ctwo^2}{{\epsilon}}\|\mu\|_{\widetilde{B}}\|\eta\|_{\widetilde{B}},\\
\widetilde{a} (\eta,\mu)&\leq &C\frac{\Ctwo^2}{{\epsilon}}\|\mu\|_{\widetilde{B}}\|\eta\|_{\widetilde{B}}.
\end{eqnarray*}
for $\eta,\mu\in
\Ltilde$.

Similarly to the proof of Lemma \LA{lbnormzero}, by integration by
part, we have 
\begin{align*}
z_h(\eta,\mu)&=\frac12({\bm\beta}\cdot\nabla\mathcal{U}\eta,\mathcal{U}\mu)_{\mathcal{T}_h}-\frac12(\mathcal{U}\eta,{\bm\beta}\cdot\nabla{\mathcal{U}}\mu)_{\mathcal{T}_h}+\frac12\langle{\bm\beta}\cdot{\bf n}\eta,\mathcal{U}\mu\rangle_{\partial\mathcal{T}_h}-\frac12\langle{\bm\beta}\cdot{\bf n}\mathcal{U}\eta,\mu\rangle_{\partial\mathcal{T}_h}\\
&=\frac12
  ({\bm\beta}\cdot\nabla\mathcal{U}\eta,\mathcal{U}\mu) _{\mathcal{T}_h}
-\frac12
  \langle\mathcal{U}\eta,{\bm\beta}\cdot {\bf
  n}\mathcal{U}\mu\rangle _{\partial\mathcal{T}_h}
+\frac12
  (\mathcal{U}\eta,\nabla\cdot{\bm\beta}\mathcal{U}\mu)_{\mathcal{T}_h}
+\frac12
  ({\bm \beta}\cdot\nabla\mathcal{U}\eta,\mathcal{U}\mu)_{\mathcal{T}_h}\\
&-\frac12\langle{\bm\beta}\cdot{\bf
  n}\mathcal{U}\eta,\mu\rangle _{\partial\mathcal{T}_h}+\frac12\langle{\bm\beta}\cdot{\bf{n}}\eta,\mathcal{U}\mu\rangle _{\partial\mathcal{T}_h}\\
&=({\bm\beta}\cdot\nabla\mathcal{U}\eta,\mathcal{U}\mu)_{\mathcal{T}_h}+\frac12
  (\mathcal{U}\eta,\nabla\cdot{\bm\beta}\mathcal{U}\mu)_{\mathcal{T}_h}-\frac12\langle{\bm\beta}\cdot{\bf n}(\mathcal{U}\eta-\eta),\mathcal{U}\mu\rangle_{\partial\mathcal{T}_h}
-\frac12\langle{\bm\beta}\cdot{\bf n}(\mathcal{U}\eta-\eta),\mu\rangle_{\partial\mathcal{T}_h}\\
&\leq C \Ctwo^2\interleave\eta\interleave_{\Omega}\|\mu\|_{h},
\end{align*}
where we use $\langle{\bm\beta}\cdot{\bf
  n}\eta,\mu\rangle_{\partial\mathcal{T}_h}=0$ for $\mu=0$ on
$\partial \Omega$ for the third equality. For the last inequality, we
use Lemma \LA{IJQzero} and \LA{UmuBound} for the first term,
Lemmas \LA{UmuBound} and \LA{muk|||} for the second term,  Assumption
\ref{assump:onbeta}, Lemmas \LA{Ubound} and \LA{trace},
\LA{inverse}  for the last two terms.
\end{proof} 

{\bf Proof of Lemma \LA{Sgnorm}:} 

By Lemma \LA{null}, we have
\begin{eqnarray*}
\langle\lambda_\Gamma,\mu_\Gamma\rangle_{S_\Gamma}
  &=&\langle\lambda_{\A,\Gamma},\mu_{\A,\Gamma}\rangle_{A}\le C\frac{\Ctwo^2}{{\epsilon}}\|\lambda_{\A,\Gamma}\|_B\|\mu_{\A,\Gamma}\|_B\\
&=&C\frac{\Ctwo^2}{{\epsilon}}\|\lambda_{\Gamma}\|_{B_\Gamma}\|\mu_{\Gamma}\|_{B_\Gamma}.
\end{eqnarray*}
where we use Lemma \LA{bnorm} for the second inequality and Lemma
\LA{null} again for the last equality.
The result for $\lambda_\Gamma,\mu_\Gamma\in \Ltilde_\Gamma$ with the
corresponding norm can be proved similarly using Lemma \LA{bnorm}.

Similarly, by the definition \EQ{productS}, we have 
\begin{eqnarray*}
\langle\lambda_\Gamma,\mu_\Gamma\rangle_{Z_\Gamma}&=&\langle
                                                      \lambda_{\A,\Gamma},\mu_{\A,\Gamma}\rangle_Z
\le C\Ctwo^2\interleave \lambda_{\A,\Gamma}\interleave \|\mu_{\A,\Gamma}\|_h\\
&\le &\frac{\Ctwo^2}{\sqrt{\epsilon}}\|\lambda_{\A,\Gamma}\|_B\|\mu_{\A,\Gamma}\|_h\\
&=&\frac{\Ctwo^2}{\sqrt{\epsilon}}\|\lambda_{\Gamma}\|_{B_\Gamma}\|\mu_{\A,\Gamma}\|_h,
\end{eqnarray*}
where we use Lemma \LA{bnorm} for the second inequality, Lemma
\LA{equivalent} for the third inequality,  and Lemma
\LA{null}  for the last equality.

\eproof

\subsection{Estimate of the average operator}


We define the following elliptic problem:
\begin{equation}
\label{equation:pde2} \left\{
\begin{array}{rcl}
-\epsilon \Delta u & =& f ,
\quad \mbox{in } \Omega, \\ [0.5ex] u & = &0 , \quad\mbox{on }
\partial \Omega.
\end{array} \right.
\end{equation}
Let $a^e_h(\eta,\mu)$ be the bilinear form defined in
\cite[(2.7)]{TW:2016:HDG} and define the norm 
$$\|\mu\|^2_{A^{(e)}}:=a^{e}_h(\mu,\mu),\quad\forall \mu\in\Lambda.$$
Given $\mu^{(i)}_{\Gamma}\in\Lambda^{(i)}_{\Gamma}$, the harmonic extension 
$\mu^{(i)}_{\mathcal{H},\Gamma}$ is defined as  
\begin{equation}\label{defh1}
\begin{aligned}
\|\mu^{(i)}_{\H,\Gamma}\|^2_{A^{e(i)}}:=\min_{\mu^{(i)}\in\Lambda^{(i)},\mu^{(i)}=\mu^{(i)}_{\Gamma
  } \mbox{ on }\partial\Omega_i}\|\mu^{(i)}\|^2_{A^{e(i)}}
\end{aligned}
\end{equation}
and define $S^{e(i)}_{\Gamma}$ norm as 
\begin{equation}\label{defh2}
\begin{aligned}
\|\mu^{(i)}_{\Gamma}\|^2_{S^{e(i)}_{\Gamma}}:=(\mu^{(i)}_{\Gamma})^TS^{e(i)}_{\Gamma}\mu^{(i)}_{\Gamma}=\|\mu^{(i)}_{\H,\Gamma}\|^2_{A^{e(i)}}.
\end{aligned}
\end{equation}
We note that the harmonic extension has the energy minimization
property, which the extension defined in \EQ{extension} does not
have. 

We also have the following results:
\begin{lemma}\label{lemma:equivo}
For any $\mu^{(i)}\in\Lambda^{(i)},$
\begin{align*}
c\epsilon\interleave\mu^{(i)}\interleave^2_{\Omega_i}\leq |\mu|^2_{A^{e(i)}}\leq C\epsilon\gamma_{h,\tau_K}\interleave\mu\interleave^2_{\Omega_i}.
\end{align*}
\end{lemma}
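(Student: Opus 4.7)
The plan is to exploit the fact that the bilinear form $a^e_h$ is exactly the HDG form for the pure diffusion problem \EQ{pde2}, i.e.\ the $\bm\beta=0$ special case of the advection--diffusion form studied in Lemma \LA{equivalent}. With $\bm\beta=0$, the defining relations \EQ{eq3.4} for $\Q\mu$ and $\U\mu$ coincide with the relations \EQ{JQJUdef} for ${\bm J}\Q\mu$ and ${\bm J}\U\mu$, so the $({\bm I}-{\bm J})$ correction terms drop out and we can write
\[
a^e_h(\mu,\mu) \;=\; \sum_{K\subseteq \Omega_i}\Bigl(\epsilon^{-1}\|\Q^e\mu\|^2_K \;+\; \|\tau_K^{\frac12}(\U^e\mu-\mu)\|^2_{\partial K}\Bigr),
\]
where $\Q^e,\U^e$ denote the pure-diffusion analogues of $\Q,\U$. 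This is the starting identity for both bounds.

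For the lower bound I would simply discard the non-negative stabilization contribution and apply the elementwise inequality $\epsilon^{-1}\|\Q^e\mu\|_K \ge c\interleave\mu\interleave_K$, which is \cite[Lemma 3.8]{multigrid} and is exactly the ingredient already used in the concluding step of the proof of Lemma \LA{equivalent}. Squaring and multiplying by $\epsilon$ gives $\epsilon^{-1}\|\Q^e\mu\|^2_K \ge c\,\epsilon\interleave\mu\interleave^2_K$, and summing over $K\subseteq\Omega_i$ produces the claimed lower bound $c\epsilon\interleave\mu\interleave^2_{\Omega_i}\le a^e_h(\mu,\mu)$.

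For the upper bound I would proceed by an energy test rather than by a detour through the Raviart--Thomas operator (which gave the suboptimal factor $\Cone$ in Lemma \LA{JQbound}). Specifically, I would test the first equation of \EQ{eq3.4} (with $\bm\beta=0$) against $\r=\Q^e\mu$ and the second against $w=\U^e\mu$, add the two identities, and then use Lemma \LA{IJQzero} (which in the $\bm\beta=0$ setting gives $\U^e m_K(\mu)=m_K(\mu)$, cf.\ also \cite[Lemma~3.5]{multigrid}) to replace every occurrence of $\mu$ in the resulting right-hand boundary integrals by $\mu-m_K(\mu)$. Combining Cauchy--Schwarz with the identity $\|\mu-m_K(\mu)\|^2_{\partial K}=|\partial K|\interleave\mu\interleave^2_K$ (from \EQ{IIInorm}) and the inverse inequality $\|\Q^e\mu\cdot{\bf n}\|_{\partial K}\le Ch^{-\frac12}\|\Q^e\mu\|_K$ yields a bound of the form
\[
a^e_h(\mu,\mu) \;\le\; C\sqrt{\epsilon}\,\interleave\mu\interleave_K\sqrt{\epsilon^{-1}\|\Q^e\mu\|_K^2} \;+\; C\sqrt{\tau_K h}\,\interleave\mu\interleave_K\,\|\tau_K^{\frac12}(\U^e\mu-\mu)\|_{\partial K}.
\]
A Young-inequality step absorbs the $a^e_h$-type terms into the left-hand side and leaves $a^e_h(\mu,\mu)\le C(\epsilon+\tau_Kh)\interleave\mu\interleave^2_K\le C\,\epsilon\,\gamma_{h,\tau_K}\interleave\mu\interleave^2_K$ after using $0<\epsilon\le 1$ and the definition \EQ{gamma}.

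The main obstacle is keeping the $\epsilon$-dependence sharp in the upper bound: the direct route through Lemma \LA{JQbound} introduces the factor $\Cone^2$ which contains $\epsilon^{-1}\tau_K h$ and therefore blows up as $\epsilon\to 0$, so it is essential to instead energy-test and use the zero-mean trick to extract a clean $\epsilon\gamma_{h,\tau_K}$ factor. Once that bookkeeping is done the remaining steps are standard Cauchy--Schwarz/trace/inverse-inequality manipulations.
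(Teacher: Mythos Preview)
The paper's proof is merely a citation to \cite[Theorem~3.9]{multigrid}; your explicit energy-testing argument goes well beyond that. The lower bound is correct, and your route for the upper bound --- testing \EQ{eq3.4} (with $\bm\beta=0$) against $(\Q^e\mu,\U^e\mu)$ and using the zero-mean trick to replace $\mu$ by $\mu-m_K(\mu)$ --- is sound and produces the elementwise estimate $a^e_h(\mu,\mu)|_K \le C(\epsilon+\tau_K h_K)\interleave\mu\interleave^2_K$.

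The genuine gap is your final step: the inequality $\epsilon+\tau_K h \le C\epsilon(1+\tau_K h)=C\epsilon\,\gamma_{h,\tau_K}$ is simply false for small $\epsilon$ whenever $\tau_K h$ is bounded away from zero (take $\epsilon=10^{-6}$, $\tau_K h=1$: the left side is $\approx 1$, the right side is $\approx 2C\cdot10^{-6}$). Invoking $0<\epsilon\le 1$ cannot repair this. In fact, if one rescales \cite[Theorem~3.9]{multigrid} (stated there for unit diffusion) to the present $-\epsilon\Delta$ setting by writing $\tilde\Q=\epsilon^{-1}\Q$ and matching the second local equation, the effective stabilizer becomes $\tau_K/\epsilon$ and the cited bound reads $C\epsilon\bigl(1+\tau_K h_K/\epsilon\bigr)=C(\epsilon+\tau_K h_K)$ --- precisely your intermediate estimate. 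So the obstruction is not your method: either stop at $C(\epsilon+\tau_K h_K)\interleave\mu\interleave^2_K$ and flag the discrepancy with the stated constant $\epsilon\gamma_{h,\tau_K}$, or identify an additional structural hypothesis (for instance $\tau_K h_K\lesssim\epsilon$) under which the two expressions coincide.
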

\begin{proof}
See \cite[Theorem 3.9]{multigrid}. 
\end{proof}
\begin{lemma}\label{lemma:EDO}
For any $\mu_{\Gamma}\in\widetilde{\Lambda}_{\Gamma},$
\begin{align*}
|E_D\mu_{\Gamma}|^2_{\widetilde{S}^e_{\Gamma}}\leq C\gamma_{h,\tau_K}\left(1+\log\frac Hh\right)^2|\mu_{\Gamma}|^2_{\widetilde{S}^e_{\Gamma}}.
\end{align*}
\begin{proof}
See  \cite[Lemma 5.5]{TW:2016:HDG}.
\end{proof}
\end{lemma}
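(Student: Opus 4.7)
The plan is to reduce this to a classical Bramble--Pasciak--Schatz type edge estimate for the $P_1$ non-conforming finite element space, using the equivalences already assembled in Lemmas \LA{equivo}, \LA{averagezeroscaling} and the isomorphism $X_h$ of \EQ{Xhdef}. First I would translate the HDG--Schur norm into a local semi-norm. By \EQ{defh1}--\EQ{defh2} the norm $|\mu^{(i)}_\Gamma|^2_{S^{e(i)}_\Gamma}$ equals $|\mu^{(i)}_{\mathcal{H},\Gamma}|^2_{A^{e(i)}}$, the minimum over all interior extensions; Lemma \LA{equivo} therefore gives a two-sided comparison
\[
c\epsilon\,\interleave \mu^{(i)}_{\mathcal{H},\Gamma}\interleave^2_{\Omega_i}\;\le\;|\mu^{(i)}_\Gamma|^2_{S^{e(i)}_\Gamma}\;\le\;C\epsilon\,\gamma_{h,\tau_K}\,\interleave \widetilde{\mu}^{(i)}\interleave^2_{\Omega_i}
\]
for any admissible extension $\widetilde{\mu}^{(i)}$. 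Consequently, to prove the lemma it suffices to show that for a suitable extension of $E_D\mu_\Gamma$ one has $\sum_i\interleave (E_D\mu_\Gamma)^{(i)}\interleave^2_{\Omega_i}\le C(1+\log H/h)^2\sum_i\interleave \mu^{(i)}_{\mathcal{H}}\interleave^2_{\Omega_i}$; the extra $\gamma_{h,\tau_K}$ on the right of the statement is absorbed by the upper bound in Lemma \LA{equivo}.

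The second step is to pass to the non-conforming $P_1$ framework through $X_h$. Decompose $\mu^{(i)}=Q_0\mu^{(i)}+(\mu^{(i)}-Q_0\mu^{(i)})$; Lemma \LA{L2projection} shows the oscillation $\mu^{(i)}-Q_0\mu^{(i)}$ is already controlled in the $\interleave\cdot\interleave$ semi-norm, and is preserved (up to localization and scaling) by $E_D$, so only the zero-order component $Q_0\mu^{(i)}$ requires interface analysis. For that component \EQ{p10} identifies $\interleave Q_0\mu^{(i)}\interleave_{\Omega_i}$ with $|X_h Q_0\mu^{(i)}|_{H^1(\mathcal{T}_h(\Omega_i))}$, and $E_D$ acts on the traces of $X_h Q_0\mu^{(i)}$ as a weighted averaging across subdomain edges. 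Since $\mu_\Gamma\in\widetilde{\Lambda}_\Gamma$, Assumption~\ref{assump:onWtilde} forces the primal edge averages $\int_{\mathcal{E}^k}\mu^{(i)}\,ds$ to agree across each shared edge, and from the partition-of-unity property $\sum_{j\in\mathcal{N}_x}\delta^\dagger_j=1$ the operator $E_D$ leaves these primal edge averages untouched. Thus $E_D\mu_\Gamma-\mu_\Gamma$ is supported in the dual/fluctuation part, with \emph{zero mean on each subdomain edge}.

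The crux, and the main obstacle, is then the standard non-conforming edge lemma: for each edge $\mathcal{E}\subset\partial\Omega_i\cap\partial\Omega_j$, if $\theta_{ij}$ is the zero-mean part of $\mu^{(i)}-\mu^{(j)}$ restricted to $\mathcal{E}$, then the discrete-harmonic $P_1$-nonconforming extension of $X_h\theta_{ij}$ into $\Omega_i$ satisfies
\[
|X_h\theta_{ij}|^2_{H^1(\Omega_i)}\;\le\;C\bigl(1+\log\tfrac{H}{h}\bigr)^2\bigl(|X_h\mu^{(i)}|^2_{H^1(\Omega_i)}+|X_h\mu^{(j)}|^2_{H^1(\Omega_j)}\bigr).
\]
The proof is the classical Bramble--Pasciak--Schatz argument (edge cutoff plus the discrete $L^\infty(\mathcal{E})$--$H^{1/2}_{00}$ inequality, the $\log$ arising from the finest mesh level), adapted to the non-conforming $P_1$ space via the intergrid transfer $I_h^V$ and the projection $Q_H^V$ whose stabilities are recorded in \EQ{htH}--\EQ{Hth}; the zero-mean hypothesis on $\theta_{ij}$ is exactly what allows the Poincar\'e inequality used in Lemma \LA{mu|||t} to remove the otherwise unavoidable $L^2$ term. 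Weighting by $\delta^\dagger_i$ (constant on each edge), summing the edge contributions over all subdomain edges and then over $i=1,\ldots,N$, and finally passing back through the isomorphism $X_h$ and the lower bound of Lemma \LA{equivo} yields the stated inequality with the single $\gamma_{h,\tau_K}(1+\log H/h)^2$ factor.
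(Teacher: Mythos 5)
Your overall skeleton is sound, and it is very likely the same route as the proof the paper is pointing to: the paper's own ``proof'' of this lemma consists entirely of the citation to \cite[Lemma 5.5]{TW:2016:HDG}, so you have reconstructed an argument the authors chose not to reproduce. The reduction you set up is correct: applying the upper bound of Lemma \LA{equivo} to a chosen extension of $E_D\mu_\Gamma$ and the lower bound to the minimizing (harmonic) extension of $\mu_\Gamma$ cancels the two factors of $\epsilon$ and leaves exactly one $\gamma_{h,\tau_K}$, so the lemma does reduce to showing $\sum_i \interleave (E_D\mu_\Gamma)^{(i)}\interleave^2_{\Omega_i}\le C(1+\log\frac Hh)^2\sum_i\interleave \mu^{(i)}_{\H,\Gamma}\interleave^2_{\Omega_i}$ for a suitable extension. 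Your treatment of the two components is also right in outline: $E_D$ commutes with $Q_0$ because $\delta_j^\dagger$ is constant on each subdomain edge; the oscillation $\mu-Q_0\mu$ is handled purely locally by the $h$-weighted estimates of Lemma \LA{L2projection}, so no logarithm enters there; and the zero edge mean of $(E_D\mu_\Gamma-\mu_\Gamma)|_{\E^{ij}}=\delta_j^\dagger\,(\mu^{(j)}-\mu^{(i)})|_{\E^{ij}}$ follows from the continuity of the primal edge averages, which is what lets the Poincar\'e inequality remove the $L^2$ term.

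The one place where your proposal does not stand on its own is exactly where all of the quantitative content lives: the $(1+\log\frac Hh)^2$ edge estimate for the $P_1$ non-conforming space. You assert it as ``the classical Bramble--Pasciak--Schatz argument adapted to the non-conforming $P_1$ space,'' but the discrete Sobolev ($L^\infty$ on an edge) inequality and the $H^{1/2}_{00}$ cutoff estimate both have to be re-established for non-conforming functions; that is a nontrivial body of work and is precisely what \cite[Lemma 5.5]{TW:2016:HDG} packages. Two smaller points you should make explicit as well: first, the estimate you actually need is for the extension of each edge term into $\Omega_i$ (zero on the remaining interface degrees of freedom and extended into the interior), not for $X_h\theta_{ij}$ in isolation; second, the fact that the decomposition of $E_D\mu_\Gamma-\mu_\Gamma$ into subdomain-edge contributions has no vertex or wire-basket remainder rests on the HDG trace degrees of freedom being interior to element edges, and deserves a sentence. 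With the non-conforming edge lemma either proved or explicitly cited, your argument closes.
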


Now we are ready to estimate our average operator.

{\bf Proof of Lemma \LA{averagenorm}}
Define $v_{\Gamma}=E_D\mu_{\Gamma}-\mu_{\Gamma},$ and
$v^{(i)}_\Gamma=\barR_\Gamma^{(i)} v_\Gamma \in \Lambda^{(i)}$.
$v^{(i)}_{\Gamma}$ has  zero edge average on $\partial\Omega_i$. 
Moreover, by the fact that
$a^{(i)}(v^{(i)}_{\A,\Gamma},q^{(i)})=0,$ for
any $q^{(i)}\in \Lambda^{(i)},$ taking
$q^{(i)}=v^{(i)}_{\A,\Gamma}-v^{(i)}_{\H,\Gamma}$, 
we have
\begin{align*}
&\|v^{(i)}_{\A,\Gamma}-v^{(i)}_{\H,\Gamma}\|^2_{B^{(i)}}\\
&=|a^{(i)}(v^{(i)}_{\A,\Gamma}-v^{(i)}_{\H,\Gamma},v^{(i)}_{\A,\Gamma}-v^{(i)}_{\H,\Gamma})|\\
&=|a^{(i)}(v^{(i)}_{\H,\Gamma},v^{(i)}_{\A,\Gamma}-v^{(i)}_{\H,\Gamma})|\\
&\leq
  |b^{(i)}(v^{(i)}_{\H,\Gamma},v^{(i)}_{\A,\Gamma}-v^{(i)}_{\H,\Gamma})|+|z^{(i)}(v^{(i)}_{\H,\Gamma},v^{(i)}_{\A,\Gamma}-v^{(i)}_{\H,\Gamma})|.
\end{align*}
Since $v^{(i)}_{\H,\Gamma}$ has zero edge average 
on $\partial \Omega_i$ and 
$v^{(i)}_{\A,\Gamma}-v^{(i)}_{\H,\Gamma}$
is zero on $\partial \Omega_i$, by Lemmas \LA{lbnorm},
\LA{lbnormzero}, \LA{subequivalent}, and \LA{averagezeroscaling}, we have
\begin{eqnarray*}
&&\|v^{(i)}_{\A,\Gamma}-v^{(i)}_{\H,\Gamma}\|^2_{B^{(i)}}\\
&\leq&
  |b^{(i)}(v^{(i)}_{\H,\Gamma},v^{(i)}_{\A,\Gamma}-v^{(i)}_{\H,\Gamma})|+|z^{(i)}(v^{(i)}_{\H,\Gamma},v^{(i)}_{\A,\Gamma}-v^{(i)}_{\H,\Gamma})|\\
&\leq&\|v^{(i)}_{\H,\Gamma}\|_{B^{(i)}}\|v^{(i)}_{\A,\Gamma}-v^{(i)}_{\H,\Gamma}\|_{B^{(i)}}\\
&&+C\frac{\Ctwo^2}{\sqrt{\epsilon}}\left(\interleave v^{(i)}_{\H,\Gamma}\interleave_{\Omega_i}
+\|v^{(i)}_{\H,\Gamma}\|_{h,\Omega_i}\right)
\|v^{(i)}_{\A,\Gamma}-v^{(i)}_{\H,\Gamma}\|_{B^{(i)}}\\
&\le&C \frac{\Ctwo^2}{\sqrt{\epsilon}}\interleave  v^{(i)}_{\H,\Gamma}\interleave_{\Omega_i}\|v^{(i)}_{\A,\Gamma}-v^{(i)}_{\H,\Gamma}\|_{B^{(i)}}.
\end{eqnarray*}
We have
\[
\|v^{(i)}_{\A,\Gamma}-v^{(i)}_{\H,\Gamma}\|_{B^{(i)}}\leq
\frac{\Ctwo^2}{\sqrt{\epsilon}}\interleave v^{(i)}_{\H,\Gamma}\interleave_{\Omega_i},
\]
and by Lemma \LA{subequivalent}, we have
\begin{equation}\label{equation:eq5.18}
\|v^{(i)}_{\A,\Gamma}\|_{B^{(i)}}\leq \frac{\Ctwo^2}{\sqrt{\epsilon}}\interleave v^{(i)}_{\H,\Gamma}\interleave_{\Omega_i}.
\end{equation}

Therefore, 
\begin{align*}
&|E_D\mu_{\Gamma}-\mu_{\Gamma}|^2_{\widetilde{S}_{\Gamma}}=\sum\limits_{i=1}^N|v^{(i)}_{\Gamma}|^2_{{S}^{(i)}_{\Gamma}}\\
&=\sum\limits_{i=1}^N|v^{(i)}_{\A,\Gamma}|^2_{A^{(i)}}=\sum\limits_{i=1}^N|v^{(i)}_{\A,\Gamma}|^2_{B^{(i)}}\leq\frac{\Ctwo^4}{\epsilon}\sum\limits_{i=1}^N
  \interleave v^{(i)}_{\H,\Gamma}
  \interleave^2_{\Omega_i},
\end{align*}
where we use \EQ{eq5.18} for the last step. Let
$\mu^{(i)}_\Gamma=\barR^{(i)}_{\Gamma}\mu_{\Gamma}$.  {By Lemma
\LA{equivo}, (\ref{defh2}),  and Lemma \LA{EDO},} we have
\begin{align*}
&|E_D\mu_{\Gamma}-\mu_{\Gamma}|^2_{\widetilde{S}_{\Gamma}}\leq C\frac{\Ctwo^4}{\epsilon}\sum\limits_{i=1}^N
  \interleave v^{(i)}_{\H,\Gamma}
  \interleave^2_{\Omega_i}\\
&\leq C\frac{\Ctwo^4}{\epsilon^2}\sum\limits_{i=1}^N|v^{(i)}_{\H,\Gamma}|^2_{A^{e(i)}}
=C\frac{\Ctwo^4}{\epsilon^2}
\sum\limits_{i=1}^N|v^{(i)}_{\Gamma}|^2_{S^{e(i)}_{\Gamma}}\le C\frac{\Ctwo^4}{\epsilon^2}\left(1+\log\frac Hh\right)^2|\mu_{\Gamma}|^2_{\widetilde{S}^e_{\Gamma}}\\
&= C\frac{\Ctwo^4}{\epsilon^2}\left(1+\log\frac
  Hh\right)^2\sum\limits_{i=1}^N|\mu^{(i)}_{\H,\Gamma}|^2_{A^{e(i)}}
\le C\frac{\Ctwo^4}{\epsilon^2}\left(1+\log\frac
  Hh\right)^2\sum\limits_{i=1}^N|\mu^{(i)}_{\A,\Gamma}|^2_{A^{e(i)}}\\
&\le
  C\frac{\Ctwo^4}{\epsilon^2}\left(1+\log\frac
  Hh\right)^2\epsilon\gamma_{h,\tau_K}\sum\limits_{i=1}^N\interleave \mu^{(i)}_{\A,\Gamma}\interleave_{\Omega_i}^2,
\end{align*}
where we use Lemmas \LA{equivo}  again. By Lemma \LA{subequivalent}, we have
\begin{align*}
&|E_D\mu_{\Gamma}-\mu_{\Gamma}|^2_{\widetilde{S}_{\Gamma}}\leq C\frac{\Ctwo^4}{\epsilon} \left(1+\log\frac
  Hh\right)^2\gamma_{h,\tau_K}\sum\limits_{i=1}^N\interleave \mu^{(i)}_{\A,\Gamma}\interleave_{\Omega_i}^2\\
&\leq C\frac{\Ctwo^4}{\epsilon^2}\left(1+\log\frac Hh\right)^2\gamma_{h,\tau_K}\sum\limits_{i=1}^N
|\mu^{(i)}_{\A,\Gamma}|^2_{A^{(i)}}\\
&=C\frac{\Ctwo^4}{\epsilon^2}\left(1+\log\frac Hh\right)^2\gamma_{h,\tau_K}\sum\limits_{i=1}^N
|\mu^{(i)}_{\Gamma}|^2_{S^{(i)}_{\Gamma}}=C\frac{\Ctwo^4}{\epsilon^2}\left(1+\log\frac Hh\right)^2\gamma_{h,\tau_K}|\mu_{\Gamma}|^2_{\widetilde{S}_{\Gamma}}.
\end{align*}
\eproof

\begin{remark}\label{remark:Ed}
Due to the HDG discretization, we have the same 
bounds for both two and three dimensions. Compare to the result for the linear
conforming  finite element discretization in
\cite[Lemma 7.8]{TuLi:2008:BDDCAD}, our bound has one more
$\frac{1}{\epsilon}$ and two
additional factors $\Ctwo$ and
$\gamma_{h,\tau_K}$, defined in \EQ{Cdef} and \EQ{gamma},
respectively.  The additional  $\frac{1}{\epsilon}$  is due to the
estimate in Lemma  \LA{lbnormzero}, see Remark \ref{remark:moree} for
more details.  The first factor $\Ctwo$  is from Lemma \LA{IJQzero},
where $\|\nabla\mathcal{U}\mu\|_K\leq \Ctwo
\interleave\mu\interleave_K$. However, for the linear conforming finite element,
$\|\nabla \mu\|_K=|\mu|_{H^1(K)}$ is hold simply by the definition.
The second factor $\gamma_{h,\tau_K}$ is from Lemma \LA{EDO}, which is
the upper bound for the average operator of the corresponding elliptic
problem with the HDG discretization. 
For the common choices of $\tau_K$,
$\gamma_{h,\tau_K}$ is usually a bounded constant. However, $\Ctwo$
can be quite large if $\epsilon$ is very small unless $h$ is
sufficiently small to balance. 
\end{remark}

\subsection{Estimate of the $L^2$ type error}

This subsection is devoting to proving Lemma \LA{L2error}. 
The usual techniques to estimate the $L^2$ error is a duality
argument \cite{Braess:1997:FET}.   \cite[Lemma
7.12]{TuLi:2008:BDDCAD}, which is a standard finite element version of
Lemma \LA{L2error}, was  proved using these techniques. However, there are
several difficulties for the HDG discretization: (1) 
the preconditioned system
\EQ{BDDC} is a reduced system for the variable $\lambda\in \Lambda$.
We need to handle $\Q\lambda$ and $\U\lambda$ these two
operators appropriately; (2) $\lambda$ is defined only on mesh
element boundaries, we need appropriate extensions to the element
interiors. 

In order to overcome these difficulties, we first introduce a local
extension operator $S^K_i$ as  that defined in \cite[Section 5.2]{multigrid}.

Given any $\lambda\in \Ltilde$, let $\lambda^K$ be the restriction of
$\lambda$ on $\partial K$, where $K$ is an element in $
\T_h$. Let $E_i$ denote the edge of $K$ opposite to the $i$th vertex
of $K$. We define $S_i^K\lambda^K$ in $P_{k+1}(K)$ by
\begin{eqnarray}
\langle S_i^K\lambda^K,\eta\rangle_{E_i}&=&\langle
                                            \lambda^K,\eta\rangle_{E_i},
                                            \quad \forall ~\eta \in
                                            P_{k+1}(E_i),\label{equation:SKdef1}\\
( S_i^K\lambda^K,v)_K&=&(\U\lambda^K,v)_K,
                                            \quad \forall~ v \in
                                            P_{k}(K),\label{equation:SKdef2}
\end{eqnarray}
for $i=1,\cdots,n+1$ all $n+1$ edges of $K$.  Define
\begin{equation}\label{equation:SKdef}
(\lambda,\mu)_S=\sum_{K\in
  \T_h}\frac{1}{n+1}\sum_{i=1}^{n+1}(S^K_i\lambda,S_i^K\mu)_K\quad\mbox{and}\quad \|\lambda\|^2_S=(\lambda,\lambda)_S.
\end{equation}

\cite[Lemma 5.7]{multigrid} gives the following results
\begin{lemma}\label{lemma:SK}
Equations \EQ{SKdef1} and \EQ{SKdef2} uniquely define an $S_i^K\lambda^K$
in $P_{k+1}(K)$. For all $\lambda\in \Lambda$, we have
\begin{eqnarray}
&&\U\lambda^K=Q_{k}(S_i^K\lambda^K),\label{equation:SKL1}\\
&&c\|\lambda\|_h\le \|\lambda\|_S\le C{\nu_{\epsilon,\tau_K,h}}\|\lambda\|_h,\label{equation:SKL2}\\
&&{|S_i^K\lambda|_{H^1(K)}\le C{\nu_{\epsilon,\tau_K,h}}\interleave\lambda\interleave_K},\label{equation:SKH1}
\end{eqnarray}
where $Q_{k}$ is a $L^2$ projection to $P_{k}(K)$ and {$\nu_{\epsilon,\tau_K,h}$ is defined in \EQ{Cdef}}.
\end{lemma}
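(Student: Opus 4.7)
The plan is to verify the five claims in sequence, borrowing the Raviart--Thomas style argument from \cite{multigrid} but adapted to the advection--diffusion operators already available in this paper.

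First, for well-posedness, I would do a dimension count. In $n$ dimensions,
\[
\dim P_{k+1}(K)=\binom{k+1+n}{n}=\binom{k+n}{n-1}+\binom{k+n}{n}=\dim P_{k+1}(E_i)+\dim P_k(K),
\]
so \EQ{SKdef1}--\EQ{SKdef2} form a square linear system. Uniqueness reduces to showing that if $\lambda^K=0$ and $\U\lambda^K=0$ then $S_i^K\lambda^K=0$. Condition \EQ{SKdef1} with $\eta$ ranging over $P_{k+1}(E_i)$ forces the trace $(S_i^K\lambda^K)|_{E_i}\in P_{k+1}(E_i)$ to vanish identically, so $S_i^K\lambda^K=\ell_i\,p$ where $\ell_i$ is the barycentric coordinate opposite the $i$-th vertex (strictly positive in the interior of $K$) and $p\in P_k(K)$. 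Plugging $v=p$ into \EQ{SKdef2} gives $(\ell_i p,p)_K=0$ and hence $p\equiv 0$, which settles both existence and uniqueness in $P_{k+1}(K)$.

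Claim \EQ{SKL1} is immediate: since $\U\lambda^K\in P_k(K)$, \EQ{SKdef2} says $(S_i^K\lambda^K-\U\lambda^K,v)_K=0$ for every $v\in P_k(K)$, which is precisely $Q_k(S_i^K\lambda^K)=\U\lambda^K$. For the upper bound in \EQ{SKL2}, I would pass to a reference element $\widehat K$ via an affine map: on $\widehat K$ the two finite-dimensional data $(S_i^K\lambda^K)|_{E_i}\in P_{k+1}(E_i)$ and $Q_k(S_i^K\lambda^K)\in P_k(K)$ determine $S_i^K\lambda^K$ linearly, so by equivalence of norms in finite dimensions and standard scaling,
\[
\|S_i^K\lambda^K\|_K^2\le C\bigl(h\,\|\lambda^K\|_{L^2(E_i)}^2+\|\U\lambda^K\|_K^2\bigr).
\]
Summing over $i$, using $\|\U\lambda^K\|_K\le C\nu_{\epsilon,\tau_K,h}\|\lambda\|_{h,K}$ from Lemma \LA{UmuBound}, and recalling $\|\lambda\|_{h,K}^2\sim h\|\lambda\|_{L^2(\partial K)}^2$ yields $\|\lambda\|_S\le C\nu_{\epsilon,\tau_K,h}\|\lambda\|_h$. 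For the lower bound, notice that $\lambda^K|_{E_i}\in P_k(E_i)\subset P_{k+1}(E_i)$, so \EQ{SKdef1} with $\eta=S_i^K\lambda^K|_{E_i}-\lambda^K|_{E_i}\in P_{k+1}(E_i)$ forces $S_i^K\lambda^K|_{E_i}=\lambda^K|_{E_i}$ as polynomials on $E_i$. Trace and inverse inequalities (Lemmas \LA{trace} and \LA{inverse}) then give
\[
\|\lambda\|_{L^2(E_i)}^2=\|S_i^K\lambda^K\|_{L^2(E_i)}^2\le C h_K^{-1}\|S_i^K\lambda^K\|_K^2,
\]
so multiplying by $h_K$, summing $i$ and averaging produces $c\|\lambda\|_h\le\|\lambda\|_S$.

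For \EQ{SKH1}, the crucial observation is that $S_i^K$ preserves constants: if $c$ is constant then $\U c=c$ (recall $\U$ is built from the local HDG problem and constants are fixed points under Assumption~\ref{assumption:beta0}, as already used in the proof of Lemma~\LA{IJQzero}), so by the uniqueness just proved $S_i^K c=c$ in $P_{k+1}(K)$. Therefore
\[
|S_i^K\lambda|_{H^1(K)}=|S_i^K(\lambda-m_K(\lambda))|_{H^1(K)}\le C h_K^{-1}\|S_i^K(\lambda-m_K(\lambda))\|_K
\]
by the inverse inequality, and the already-established upper bound for \EQ{SKL2} applied to $\lambda-m_K(\lambda)$ gives $C h_K^{-1}\nu_{\epsilon,\tau_K,h}\|\lambda-m_K(\lambda)\|_{h,K}=C\nu_{\epsilon,\tau_K,h}\interleave\lambda\interleave_K$ by the definitions \EQ{L2norm} and \EQ{IIInorm}.

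The main obstacle is bookkeeping the $\nu_{\epsilon,\tau_K,h}$ factor cleanly: it enters only through $\|\U\lambda^K\|_K$ in the upper bound of Lemma \LA{UmuBound}, so I need to keep that term isolated on the reference element and resist prematurely bundling it with the edge contribution, which already has a clean constant from $S_i^K\lambda^K|_{E_i}=\lambda^K|_{E_i}$. Everything else is dimension-counting, scaling, and the constant-preserving property, all of which are straightforward once the reference-element framework is set up.
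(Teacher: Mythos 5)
Your proof is correct and follows essentially the route the paper intends: the paper's own proof is a one-line citation to the reference defining $S_i^K$, modified by Lemma \LA{UmuBound} (which you invoke exactly where needed, to bound $\|\U\lambda^K\|_K$ in the upper half of \EQ{SKL2}) and Lemma \LA{IJQzero} (whose constant-preservation property $\U m_K(\lambda)=m_K(\lambda)$ you use, together with uniqueness, to get $S_i^K m_K(\lambda)=m_K(\lambda)$ and hence \EQ{SKH1}). Your dimension count, the unisolvence argument via $S=\ell_i p$, the identity $S_i^K\lambda^K|_{E_i}=\lambda^K|_{E_i}$ for the lower bound, and the scaling bookkeeping are all sound, so this is a faithful self-contained expansion of the cited argument rather than a different method.
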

{
\begin{proof}
This can be proved by a modification on the proof of \cite[Lemma 5.7]{multigrid} with  Lemma \LA{UmuBound} and Lemma \LA{IJQzero}.
\end{proof}
}

In our proof, we will also need to use another extension $X_h$, defined
in \EQ{Xhdef}, to extend $\lambda$ defined on the 
element boundary to the function defined in the element interior by
connecting with the $P_1$ non-conforming finite element  functions.

Now we are ready to use the duality argument to prove Lemma
\LA{L2error}.

Given $\lambda\in \Ltilde$, 
let $\varphi_\lambda \in \Lambda$ and $\widetilde{\varphi}_\lambda \in \Ltilde$ as the solutions
to the following
problems, respectively, 
\begin{eqnarray}
\label{equation:phig}
a_h(\mu, \varphi_\lambda ) & = & (\U\mu, g)_{\T_h}, \quad\forall \mu \in \Lambda, \\
\label{equation:phigh} \qquad \widetilde{a}(\widetilde{\mu}, \phitilde_\lambda) & = & (\U\widetilde{\mu}, g)_{\T_h}, \quad \forall \widetilde{\mu} \in \Ltilde,
\end{eqnarray}
where $g$ is a function defined element-wise as 
\begin{equation}\label{equation:gdef}
g=\frac{1}{n+1}\sum_{i=1}^{n+1} {Q_k}(S_i^K\lambda) 
\end{equation}
 in an element
$K$. $g$ is a $L^2$ function  and let $u^*_g$ be the weak solution
to the adjoint problem  \EQ{pdea}  $L^*u^*_g=g$. 
We know that $u^*_g \in H^2(\Omega)$
under the regularity assumption \EQ{regularity}. 
Let ${\bf Q}^*_g=\epsilon \nabla u^*_g$ and $\varphi^*_g$ be the trace of
$u^*_g$ on the element boundary.  We have
\begin{equation}\label{equation:ad}
-\nabla\cdot {\bf Q}^*_g-\vbeta\cdot\nabla u^*_g-\nabla\cdot\vbeta u^*_g=g.
\end{equation}

Denote
$({\bf Q}_g, u_g,\varphi_g)$ as the HDG solution of the
adjoint problem \EQ{ad}.  
We can see that $\varphi_\lambda=\varphi_g$, since for arbitrary $\mu\in\Lambda$,
\begin{align*}
a_h(\mu,\varphi_{\lambda})&=D((\Q\mu,\U\mu,\mu);(\Q\varphi_\lambda,\U\varphi_\lambda,\varphi_\lambda))
=D((\Q\mu,\U\mu,\mu);({\bf Q}_g,u_g,\varphi_\lambda))\\
&=D((\Q\mu,\U\mu,\mu);({\bf Q}_g,u_g,\varphi_\lambda-\varphi_g))+
D((\Q\mu,\U\mu,\mu);({\bf Q}_g,u_g,\varphi_g))\\
&=D((\Q\mu,\U\mu,\mu);({\bf Q}_g,u_g,\varphi_\lambda-\varphi_g))+(\U\mu,g)_{\T_h},
\end{align*}
where we use $({\bf Q}_g, u_g,\varphi_g)$ is  the HDG solution of the
adjoint problem in the last step.

Therefore, $\forall \mu\in\Lambda$,
\begin{align*}
D((\Q\mu,\U\mu,\mu);({\bf Q}_g,u_g,\varphi_\lambda-\varphi_g))=0
\end{align*}
and we have $\varphi_\lambda=\varphi_g.$

The following lemma gives an error estimate for the HDG solution of
the dual problem. Since we need to obtain the error estimate in
$B$-norm, the results in \cite{fu_analysis_2015-1} cannot be used directly.

\begin{lemma} \label{lemma:HDGerror}
  Let $P_M$ be the $L_2$ projection onto $\Lambda$. We have
  $$
\|\varphi_g-P_M\varphi_g^*\|_B\le C_E(\epsilon,h)|u^*|_{H^2},
$$
where
$C_E(\epsilon,h)=\epsilon^{\frac12}h+\Ctwo\epsilon^{\frac12}h+\Ctwo
\epsilon^{-\frac12} h^2+h^{\frac32}$.
\end{lemma}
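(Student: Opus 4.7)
The plan is to use the identity $\|\varphi_g - P_M \varphi_g^*\|_B^2 = b_h(\eta,\eta) = a_h(\eta,\eta)$ with $\eta := \varphi_g - P_M\varphi_g^*$, which follows because the skew-symmetric part satisfies $z_h(\eta,\eta)=0$. I would then split
\[
a_h(\eta,\eta) \;=\; a_h(\eta,\varphi_g) \;-\; a_h(\eta, P_M \varphi_g^*),
\]
and rewrite the first term using the defining equation \EQ{phig} as $a_h(\eta,\varphi_g)=(\mathcal{U}\eta,g)_{\T_h}$. The bulk of the work is to produce an analogous ``consistency-type'' expression for $a_h(\eta, P_M\varphi_g^*)$ so that the two pieces cancel up to controllable projection errors.

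To produce that expression, I would introduce an HDG-style projection $(\Pi_V \mathbf{Q}_g^*, \Pi_W u_g^*)$ of the exact adjoint solution onto $\mathbf{V}\times W$ (with the trace sent to $P_M \varphi_g^*$), starting from $\mathbf{Q}_g^* = \epsilon\nabla u_g^*$ and the adjoint PDE \EQ{ad}. Element-by-element integration by parts of the two defining identities \EQ{eq3.4} for $\mathcal{Q}\eta$ and $\mathcal{U}\eta$ against $\Pi_V \mathbf{Q}_g^*$ and $\Pi_W u_g^*$, combined with the adjoint equation and the $L^2$-orthogonality of $P_M$, should yield an identity of the form
\[
a_h(\eta, P_M \varphi_g^*) \;=\; (\mathcal{U}\eta,g)_{\T_h} \;+\; \mathcal{E}(\eta; \mathbf{Q}_g^* - \Pi_V\mathbf{Q}_g^*,\, u_g^* - \Pi_W u_g^*,\, \varphi_g^* - P_M \varphi_g^*),
\]
where $\mathcal{E}$ is a sum of volume and boundary pairings of $\mathcal{Q}\eta$, $\mathcal{U}\eta$, $\eta$, $\mathcal{U}\eta - \eta$, and $\nabla\mathcal{U}\eta$ against the three approximation errors. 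Subtracting the two identities eliminates the $(\mathcal{U}\eta,g)$ contribution and leaves $a_h(\eta,\eta) = -\mathcal{E}$.

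It then remains to bound each term of $\mathcal{E}$ by a Cauchy--Schwarz inequality, pairing each ``$\eta$-factor'' against the appropriate part of the $B$-norm (i.e.\ $\epsilon^{-1/2}\|\mathcal{Q}\eta\|$, $\|(\tau_K-\tfrac12\vbeta\cdot\mathbf{n})^{1/2}(\mathcal{U}\eta-\eta)\|_{\partial K}$, $\|(-\tfrac12\nabla\cdot\vbeta)^{1/2}\mathcal{U}\eta\|$) and each projection error against its standard approximation estimate: $\|\mathbf{Q}_g^* - \Pi_V \mathbf{Q}_g^*\|_K \lesssim \epsilon h |u_g^*|_{H^2(K)}$, $\|u_g^* - \Pi_W u_g^*\|_K \lesssim h^2 |u_g^*|_{H^2(K)}$, and $\|\varphi_g^* - P_M \varphi_g^*\|_{\partial K} \lesssim h^{3/2} |u_g^*|_{H^2(K)}$. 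Terms involving $\nabla\mathcal{U}\eta$ or $\mathcal{U}\eta$ in $L^2(K)$ must be controlled using Lemmas~\LA{UmuBound} and \LA{IJQzero}, which is where the factor $\Ctwo$ enters; the $\epsilon^{\pm 1/2}$ factors enter through the $\epsilon^{-1/2}\|\mathcal{Q}\eta\|$ component of the $B$-norm paired against $\epsilon h |u_g^*|_{H^2}$, yielding $\epsilon^{1/2} h$, while pairings of $\mathcal{U}\eta$ with the $\epsilon^{-1}$-scaled volume projection error produce the $\Ctwo \epsilon^{-1/2} h^2$ contribution. Dividing out a factor of $\|\eta\|_B$ after summing over elements gives the stated bound $C_E(\epsilon,h) |u_g^*|_{H^2}$.

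The main obstacle is the bookkeeping of $\epsilon$-powers through the non-standard operators $\mathcal{Q}$ and $\mathcal{U}$: unlike the conforming FE case, $\mathcal{U}\eta$ and $\nabla\mathcal{U}\eta$ do not directly embed into a single scale-invariant norm, and their bounds via Lemma~\LA{UmuBound}--\LA{IJQzero} bring in the constant $\Ctwo$ which already mixes $\epsilon$, $\tau_K$, and $h$. Ensuring that each pairing is absorbed correctly to produce exactly the four terms comprising $C_E(\epsilon,h)$, and that no $\Ctwo$ factor is left on a $\varphi_g^* - P_M \varphi_g^*$ trace pairing (which would degrade the $h^{3/2}$ term), will require a careful choice of how to distribute $\mathcal{U}\eta - \eta$ versus $\mathcal{U}\eta$ on each boundary term.
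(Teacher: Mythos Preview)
Your proposal is correct and follows essentially the same approach as the paper. The only cosmetic difference is that the paper avoids your split $a_h(\eta,\varphi_g)-a_h(\eta,P_M\varphi_g^*)$ by working directly with the full bilinear form $D$: using \EQ{Daeqn}, one has $\|\eta\|_B^2=D((\Q\eta,\U\eta,\eta);({\bf r},w,\eta))$ for \emph{any} $({\bf r},w)$, so one may take $({\bf r},w)=({\bf Q}_g-{\bf\Pi}_h{\bf Q}_g^*,\,u_g-\Pi_hu_g^*)$ and then invoke Galerkin orthogonality for the adjoint HDG discretization to pass immediately to the projection errors $(\delta^{{\bf Q}_{\varphi_g}},\delta^{u_{\varphi_g}},\delta^{\varphi_g})$, where ${\bf\Pi}_h,\Pi_h$ are the specific HDG projections from \cite{fu_analysis_2015-1} whose orthogonality kills $(\delta^{u_{\varphi_g}},\nabla\cdot\Q\eta)_K$ and $(\delta^{{\bf Q}_{\varphi_g}},\nabla\U\eta)_K$; the remaining four terms are then bounded exactly as you describe, using Lemmas~\LA{Ubound} and \LA{IJQzero} together with Lemma~\LA{equivalent} to trade $\interleave\eta\interleave$ for $\epsilon^{-1/2}\|\eta\|_B$, producing precisely the four summands in $C_E(\epsilon,h)$.
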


\begin{proof} 

  Let ${\bf \Pi} _hq$ and $\Pi_h
u$ are projections defined in
\cite[(4.5a)-(4.5d)]{fu_analysis_2015-1}
and
\begin{align*}
 \epsilon^{\varphi_g}&:=\varphi_g-P_M\varphi^*_g,\epsilon^{u_{\varphi_g}}:=u_g-\Pi_h
                       u^*_g,\epsilon^{{\bf Q}_{\varphi_g}}:={\bf
                       Q}_g-{\bf \Pi}_h {\bf Q}^*_g,\\
  \delta^{\varphi_g}&:=\varphi^*_g-P_M\varphi^*_g,\delta^{u_{\varphi_g}}:=u^*_g-\Pi_h u^*_g,\delta^{{\bf Q}_{\varphi_g}}:={\bf Q}^*_g-{\bf \Pi}_h{\bf Q}^*_g.
 \end{align*}

 We have
\begin{align*}
\|\epsilon^{\varphi_g}\|^2_{B}&=D((\Q\epsilon^{\varphi_g},\U\epsilon^{\varphi_g},\epsilon^{\varphi_g});
(\Q\epsilon^{\varphi_g},\U\epsilon^{\varphi_g},\epsilon^{\varphi_g}))=D((\Q\epsilon^{\varphi_g},\U\epsilon^{\varphi_g},\epsilon^{\varphi_g});
(\epsilon^{{\bf Q}_{\varphi_g}},\epsilon^{{u}_{\varphi_g}},\epsilon^{\varphi_g}))\\
&=D((\Q\epsilon^{\varphi_g},\U\epsilon^{\varphi_g},\epsilon^{\varphi_g});(\delta^{{\bf Q}_{\varphi_g}},\delta^{{u}_{\varphi_g}},\delta^{\varphi_g})),
\end{align*}
where the third equality follows from Galerkin orthogonality of the dual problem that 
$$D((Q\epsilon^{u_{\varphi_g}},\U\epsilon^{u_{\varphi_g}}, \epsilon^{u_{\varphi_g}});({\bf Q}_g-{\bf Q}^*_g,{u}_g-{u}^*_g,\varphi_g-\varphi^*_g))=0.$$

Next we estimate
$D((\Q\epsilon^{\varphi_g},\U\epsilon^{\varphi_g},\epsilon^{\varphi_g});(\delta^{{\bf
    Q}_{\varphi_g}},\delta^{{u}_{\varphi_g}},\delta^{\varphi_g}))$. Following
the proof of \cite[Lemma 4.4]{fu_analysis_2015-1} and using the
definition of the projections ${\bf \Pi}_h$ and $\Pi_h$, we have
\begin{align*}
&D((\Q\epsilon^{\varphi_g},\U\epsilon^{\varphi_g},\epsilon^{\varphi_g});(\delta^{{\bf Q}_{\varphi_g}},\delta^{{u}_{\varphi_g}},\delta^{\varphi_g}))\\
&=(\epsilon^{-1}{\Q}\epsilon^{\varphi_g}, \delta^{{\bf Q}_{\varphi_g}})_{\T_h}-(\U\epsilon^{\varphi_g}, \nabla\cdot\delta^{{\bf Q}_{\varphi_g}}) _{\T_h}+\langle\epsilon^{\varphi_g}, \delta^{{\bf Q}_{\varphi_g}}\cdot {\bf n}\rangle_{\partial\T_h}\nonumber\\
&\quad -({\Q}\epsilon^{\varphi_g}+{\bm\beta}\U\epsilon^{\varphi_g},\nabla\delta^{{\bf u}_{\varphi_g}} ) _{\T_h}+\langle ({\Q}\epsilon^{\varphi_g}+{\bm\beta}\epsilon^{\varphi_g})\cdot{\bf n}+\tau_K(\U\epsilon^{\varphi_g}-\epsilon^{\varphi_g}),\delta^{u_{\varphi_g}}\rangle_{\partial\T_h}\\
&\quad-(\nabla\cdot{\bm\beta}\U\epsilon^{\varphi_g},\delta^{u_{\varphi_g}}) _{\T_h}-\langle(\Q\epsilon^{\varphi_g}+{\bm \beta}\epsilon^{\varphi_g})\cdot{\bf n}+\tau_K(\U\epsilon^{\varphi_g}-\epsilon^{\varphi_g}),\delta^{\varphi_g}\rangle_{\partial\T_h}\\
&=\epsilon^{-1}(\Q\epsilon^{\varphi_g},\delta^{{\bf Q}_{\varphi_g}})_{\T_h}+(\nabla\U\epsilon^{\varphi_g},\delta^{{\bf Q}_{\varphi_g}})_{\T_h}-
\langle\U\epsilon^{\varphi_g}-\epsilon^{\varphi_g},\delta^{{\bf Q}_{\varphi_g}}\cdot{\bf n}\rangle_{\partial\T_h}
\nonumber\\
&\quad+(\nabla\cdot\Q\epsilon^{\varphi_g},\delta^{u_{\varphi_g}})_{\T_h}+
(\vbeta\cdot\nabla\U\epsilon^{\varphi_g},\delta^{u_{\varphi_g}})_{\T_h}
+\langle (\tau_K-\vbeta\cdot{\bf n}) (\U\epsilon^{\varphi_g}-\epsilon^{\varphi_g}),\delta^{u_{\varphi_g}}\rangle_{\partial\T_h}
\nonumber\\
&\quad-\langle(\Q\epsilon^{\varphi_g}+{\bm \beta}\epsilon^{\varphi_g})\cdot{\bf n}+\tau_K(\U\epsilon^{\varphi_g}-\epsilon^{\varphi_g}),\delta^{\varphi_g}\rangle_{\partial\T_h}\\
&=\epsilon^{-1}(\Q\epsilon^{\varphi_g},\delta^{{\bf Q}_{\varphi_g}})_{\T_h}-
\langle\U\epsilon^{\varphi_g}-\epsilon^{\varphi_g},\delta^{{\bf Q}_{\varphi_g}}\cdot{\bf n}\rangle_{\partial\T_h}+
(\vbeta\cdot\nabla\U\epsilon^{\varphi_g},\delta^{u_{\varphi_g}})_{\T_h}\nonumber\\
&\quad+\langle(\tau_K-\vbeta\cdot{\bf n})(\U\epsilon^{\varphi_g}-\epsilon^{\varphi_g}),\delta^{u_{\varphi_g}}\rangle_{\partial\T_h}
\nonumber\\
&\leq C\epsilon^{-\frac12}\|\epsilon^{\varphi_g}\|_B\|\delta^{{\bf Q}_{\varphi_g}}\|_{\T_h}+C\|\U\epsilon^{\varphi_g}-\epsilon^{\varphi_g}\|_{\partial\T_h}\|\delta^{{\bf Q}_g}\|_{\partial\T_h}
+C\|\nabla\U\epsilon^{\varphi_g}\|_{\T_h}\|\delta^{u_{\varphi_g}}\|_{\T_h}+C\|\epsilon^{\varphi_g}\|_B\|\delta^{u_{\varphi_g}}\|_{\partial\T_h}\\
&\leq C\epsilon^{-\frac12}\|\epsilon^{\varphi_g}\|_B\|\delta^{{\bf Q}_{\varphi_g}}\|_{\T_h}+Ch^{\frac12}\nu_{\epsilon,\tau,h}\interleave\epsilon^{\varphi_g}\interleave_{\T_h}\|\delta^{{\bf Q}_g}\|_{\partial\T_h}
+C\Ctwo \interleave\epsilon^{\varphi_g}\interleave_{\T_h}\|\delta^{u_{\varphi_g}}\|_{\T_h}+C\|\epsilon^{\varphi_g}\|_B\|\delta^{u_{\varphi_g}}\|_{\partial\T_h}\\
&\leq C(\epsilon^{-\frac12}\|\epsilon^{\varphi_g}\|_B+\epsilon^{-\frac12}\nu_{\epsilon,\tau,h}
\|\epsilon^{\varphi_g}\|_B)\|\delta^{{\bf Q}_{\varphi_g}}\|_{\T_h}
+C\epsilon^{-\frac12}\nu_{\epsilon,\tau_K,h}\|\epsilon^{\varphi_g}\|_{B}\|\delta^{u_{\varphi_g}}\|_{\T_h}+C\|\epsilon^{\varphi_g}\|_B\|\delta^{u_{\varphi_g}}\|_{\partial\T_h}
\end{align*}
where the second equality follows from the facts 
\begin{align*}
&(\delta^{u_{\varphi_g}},\nabla\cdot{\Q}\epsilon^{\varphi_g})_{\T_h}=0,\quad
                 (\delta^{{\bf Q}_{\varphi_g}},\nabla\cdot{\U}\epsilon^{\varphi_g})_{\T_h}=0,
\end{align*}
the third equality follows from the definition of ${\bf \Pi}_h, P_M$ and the fact that
$\langle\vbeta\cdot{\bf
  n}\epsilon^{\varphi_g},\delta^{\varphi_g}\rangle_{\partial\T_h}=0$. The
second inequality follows from Lemmas $\ref{lemma:Ubound}.$
and \LA{IJQzero}.
The last inequality follows from Lemma
\LA{equivalent}.

Cancelling the common factor both sides, we obtain
$$\|\epsilon^{\varphi_g}\|_B\leq C(\epsilon^{-\frac12}+\epsilon^{-\frac12}\nu_{\epsilon,\tau,h})
\|\delta^{{\bf Q}_{\varphi_g}}\|_{\T_h}
+C\epsilon^{-\frac12}\nu_{\epsilon,\tau_K,h}\|\delta^{u_{\varphi_g}}\|_{\T_h}+Ch^{-\frac12}\|\delta^{u_{\varphi_g}}\|_{\T_h}.$$

Using the properties of the projections, see \cite{fu_analysis_2015-1}, we have 
$$\|\delta^{{\bf Q}_{\varphi_g} }\|_{\T_h}\leq h|{\bf Q}^*_{g}|_{H^1}=\epsilon h|u^*_g|_{H^2},
\|\delta^{{u}_{\varphi_g} }\|_{\T_h}\leq Ch^2|u^*_g|_{H^2},$$
which implies that 
\begin{align*}
\|\epsilon^{\varphi_g}\|_B&\leq C(\epsilon^{-\frac12}+\Ctwo\epsilon^{-\frac12})
\|\delta^{{\bf Q}_{\varphi_g}}\|_{\T_h}+(\epsilon^{-\frac12}\nu_{\epsilon,\tau_K,h}+h^{-\frac12})\|\delta^{u_{\varphi_g}}\|_{\T_h}\\
&\leq C(\epsilon^{-\frac12}+\Ctwo\epsilon^{-\frac12})\epsilon h|u^*_g|_{H^2}+
(\epsilon^{-\frac12}\nu_{\epsilon,\tau_K,h}+h^{-\frac12})h^2|u^*_g|_{H^2}\\
&=C(\epsilon^{\frac12}h+\Ctwo\epsilon^{\frac12} h+\Ctwo \epsilon^{-\frac12}h^2+h^{\frac32})|u^*_g|_{H^2}.
\end{align*}

\end{proof}

We will prove the following lemma which is similar to \cite[Lemma
5.1]{TuLi:2008:BDDCAD}.
However, there are several technical differences. Since for any $q \in
\Ltilde$, $q$  is only defined on the element boundary, we need to  use  $\U
q$ which is the extension of $q$ into interior element to define the
$L^2$ inner product. Due to the HDG discretization, we use $\varphi_g$,
the solution of HDG, in the definition of $L_h$, which requires an
error bound provided in Lemma \LA{HDGerror}. This new definition of
$L_h$ simplifies the proof of Lemma \LA{ErrorBound}.
Moreover, the estimation of the term related to $q$ requires more tools.

\begin{lemma}\label{lemma:exact}
Let Assumption~\ref{assump:onWtilde} hold and
$L_h(q,\varphi_g) = \widetilde{a}(q,\varphi_g)- (\U q,g)_{\T_h}$, for $q \in
\Ltilde$. There exists a constant $C$, which is independent of
$\epsilon$, 
$H$, and $h$, such that for all $q \in \Ltilde$, 
$|L_h(q,\varphi_g)| \leq C C_L(\epsilon,H,h)\| u^*_g \|_{H^2}\| q \|_\Btilde$,
where
$C_L(\epsilon,H,h)=\frac{1}{\sqrt{\epsilon}}\max(H\epsilon, H^2)$.
\end{lemma}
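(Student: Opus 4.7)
The plan is to exploit the fact that $L_h(q,\varphi_g)$ vanishes identically on the conforming subspace $\Lambda\subset\Ltilde$, then to extract and estimate the residual coming from the discontinuity of $q$. For $q\in\Lambda$, both $q$ and $\varphi_g$ are single-valued on every face, so the Robin correction $\frac12\langle\vbeta\cdot\vvecn q^{(i)},\varphi_g^{(i)}\rangle_{\partial\T_h(\Omega_i)}$ in $a^{(i)}$ (see \EQ{lbformA}) collapses to a $\partial\Omega_i$ boundary integral (interior element faces pair up with opposite normals and cancel) and these boundary pieces then cancel across $\Gamma$ by continuity, so $\widetilde{a}(q,\varphi_g)=a_h(q,\varphi_g)=(\U q,g)_{\T_h}$ by the HDG definition \EQ{phig} of $\varphi_g$. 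The first step for general $q\in\Ltilde$ is therefore to rewrite $L_h(q,\varphi_g)$ as a sum over subdomain interface faces $\E^k\subset\Gamma$ of integrals $\int_{\E^k}[q]\,\phi\,ds$, where $[q]=q^{(i)}-q^{(j)}$ is the jump across $\E^k$ and $\phi$ is a linear functional of $(\varphi_g,\U\varphi_g,\Q\varphi_g)$ on $\E^k$, assembled from the surviving Robin pieces $\frac12\vbeta\cdot\vvecn_i\varphi_g$ together with the interface contributions produced when the adjoint HDG equations satisfied by $(\Q\varphi_g,\U\varphi_g,\varphi_g)$ are used to rewrite $(\U q,g)_{\T_h}$ in terms of the subdomain bilinear-form sum $\sum_i a_h^{(i)}(q^{(i)},\varphi_g)$ plus $\partial\Omega_i$-boundary corrections involving $\Q\varphi_g\cdot\vvecn$ and $\tau_K(\U\varphi_g-\varphi_g)$.

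Second, I would replace $\varphi_g$ on each interface face by the trace $\varphi_g^{*}=u^{*}_g|_{\partial\T_h}$ of the continuous adjoint solution. The replacement error contributes a term bounded by $\|\varphi_g-P_M\varphi_g^{*}\|_B\,\|q\|_{\Btilde}$ which, by Lemma \LA{HDGerror}, is $\le C_E(\epsilon,h)\,|u^{*}_g|_{H^2}\,\|q\|_{\Btilde}$ and is absorbed into the target bound. For the remaining piece involving $\varphi_g^{*}$, invoke Assumption \ref{assump:onWtilde}: on every interface face $\E^k$ the jump $[q]$ is orthogonal to each of the three test functions $1$, $\vbeta\cdot\vvecn$, and $s\,\vbeta\cdot\vvecn$. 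Hence on each face one may subtract from $u^{*}_g$ any constant in the diffusion-type piece of $\phi$ (the part multiplied by $-\epsilon\nabla u^{*}_g\cdot\vvecn$) and any $\vbeta\cdot\vvecn$-weighted linear polynomial in $s$ in the convection-type piece (the part multiplied by $\frac12\vbeta\cdot\vvecn$), without altering the interface integral.

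Third, choose those subtracted polynomials as the best local polynomial approximations of $u^{*}_g$ on the two coarse subdomains adjacent to $\E^k$ and apply Bramble--Hilbert (Lemma \LA{WHapproximation}) together with the trace inequality (Lemma \LA{trace}). This produces face-level bounds of order $\epsilon H^{3/2}|u^{*}_g|_{H^2(\Omega_i\cup\Omega_j)}$ for the diffusion integral and $H^{5/2}|u^{*}_g|_{H^2(\Omega_i\cup\Omega_j)}$ for the convection integral. Pairing against $\|[q]\|_{L^{2}(\E^k)}\le CH^{-1/2}\|[q]\|_{h,\Omega_i\cup\Omega_j}$, summing over interface faces via Cauchy--Schwarz, and applying Lemma \LA{averagezeroscaling} (since, after subtracting a face average, $[q]$ has zero face averages on each $\E^k$) followed by Lemma \LA{subequivalent} to pass from $\epsilon\sum_i\interleave q^{(i)}\interleave^2_{\Omega_i}$ to $\|q\|_{\Btilde}^{2}$ introduces exactly one factor $\epsilon^{-1/2}$. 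The two contributions combine into $C\epsilon^{1/2}H\,|u^{*}_g|_{H^2}\|q\|_{\Btilde}+C\epsilon^{-1/2}H^{2}\,|u^{*}_g|_{H^2}\|q\|_{\Btilde}$, which equals $CC_L(\epsilon,H,h)\|u^{*}_g\|_{H^2}\|q\|_{\Btilde}$ since the $\epsilon^{-1/2}$ factors out of the maximum.

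The main obstacle will be the bookkeeping of the first step: rearranging $\widetilde{a}(q,\varphi_g)-(\U q,g)_{\T_h}$ cleanly into a sum of face-jump integrals whose integrand $\phi$ decomposes unambiguously into a purely diffusive piece (where subtracting a constant is enough, matching the face-average constraint) and a purely convective piece (where subtracting a $\vbeta\cdot\vvecn$-weighted linear function matches the two flux-moment constraints). A secondary subtlety is to track the $\epsilon$-weights so that exactly one $\epsilon^{-1/2}$ survives, producing the sharp $\max(\epsilon H, H^2)$ structure of $C_L$ instead of a looser sum; this relies on the diffusion integrand already carrying the $\epsilon$ from $\Q\varphi_g^{*}=-\epsilon\nabla u^{*}_g$, while the single $\epsilon^{-1/2}$ arises only at the final conversion from $\interleave\cdot\interleave$ to $\|\cdot\|_{\Btilde}$ via Lemma \LA{subequivalent}.
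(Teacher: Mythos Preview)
Your overall strategy---reduce $L_h$ to subdomain-interface integrals, invoke Assumption~\ref{assump:onWtilde} to subtract polynomial approximants of $u_g^*$, apply Bramble--Hilbert, and absorb a discrete-versus-exact error via Lemma~\LA{HDGerror}---is precisely the paper's. But the paper executes your step~1 differently, and the difference affects your step~2.

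You propose to first extract an interface integrand $\phi$ built from the \emph{discrete} quantities $\Q\varphi_g\cdot\vvecn$, $\tau_K(\U\varphi_g-\varphi_g)$ and $\tfrac12\vbeta\cdot\vvecn\varphi_g$, and only afterwards replace them by exact counterparts. The paper instead uses the identity \EQ{locDaeqn}, which says $a_h^{(i)}(q^{(i)},\varphi_g)=D^{(i)}\big((\Q q^{(i)},\U q^{(i)},q^{(i)});({\bf r},w,\varphi_g)\big)$ for \emph{arbitrary} $({\bf r},w)$, to insert the full HDG adjoint triple $({\bf Q}_g,u_g,\varphi_g)$ in the second slot, then passes at once to the exact triple $({\bf Q}_g^*,u_g^*,\varphi_g^*)$ by Galerkin orthogonality for the dual discretization. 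Expanding $D^{(i)}$ against the exact solution and using ${\bf Q}_g^*=\epsilon\nabla u_g^*$ together with \EQ{ad} collapses everything to
\[
L_h(q,\varphi_g)=\sum_i\Big(\epsilon\langle q^{(i)},\partial_n u_g^*\rangle_{\partial\Omega_i}+\tfrac12\langle\vbeta\cdot\vvecn q^{(i)},u_g^*\rangle_{\partial\Omega_i}\Big)+\tfrac12\sum_i\langle\vbeta\cdot\vvecn q^{(i)},\varphi_g-\varphi_g^*\rangle_{\partial\Omega_i},
\]
so the \emph{only} discrete error surviving on $\Gamma$ is the scalar trace $\varphi_g-\varphi_g^*$ inside the Robin piece (the paper's $I_3$). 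Your route would instead leave you needing interface bounds on $\Q\varphi_g-{\bf Q}_g^*$ and $\U\varphi_g-u_g^*$ separately; Lemma~\LA{HDGerror} gives only $\|\varphi_g-P_M\varphi_g^*\|_B$, which does not directly control those face quantities, so your step~2 claim that the replacement error is $\le\|\varphi_g-P_M\varphi_g^*\|_B\|q\|_{\Btilde}$ is not justified as stated.

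Two smaller points on step~3. The paper does not work with the jump $[q]$; it subtracts the common edge average $\overline{q}_{\E^{ij}}$ and bounds $\int_{\E^{ij}}|q^{(i)}-\overline{q}_{\E^{ij}}|^2\,ds\le CH\interleave q^{(i)}\interleave_{\Omega_i}^2$ via the $P_1$ nonconforming isomorphism $X_h$ of \EQ{Xhdef} and a Poincar\'e inequality for nonconforming elements---not via Lemma~\LA{averagezeroscaling}, which is a volume $\|\cdot\|_h$--$\interleave\cdot\interleave$ estimate and does not apply to a face-only quantity. Relatedly, your inequality $\|[q]\|_{L^2(\E^k)}\le CH^{-1/2}\|[q]\|_{h,\Omega_i\cup\Omega_j}$ is ill-posed since $[q]$ lives only on $\E^k$; the paper's route avoids this entirely.
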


\beginproof For any
$q \in \Ltilde$, we have
\begin{eqnarray*}
\widetilde{a}(q,\varphi_g)&= &\sum_{i=1}^N
                               a^{(i)}(q^{(i)},\varphi_g)=\sum_{i=1}^N
                               a_h^{(i)}(q^{(i)},\varphi_g)+\frac12\langle{\bm\beta}\cdot{\bf
  n}q^{(i)},\varphi_g\rangle_{\partial \Omega_i}\\
&=&\sum_{i=1}^N  D^{(i)}((\mathcal{Q} q^{(i)},
                               \mathcal{U}q^{(i)}, q^{(i)});  {({\bf Q}_g, u_g,\varphi_g)}) +\frac12\langle{\bm\beta}\cdot{\bf
  n}q^{(i)},\varphi_g\rangle_{\partial \Omega_i}\\\
  &=&\sum_{i=1}^N  D^{(i)}((\mathcal{Q} q^{(i)},
                               \mathcal{U}q^{(i)}, q^{(i)}); ( {{\bf Q}_g}- {{\bf Q}^*_g}, u_g-u^*_g,\varphi_g-\varphi^*_g)) \\
   &&\qquad +D^{(i)}((\mathcal{Q} q^{(i)},
                               \mathcal{U}q^{(i)}, q^{(i)});(  {{\bf Q}^*_g}, u^*_g,\varphi^*_g))+\frac12\langle{\bm\beta}\cdot{\bf
  n}q^{(i)},\varphi_g\rangle_{\partial \Omega_i}\\\
&=&\sum_{i=1}^N \left\{ (\epsilon^{-1} \MQ q^{(i)},
     {{\bf Q}^*_g})_{\T_h(\Omega_i)}-(\MU q^{(i)},\nabla\cdot  {{\bf Q}^*_g})_{\T_h(\Omega_i)}+\langle q^{(i)},  {{\bf Q}^*_g}\cdot{\bf n}\rangle_{\partial\T_h(\Omega_i)}\right. \\
&&\qquad  -(\MQ q^{(i)}+\vbeta \MU q^{(i)}, \nabla u^*_g)_{\T_h(\Omega_i)}+\langle
   (\MQ q^{(i)}+\vbeta q^{(i)})\cdot {\bf n}+\tau_K (\MU q^{(i)}-q^{(i)}), u^*_g\rangle_{\partial\T_h(\Omega_i)}\\
&&\qquad \left.-((\nabla \cdot \vbeta)\MU q^{(i)},
   u^*_{g^{(i)}})_{\T_h(\Omega_i)}-\langle   (\MQ q^{(i)}+\vbeta q^{(i)})\cdot {\bf n}+\tau_K (\MU
   q^{(i)}-q^{(i)}), \varphi^*_g\rangle_{\partial\T_h(\Omega_i)}\right\} \\
&&\qquad +\frac12\langle{\bm\beta}\cdot{\bf
  n}q^{(i)},\varphi_g\rangle_{\partial\T_h(\Omega_i)}\\
&=&\sum_{i=1}^N \left\{ \langle q^{(i)},  {{\bf Q}^*_g}\cdot{\bf n}\rangle_{\partial\T_h(\Omega_i)} +(\MQ q^{(i)}, \epsilon^{-1} {{\bf Q}^*_g}-\nabla
    u^*_g)_{\T_h(\Omega_i)}\right.\\
&&\qquad + (\MU q^{(i)},-\nabla\cdot {{\bf Q}^*_g}-\vbeta\cdot\nabla u^*_g-(\nabla
    \cdot \vbeta) u^*_g)_{\T_h(\Omega_i)}\\
&&\qquad \left. +\langle
   (\MQ q^{(i)}+\vbeta q^{(i)})\cdot {\bf n}+\tau_K (\MU
   q^{(i)}-q^{(i)}), u^*_g\rangle_{\partial \T_h(\Omega_i)} \right.\\
&&\qquad \left.-\langle   (\MQ q^{(i)}+\vbeta q^{(i)})\cdot {\bf n}+\tau_K (\MU q^{(i)}-q^{(i)}), \varphi^*_g\rangle_{\partial \T_h(\Omega_i)}\right\} 
+\frac12\langle{\bm\beta}\cdot{\bf
  n}q^{(i)},\varphi_g\rangle_{\partial\T_h(\Omega_i)}\\
&=&\sum_{i=1}^N \epsilon\langle q^{(i)}, \partial_n u^*_g\rangle_{\partial\Omega_i} +\frac12\langle{\bm\beta}\cdot{\bf
  n}q^{(i)},\varphi_g\rangle_{\partial \Omega_i}+(\MU q^{(i)}, g)_{\Omega_i},
\end{eqnarray*}
where we use the definition of $a^{(i)}$ defined in \EQ{lbformA} for the second equality and \EQ{locDaeqn} for the third equality. The Galerkin-orthogonality condition for the dual discretization
$$D^{(i)}((\mathcal{Q} q^{(i)},
                               \mathcal{U}q^{(i)}, q^{(i)});(  {{\bf Q}_g-{\bf Q}^*_g}, u_g-u^*_g,\varphi_g-\varphi^*_g))=0$$ and  the
definition of $D^{(i)}$ \EQ{locBform} are used for the fifth equality. The
definitions of $ {{\bf Q}^*_g}$, $u^*_g$, $\varphi^*_g$ and \EQ{ad}
for the second, third and
fourth terms in the last equality. 

Therefore we have
\begin{eqnarray}\label{equation:Lh1}
&&L_h(q,\varphi_g) = \widetilde{a}(q,\varphi_g)-
                      (\U q,g)_{\Omega_i}\\
&=&\sum_{i=1}^N \epsilon\langle
                      q^{(i)}, \partial_n u^*_g\rangle_{\partial\Omega_i}+\frac12\langle{\bm\beta}\cdot{\bf
  n}q^{(i)},\varphi^*_g\rangle_{\partial \Omega_i}
 -\frac12\langle{\bm\beta}\cdot{\bf n}q^{(i)},\varphi^*_g\rangle_{\partial \Omega_i}+\frac12\langle{\bm\beta}\cdot{\bf n}q^{(i)},\varphi_g\rangle_{\partial \Omega_i}\nonumber\\
&=&\sum_{i=1}^N
                      \left(\epsilon\langle q^{(i)}, \partial_n
                      u^*_g\rangle_{\partial\Omega_i}+\frac12\langle{\bm\beta}\cdot{\bf
  n}q^{(i)},u^*_g\rangle_{\partial \Omega_i}\right)+
 \left(-\frac12\langle{\bm\beta}\cdot{\bf n}q^{(i)},\varphi^*_g\rangle_{\partial \Omega_i}+\frac12\langle{\bm\beta}\cdot{\bf n}q^{(i)},\varphi_g\rangle_{\partial \Omega_i}\right),\nonumber
\end{eqnarray}
where we use $\varphi^*_g=u^*_g$ on the element boundary for the second equality.

Denote the common average of $q$ on the edge $\E^{ij}$ of $\partial\Omega_i$
by $\overline{q}_{\E^{ij}}$.
\begin{eqnarray*}
&&{\sum_{i=1}^N
                      \left(\epsilon\langle q^{(i)}, \partial_n
                      u^*_g\rangle_{\partial\Omega_i}{+}\frac12\langle{\bm\beta}\cdot{\bf
  n}q^{(i)},u^*_g\rangle_{\partial \Omega_i}\right)}\\
  &&= \sum_{i=1}^N\sum_{\E^{ij}\subset
\partial\Omega_i} \int_{\E^{ij}} \epsilon
\partial_n  u^*_g (q^{(i)} - \overline{q}_{\E^{ij}} ){+}\frac12
\vbeta\cdot\vvec{n} u^*_g (q^{(i)} -  \overline{q}_{\E^{ij}} )ds,
\end{eqnarray*}
where we have also subtracted the constant average values
$\overline{q}_{\E^{ij}}$ from $q^{(i)}$,
which does not change the sum. Then, from
Assumption~\ref{assump:onWtilde}, we know that
\begin{eqnarray}
 &&{\sum_{i=1}^N
                      \left(\epsilon\langle q^{(i)}, \partial_n
                      u^*_g\rangle_{\partial\Omega_i}{+}\frac12\langle{\bm\beta}\cdot{\bf
  n}q^{(i)},u^*_g\rangle_{\partial \Omega_i}\right)}\nonumber\\
 &&= \sum_{i=1}^N\sum_{\E^{ij}\subset
\partial\Omega_i} \left(\int_{\E^{ij}} \epsilon
\partial_n \left( u^*_g -I^{(i)}_H u^*_g\right)\left(q^{(i)} -
              \overline{q}_{\E^{ij}} \right)~ds\right.\nonumber\\
&&\left.\quad{+}\frac12 \int_{\E^{ij}} 
\vbeta\cdot\vvec{n} \left( u^*_g -I^{(i)}_H u^*_g\right) \left(q^{(i)} -  \overline{q}_{\E^{ij}} \right)~ds \right),
\label{equation:terms} 
\end{eqnarray}
where $I^{(i)}_H u_g$ represents the interpolation of $u_g$ into
the finite element space on the coarse subdomain mesh.

Plug \EQ{terms} in \EQ{Lh1}, we have
\begin{eqnarray}\label{equation:Lh2}
L_h(q,\varphi_g) &=& \sum_{i=1}^N\sum_{\E^{ij}\subset
\partial\Omega_i} \int_{\E^{ij}} \epsilon
\partial_n \left( u^*_g -I^{(i)}_H u^*_g\right)\left(q^{(i)} -
              \overline{q}_{\E^{ij}} \right)~ds\nonumber\\
&+&\sum_{i=1}^N\sum_{\E^{ij}\subset
\partial\Omega_i} \frac12 \int_{\E^{ij}} 
\vbeta\cdot\vvec{n} \left( u^*_g -I^{(i)}_H u^*_g\right) \left(q^{(i)}
   -  \overline{q}_{\E^{ij}} \right)~ds \nonumber\\
  &+&
 \left(-\frac12\langle{\bm\beta}\cdot{\bf
     n}q^{(i)},\varphi^*_g\rangle_{\partial
     \Omega_i}+\frac12\langle{\bm\beta}\cdot{\bf
     n}q^{(i)},\varphi_g\rangle_{\partial \Omega_i}\right)\nonumber\\
  &:=&I_1+I_2+I_3.
     \end{eqnarray}
   
We will show in the following that each of the three terms
in~\EQ{Lh2} can be bounded by $CC_L\| u^*_g
\|_{H^2} \| q \|_\Btilde$, where $C$ is a positive constant
independent of $H$ and $h$.

For the first term $I_1$, from the Cauchy-Schwarz inequality, we
have
\begin{equation}\label{equation:0term}
\qquad |I_1|\le \sum_{i=1}^N\sum_{\E^{ij}\subset
\partial\Omega_i} \epsilon\left(\int_{\E^{ij}} |\nabla (u^*_g-I^{(i)}_H
u^*_g)|^2~ds \int_{\E^{ij}}  |q^{(i)}
-\overline{q}_{\E^{(ij)}}|^2~ds\right)^{1/2}.
\end{equation}
Using Lemmas ~\LA{trace}, \LA{inverse}, and \LA{WHapproximation}, we have for
the first factor
\begin{eqnarray}\label{equation:1term}
&&\int_{\E^{ij}} |\nabla (u^*_g - I^{(i)}_H u^*_g)|^2~ds\le
CH\|\nabla(u^*_g-I^{(i)}_H
u^*_g)\|_{H^1(\Omega_i)}^2\nonumber \\
&\le & CH\|u^*_g - I^{(i)}_Hu^*_g\|_{H^2(\Omega_i)}^2\le
CH|u^*_g|_{H^2(\Omega_i)}^2.
\end{eqnarray}

For the second factor, we can estimate as follows.
 Let $q^{(i)}_0=Q_0 q^{(i)} $ and we have $\overline{q}_{\E^{ij}} = \overline{q_0}_{\E^{ij}} $.
\begin{equation}\label{equation:qterm1}
\int_{\E^{ij}} | q^{(i)} -
\overline{q}_{\E^{ij}}|^2~ds
\le  \int_{\E^{ij}} | q^{(i)} -q^{(i)} _0 |^2~ds+\int_{\E^{ij}}
       | q_0^{(i)} -
\overline{q}_{\E^{ij}}|^2~ds.
\end{equation}

The first term in \EQ{qterm1} can be estimated by Lemma \LA{L2projection}
as
\begin{eqnarray*}
\int_{\E^{ij}} | q^{(i)} - q_0^{(i)}|^2~ds&\le& \sum_{K\in
    \T_h,K\subseteq \Omega_i}\left\|q^{(i)} - q_0^{(i)}\right\|^2_{L^2(\partial
    K)}\\ 
&\le& C \sum_{K\in
    \T_h,K\subseteq\Omega_i} h  \interleave q^{(i)}\interleave^2_{K} =  Ch \interleave q^{(i)}\interleave^2_{\Omega_i}.
\end{eqnarray*}

The second term in \EQ{qterm1} can be estimated by the isomorphism
$X_h$ defined by \EQ{Xhdef}. We follow
\cite[Equation (23)]{TuLi:2008:BDDCAD} and have
\begin{eqnarray*}
\int_{\E^{ij}}
       |  q_0^{(i)} -
\overline{q}_{\E^{(ij)}}|^2~ds&\le& \int_{\E^{ij}}
       | X_h(q_0^{(i)} - \overline{q_0}^{(i)}_{\E^{ij}})|^2~ds\\
&\le &  CH| X_h(q_0^{(i)}-
\overline{q}_{\E^{(ij)}} )|_{H^1(\Omega_i)}^2 
\le   CH \interleave q_0^{(i)}-
\overline{q}_{\E^{(ij)}} \interleave_{\Omega_i}^2 \\
&\le&   CH \interleave q_0^{(i)} \interleave_{\Omega_i}^2 
\le   CH \interleave q^{(i)} \interleave_{\Omega_i}^2 ,
\end{eqnarray*}
where we have used  the definition of $X_h$  for the first inequality, a  trace
theorem and a Poincar\'{e} 
inequality  of $P_1$ non-conforming finite element functions for the
second inequality, \EQ{p10} for the third
inequality, and Lemma \LA{L2projection} for the last inequality.

Therefore, we have
\begin{equation}\label{equation:2term}
\int_{\E^{ij}} | q^{(i)} -
\overline{q}_{\E^{ij}}|^2~ds
\le CH \interleave q^{(i)} \interleave_{\Omega_i}^2 .
\end{equation}

Combining  \EQ{0term}, \EQ{1term}, and
\EQ{2term}, we have the following bound for $I_1$,
\[
|I_1| \le  C \epsilon H \sum_{i=1}^N
|u^*_g|_{H^2(\Omega_i)}\interleave q^{(i)} \interleave_{\Omega_i}
\le C \sqrt{\epsilon} H|u^*_g|_{H^2(\Omega)}
\|q\|_{\Btilde},
\]
where we use the Cauchy-Schwarz inequality and
Lemma~\LA{equivalent} in the last step.

To derive a bound for $I_2$, we find
from the Cauchy-Schwarz inequality that
\begin{equation}\label{equation:20term}
\qquad |I_2|\le \sum_{i=1}^N\sum_{\E^{ij}\subset
\partial\Omega} \|\vbeta\|_{\infty}\Big(\int_{\E^{ij}} |u^*_g-I^{(i)}_H
u^*_g|^2~ds\int_{\E^{ij}} | q^{(i)}
-\overline{q}_{\E^{ij}}|^2~ds\Big)^{1/2}.
\end{equation}
Using a trace theorem and  Lemma~\LA{WHapproximation}, we have, for
the first factor on the right hand side of~\EQ{20term},
\begin{equation}\label{equation:21term}
\int_{\E^{ij}}|u^*_g-I^{(i)}_H u^*_g|^2~ds\le
CH\|u^*_g-I^{(i)}_H u^*_g\|_{H^1(\Omega_i)}^2 \le
CH^3|u^*_g|_{H^2(\Omega_i)}^2.
\end{equation}
Combining \EQ{20term}, \EQ{21term}, and \EQ{2term}, and using
Lemma~\LA{equivalent}, we have
\[
|I_2| \le C \|\vbeta\|_{\infty} H^2 \sum_{i=1}^N
|u^*_g|_{H^2(\Omega_i)}\interleave q^{(i)} \interleave_{\Omega_i}\le C \|\vbeta\|_{\infty}
\frac{H}{\sqrt{\epsilon}} H
|u^*_g|_{H^2(\Omega)} \|q\|_{\Btilde}.
\]

For $I_3$, we have 

\begin{eqnarray}\label{equation:I1}
I_3&=&\sum\limits_{i=1}^N\left(\frac12\langle{\bm\beta}\cdot{\bf n}q^{(i)},\varphi_g\rangle_{\partial \Omega_i}-\frac12\langle{\bm\beta}\cdot{\bf n}q^{(i)},\varphi^*_g\rangle_{\partial \Omega_i}\right)\nonumber\\
&=&\sum\limits_{i=1}^N\sum\limits_{\E_{ij}\subset\partial\Omega_i}\frac12\int_{\E_{ij}}{\bm\beta}\cdot{\bf n}(q^{(i)}-\overline{q}_{\E^{ij}})(\varphi_g-\varphi^*_g) ds \nonumber\\
&\leq &C\|\vbeta\cdot{\bf n}\|^{\frac12}_{\infty}\sum\limits_{i=1}^N\sum\limits_{\E_{ij}\subset\partial\Omega_i}\left(\int_{\E_{ij}}(q^{(i)}-\overline{q}_{\E^{ij}})^2ds\int_{\E_{ij}}(\tau-\frac12\vbeta\cdot{\bf n})(\varphi^*_g-\varphi_g)^2ds\right)^{\frac12}\nonumber\\
&\leq &C\|\vbeta\cdot{\bf n}\|^{\frac12}_{\infty}\sqrt{H}\interleave
                                                                                                                                                                                                                                                                      q\interleave\|(\tau-\frac12\vbeta\cdot{\bf n})^{\frac12}(\varphi^*_g-\varphi_g)\|_{\partial\T_h},
\end{eqnarray}
where the second inequality follows from $\eqref{equation:2term}$.

We will estimate $\|(\tau-\frac12\vbeta\cdot{\bf
  n})^{\frac12}(\varphi^*_g-\varphi_g)\|_{\partial\T_h}$ using the
error estimate in Lemma \LA{HDGerror}.
Recall $P_M$ is the $L_2$ projection onto $\Lambda$ and we have
\begin{equation}\label{equation:I1_1}
  \|\varphi^*_g-P_M\varphi^*_g\|_{\partial\T_h}\leq
  h^{\frac32}|u^*_g|_{H^2}.
\end{equation}
\begin{equation}\label{equation:I1_2}
\|(\tau-\frac12\vbeta\cdot{\bf n})^{\frac12}(\varphi^*_g-\varphi_g)\|_{\partial\T_h}\leq C\|\varphi^*_g-P_M\varphi^*_g\|_{\partial\T_h}+C\|(\varphi_g-P_M\varphi^*_g)\|_{\partial\T_h}.
\end{equation}

Let  $\epsilon^{\varphi_g}:=\varphi_g-P_M\varphi^*_g$. 
By Lemmas \LA{muk|||}, \LA{equivalent}, and \LA{HDGerror}, we have,
\begin{eqnarray}\label{equation:I1_3}
\|\epsilon^{\varphi_g}\|_{\partial\T_h}&=&h^{-\frac12} C\|\epsilon^{\varphi_g}\|_{h,\Omega}\leq Ch^{-\frac12}\interleave\epsilon^{\varphi_g}\interleave\leq Ch^{-\frac12}\epsilon^{-\frac12}\|\epsilon^{\varphi_g}\|_B\nonumber\\
&\leq &C(h^{\frac12}+\Ctwo h^{\frac12}+\Ctwo\epsilon^{-1}h^{\frac32}+h\epsilon^{-\frac12})|u^*_g|_{H^2}
\end{eqnarray}
Combining \EQ{I1_1}, \EQ{I1_2}, and \EQ{I1_3}, we have
\begin{align*}
\|(\tau-\frac12\vbeta\cdot{\bf
  n})^{\frac12}(\varphi^*_g-\varphi_g)\|_{\T_h}&\leq C(h^{\frac32}
                                                 +h^{\frac12}+\Ctwo h^{\frac12}+\Ctwo\epsilon^{-1}h^{\frac32}+\epsilon^{-\frac12}h)|u^*_g|_{H^2}
\end{align*}
and
$$|I_3|\le C\sqrt{H}(h^{\frac32}
                                                 +h^{\frac12}+\Ctwo
                                                  h^{\frac12}+\Ctwo\epsilon^{-1}h^{\frac32}+\epsilon^{-\frac12}h)\interleave
                                                 q\interleave |u^*_g|_{H^2}.$$

\endproof




\begin{lemma} \label{lemma:ErrorBound}
Let Assumption~\ref{assump:onWtilde} hold. $\varphi_\lambda$ and
$\phitilde_\lambda$ are solutions of \EQ{phig} and \EQ{phigh},
respectively, for $\lambda \in \Ltilde$. We have
$$
\| \varphi_\lambda- \phitilde_\lambda \|_{\Btilde}\leq C\frac{C_L(\epsilon,H,h)}{\epsilon}
\|g\|_{L^2(\Omega)},
$$
where $C_L(\epsilon,H,h)$ is given
in Lemma~\LA{exact}.
\end{lemma}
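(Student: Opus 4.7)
The plan is to use a duality/perturbation argument where the nonconformity of the partially sub-assembled space is captured entirely by the functional $L_h$ from Lemma~\LA{exact}.

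First, set $e = \varphi_\lambda - \phitilde_\lambda$. Since $\Lambda \subset \Ltilde$, both $\varphi_\lambda$ and $\phitilde_\lambda$ lie in $\Ltilde$, hence $e \in \Ltilde$ and may be used as a test function in \EQ{phigh}. This gives
$$
\widetilde{a}(e,\phitilde_\lambda) = (\U e, g)_{\T_h},
$$
so that
$$
\widetilde{a}(e,e) = \widetilde{a}(e,\varphi_\lambda) - \widetilde{a}(e,\phitilde_\lambda) = \widetilde{a}(e,\varphi_\lambda) - (\U e, g)_{\T_h} = L_h(e,\varphi_\lambda).
$$
The key point is that although $\widetilde{a}(\mu,\varphi_\lambda) = a_h(\mu,\varphi_\lambda)$ for $\mu \in \Lambda$, this equality generally fails for $\mu \in \Ltilde$, and the defect is precisely what Lemma~\LA{exact} quantifies.

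Next I would establish coercivity in the $\Btilde$-norm. Since $\widetilde{a} = \widetilde{b} + \widetilde{z}$ with $\widetilde{z}$ skew-symmetric (each subdomain contribution $z^{(i)}(\eta,\eta)$ vanishes by inspection of \EQ{lbformN}), we have $\widetilde{z}(e,e) = 0$, and hence
$$
\widetilde{a}(e,e) = \widetilde{b}(e,e) = \|e\|^2_{\Btilde}.
$$
Combining with the previous identity yields $\|e\|^2_{\Btilde} = L_h(e,\varphi_\lambda) = L_h(e,\varphi_g)$, where we use $\varphi_\lambda = \varphi_g$ as established in the discussion preceding Lemma~\LA{HDGerror}.

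Finally, I would invoke Lemma~\LA{exact} with test function $q = e$ to obtain
$$
\|e\|^2_{\Btilde} = |L_h(e,\varphi_g)| \leq C\, C_L(\epsilon,H,h)\, \|u^*_g\|_{H^2}\, \|e\|_{\Btilde},
$$
and then apply the regularity bound \EQ{regularity}, namely $\|u^*_g\|_{H^2} \leq C\epsilon^{-1}\|g\|_{L^2(\Omega)}$, followed by cancellation of one factor of $\|e\|_{\Btilde}$. This produces the claimed estimate. No step here is technically hard once Lemma~\LA{exact} is in hand; the only subtle point is recognizing that the appropriate ``Galerkin orthogonality'' for this problem is precisely the identity $\widetilde{a}(e,e) = L_h(e,\varphi_\lambda)$, which isolates the non-conformity error into a single quantity already controlled in the preceding lemma.
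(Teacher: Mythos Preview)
Your argument is correct and essentially identical to the paper's proof: both use $\widetilde{a}(e,e)=\|e\|_{\Btilde}^2$ (via skew-symmetry of $\widetilde{z}$), test with $e=\varphi_\lambda-\phitilde_\lambda$ in \EQ{phigh} to produce the identity $\|e\|_{\Btilde}^2 = L_h(e,\varphi_g)$, and then apply Lemma~\LA{exact} followed by the regularity estimate~\EQ{regularity}. The only cosmetic differences are the sign of the error (the paper works with $q=\phitilde_\lambda-\varphi_\lambda$) and that you spell out $\widetilde{z}(e,e)=0$ explicitly.
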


\beginproof
Recall $\varphi_\lambda$ is the HDG solution of \EQ{phig}. 
\begin{eqnarray*}
& & \| \phitilde_\lambda - \varphi_\lambda \|_{\Btilde}^2 =
\widetilde{a}(\phitilde_\lambda - \varphi_\lambda,
\phitilde_\lambda - \varphi_\lambda) \\
& = &  \widetilde{a}(\phitilde_\lambda-
\varphi_\lambda,\phitilde_\lambda) - \widetilde{a}( \phitilde_\lambda -
\varphi_\lambda,\varphi_{\lambda}) \\
&=&(\U(\phitilde_\lambda -
\varphi_\lambda),g)_{\T_h}- \widetilde{a}( \phitilde_\lambda -
\varphi_\lambda, \varphi_{\lambda}),
\end{eqnarray*}
where we have used \EQ{phigh} for the third equality.

Dividing by $\| \phitilde_\lambda -
\varphi_\lambda \|_{\Btilde}$ on both sides and denoting
$\phitilde_\lambda - \varphi_\lambda$ by $q$, we have, by the fact that
\begin{eqnarray}\label{equation:eee}
\| \phitilde_\lambda-\varphi_\lambda \|_{\Btilde} & \leq &  \frac{\big| (\U q,g)_{\T_h} -
\widetilde{a}(q, \varphi_{\lambda}) \big|}{\| q
\|_{\Btilde}} \nonumber\\
& \leq & C C_L(\epsilon,H,h)\|u^{*}_g\|_{H^2}\leq C\frac{C_L(\epsilon,H,h)}{\epsilon}\|g\|_{L^2(\Omega)},
\end{eqnarray}
where we use  Lemma
\LA{exact} for the second step and the regularity assumption \EQ{regularity} for
the last step. 
\endproof

The following lemma is \cite[Lemma 7.11]{TuLi:2008:BDDCAD}.
\begin{lemma}
\label{lemma:propw}  Let $w_\Gamma = \Stilde^{-1}_\Gamma
\Rtilde_{D,\Gamma} S_\Gamma \mu_\Gamma$, for $\mu_\Gamma \in \Lhat_\Gamma$.
Then for all $v \in \Rtilde(\Lhat)$, $\left<w_{{\A},\Gamma}, v
\right>_\Atilde=\big<\Rtilde \mu_{{\A},\Gamma}, v \big>_\Atilde$, i.e.,
$\big<\wE-\Rtilde \uE, v \big>_\Atilde=0$.
\end{lemma}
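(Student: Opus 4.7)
The goal is to show that $\wE - \Rtilde \uE$ is $\Atilde$-orthogonal to the ``conforming'' subspace $\Rtilde(\Lhat) \subset \Ltilde$. The plan is to unwind the definitions using Lemma~\LA{null} (which converts the volumetric $\Atilde$/$A$ inner products into interface $\Stilde_\Gamma$/$S_\Gamma$ inner products through the extension $\lambda \mapsto \lambda_{\A,\Gamma}$) together with the partition-of-unity identity $\Rtilde_\Gamma^T \Rtilde_{D,\Gamma} = I$ on $\Lhat_\Gamma$ that was already invoked in the proof of Lemma~\LA{wnorm}.

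First, fix an arbitrary $v \in \Rtilde(\Lhat)$. By definition of the injection $\Rtilde$, there is a unique $\eta \in \Lhat$ with $v = \Rtilde \eta$; let $\eta_\Gamma \in \Lhat_\Gamma$ denote the interface restriction of $\eta$, so that the interface component of $v$ is $v_\Gamma = \Rtilde_\Gamma \eta_\Gamma \in \Ltilde_\Gamma$. Next, apply Lemma~\LA{null} in the partially assembled space to get
\[
\left<\wE, v\right>_\Atilde = \left<w_\Gamma, v_\Gamma\right>_{\Stilde_\Gamma} = v_\Gamma^T \Stilde_\Gamma w_\Gamma = v_\Gamma^T \Rtilde_{D,\Gamma} S_\Gamma \mu_\Gamma,
\]
where the last equality uses $\Stilde_\Gamma w_\Gamma = \Rtilde_{D,\Gamma} S_\Gamma \mu_\Gamma$, which follows from the definition $w_\Gamma = \Stilde^{-1}_\Gamma \Rtilde_{D,\Gamma} S_\Gamma \mu_\Gamma$.

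Then substitute $v_\Gamma = \Rtilde_\Gamma \eta_\Gamma$ and collapse using $\Rtilde_\Gamma^T \Rtilde_{D,\Gamma} = I$ on $\Lhat_\Gamma$:
\[
v_\Gamma^T \Rtilde_{D,\Gamma} S_\Gamma \mu_\Gamma = \eta_\Gamma^T \Rtilde_\Gamma^T \Rtilde_{D,\Gamma} S_\Gamma \mu_\Gamma = \eta_\Gamma^T S_\Gamma \mu_\Gamma = \left<\mu_\Gamma, \eta_\Gamma\right>_{S_\Gamma}.
\]
Applying Lemma~\LA{null} a second time, now in the $\Lhat$/$\Lhat_\Gamma$ setting (the lemma explicitly states that the same identities hold there), gives $\left<\mu_\Gamma, \eta_\Gamma\right>_{S_\Gamma} = \left<\uE, \eta\right>_A$. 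Finally, from $A = \Rtilde^T \Atilde \Rtilde$ we have $\left<\uE, \eta\right>_A = \eta^T \Rtilde^T \Atilde \Rtilde \uE = v^T \Atilde (\Rtilde \uE) = \left<\Rtilde \uE, v\right>_\Atilde$, completing the chain of equalities.

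The argument is essentially bookkeeping, so there is no serious obstacle, but the step requiring most care is making sure that the partition-of-unity identity $\Rtilde_\Gamma^T \Rtilde_{D,\Gamma} = I$ is applied on the correct space and in the correct direction; this is what allows the sum over scaled subdomain contributions arising from $\Rtilde_{D,\Gamma}$ to be undone once the test function $v$ is known to come from the globally continuous subspace $\Rtilde(\Lhat)$. The two applications of Lemma~\LA{null} (once in $\Ltilde$ and once in $\Lhat$) must also be matched with the correct interface traces, which is why the explicit identification of $v_\Gamma$ and $\eta_\Gamma$ at the outset is useful.
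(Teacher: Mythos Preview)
Your proof is correct and follows essentially the same route as the paper's: both reduce the $\Atilde$-pairing to the interface Schur complement via Lemma~\LA{null}, cancel $\Stilde_\Gamma$ against its inverse in the definition of $w_\Gamma$, and then invoke a partition-of-unity identity to collapse the scaling. The only cosmetic difference is that the paper uses $\RtildeG \Rtilde_{D,\Gamma}^T v_\Gamma = v_\Gamma$ for $v_\Gamma \in \RtildeG(\Lhat_\Gamma)$ (the averaging operator $E_D$ acts as the identity on continuous interface functions), whereas you use the transposed identity $\Rtilde_\Gamma^T \Rtilde_{D,\Gamma} = I$ on $\Lhat_\Gamma$; these are two sides of the same standard BDDC fact.
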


\beginproof
For any $v \in \Rtilde(\Lhat)$, denote its continuous interface part
by $v_\Gamma \in \Rtilde_\Gamma(\Lhat_\Gamma)$.  Given $\uG \in
\Lhat_\Gamma$, from Lemma~\LA{null} and the fact that $\RtildeG
\Rtilde_{D,\Gamma}^T v_\Gamma = v_\Gamma$, we have
\begin{eqnarray*}
\left<\wE,v \right>_\Atilde = \left<w_\Gamma,v_\Gamma
\right>_{\Stilde_\Gamma} =  v_\Gamma^T \STG \wG = v_\Gamma^T \STG
\Stilde^{-1}_\Gamma \Rtilde_{D,\Gamma} \SG \uG = v_\Gamma^T
\Rtilde_{D,\Gamma} \RtildeG^T \STG
\RtildeG \uG & & \\
= \big< \RtildeG \uG , \RtildeG \Rtilde_{D,\Gamma}^T v_\Gamma
\big>_\STG =  \big<\RtildeG \uG,v_\Gamma\big>_\STG = v_\Gamma^T \STG
\RtildeG \uG = \big<\Rtilde \uE, v \big>_\Atilde . \mbox{\endproof}
\end{eqnarray*}

The following lemma is similar to  \cite[Lemma
7.12]{TuLi:2008:BDDCAD}. However, we have to use $S$-norm defined in
\EQ{SKdef} instead of $L^2$ norm, which requires more technical tools.

\begin{lemma}
\label{lemma:approximation}  Let Assumption~\ref{assump:onWtilde}
hold. Given any $\mu_\Gamma \in
\Lhat_\Gamma$, let $w_\Gamma = \Stilde^{-1}_\Gamma \Rtilde_{D,\Gamma}
S_\Gamma \mu_\Gamma$. There then exists a
positive constant $C$ such that 
\[
\| \wE - \uE \|_{S}
 \leq C \frac{\Ctwo^4C_{ED}\mymax}{\epsilon^3 }
\|\uG\|_{B_\Gamma},
\]
where $C_L(\epsilon, H,h)$ is given in Lemma~\LA{exact}.
\end{lemma}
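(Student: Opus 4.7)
The plan is to use an Aubin-Nitsche-type duality argument, paralleling the strategy of \cite[Lemma 7.12]{TuLi:2008:BDDCAD} but replacing the $L^2$-error of the finite element case with the $S$-norm of \EQ{SKdef} suited to HDG.

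Set $q := \wE - \Rtilde\uE \in \Ltilde$. By Lemma \LA{propw}, $q$ is $\widetilde{a}$-orthogonal to $\Rtilde(\Lhat)$. First, I would choose the adjoint right-hand side to be $g := \U q \in W$, which by Lemma \LA{SK} coincides with the definition \EQ{gdef} applied to $\lambda=q$. Let $\varphi_g \in \Lambda$ and $\phitilde_g \in \Ltilde$ solve \EQ{phig} and \EQ{phigh} with this $g$. Since $\Rtilde\varphi_g \in \Rtilde(\Lhat)$, orthogonality gives
$$
\|\U q\|^2_{L^2(\Omega)} = (\U q, g)_{\T_h} = \widetilde{a}(q, \phitilde_g) = \widetilde{a}(q, \phitilde_g - \Rtilde\varphi_g).
$$
Applying Lemma \LA{bnorm} to control the bilinear form by $C\Ctwo^2\epsilon^{-1}\|q\|_{\Btilde}\|\phitilde_g - \Rtilde\varphi_g\|_{\Btilde}$ and Lemma \LA{ErrorBound} to bound the dual error by $C\mymax\epsilon^{-1}\|g\|_{L^2}$, then cancelling $\|g\|_{L^2}=\|\U q\|_{L^2}$, yields $\|\U q\|_{L^2} \le C\Ctwo^2\mymax\epsilon^{-2}\|q\|_{\Btilde}$.

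Next, I would estimate $\|q\|_{\Btilde}$ via the triangle inequality, Lemma \LA{null}, and Lemma \LA{wnorm}:
$$
\|q\|_{\Btilde} \le \|\wG\|_{\BTG} + \|\uG\|_{B_\Gamma} \le C\Ctwo^2 C_{ED}\epsilon^{-1}\|\uG\|_{B_\Gamma}.
$$
Combining gives the intermediate bound $\|\U q\|_{L^2} \le C\Ctwo^4 C_{ED}\mymax\epsilon^{-3}\|\uG\|_{B_\Gamma}$, whose constant matches the target up to the discrepancy between $\|\U q\|_{L^2}$ and $\|q\|_S$.

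The main obstacle is recovering the full $S$-norm from this $L^2$-bound on $\U q$. By \EQ{SKdef} and Lemma \LA{SK} one has the orthogonal decomposition $(S_i^K q, S_i^K q)_K = \|\U q\|_K^2 + \|(I - Q_k)S_i^K q\|_K^2$, so
$$
\|q\|_S^2 = \|\U q\|_{L^2(\Omega)}^2 + \sum_{K}\frac{1}{n+1}\sum_{i}\|(I-Q_k)S_i^K q\|_K^2.
$$
The remainder $r_i := (I-Q_k)S_i^K q$ lies in $P_{k+1}(K)\cap P_k(K)^\perp$ and has trace $q-\U q$ on $E_i$ by \EQ{SKdef1}; a dimension count (using that the trace map to $P_{k+1}(E_i)$ is injective via the barycentric factorization $p = \lambda_i q$ with $q \in P_k$, and that the dimensions match) combined with a standard scaling argument gives $\|r_i\|_K \le Ch^{1/2}\|q-\U q\|_{E_i}$. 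Lemma \LA{Ubound} then bounds $\|q-\U q\|_{\partial K}$ by $C\Ctwo h^{1/2}\interleave q\interleave_K$, and the $\Ltilde$-analogue of Lemma \LA{equivalent} (via Lemmas \LA{subequivalent} and \LA{mu|||t}) bounds $\interleave q\interleave$ by $C\epsilon^{-1/2}\|q\|_{\Btilde}$. The resulting remainder contribution is subdominant to $\|\U q\|_{L^2}^2$ in the relevant parameter regime, producing the claimed $S$-norm bound.
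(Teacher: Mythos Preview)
Your argument is correct and arrives at the same bound, but it is organized differently from the paper's proof. The paper does not take $g=\U q$ and then decompose $\|q\|_S^2$ via Pythagoras; instead it uses the dual characterization $\|q\|_S=\sup_{\lambda}\frac{|(q,\lambda)_S|}{\|\lambda\|_S}$. For a general test function $\lambda$ it writes, using \EQ{SKL1},
\[
(q,\lambda)_S=(\U q,g_\lambda)_{\T_h}+\sum_{K,j}\tfrac{1}{n+1}\big((I-Q_k)S_j^Kq,(I-Q_k)S_j^K\lambda\big)_K,
\]
expresses the first term via \EQ{phig}--\EQ{phigh} as $\widetilde a(\wE,\phitilde_\lambda)-a_h(\uE,\varphi_\lambda)$, applies Lemma~\LA{propw} to reduce this to $\widetilde a(\wE,\phitilde_\lambda-\varphi_\lambda)$, and then invokes Lemmas~\LA{bnorm} and~\LA{ErrorBound}. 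The remainder is bounded, via \EQ{SKH1} in Lemma~\LA{SK} together with a Friedrichs estimate, by $C\Ctwo h\epsilon^{-1/2}\|q\|_{\Btilde}\|\lambda\|_S$, and the sup over $\lambda$ is taken. Your route replaces the sup by the specific choice $\lambda=q$ (equivalently $g=\U q$), which is more direct; your remainder estimate, via injectivity of the trace $P_{k+1}(K)\cap P_k(K)^\perp\to P_{k+1}(E_i)$ combined with Lemma~\LA{Ubound}, is an elegant alternative to the paper's use of \EQ{SKH1} and gives the same $Ch\Ctwo\interleave q\interleave_K$ control. Both approaches ultimately need $h\epsilon^{3/2}\le C\Ctwo\,\mymax$ to absorb the remainder, which holds in the regime $0<\epsilon\le 1$, $h\le H$.

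One small point of phrasing: saying the remainder is ``subdominant to $\|\U q\|_{L^2}^2$'' is not quite what you need, since $\U q$ could in principle vanish. What you actually establish (and what suffices) is that the remainder is itself bounded by $C\big(\Ctwo^4 C_{ED}\mymax\epsilon^{-3}\|\uG\|_{B_\Gamma}\big)^2$, independently of the main term; the $S$-norm bound then follows from $\|q\|_S^2=\|\U q\|_{L^2}^2+\text{remainder}$.
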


\beginproof
Given any $q^{(i)}, \lambda^{(i)} \in \Lambda^{(i)}$, by \EQ{SKdef}, \EQ{SKdef2}, and \EQ{gdef},  we have
\begin{eqnarray}
&&(q^{(i)},\lambda^{(i)})_{S(\Omega_i)}=\sum_{K\in
                 \T_h(\Omega_i)}\frac{1}{n+1}\sum_{j=1}^{n+1}(S_j^K
                 q^{(i)},S_j^K\lambda^{(i)})_K\nonumber\\
&=&\sum_{K\in
                 \T_h(\Omega_i)}\frac{1}{n+1}\sum_{j=1}^{n+1}\left((S_j^K
                 q^{(i)},Q_k S_j^K\lambda^{(i)})_K+(S_j^K
                 q^{(i)}, (1-Q_k)S_j^K\lambda^{(i)})_K\right)\nonumber\\
&=&\sum_{K\in
                 \T_h(\Omega_i)}\frac{1}{n+1}\sum_{j=1}^{n+1}(\MU
    q^{(i)},Q_k S_j^K\lambda^{(i)})_K\nonumber\\
&&+\sum_{K\in
                 \T_h(\Omega_i)}\frac{1}{n+1}\sum_{j=1}^{n+1}
\left((1-Q_k)S_j^Kq^{(i)}, (1-Q_k)S_j^K\lambda^{(i)})_K+(Q_kS_j^Kq^{(i)}, (1-Q_k)S_j^K\lambda^{(i)})_K\right)\nonumber\\
&=&\sum_{K\in
                 \T_h(\Omega_i)}(\MU q^{(i)}, \frac{1}{n+1}\sum_{j=1}^{n+1} Q_kS_j^K\lambda^{(i)})_K+\sum_{K\in
                 \T_h(\Omega_i)}\frac{1}{n+1}\sum_{j=1}^{n+1}
((1-Q_k)S_j^Kq^{(i)}, (1-Q_k)S_j^K\lambda^{(i)})_K\nonumber\\
&=& \sum_{K\in
                 \T_h(\Omega_i)}(\MU q^{(i)}, g)_K+\sum_{K\in
                 \T_h(\Omega_i)}\frac{1}{n+1}\sum_{j=1}^{n+1}
((1-Q_k)S_j^Kq^{(i)}, (1-Q_k)S_j^K\lambda^{(i)})_K\nonumber\\
&=&(\MU q^{(i)},g)_{\Omega_i}+\sum_{K\in
                 \T_h(\Omega_i)}\frac{1}{n+1}\sum_{j=1}^{n+1}
((1-Q_k)S_j^Kq^{(i)}, (1-Q_k)S_j^K\lambda^{(i)})_K.\label{equation:SnormCalculation}
\end{eqnarray}

Using the Friedrichs estimate and  \EQ{SKH1},  we have
$$\|(1-Q_k)S_j^Kq^{(i)}\|_{L^2(K)}\le Ch |S_j^Kq^{(i)}|_{H^1(K)}\le
C{\nu_{\epsilon,\tau_K,h}}h\interleave q^{(i)} \interleave_K$$
and
$$\|(1-Q_k)S_j^K\lambda^{(i)}\|_{L^2(K)}\le C \|S_j^K\lambda^{(i)}\|_{L^2(K)}.
$$
Combining these two estimates, we have
\begin{eqnarray}\label{equation:residualofSnorm}
&&\sum_{K\in
                 \T_h(\Omega_i)}\frac{1}{n+1}\sum_{j=1}^{n+1}
((1-Q_k)S_j^Kq^{(i)}, (1-Q_k)S_j^K\lambda^{(i)})_K\nonumber\\
&\le&
\sum_{K\in
                 \T_h(\Omega_i)}\frac{1}{n+1}\sum_{j=1}^{n+1}\|(1-Q_k)S_j^Kq^{(i)}\|_{L^2(K)}\|(1-Q_k)S_j^K\lambda^{(i)}\|_{L^2(K)}\nonumber\\
&\le &\sum_{K\in
                 \T_h(\Omega_i)}\frac{1}{n+1}\sum_{j=1}^{n+1}C{\nu_{\epsilon,\tau_K,h}}h\interleave
       q^{(i)} \interleave_K \|S_j^K\lambda^{(i)}\|_{L^2(K)}\nonumber\\
&=&\sum_{K\in
                 \T_h(\Omega_i)}C{\nu_{\epsilon,\tau_K,h}}h\interleave
       q^{(i)} \interleave_K \frac{1}{n+1}\sum_{j=1}^{n+1}\|S_j^K\lambda^{(i)}\|_{L^2(K)}\nonumber\\
&\le &C{\nu_{\epsilon,\tau_K,h}}h\interleave
       q^{(i)} \interleave_{\Omega_i} \|\lambda^{(i)}\|_{S(\Omega_i)}\nonumber\\
&\le &C{\nu_{\epsilon,\tau_K,h}}\frac{h}{\sqrt{\epsilon}}
      \| q^{(i)}\|_{B(\Omega_i)} \|\lambda^{(i)}\|_{S(\Omega_i)}.
\end{eqnarray}
We have, from \EQ{SnormCalculation}, \EQ{phig} and \EQ{phigh}, that
\begin{align*}
( \wE - \uE, \lambda )_{S} 
&=\widetilde{a}(\wE, \phitilde_\lambda) - a_h(\uE, \varphi_\lambda) \\
&\quad+\sum_{K\in
                 \T_h(\Omega)}
\frac{1}{n+1}\sum_{j=1}^{n+1}
((1-Q_k)S_j^Kw_{\A,\Gamma}, (1-Q_k)S_j^K\lambda)_K\\
&\quad-\sum_{K\in
                 \T_h(\Omega)}
\frac{1}{n+1}\sum_{j=1}^{n+1}
((1-Q_k)S_j^K\mu_{\A,\Gamma}, (1-Q_k)S_j^K\lambda)_K.
\end{align*}

Since $\varphi_\lambda\in \Lambda$, by  Lemma \LA{propw}, we know that $\widetilde{a}( \wE - \uE, \varphi_\lambda)
= 0$. Therefore,
\begin{eqnarray*}
&&\widetilde{a}(\wE, \phitilde_\lambda) - a_h(\uE, \varphi_\lambda) 
=\widetilde{a}(\wE , \phitilde_\lambda) - \widetilde{a}(\uE ,
\varphi_\lambda) \\
& =& ~ \widetilde{a}(\wE, \phitilde_\lambda) - \widetilde{a}(\wE , \varphi_\lambda)
 = ~ \widetilde{a}(\wE , \phitilde_\lambda
- \varphi_\lambda), 
\end{eqnarray*}
and, by Lemmas \LA{bnorm} and \LA{ErrorBound}, \EQ{residualofSnorm},
\begin{align*}
&\big| ( \wE - \uE, \lambda )_{S} \big|\\
 &\le\left|\widetilde{a}(\wE , \phitilde_\lambda
- \varphi_\lambda)\right|+\left|\sum_{K\in
                 \T_h(\Omega)}
\frac{1}{n+1}\sum_{j=1}^{n+1}
((1-Q_k)S_j^K(w_{\A,\Gamma}-\mu_{\A,\Gamma}), (1-Q_k)S_j^K\lambda)_K\right|\\
& \leq  \frac{\Ctwo^2}{{\epsilon}} \| \wE \|_{\Btilde} \| \varphi_\lambda  -
       \phitilde_\lambda \|_{\Btilde}+C{\nu_{\epsilon,\tau_K,h}}\frac{h}{\sqrt{\epsilon}}
      \| w_{\A,\Gamma}-\mu_{\A,\Gamma}\|_{B} \|\lambda\|_{S}\\ 
& \leq C \frac{\Ctwo^2\mymax}{\epsilon^2} \| \wE\|_{\Btilde} \| g \|_{L^2(\Omega)}+{\frac{\Ctwo h}{\sqrt{\epsilon}}(\|\wE\|_{\Btilde}+\|\mu_{\A,\Gamma}\|_{\widetilde{B}})\|\lambda\|_S}.
\end{align*}

Since $g=\frac{1}{n+1}\sum_{i=1}^{n+1}Q_kS_i^K\lambda$, we have
\begin{eqnarray*}
\| g \|^2_{L^2(\Omega)}&=&\sum_{K\in \T_h(\Omega)}(\frac{1}{n+1}\sum_{i=1}^{n+1}{Q_k}S_i^K\lambda,
                           \frac{1}{n+1}\sum_{i=1}^{n+1}{Q_k}S_i^K\lambda)_{K}\\
&\le &C\sum_{K\in \T_h(\Omega)}\frac{1}{n+1}\sum_{i=1}^{n+1}({Q_k}S_i^K\lambda,
                           {Q_k}S_i^K\lambda)_{K}\le {C\|\lambda\|^2_{S}}.
\end{eqnarray*}
Therefore, we have
\begin{eqnarray*}
\| \wE - \uE \|_{S} & = & \sup_{\lambda \in \Ltilde}
\frac{\big| ( \wE - \uE, \lambda )_{S} \big|}{\| \lambda \|_{S}} \\
& \leq &C( \frac{\Ctwo^2\mymax}{\epsilon^2}+{\frac{\Ctwo h}{\sqrt\epsilon}})~ (\| \wE \|_\Btilde+\|\uG\|_{B_\Gamma}) \\
&\le& C \frac{\Ctwo^4C_{ED}\mymax}{\epsilon^3 }
\|\uG\|_{B_\Gamma}, \mbox{\endproof}
\end{eqnarray*}
where we use Lemma \LA{wnorm} for the last step.

{\bf Proof of Lemma \LA{L2error}:}
\begin{proof}
Given $\mu_\Gamma\in \Lhat_\Gamma$, let $w_\Gamma = \Stilde^{-1}_\Gamma
\Rtilde_{D,\Gamma} S_\Gamma \mu_\Gamma$.  Recall the restriction operators $\widehat{R}_{\Gamma}^{(i)}$ and $\overline{R}_{\Gamma}^{(i)}$ restricts functions 
in $\Lhat_\Gamma$ and $\Ltilde_\Gamma$ to $\Lambda_{\Gamma}^{(i)}$.
To make the notation simpler, let $\H_{\A}
(\mu_\Gamma)=\mu_{\A,\Gamma}$. 

We have
$v_{\Gamma}=T\mu_{\Gamma}-\mu_{\Gamma}=\widetilde{R}^T_{D,\Gamma}w_{\Gamma}-\widetilde{R}^T_{D,\Gamma}\widetilde{R}_{\Gamma}\mu_{\Gamma}$
and
\begin{align*}
&\|v_{\A,\Gamma}\|_{h}
  =\sum_{i=1}^N
                 \|\mathcal{H}_{\mathcal{A}}(
                 \hatR_\Gamma^{(i)}(\widetilde{R}^T_{D,\Gamma}w_{\Gamma}-\mu_{\Gamma}))\|_{h,\Omega_i}\\
&=\sum_{i=1}^N
  \|\mathcal{H}_{\mathcal{A}}({\hatR}_\Gamma^{(i)}\widetilde{R}^T_{D,\Gamma}w_{\Gamma}-\overline{R}^{(i)}_\Gamma
  w_\Gamma)+\mathcal{H}_{\mathcal{A}}(\overline{R}^{(i)}_\Gamma
  w_\Gamma-{\hatR}_\Gamma^{(i)}\mu_{\Gamma}))\|_{h,\Omega_i}\\
&\le \sum_{i=1}^N
  \left(\|\mathcal{H}_{\mathcal{A}}({\hatR}_\Gamma^{(i)}\widetilde{R}^T_{D,\Gamma}w_{\Gamma}-\overline{R}^{(i)}_\Gamma
  w_\Gamma) \|_{h,\Omega_i}+\|\mathcal{H}_{\mathcal{A}}(\overline{R}^{(i)}_\Gamma
  w_\Gamma-{\hatR}_\Gamma^{(i)}\mu_{\Gamma}))\|_{h,\Omega_i}\right).
\end{align*}

For the first term,  since $w_{\Gamma}\in \Ltilde_\Gamma$, $\widetilde{R}^T_{D,\Gamma}w_{\Gamma} $ and
$w_{\Gamma} $ have the same edge average on $\partial\Omega_i$. We use Lemmas \LA{averagezeroscaling},
\LA{averagenorm}, \LA{wnorm} and have 
\begin{eqnarray*}
\sum_{i=1}^N \|\mathcal{H}_{\mathcal{A}}({\hatR}_\Gamma^{(i)}\widetilde{R}^T_{D,\Gamma}w_{\Gamma}-\overline{R}^{(i)}_\Gamma
  w_\Gamma) \|_{h,\Omega_i}
&\le &C \sum_{i=1}^N  H\interleave \mathcal{H}_{\mathcal{A}}({\hatR}_\Gamma^{(i)}\overline{R}^T_{D,\Gamma}w_{\Gamma}-\overline{R}^{(i)}_\Gamma
  w_\Gamma) \interleave_{\Omega_i}\\
&\le& C \sum_{i=1}^N \frac{H}{\sqrt{\epsilon}}\|\mathcal{H}_{\mathcal{A}}({\hatR}_\Gamma^{(i)}\widetilde{R}^T_{D,\Gamma}w_{\Gamma}-\overline{R}^{(i)}_\Gamma
  w_\Gamma)\|_{B(\Omega_i)}\\
&\le& C \sum_{i=1}^N \frac{H}{\sqrt{\epsilon}}\|{\hatR}_\Gamma^{(i)}\widetilde{R}^T_{D,\Gamma}w_{\Gamma}-\overline{R}^{(i)}_\Gamma
  w_\Gamma \|_{B_\Gamma^{(i)}} \le C\frac{H}{\sqrt{\epsilon}}C_{ED}\|w_{\Gamma}\|_{\Btilde_\Gamma}\\
&\le &\frac{\Ctwo^2
       HC^2_{ED}}{\epsilon^{\frac32}}\|\mu_\Gamma\|_{B_\Gamma}={
       \frac{\Ctwo^6 H\left(1+\log\frac{H}{h}\right)^2\gamma_{h,\tau_K}}{\epsilon^{\frac72}}\|\mu_\Gamma\|_{B_\Gamma}.}
\end{eqnarray*}

 For the second term, we use Lemmas \LA{SK}, \LA{approximation} and have
\begin{eqnarray*}
&&\sum_{i=1}^N \|\mathcal{H}_{\mathcal{A}}(\overline{R}^{(i)}_\Gamma
  w_\Gamma-{\hatR}_\Gamma^{(i)}\mu_{\Gamma})\|_{h,\Omega_i}
\le C\|w_{\mathcal{A},\Gamma}-\mu_{\mathcal{A},\Gamma}\|_S\\
&\le &
       C\frac{\Ctwo^4C_{ED}\mymax}{\epsilon^3}\|\mu_\Gamma\|_{B_\Gamma}\\
&=&C\frac{\Ctwo^6\left(1+\log\frac{H}{h}\right)\gamma^{\frac12}_{h,\tau_K}\mymax}{\epsilon^{4}}\|\mu_\Gamma\|_{B_\Gamma}.
\end{eqnarray*}

Therefore, we have
\[
\|v_{\A,\Gamma}\|_{h}\le C\frac{\Ctwo^6\left(1+\log\frac{H}{h}\right)^2\gamma_{h,\tau_K}\mymax}{\epsilon^4}\|\mu_\Gamma\|_{B_\Gamma}.
\]
\end{proof}

\begin{remark}\label{remark:error}
Compare to \cite[Lemma 7.14]{TuLi:2008:BDDCAD}, we estimate the first
and second term separately and therefore our estimate does not
have the factor $\frac{H}{h}$. However, the additional factors $\Ctwo^6$
and $\gamma_{h,\tau_K}$ are still in our estimate
which are from the HDG discretization. An additional
$\frac{1}{\epsilon}$ is due to the estimate of the average
operator $E_D$.  
\end{remark}

\section{Numerical experiments}\label{sec:numerical}

We present some numerical results for two examples, which were considered in \cite{006tos,TuLi:2008:BDDCAD}.
In \cite{TuLi:2008:BDDCAD}, the performance of the BDDC algorithms for
Test Problem I and II are very similar. Therefore, we only present the
results for Test Problem I (thermal boundary layer) and III (Rotating
flow field)  in \cite{TuLi:2008:BDDCAD}.

We decompose our domain $\Omega=[-1,1]\times [-1,1]$ into subdomains and each subdomain
into triangles. We then  discretize \EQ{pdea} with HDG discretization
of degree $0$, $1$, and $2$ polynomials. In \EQ{nqtrace}, we take the stabilizer
$\vbeta_{\E}=\max (\sup_{x\in \E} (\vbeta\cdot {\bf n},0))$  for any
$\E\in \partial K$ and $\forall K\in \T_h$, as defined in \cite[(2.10)]{fu_analysis_2015-1}.
We take $f=0$ in all the examples as in \cite{TuLi:2008:BDDCAD}.

We use the GMRES method without restart and work with the
$L^2$-norm. The iteration is stopped when $L^2$-norm of the initial
residual is reduced by a factor $10^{-10}$. The convergence rates in
$K$-norm are quite similar with using the $L^2$-norm \cite{TuLi:2008:BDDCAD}. 
We present the results of our BDDC algorithms employing different 
 primal continuity constraints. 
For BDDC$_1$, the subdomain 
edge average
constraints are  chosen as coarse level primal
variables, while  in BDDC$_2$, besides the subdomain edge average
constraints, one  additional edge flux weighted average constraint, defined
in Assumption \ref{assump:onWtilde}, 
for each edge, are also added to the coarse level
 subspace.   For BDDC$_3$, the first moment defined in Assumption
 \ref{assump:onWtilde}, is added additional to those used in BDDC$_2$. 


We note that the one-level and
two-level Robin-Robin methods,  which were developed and extended in
\cite{ Achdou:1997:RPA, Achdou:1999:DDN, Achdou:2000:DDAD}, are
closely related to our BDDC algorithms.  In
\cite{TuLi:2008:BDDCAD}, the performance of these  Robin-Robin 
algorithms are compared with similar BDDC algorithms as we discussed
in this paper.  The BDDC with extra flux weighted edge constraints
over-performed.  We do not present any results for
the these algorithms here.  See \cite{TuLi:2008:BDDCAD} for more details.

\subsection{Thermal boundary layer (Test Problem I)}
As in \cite{006tos}, see also \cite{Trotta:1996:MFE,Franca:1992:SFE,TuLi:2008:BDDCAD},
a thermal boundary layer problem is considered. The velocity field be
$\vbeta=\left(\frac{1+y}{2},0\right)$. The boundary condition is:
\[
\begin{array}{ll}
u=1,& \left\{\begin{array}{ll}
       x=-1 & -1< y\le 1, \\
      y=1,  & -1\le x\le 1,
      \end{array}
     \right. \\ [0.6ex]
u=0, & y=-1, -1\le x \le 1,\\ [0.6ex]  u=\frac{1+y}{2},& x=1,-1\le
y\le 1.
\end{array}
\]

In the first set of our experiments,  we change the number of the 
subdomains with a fixed
subdomain local problem. The results are reported in Table \ref{TR1_1}.
We note that, for large values of the viscosity $\epsilon$
($\epsilon\ge 0.01$) with different degrees of the  basis functions in HDG,  the
iteration counts for all BDDC algorithms are 
independent of
the number of the subdomains. However, with the increasing of the
degrees of the basis functions, the system become more ill-conditioned
and those additional constraints make the BDDC perform better. 

With a decreasing value of $\epsilon$ ($\epsilon< 0.01$), we can see
the scalability of the algorithms deteriorates.  But the more constraints used in the BDDC algorithms, the
performance  are  better. 
We also note that the numbers of the iterations are bounded for all
algorithms even when the viscosity $\epsilon$ goes to zero.  These
observations are similar to those for the FETI algorithms in
\cite{006tos}. When $\epsilon\le 10^{-5}$, the iterations are double
as we half $H$, the size of the subdomains, for all BDDC algorithms. 
In our theory, we  require $H$ small enough to obtain the
scalable results. In order to confirm that, we did larger experiments
for those degree zero basis functions, which we can afford. The results
are listed in Table \ref{TR1_0}.   When we increase the number of the
subdomains to $128$ for $\epsilon=10^{-3}$ ($\epsilon\ge 10^{-4}$ for BDDC$_3$), the iterations do not
increase anymore. However, for smaller $\epsilon$, $128$ is still not
larger enough to see the scalability of the algorithms.

In the second set of our experiments, we change the size of the subdomain local 
problems with a fixed
 number of the subdomains. The results are listed in Table \ref{TR1_2}.
From the results, we can see that the number of iterations increases slightly
with increasing 
$\frac{H}{h}$, the size of the subdomain local problems, when the viscosity $\epsilon$ is
large ($\epsilon\ge 10^{-4}$). These results are 
 consistent  with the results of the domain decomposition algorithms for symmetric,
positive definite problems.  Moreover, the more constraints are used in
the BDDC algorithms, the better performance we obtained. 

However, with the decreasing of the values of $\epsilon$, the iterations
for all BDDC algorithms are almost independent of the
values of $\epsilon$, $\frac{H}{h}$,  and the
constraints of the BDDC algorithms.  These results are consistent with
the results in \cite{TuLi:2008:BDDCAD}.

\subsection{Rotating flow field (Test Problem II)}

The second test case is more difficult one, see \cite{006tos,TuLi:2008:BDDCAD}.
The velocity field be
$\vbeta=\left(y,-x\right)$ and the boundary condition is:

\[
\begin{array}{ll}
u=1,& \left\{\begin{array}{ll}
       y=-1 & 0< x\le 1, \\
       y=1, & 0< x\le 1,\\
       x=1, &-1\le y\le 1,
      \end{array}
     \right. \\
 \\
u=0, & \mbox{elsewhere}.
\end{array}
\]

We also have two sets of the numerical experiments for this example.

In the first set of our experiments,  we change the number of subdomains
 with a fixed size of
the subdomain local
 problems.
 The results 
 are reported in Table \ref{TR2_1}.

Again, we note that, for large values of the viscosity $\epsilon\ge 10^{-3}$, the
iteration counts are independent of
the
 number of the subdomains for all three BDDC algorithms with different
 degrees of the basis functions. However,   
 the additional constraints are more significant in this difficult
case.  For the basis functions with degree $0$ and $1$, the iteration
counts with BDDC$_3$ are almost independent of the number of subdomains, even
with very small $\epsilon=10^{-6}$, while this is not the case for
the BDDC$_1$ algorithm with $\epsilon<10^{-4}$ for degree $0$ 
basis function.  The performance of BDDC$_2$ is between these two
algorithms. 

With the degree $2$ basis functions, the iteration counts increase
rapidly  with the decreasing of $\epsilon$. For $\epsilon=10^{-6}$,
the number of iterations for the BDDC$_1$ is almost $20$ times of
those for BDDC$_3$ for large number of subdomains. Even though the
iteration counts for 
BDDC$_3$  is increasing in this case, it is much less than double
when we half $H$, the size of the subdomains.

In the second set of our experiments, we change the size of the subdomain local problems with a
fixed number
 of the subdomains. The results
 are reported in Table \ref{TR2_2}.
From the results, we can see that the number of iterations increases with 
 increasing
$\frac{H}{h}$
for all values of the  viscosity $\epsilon$ for all BDDC
algorithms. The problems become more difficult when we increase the
degrees of the HDG basis functions. 
However, the BDDC$_3$ algorithm works 
significantly better than the others.

\begin{center}
\begin{table}[h]
\caption{Iteration counts for changing number of subdomains and
$H/h=6$ for Test Problem I} 

\begin{tabular}{|c|c|c|c|c||c|c|c|c||c|c|c|c|}\hline\label{TR1_1}
\# of Sub. &  $4^2$ &  $8^2$& $16^2$
& $32^2$ & $4^2$  & $8^2$&
 $16^2$ &  $32^2$&$4^2$ &$8^2$ & $16^2$
 & $32^2$ \\
\hline \hline $\epsilon$(deg=0)
&\multicolumn{4}{c||}{BDDC$_1$}&\multicolumn{4}{c|}{BDDC$_2$}&\multicolumn{4}{c|}{BDDC$_3$}\\
\hline
 $1e0 $ &11   &11  &12   &11       &10   &11     &11      &10        &9  &10  &10  &10 \\
 $1e-1$ &12   &14   &13   &13       &11   &12     &12      &12      &10  &11   &11  &11 \\                   
 $1e-2$ &11   &17 &21  &19         &9   &13   &15    &14           &8  &12  &13  &13 \\                       
 $1e-3$ &7   &13  &23 &33         &6  &11    &18    &25            &5&9   &15 &17 \\      
 $1e-4$ &5   &9   &17  &29        &5   &8    &14     &23           &4  &7  &11  &19  \\                 
 $1e-5$ &4   &7   &12  &24         &4   &6    &12     &22         &3  &6   &11  &19  \\                
 $1e-6$ &3   &6   &11  &20      &3   &6    &10     &20           &3  &5   &10  &19  \\         
\hline \hline $\epsilon$ (deg=1)
&\multicolumn{4}{c||}{BDDC$_1$}&\multicolumn{4}{c|}{BDDC$_2$}&\multicolumn{4}{c|}{BDDC$_3$}\\
\hline
 $1e0 $ &13   &14   &14    &14       &12   &13     &13      &13        &10  &11   &11  &11  \\
 $1e-1$ &16   &17  &16   &16        &13  &15     &15      &15         &12  &13   &13  &13 \\                   
 $1e-2$ &15   &23 &27  &25           &12   &18   &20    &19           &10  &15   &17  &16 \\                       
 $1e-3$ &9   &17  &30 &45             &8   &14     &26    &32          &7  &13  &21 &25 \\                       
 $1e-4$ &6   &11   &22  &38           &5   &10    &19     &35          &5  &9   &16  &29 \\                 
 $1e-5$ &4   &8   &16  &32             &4   &8    &15     &27           &4  &7   &13  &22 \\                
 $1e-6$ &4   &7   &12  &24             &4   &6    &12     &23          &4  &6   &11  &21  \\ 
\hline \hline $\epsilon$ (deg=2)
&\multicolumn{4}{c||}{BDDC$_1$}&\multicolumn{4}{c|}{BDDC$_2$}&\multicolumn{4}{c|}{BDDC$_3$}\\
\hline
$1e0 $ &14   & 15  &15  &15                 & 13  & 15  & 14&14   & 11  &11   & 11 & 11\\
 $1e-1$  &17   & 18  &18  &17                 &14   &17 &17 & 16  &13   &14   &14  &13 \\
 $1e-2$ & 18  & 25  & 29 & 27                & 14  & 20 &23 & 21  & 11  & 17  &18  &17\\
 $1e-3$  & 10  & 20  & 33 &49                 & 9  & 17  & 30&37   & 8  & 15  & 23 &26\\    
 $1e-4$  & 6  & 13  &24  & 44                & 6  &  12   &22 &41   & 6  & 10  & 19 &33\\ 
 $1e-5$  & 5  & 9  & 18 & 33                & 5  & 9  &  18       &30   &4   &8   &15  & 24 \\
 $1e-6$  & 4  &  7 & 14 & 29                & 4  & 7  & 13 &27   &  4 &  7 &13  & 25\\    
 \hline       \end{tabular}

\end{table}

\begin{center}

\begin{table}
\caption{Iteration counts for changing number of subdomains and
$H/h=6$ for Test Problem I with large number of subdomains} 

\begin{tabular}{|c|c|c|c|c||c|c|c|c||c|c|c|c|}\hline\label{TR1_0}
\# of Sub. &  $16^2$ &  $32^2$& $64^2$
& $128^2$&  $16^2$ &  $32^2$& $64^2$
& $128^2$&  $16^2$ &  $32^2$& $64^2$
& $128^2$\\
\hline \hline $\epsilon$ (deg=0)
&\multicolumn{4}{c||}{BDDC$_1$}&\multicolumn{4}{c|}{BDDC$_2$}&\multicolumn{4}{c|}{BDDC$_3$}\\
\hline
  $1e-3$ &23   &33  &48 & 35        &18 &25    &  20  & 17           &15&17   &15 &14 \\      
  $1e-4$ &17   &29   &45  &  81      &14   &23    & 41 &   55        &11  &19  &36  &29  \\                 
 $1e-5$ &12  &24   &45  &75         &12   &22    & 40    & 72        &11 &19   &35  & 67 \\                
 $1e-6$ &11  &20   & 40 &77      &10   &20    &  38   &72           &10  &19   & 35 & 67 \\         
\hline 

      \end{tabular}

\end{table}
\end{center}

\begin{table}[h]
\caption{Iteration counts for $6\times6$ subdomains and changing 
subdomain problem size Test Problem I} \center{
\begin{tabular}{|c|c|c|c|c||c|c|c|c||c|c|c|c|}\hline\label{TR1_2}
H/h. &  $4$ &  $8$& $16$
& $32$ & $4$  & $8$&
 $16$ &  $32$&$4$ &$8$ & $16$
 & $32$ \\\hline \hline $\epsilon$ (deg=0)
&\multicolumn{4}{c||}{BDDC$_1$}&\multicolumn{4}{c|}{BDDC$_2$}&\multicolumn{4}{c|}{BDDC$_3$}\\
\hline
$1e0 $ &10   &12 &14   &15         &10  &11     &13  &14        &9 &10   &10  &10  \\
 $1e-1$ &12  &14  &17  &19          &11  &13     &14  &16       &10 &11   &12  &13 \\                   
 $1e-2$ &13  &16   &19 &22           &10   &13   &15   &18        &9  &11  &13  &14 \\                       
 $1e-3$ &9   &11    &13 &16            &8   &9     &11     &14        &6 &7   &9 &11 \\                       
 $1e-4$ &6   &7    &9   &10            &6   &7    &8      &9             &5  &6  &7&8\\                 
 $1e-5$ &5   &6     &6   &7              &5   &5    &6     &7             &4  &5   &6  &6  \\                
 $1e-6$ &5   &5    &5   &6             &4   &5    &5     &6             &4  &4   &5  &5  \\        
\hline \hline $\epsilon$ (deg=1)
&\multicolumn{4}{c||}{BDDC$_1$}&\multicolumn{4}{c|}{BDDC$_2$}&\multicolumn{4}{c|}{BDDC$_3$}\\
\hline
$1e0 $ &13   &14   &16    &18     &12   &13   &14  &15            &10  &11   &11 &11 \\
 $1e-1$ &16  &17  &19   &21       &14    &15 &17    &17           &13   &13  &14   &14 \\                   
 $1e-2$ &17   &21 &24  &26        &14 &17   &18  &20              &13  &14   &15 &16\\                       
 $1e-3$ &12   &14 &17 &20        &10   &12  &14  &17              &8   &10   &12&13 \\                       
 $1e-4$ &8   &9  &10  &12          &7   &8    &9  &10                 &6   &7      &8  &9\\                 
 $1e-5$ &6   &7   &7  &8            &6   &6    &7     &8                 &5   &6     &7  &7  \\                
 $1e-6$ &5  &5   &6  &7             &5   &5    &6     &6                 &5   &5    &5  &6  \\    
     \hline \hline $\epsilon$ (deg=2)
&\multicolumn{4}{c||}{BDDC$_1$}&\multicolumn{4}{c|}{BDDC$_2$}&\multicolumn{4}{c|}{BDDC$_3$}\\
\hline
$1e0 $ &14   &15   &17  &19                 &13   &15   &16 & 16  &11   &11   & 11 & 11\\
 $1e-1$  &17   &19   &21  & 22                &16   &17   &18 &19   &13   &14   & 14 &14 \\                   
 $1e-2$ & 20  &23   &26  &  28               & 17  & 19  &20 & 22  & 14  & 15  &16  &16\\                       
 $1e-3$  &14   &17   &20  & 23                &12   &14   &17 & 20  & 10  &12   & 13 &15\\                       
 $1e-4$  & 9  & 10  &11  &  13               & 8  & 9  &10 & 12&7   &  8 &9  &10\\                 
 $1e-5$  & 7  &7   & 8 &  9                    &  6 &  7 &8 &  8 & 6  &  7 & 7 &8  \\                
 $1e-6$  & 5  &6   &7  &   7                    &5   &6   &6 & 7  &5   &6   &  6&7 \\    
  \hline    
\end{tabular}
}
\end{table}

\begin{table}[h]
\caption{Iteration counts for changing number of subdomains and
$H/h=6$ for Test Problem II} \center{
\begin{tabular}{|c|c|c|c|c||c|c|c|c||c|c|c|c|}\hline\label{TR2_1}
\# of Sub. &  $4^2$ &  $8^2$& $16^2$
& $32^2$ & $4^2$  & $8^2$&
 $16^2$ &  $32^2$&$4^2$ &$8^2$ & $16^2$
 & $32^2$ \\
\hline \hline $\epsilon$ (deg=0)
&\multicolumn{4}{c||}{BDDC$_1$}&\multicolumn{4}{c|}{BDDC$_2$}&\multicolumn{4}{c|}{BDDC$_3$}\\
\hline
 $1e0 $ &11   &12   &12    &12       &6  &6        &6      &6        &4  &4   &4  &3  \\
 $1e-1$ &13   &14   &14    &13       &7   &7        &7      &7        &5  &4   &4  &4 \\                   
 $1e-2$ &14  &19 &20 &19              &9   &9       &9      &8        &6  &5   &5  &5 \\                       
 $1e-3$ &23   &31  &38 &36            &13  &13     &12    &11      &8  &8   &8 &7 \\                       
 $1e-4$ &25  &39   &56  &62           &14   &16    &15    &16      &9  &9   &9  &10  \\                 
 $1e-5$ &25   &40   &59  &72          &14  &16     &16     &16      &9  &9   &9  &10 \\                
 $1e-6$ &25   &40   &60  &74           &14   &16    &16     &16      &9  &9   &9  &10  \\         
\hline \hline $\epsilon$ (deg=1)
&\multicolumn{4}{c||}{BDDC$_1$}&\multicolumn{4}{c|}{BDDC$_2$}&\multicolumn{4}{c|}{BDDC$_3$}\\
\hline
 $1e0 $ &13   &14   &15    &14               &7   &8     &8      &8              &5  &5   &4  &4  \\
 $1e-1$ &16   &18    &17  &16                &9  &10     &9      &9              &6  &6   & 6 &5 \\                   
 $1e-2$ &20   &26    &27  &25                &13   &13   &13    &11           &8 &8     &8  &7 \\                       
 $1e-3$ &47   &64    &69  &61               &29   &27     &22   &16           &18  &15  &12 &9 \\                       
 $1e-4$ &82   &153   &257  &279          &53   &77    &82    &72           &33  &38 &39 &32\\                 
 $1e-5$ &94   &226   &463  &894          &68   &118    &164 &185         &41  &55   &63  &68  \\                
 $1e-6$ &95   &246   &572  &$>$1000        &70  &132   &213  &277        &41  &61   &75 &80  \\       
\hline \hline $\epsilon$ (deg=2)
&\multicolumn{4}{c||}{BDDC$_1$}&\multicolumn{4}{c|}{BDDC$_2$}&\multicolumn{4}{c|}{BDDC$_3$}\\
\hline
$1e0 $ &14   &16   &16    &16       &8  &9        &9      &9        &5  &5   &5  &5  \\
 $1e-1$ &17   &19   &18    &18       &11   &11        &10      &10        &6  &6   &6  &6 \\                   
 $1e-2$ &22  &29 &30 &27              &15   &15       &14      &13        &9  &9   &9  &8 \\                       
 $1e-3$ &50   &67  &73 &67           &33  &29     &25    &19      &21  &16   &12 &11 \\                       
 $1e-4$ &118  &215   &303  &290     &83   &118    &114    &88      &57  &68   &60  &43  \\                 
 $1e-5$ &158   &417   &907 &$>$1000   &128  &262     &377     &413      &91  &167   &196  &215 \\                
 $1e-6$ &166   &509   &$>$1000 &$>$1000  &140   &371    &755     & $>$1000     &110  &236   &374  &526  \\   
 \hline         \end{tabular}
}
\end{table}

\begin{table}[h]
\caption{Iteration counts for $6\times6$ subdomains and changing 
subdomain problem size 
for Test Problem II} \center{
\begin{tabular}{|c|c|c|c|c||c|c|c|c||c|c|c|c|}\hline\label{TR2_2}
H/h. &  $4$ &  $8$& $16$
& $32$ & $4$  & $8$&
 $16$ &  $32$&$4$ &$8$ & $16$
 & $32$ \\\hline \hline $\epsilon$ (deg=0)
&\multicolumn{4}{c||}{BDDC$_1$}&\multicolumn{4}{c|}{BDDC$_2$}&\multicolumn{4}{c|}{BDDC$_3$}\\
\hline
$1e0 $ &10   &12   &14   &16          &5   &7   &8 &9             &3   &4   &5  &5    \\
 $1e-1$ &12  &15  &17   &19           &6   &8   &9 &11           &4   &5   &6  &6    \\                   
 $1e-2$ &15 &19    &23  &26           &7   &10  &12 &15        &6   &6   &8  &9    \\                       
 $1e-3$ &24  &34  &43   &51           &11   &15 & 20&25       &8   &9   &11  &14    \\                       
 $1e-4$ &26   &41 &59   &81           &12   &19  &29 &42      &8   &10   &16  &22    \\                 
 $1e-5$ &27  &42  & 62  &88           &12  &19  &30 &47       &8   &11   &17  &24     \\                
 $1e-6$ &27  &42  & 62  &90          &12   &19  &30 &47       &8   &11   &17 &24    \\        
\hline \hline $\epsilon$ (deg=1)
&\multicolumn{4}{c||}{BDDC$_1$}&\multicolumn{4}{c|}{BDDC$_2$}&\multicolumn{4}{c|}{BDDC$_3$}\\
\hline
$1e0 $ &13   &15   & 16&18                &7   &9   &9     &10                   &4   &5   &5  & 5\\
 $1e-1$ &16   &18   &20  &22              &8   &10   &11  &11                  &5   &6   &7 &7 \\                   
 $1e-2$ &22   &25  &29  &32               &11   &14   &16 &17                &7   &9   &10 &11\\                       
 $1e-3$  &54  &61   & 63 &64              &26   &31   & 32 &33              &14 &18   &18  &19\\                       
 $1e-4$  &97   &143   &177  &192        &49  &80   &105 &115            &25   &46   &65  &71\\                 
 $1e-5$  &112   &200  & 330 & 448       &61   &120 &208 &305           &29   &71   &127  &187  \\                
 $1e-6$  &114   & 214  & 380 & 636       &63   &137&279 &471          &31   & 80  &175  &288 \\    
    \hline \hline $\epsilon$ (deg=2)
&\multicolumn{4}{c||}{BDDC$_1$}&\multicolumn{4}{c|}{BDDC$_2$}&\multicolumn{4}{c|}{BDDC$_3$}\\
\hline
$1e0 $ &14   &16   & 17 &    19               & 9  &  9 & 10&   10  & 5  & 5  & 5 & 5\\
 $1e-1$  &17   &19   &  21&  23               &10   &11   &12 & 13  &6   & 7  &7  &7 \\                   
 $1e-2$ & 24  & 28  & 31 &    33             & 14  & 16  &17 &  19 &  9 &  10 & 11 &11\\                       
 $1e-3$  & 60  & 62  & 63 &   66              & 32  & 33  &33 & 35  & 18  & 19  & 19 &20\\                       
 $1e-4$  & 147  & 180  &188  &189         &89   & 113  &118 & 118  &51   &72   &73  &69\\                 
 $1e-5$  & 210  & 356  & 466 & 537           &144   & 236  &339 &370  &86   &164   & 222 &209  \\                
 $1e-6$  & 223  &  408 & 693 &  $>$1000         &167  & 330  &596 &832   & 99  & 235  &415  &518 \\    
    \hline
\end{tabular}
}

\end{table}

\end{center}

\bibliography{paper}

\begin{thebibliography}{10}

\bibitem{Achdou:1999:DDN}
Y.~Achdou, C.~Japhet, P.~Le Tallec, F.~Nataf, F.~Rogier, and M.~Vidrascu.
\newblock Domain decomposition methods for non-symmetric problems.
\newblock In Choi-Hong Lai, Petter~E. Bj{\o}rstad, Mark Cross, and Olof~B.
  Widlund, editors, {\em Domain Decomposition Methods in Sciences and
  Engineering: Eleventh International Conference London, UK}, pages 3--17.
  {DDM}.org, 1999.
\newblock Greenwich, England, July 20--24, 1998.

\bibitem{Achdou:2000:DDAD}
Y.~Achdou, P.~Le~Tallec, F.~Nataf, and M.~Vidrascu.
\newblock A domain decoposition preconditioner for an advection-diffusion
  problem.
\newblock {\em Comp. Methods Appl. Mech. Engrg}, 184:145--170, 2000.

\bibitem{Achdou:1997:RPA}
Y.~Achdou and F.~Nataf.
\newblock A {R}obin-{R}obin preconditioner for an advection-diffusion problem.
\newblock {\em C. R. Acad. Sci. Paris}, 325, S\'{e}rie I:1211--1216, 1997.

\bibitem{AyusoM2009}
B.~Ayuso and L.~D. Marini.
\newblock Discontinuous {G}alerkin methods for advection-diffusion-reaction
  problems.
\newblock {\em SIAM J. Numer. Anal.}, 47(2):1391--1420, 2009.

\bibitem{Braess:1997:FET}
D.~Braess.
\newblock {\em Finite Elements: Theory, Fast Solvers, and Applications in Solid
  Mechanics}.
\newblock Cambridge University Press, Cambridge, 1997.

\bibitem{BrennerA1989}
S.~C. Brenner.
\newblock An optimal-order multigrid method for {${\rm P}1$} nonconforming
  finite elements.
\newblock {\em Math. Comp.}, 52(185):1--15, 1989.

\bibitem{brenner}
S.~C. Brenner.
\newblock Poincar\'{e}-{F}riedrichs inequalities for piecewise {$H^1$}
  functions.
\newblock {\em SIAM J. Numer. Anal.}, 41(1):306--324, 2003.

\bibitem{3cai}
X.-C. Cai.
\newblock Additive {S}chwarz algorithms for parabolic convection-diffusion
  equations.
\newblock {\em Numer. Math.}, 60(1):41--61, 1991.

\bibitem{1caiwid}
X.-C. Cai and O.~Widlund.
\newblock Domain decomposition algorithms for indefinite elliptic problems.
\newblock {\em SIAM J. Sci. Statist. Comput.}, 13(1):243--258, January 1992.

\bibitem{2caiwid}
X.-C. Cai and O.~Widlund.
\newblock Multiplicative {S}chwarz algorithms for some nonsymmetric and
  indefinite problems.
\newblock {\em SIAM J. Numer. Anal.}, 30(4):936--952, August 1993.

\bibitem{HDGOseen}
A.~Cesmelioglu, B.~Cockburn, N.~C. Nguyen, and J.~Peraire.
\newblock Analysis of {HDG} methods for {O}seen equations.
\newblock {\em J. Sci. Comput.}, 55(2):392--431, 2013.

\bibitem{ChenC2012}
Y.~Chen and B.~Cockburn.
\newblock Analysis of variable-degree {HDG} methods for convection-diffusion
  equations. {P}art {I}: general nonconforming meshes.
\newblock {\em IMA J. Numer. Anal.}, 32(4):1267--1293, 2012.

\bibitem{ChenC2014}
Y.~Chen and B.~Cockburn.
\newblock Analysis of variable-degree {HDG} methods for convection-diffusion
  equations. {P}art {II}: {S}emimatching nonconforming meshes.
\newblock {\em Math. Comp.}, 83(285):87--111, 2014.

\bibitem{multigrid}
B.~Cockburn, O.~Dubois, J.~Gopalakrishnan, and S.~Tan.
\newblock Multigrid for an {HDG} method.
\newblock {\em IMA J. Numer. Anal.}, 34(4):1386--1425, 2014.

\bibitem{error}
B.~Cockburn and J.~Gopalakrishnan.
\newblock Error analysis of variable degree mixed methods for elliptic problems
  via hybridization.
\newblock {\em Math. Comp.}, 74(252):1653--1677, 2005.

\bibitem{CGL2009}
B.~Cockburn, J.~Gopalakrishnan, and R.~Lazarov.
\newblock Unified hybridization of discontinuous {G}alerkin, mixed, and
  continuous {G}alerkin methods for second order elliptic problems.
\newblock {\em SIAM J. Numer. Anal.}, 47(2):1319--1365, 2009.

\bibitem{HDGS2011}
B.~Cockburn, J.~Gopalakrishnan, N.~C. Nguyen, J.~Peraire, and F.~Sayas.
\newblock Analysis of {HDG} methods for {S}tokes flow.
\newblock {\em Math. Comp.}, 80(274):723--760, 2011.

\bibitem{Conceicao_thesis}
D.~Concei\c{c}ao.
\newblock Balancing domain decomposition preconditioners for non-symmetric
  problems.
\newblock Technical Report Serie C 46, Instituto de Matem\'atica Pura e
  Aplicada, Rio de Janeiro, Brazil, May 2006.

\bibitem{DEF1974}
A.~Devinatz, R.~Ellis, and A.~Friedman.
\newblock The asymptotic behavior of the first real eigenvalue of second order
  elliptic operators with a small parameter in the highest derivatives. {II}.
\newblock {\em Indiana Univ. Math. J.}, 23:991--1011, 1973/74.

\bibitem{Dohrmann:2003:PSC}
C.~R. Dohrmann.
\newblock A preconditioner for substructuring based on constrained energy
  minimization.
\newblock {\em SIAM J. Sci Comput.}, 25(1):246--258, 2003.

\bibitem{eielsc}
S.~C. Eisenstat, H.~C. Elman, and M.~H. Schultz.
\newblock Variational iterative methods for nonsymmetric systems of linear
  equations.
\newblock {\em SIAM J. Numer. Anal.}, 20 (2):345--357, 1983.

\bibitem{Franca:1992:SFE}
L.~P. Franca, S.~L. Frey, and T.~J.~R. Hughes.
\newblock Stabilized finite element methods. {I}. {A}pplication to the
  advective-diffusive model.
\newblock {\em Comput. Methods Appl. Mech. Engrg.}, 95(2):253--276, 1992.

\bibitem{fu_analysis_2015-1}
G.~Fu, W.~Qiu, and W.~Zhang.
\newblock An analysis of {HDG} methods for convection-dominated diffusion
  problems.
\newblock {\em ESAIM Math. Model. Numer. Anal.}, 49(1):225--256, 2015.

\bibitem{schwarz}
J.~Gopalakrishnan.
\newblock A {S}chwarz preconditioner for a hybridized mixed method.
\newblock {\em Comput. Methods Appl. Math.}, 3(1):116--134, 2003.
\newblock Dedicated to Raytcho Lazarov.

\bibitem{Hughes:1989:NFE}
T.~J.R. Hughes, L.~P. Franca, and G.~M. Hulbert.
\newblock A new finite element formulation for computational fluid dynamics.
  {VIII}. {T}he {G}alerkin/least-squares method for advective-diffusive
  equations.
\newblock {\em Comput. Methods Appl. Mech. Engrg.}, 73(2):173--189, 1989.

\bibitem{Klawonn:2004:FDP}
A.~Klawonn and O.~Widlund.
\newblock Dual-primal {FETI} methods for linear elasticity.
\newblock {\em Comm. Pure Appl. Math.}, 59(11):1523--1572, 2006.

\bibitem{klaw_wid3}
A.~Klawonn, O.~B. Widlund, and M.~Dryja.
\newblock Dual-primal {FETI} methods for three-dimensional elliptic problems
  with heterogeneous coefficients.
\newblock {\em SIAM J. Numer. Anal.}, 40(1):159--179, April 2002.

\bibitem{BDDCH}
J.~Li and X.~Tu.
\newblock Convergence analysis of a balancing domain decomposition method for
  solving interior {H}elmholtz equations.
\newblock {\em Numer. Linear Algebra Appl.}, 16:745--773, 2009.

\bibitem{LiBDDCS}
J.~Li and O.~Widlund.
\newblock {BDDC} algorithms for incompressible {S}tokes equations.
\newblock {\em SIAM J. Numer. Anal.}, 44(6):2432--2455, 2006.

\bibitem{Li:2004:FBC}
J.~Li and O.~Widlund.
\newblock {FETI--DP}, {BDDC}, and block {C}holesky methods.
\newblock {\em Internat. J. Numer. Methods Engrg.}, 66:250--271, 2006.

\bibitem{Jan1}
J.~Mandel and C.~Dohrmann.
\newblock Convergence of a balancing domain decomposition by constraints and
  energy minimization.
\newblock {\em Numer. Linear Algebra Appl.}, 10(7):639--659, 2003.

\bibitem{Jan2}
J.~Mandel, C.~Dohrmann, and R.~Tezaur.
\newblock An algebraic theory for primal and dual substructuring methods by
  constraints.
\newblock {\em Appl. Numer. Math.}, 54(2):167--193, 2005.

\bibitem{NPC2010}
N.~C. Nguyen, J.~Peraire, and B.~Cockburn.
\newblock A hybridizable discontinuous {G}alerkin method for {S}tokes flow.
\newblock {\em Comput. Methods Appl. Mech. Engrg.}, 199(9-12):582--597, 2010.

\bibitem{HDGM}
N.~C. Nguyen, J.~Peraire, and B.~Cockburn.
\newblock Hybridizable discontinuous {G}alerkin methods for the time-harmonic
  {M}axwell's equations.
\newblock {\em J. Comput. Phys.}, 230(19):7151--7175, 2011.

\bibitem{HDGH}
N.~C. Nguyen, J.~Peraire, F.~Reitich, and B.~Cockburn.
\newblock A phase-based hybridizable discontinuous {G}alerkin method for the
  numerical solution of the {H}elmholtz equation.
\newblock {\em J. Comput. Phys.}, 290:318--335, 2015.

\bibitem{nguyen_implicit_2009}
N.C. Nguyen, J.~Peraire, and B.~Cockburn.
\newblock An implicit high-order hybridizable discontinuous galerkin method for
  linear convection-diffusion equations.
\newblock {\em Journal of Computational Physics}, 228(9):3232--3254, 2009.

\bibitem{QS2016}
W.~Qiu and K.~Shi.
\newblock An {HDG} method for convection diffusion equation.
\newblock {\em J. Sci. Comput.}, 66(1):346--357, 2016.

\bibitem{Quarteroni:1999:DDM}
A.~Quarteroni and A.~Valli.
\newblock {\em Domain Decomposition Methods for Partial Differential
  Equations}.
\newblock Oxford Science Publications, 1999.

\bibitem{RST1996}
H.-G. Roos, M.~Stynes, and L.~Tobiska.
\newblock {\em Numerical methods for singularly perturbed differential
  equations}, volume~24 of {\em Springer Series in Computational Mathematics}.
\newblock Springer-Verlag, Berlin, 1996.
\newblock Convection-diffusion and flow problems.

\bibitem{Saad:1986:GGM}
Y.~Saad and M.~H. Schultz.
\newblock {GMRES}: A generalized minimal residual algorithm for solving
  nonsymmetric linear systems.
\newblock {\em SIAM J. Sci. Statist. Comp.}, 7:856--869, 1986.

\bibitem{006tos}
A.~Toselli.
\newblock {F}{E}{T}{I} domain decomposition methods for scalar
  advection-diffusion problems.
\newblock {\em Comput. Methods Appl. Mech. Engrg.}, 190(43-44):5759--5776,
  2001.

\bibitem{Trotta:1996:MFE}
R.~L. Trotta.
\newblock Multidomain finite elements for advection-diffusion equations.
\newblock {\em Appl. Numer. Math.}, 21(1):91--118, 1996.

\bibitem{Tu:2005:BPP}
X.~Tu.
\newblock A {BDDC} algorithm for a mixed formulation of flows in porous media.
\newblock {\em Electron. Trans. Numer. Anal.}, 20:164--179, 2005.

\bibitem{Tu_thesis}
X.~Tu.
\newblock {\em {BDDC} Domain Decomposition Algorithms: Methods with Three
  Levels and for Flow in Porous Media}.
\newblock PhD thesis, Courant Institute, New York University, January 2006.
\newblock TR2005-879, Department of Computer Science, Courant Institute.
  http://cs.nyu.edu/csweb/Research/TechReports/TR2005-879/TR2005-879.pdf.

\bibitem{Tu:2005:BPD}
X.~Tu.
\newblock A {BDDC} algorithm for flow in porous media with a hybrid finite
  element discretization.
\newblock {\em Electron. Trans. Numer. Anal.}, 26:146--160, 2007.

\bibitem{TuLi:2008:BDDCAD}
X.~Tu and J.~Li.
\newblock A balancing domain decomposition method by constraints for
  advection-diffusion problems.
\newblock {\em Commun. Appl. Math. Comput. Sci.}, 3:25--60, 2008.

\bibitem{TW:2016:HDG}
X.~Tu and B.~Wang.
\newblock A {BDDC} algorithm for second-order elliptic problems with
  hybridizable discontinuous {G}alerkin discretizations.
\newblock {\em Electron. Trans. Numer. Anal.}, 45:354--370, 2016.

\bibitem{TW:2017:WGS}
X.~Tu and B.~Wang.
\newblock A {BDDC} algorithm for the {S}tokes problem with weak {G}alerkin
  discretizations.
\newblock {\em Comput. Math. Appl.}, 76(2):377--392, 2018.

\bibitem{TWZstokes2020}
X.~Tu, B.~Wang, and J.~Zhang.
\newblock Analysis of {BDDC} algorithms for {S}tokes problems with hybridizable
  discontinuous {G}alerkin discretizations.
\newblock {\em Electron. Trans. Numer. Anal.}, 52:553--570, 2020.

\bibitem{WY2014mixed}
J.~Wang and X.~Ye.
\newblock A weak {G}alerkin mixed finite element method for second order
  elliptic problems.
\newblock {\em Math. Comp.}, 83(289):2101--2126, 2014.

\bibitem{WYWGS2016}
J.~Wang and X.~Ye.
\newblock A weak {G}alerkin finite element method for the {S}tokes equations.
\newblock {\em Adv. Comput. Math.}, 42(1):155--174, 2016.

\bibitem{1xu}
J.~Xu.
\newblock {\em Theory of Multilevel Methods}.
\newblock PhD thesis, Cornell University, May 1989.

\end{thebibliography}
\bibliographystyle{plain}
\end{document}